\numberwithin{equation}{section}
\renewcommand\d{\partial}
\def\eps{\varepsilon }
\def\bdiag{\textrm{block-diag}}
\def\diag{\textrm{diag}}
\renewcommand\d{\partial}
\newcommand\R{\mathbb R}
\def\eps{\varepsilon}
\newcommand\br{\begin{remark}}
\newcommand\er{\end{remark}}
\newcommand\bp{\begin{pmatrix}}
\newcommand\ep{\end{pmatrix}}
\newcommand{\be}{\begin{equation}}
\newcommand{\ee}{\end{equation}}
\newcommand\ba{\begin{equation}\begin{aligned}}
\newcommand\ea{\end{aligned}\end{equation}}
\newcommand{\bap}{\begin{app}}
\newcommand{\eap}{\end{app}}
\newcommand{\begs}{\begin{exams}}
\newcommand{\eegs}{\end{exams}}
\newcommand{\beg}{\begin{example}}
\newcommand{\eeg}{\end{example}}
\newcommand{\bpr}{\begin{proposition}}
\newcommand{\epr}{\end{proposition}}
\newcommand{\bt}{\begin{theorem}}
\newcommand{\et}{\end{theorem}}
\newcommand{\bc}{\begin{corollary}}
\newcommand{\ec}{\end{corollary}}
\newcommand{\bl}{\begin{lemma}}
\newcommand{\el}{\end{lemma}}
\newcommand{\bd}{\begin{definition}}
\newcommand{\ed}{\end{definition}}
\newcommand{\brs}{\begin{remarks}}
\newcommand{\ers}{\end{remarks}}
\newcommand{\U }{\mathcal{U}}
\newcommand{\N}{\mathcal{N}}
\newcommand{\const}{\text{\rm constant}}
\newcommand{\Id}{{\rm Id }}
\newcommand{\blockdiag}{{\rm blockdiag }}
\DeclareMathOperator{\sgn}{sgn}
\newtheorem{theorem}{Theorem}[section]
\newtheorem{proposition}[theorem]{Proposition}
\newtheorem{corollary}[theorem]{Corollary}
\newtheorem{lemma}[theorem]{Lemma}
\theoremstyle{remark}
\newtheorem{remark}[theorem]{Remark}
\theoremstyle{definition}
\newtheorem{definition}[theorem]{Definition}
\newtheorem{example}[theorem]{Example}
\newtheorem{obs}[theorem]{Observation}
\newcommand{\f}{\frac}
\newcommand{\beq}{\begin{equation}}
\newcommand{\eeq}{\end{equation}}
\title{Stability of hydraulic shock profiles}
\author{Zhao Yang}
\address{Indiana University, Bloomington, IN 47405}
\email{yangzha@indiana.edu}
\thanks{Research of Z.Y. was partially supported by the Hazel King Thompson Summer Reading Fellowship,
a Mathematics Department Research Assistantship, and the College of Arts and Sciences Dissertation Year Fellowship}
\author{Kevin Zumbrun}
\address{Indiana University, Bloomington, IN 47405}
\email{kzumbrun@indiana.edu}
\thanks{Research of K.Z. was partially supported
under NSF grants no. DMS-1400555 and DMS-1700279}
\begin{document}

\begin{abstract}
We establish nonlinear $H^2\cap L^1 \to H^2$ stability with sharp rates of decay in $L^p$, $p\geq 2$,
of general hydraulic shock profiles, with or without subshocks, of the inviscid Saint-Venant equations of shallow water flow,
under the assumption of Evans-Lopatinsky stability of the associated eigenvalue problem.
We verify this assumption numerically for all profiles, giving in particular the first nonlinear
stability results for shock profiles with subshocks of a hyperbolic relaxation system. 
\end{abstract}

\date{\today}
\maketitle

{\it Keywords}: shallow water equations; relaxation shock; subshock; Evans-Lopatinsky determinant; hyperbolic balance laws.

{\it 2010 MSC}:  35B35, 35L67, 35Q35, 35P15.

\tableofcontents

\section{Introduction}\label{s:intro}
In this paper, by a combination of rigorous analysis and numerical verification, 
we establish nonlinear stability of nondegenerate hydraulic shock profiles of the 
inviscid Saint-Venant equations for inclined shallow water flow, across their entire domain of existence, 
in particular including large-amplitude profiles containing subshock discontinuities.
Specifically, assuming spectral stability in the sense of Majda-Erpenbeck \cite{Ma,Er1,Er2,HuZ,Z1},
we prove linear and nonlinear $H^2\cap L^1\to H^2$ 
phase asymptotic orbital stability, 
with sharp rates of decay in $L^p$, $p\geq 2$. 
We then verify the spectral stability condition numerically, by exhaustive Evans-Lopatinsky/Evans function computations. 

The inviscid Saint-Venant equations
\ba \label{sv}
\d_th+\d_xq&=0,\\
\d_tq+\d_{x}\left(\frac{q^2}{h}+\frac{h^2}{2F^2}\right)&=h-\frac{|q|q}{h^2},
\ea
here given in nondimensional form, model inclined shallow water flow,
where $h$ is fluid height; $q=hu$ is total flow, with $u$ fluid velocity;
and $F>0$ is the {\it Froude number}, a nondimensional parameter depending on reference height/velocity and inclination.
Among other applications, they are commonly used in the hydraulic engineering literature to describe flow in a dam spillway, channel, or etc.; see, e.g.,  \cite{BM,Je,Br1,Br2,Dr,JNRYZ} for further discussion.

Equations \eqref{sv} form a {\it hyperbolic system of balance laws} \cite{La,Bre, Da}, 
with the first equation representing conservation of fluid and the second balance between change of momentum 
and the opposing forces of gravity ($h$)
and turbulent bottom friction ($-h^{-2}|q|q$).  
More specifically, they compose a $2\times 2$ {\it relaxation system} \cite{W,L1,Bre,Da}, 
with associated formal equilibrium equation
\be\label{CE}
\d_t h + \d_x  q_*(h)=0,
\ee
where $q_*(h):=h^{3/2}$ is the value of $q$ for which gravity and bottom forces cancel.
That is, near-equilibrium behavior is formally modeled by a scalar conservation law, or {\it generalized}
(inviscid) {\it Burgers equation}.
On the other hand, short-time, or transient, 
behavior is formally modeled by the first-order part of \eqref{sv}, with zero-order forcing term $ h-h^{-2}q^2$ ($q>0$)
set to zero; for later reference, we note that this coincides with the 
{\it equations of isentropic $\gamma$-law gas dynamics} with $\gamma =2$ \cite{Bre,Da,Sm}.

As discussed, e.g., in \cite{W,L1,JK}, the formal approximation \eqref{CE} is valid for general $2\times 2$
relaxation systems in the vicinity of an equilibrium point 
$(h,q)=(h_0,q_*(h_0)) $ provided there holds the {\it subcharacteristic condition} that the characteristic speed
$q_*'(h_0)$ of \eqref{CE} lies between the characteristic speeds of \eqref{sv}.
This is also the condition for {\it hydrodynamic stability}, or stability under perturbation of a constant equilibrium
flow $(h,q)(x,t)\equiv (h_0, q_*(h_0))$: for the Saint-Venant equations, the classical Froude number
condition of Jeffreys \cite{Je},
\be\label{hydrostab}
F<2.
\ee
In this regime, one may expect persistent asymptotically-constant traveling wave solutions 
\be\label{prof}
(h,q)(x,t)= (H,Q)(x-ct), \quad \lim_{z\to - \infty}(H,Q)(z)= (H_L,Q_L), \; \lim_{z\to +\infty}(H,Q)(z)= (H_R,Q_R), 
\ee
analogous to shock waves of \eqref{CE}, known as {\it relaxation shocks}, or relaxation profiles;
in the context of \eqref{sv}, we shall call these {\it hydraulic shock profiles}.
In the complementary regime $F>2$, one expects, rather, complex behavior and pattern formation \cite{JK,JNRYZ,BJNRZ}.

Indeed, we have the following description of existence (Section \ref{s:profiles}). 
Here and elsewhere, let $[h]=h(x^+)-h(x^-)$ of a quantity $h$ across a discontinuity located at $x$.

\bpr\label{existprop}
Let $(H_L,H_R, c)$ be a triple for which there exists an entropy-admissible shock solution in the sense of Lax \cite{La}
with speed $c$ of \eqref{CE} connecting left state $H_L$ to right state $H_R$, i.e., $H_L>H_R>0$ and
$c[H]=[q_*(H)]$.
Then, there exists a corresponding hydraulic shock profile \eqref{prof} with $Q_L=q_*(H_L)$ and $Q_R=q_*(H_R)$
precisely if $0<F<2$.
The profile is smooth for $H_L> H_R> H_L \frac{2F^2}{1+2F+\sqrt{1+4F}}$, and nondegenerate in the sense that $c$ is not a
characteristic speed of \eqref{sv} at any point along the profile.
For $0<H_R< H_L \frac{2F^2}{1+2F+\sqrt{1+4F}}$, the profile is nondegenerate and piecewise smooth, with a single
discontinuity consisting of an entropy-admissible shock of \eqref{sv}.
At the critical value $H_R= H_L \frac{2F^2}{1+2F+\sqrt{1+4F}}$, $H_R$ is characteristic,
and there exists a degenerate profile that is continuous but not smooth, with discontinuous derivative at $H_R$.
For $F>2$, there exist smooth ``reverse shock'' profiles connecting the endstates in the opposite
direction $H_R\to H_L$, precisely when $H_R<H_L<H_R\frac{1+2F -\sqrt{1+4F}}{2}$. 
In the degenerate case $H_L=H_R\frac{1+2F -\sqrt{1+4F}}{2}$, $H_L$ is characteristic and
there exists an uncountable family of degenerate entropy-admissible piecewise smooth homoclinic profiles connecting $H_R$ to itself, but no smooth profiles. 
In all cases, these are the only entropy-admissible piecewise smooth, asymptotically-constant traveling waves of \eqref{sv},
and $c, Q>0$.
\epr

This corresponds to the picture for general relaxation systems \cite{W,L1,YoZ,MZ}, wherein smooth 
relaxation profiles are known to exist for small-amplitude equilibrium shocks near equilibrium points that are stable as constant solutions, but larger-amplitude profiles contain discontinuities, or ``subshocks'', if they exist at all.
Meanwhile, profiles initiating from an unstable equilibrium typically connect endstates in a reverse direction corresponding
to a non-entropy admissible shock of \eqref{CE} \cite{YoZ} (and in any case cannot be stable as solutions of the associated
relaxation system \cite{MZ,MZ2}).
Accordingly, we focus hereafter on the case $0<F<2$ for which hydraulic shock profiles exist in the proper direction,
and examine the stability of such profiles as solutions of \eqref{sv}.

\subsection{Main results}\label{s:mainresults}
We first recall that system \eqref{sv} is of classical {\it Kawashima class}, meaning that it is of symmetrizable hyperbolic type, with a symmetrizer that simultaneously symmetrizes the linearized zero-order relaxation (or ``balance'') term; see Observation \ref{symmobs}.
By the analytical results of \cite{MZ,MZ3}, therefore, we obtain immediately spectral, linearized, and nonlinear 
stability and asymptotic orbital 
stability with sharp rates of smooth hydraulic shock profiles of sufficiently small amplitude, for any fixed endstate
$H_L$.
Moreover, by \cite{MZ2}, we obtain the same linearized and nonlinear stability results for smooth profiles of 
{\it arbitrary amplitude}, provided they are spectrally stable in the sense of a standard Evans function condition,
and nondegenerate in the sense that hyperbolic characteristics do not coincide along the profile with the speed of the wave.
Hence, {\it the smooth nondegenerate case may be treated by existing analysis}, reducing to a 
standard numerical Evans function study of intermediate-amplitude waves, as carried out for example in 
\cite{BHRZ,BHZ,BLeZ,BLZ,HLyZ1}.

We focus here on the complementary large-amplitude case of {\it nondegenerate shock profiles containing subshocks},
or $0<H_R< H_L \frac{2F^2}{1+2F+\sqrt{1+4F}}$.
The degenerate case $H_R= H_L \frac{2F^2}{1+2F+\sqrt{1+4F}}$ we do not treat.
For perturbations satisfying appropriate compatibility conditions at the shock, in particular for perturbations supported
away from the shock, short-time $H^s$ existence follows by the analysis of Majda \cite{Ma,Me}, as noted in \cite{JLW}.
However, so far as we know, there were no results up to now on large-time behavior or existence under perturbation
of relaxation profiles containing subshocks.
Our main result is the following theorem establishing global existence and nonlinear 
phase-asymptotic orbital stability 
in this case, with sharp rates of decay, assuming spectral stability in the sense of an Evans-Lopatinsky condition 
analogous to that of the smooth profile case.

\bt\label{main}
For $0<F<2$ and $0<H_R< H_L \frac{2F^2}{1+2F+\sqrt{1+4F}}$, let $\overline{W}=(H,Q)$ be a hydraulic shock profile \eqref{prof},
and $v_0$ be an initial perturbation supported away from the subshock discontinuity of $\overline{W}$ of norm $\eps$ sufficiently small
in $H^{s}\cap L^1$, $s\geq 2$. 
Moreover, assume that $\overline{W}$ is \emph {spectrally stable} in the sense of the Evans-Lopatinsky condition
defined in Section \ref{s:lop}. 
Then, for initial data $\tilde W_0:=\overline{W}_0+v_0$, there exists a global solution of \eqref{sv}, 
with a single shock located at $ct- \eta(t)$, and $H^s$ to either side of the shock, satisfying
for $t\geq 0$, $2\leq p\leq \infty$, and some limiting phase $\eta_\infty$: 
\ba\label{mainests}
|\tilde W(\cdot,t)-\overline{W}(\cdot-ct +\eta(t))|_{H^s}&\leq C\eps (1+t)^{-1/4},\\
|\tilde W(\cdot,t)-\overline{W}(\cdot-ct +\eta(t))|_{L^p}&\leq C\eps (1+t)^{-(1/2)(1-1/p)}, \\
|\dot \eta(t)|&\leq C\eps (1+t)^{-(1/2)},\\
|\eta(t)|&\leq C\eps ,\\
|\eta(t)-\eta_\infty|&\leq C\eps(1+t)^{-1/4+\upsilon} + C|v_0|_{L^1(|x|\geq t/C)}
\ea
for any $\upsilon>0$, and some $C=C(\upsilon)>0$. In particular, $\eta(t)\to \eta_\infty$ as $t\to +\infty$.
\et

Estimates \eqref{mainests}(i)-(iv) may be recognized as exactly the same as those given for smooth profiles in 
\cite[Thm. 1.2]{MZ2}, but with $\eta$ now an exact shock location forced by the presence of a 
discontinuity rather than an approximate location designed to optimize errors as in the smooth case.
Estimate \eqref{mainests}(v), upgrading asymptotic orbital stability to phase-asymptotic orbital stability, 
is new even in the smooth case.
We complement these results by systematic numerical studies verifying the Evans-Lopatinsky condition for 
nondegenerate hydraulic shock profiles containing subshocks, and the Evans condition for nondegenerate smooth profiles,
across their full domain of existence.  
Together with our analytical results, 
this yields both linearized and nonlinear phase-asymptotic orbital stability 
of (all) nondegenerate hydraulic shock profiles of \eqref{sv}, 
that is, {\it asymptotic convergence under perturbation to a nearby translate of the original wave.}
Note, due to translation invariance, that this is the strongest possible notion of stability for a traveling wave
\cite{Sa,He,L2,ZH}.

\br\label{noratermk}
As noted in \cite{MZ2} for the smooth case, the rates \eqref{mainests} are sharp,
In particular, as noted in \cite{MZ,MZ2}, under the
very weak localization $v_0\in L^1\cap H^s$ assumed on the initial perturbation, it is not possible
to give a rate for the convergence $\eta(t)\to \eta_\infty$, even at the linearized level.
For, by translating the initial perturbation farther and farther toward infinity, an operation
that does not change its norm, we may by finite propagation speed of the underlying hyperbolic model,
delay indefinitely the interaction of the perturbation with the component subshock of the traveling wave.
However, conservation of mass principles \cite{L1,L2}, applied to the linearized problem,
imply that, to linear order in perturbation norm $\eps$ 
the asymptotic shock location depends only on the ``total perturbation mass''
$\int_{-\infty}^{+\infty}h_0(y)\, dy$, hence is independent of translation.
These two facts together are inconsistent with convergence at a fixed rate depending only on $\eps=|v_0|_{L^1\cap H^s}$.
\er

\subsection{Discussion and open problems}\label{s:discussion}
Large-amplitude hydraulic shock profiles are physically interesting from the point of view of
dam break or river bore phenomena.
Our results bear on the question whether the Saint-Venant equations \eqref{sv} typically used in hydraulic engineering
can model such phenomena.
An interesting question for further investigation is whether the modeling of additional physical effects such as viscosity or capillarity become important at large amplitudes, radically changing behavior, or whether the solutions studied here indeed accurately capture behavior even in the discontinuous regime.
We mention also the recent introduction in \cite{RG1,RG2} of vorticity to model \eqref{sv}, yielding effectively a 
$3\times 3$ relaxation model with scalar equilibrium system.
In the unstable, pattern formation regime analogous to $F>2$ for \eqref{sv}, 
this augmented model is seen to give much closer correspondence in wave form for periodic roll wave patterns to that 
seen in experiment in  \cite{Br1,Br2}.
A very interesting open problem would be to study existence and stability of hydraulic shocks for this more 
complicated model, in particular comparing results to Saint-Venant profiles and experiment,

On the mathematical side, our main contribution here is the treatment for the first time of nonlinear stability of 
relaxation profiles containing subshocks, a topic that so far as we know has up to now not been addressed.
(Though see \cite{DR1,DR2} for related, contemporary, studies of stability of discontinuous solutions of scalar balance laws.)
Indeed, at the outset it is perhaps not clear what is the proper framework in which this problem
should be approached, as smooth and discontinuous shocks have been treated in the literature 
by quite different and at first sight incompatible techniques.  
However, a useful bridge between these two (continuous and discontinuous) domains comes from the study of 
smooth boundary layer solutions of initial boundary value problems in \cite{YZ,NZ} and the treatment of piecewise 
smooth detonation waves in \cite{JLW}, in particular the suggestive use of the ``good unknown'' to separate interior 
and boundary problems in a convenient way.

Combining these two approaches allows us to formulate the linearized problem by an inverse Laplace transform
representation similar to that appearing for smooth profiles in \cite{ZH,MZ,YZ,NZ}, and thereby to obtain detailed
pointwise Green function bounds by analogous (stationary phase, or Riemann saddlepoint) techniques.
This allows us as in the smooth profile case to set up a nonlinear iteration based on contraction mapping, for which
the nonlinear source loses one derivative.
The nonlinear argument is then closed by an energy-based ``nonlinear damping'' estimate on the half-line
modifying the corresponding large-amplitude estimate of \cite{MZ2}) on the whole line, 
which controls higher Sobolev norms in terms of $L^2$ and an exponentially decaying multiple of the initial high norm, 
thus closing the iteration.

A key new ingredient in the half-line argument is the observation that the hyperbolic Friedrichs 
symmetrizer $\tilde{A}_\alpha^0$ used in the symmetric hyperbolic part of the energy estimates may be chosen so that the 
boundary conditions become maximally dissipative, a special feature of the one-dimensional case.
A second new ingredient is the use of ``Strichartz-type'' bounds (Lemma \ref{Glemaux}) to control new trace
terms arising in phase bounds for the nonsmooth case; the resulting ``vertical estimate'' \eqref{vert}
controlling time integrals at fixed spatial location seems of interest in its own right.
A further novelty in the analysis
is the introduction of a new ``approximate characteristic'' argument by which we can roughly decompose tail
from center contributions of the initial perturbation,
to obtain convergence of the phase $\eta(t)$ as $t\to +\infty$.
The latter result is new even in the smooth case.

The treatment for discontinuous waves of decay in low norms $L^p$, $1\leq p\leq 2$ is an interesting open problem that we expect could
be carried out by a suitable modification of the argument for the smooth case in \cite{MZ2}.
A very interesting novelty in either smooth or nonsmooth case, would be to prove decay in $L^1$ at nonuniform rate $\int_{[-a_-t/2,-a_+t/2]^c}|v_0(x)|\, dx$
determined by the tail of the initial data,
where $a_\pm$ denote limiting equilibrium characteristic speeds as $x\to \pm \infty$ for the linearized equations about the wave.
This should be possible using an $L^1$ version of the approximate characteristic estimate \eqref{charest} developed here.
Such a result would at the same time give an alternative, shorter proof of convergence $\eta(t)\to \eta_\infty$ of the phase,
based on conservation of mass of the unforced first coordinate $u$, similar to the classical argument of \cite{L2} for shock profiles
of viscous conservation laws.

We note that all of our nonlinear arguments extend to nondegenerate piecewise smooth relaxation shocks of 
general $n\times n$ systems with scalar equlibrium systems, in particular to the $3\times 3$
Richard-Gavrilyuks (RG) model of \cite{RG1,RG2}.
Thus, the stability problem in that case reduces to an examination of the existence and spectral stability problems.
For $n\times n$ relaxation systems with $r\times r$ equilibrium systems, $r>1$, Lax shocks of the equilibrium system
admit $r-1>0$ outgoing characteristic modes, leading to new, algebraically-decaying contributions from G
source terms in the nonlinear Rankine-Hugoniot equations for which our our current $L^p$-based nonlinear iteration scheme
appears not to close.
However, this should be treatable under further localization conditions on the initial perturbation by a more
detailed pointwise analysis as in \cite{HoZ,RaZ,HRZ}.

Though we do not show it here, in the present case for which the equilibrium behavior corresponds to a scalar shock, 
given the $H^s$ bounds established in Theorem \ref{main},
the weighted norm method of Sattinger \cite{Sa} can be applied in straightforward fashion to yield exponential
decay of $|v(t)|_{L^\infty}$, assuming spatial exponential decay on the initial perturbation.
This yields time-exponential convergence of the phase to a limiting value, giving
the stronger results of time-exponential phase-asymptotic orbital stability.
Similarly, assuming algebraic decay at rate $|v_0(x)|\leq C(1+|x|)^{-r}$, $1<r\leq 3/2$ of the initial data, 
a pointwise analysis as in \cite{Ho,HoZ,RaZ,HRZ} should give time-algebraic convergence to a limiting phase
at rate $|v|_{L^\infty}\leq C(1+t)^{1-r}$, reflecting the rate at which ``mass'', or integral of the conserved quantity $u$,
is convected from initial data to the shock center: more precisely, the rate at which residual mass 
$\int_{[-a_-t, -a_+t]^c}v_0(x)\, dx$ converges to zero, where $a_\pm$ are the characteristic velocities
of the limiting equilibrium systems at $x\to \pm \infty$.
This rate if not the precise characteristic description is 
obtained in the present analysis
for $r<5/4$; see Remark \ref{linphasermk} for further discussion.  
For $r\geq 5/4$, we get the nonsharp rate $(1+t)^{-1/4+\upsilon}$ for any $\upsilon>0$.

An interesting new issue in the nonsmooth case is compatibility at time $t=0$ of 
Rankine-Hugoniot conditions and initial perturbation.
In Figure \ref{timeev}, we display the results of numerical time-evolution of a perturbed subshock-type profile , 
first with initial perturbation supported away from the subshock (panels (a)-(d)) and second with piecewise
smooth initial perturbation
supported at the subshock (panels (e)-(h)) and incompatible with the Rankine-Hugoniot conditions at time $t=0$.
In both cases, stability is clear; however, in the second experiment one can see clearly an additional shock discontinuity originating from the subshock, generated by initial incompatibility.

\begin{figure}
\begin{center}
\includegraphics[scale=0.32]{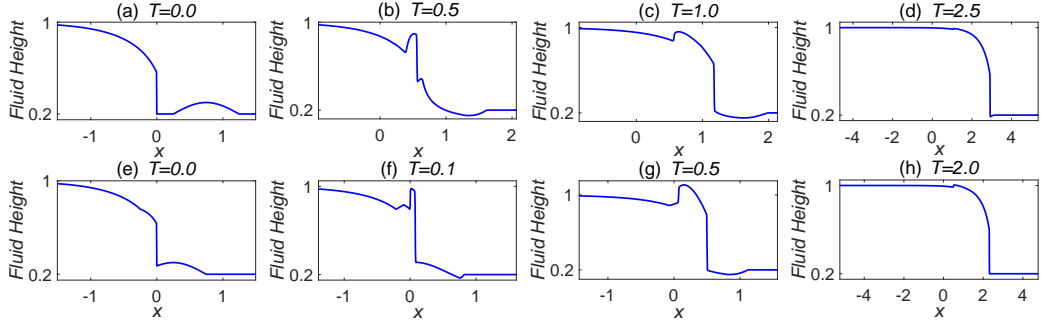}
\end{center}
\caption{
	Time-evolution study using CLAWPACK \cite{C1,C2}, illustrating stability under perturbation of a discontinuous hydraulic shock.  In (a) we show a perturbed profile with $C^\infty$ ``bump-type'' perturbation supported on an interval away from the subshock.  In (b) and (c) we show the solution at intermediate times $T=0.5$ and $1.0$ of the waveform in (a) after evolution under \eqref{sv}; stability and smoothness away from the subshock are clearly visible. In (d) we show
	the solution at time $T=2.5$, exhibiting convergence to a shift of the original waveform (slightly compressed in the horizontal direction due to scaling of the figure).
In (e) we show a perturbed profile with perturbation supported at the subshock.
In (f) and (g) we show the solution at times $T=0.1$ and $0.5$ of the waveform in (e) after evolution under \eqref{sv}; stability is again clear, 
but one can see also an additional shock discontinuity emerging from the subshock and propagating downstream, 
caused by incompatibility of the data with Rankine-Hugoniot conditions at time $0$. 	
In (h), we show the solution at time $T=2.0$, exhibiting convergence to a shift of the original waveform.
}
\label{timeev}
\end{figure}

An interesting open problem would be to analyze the second case by the introduction/tracking of this additional shock wave in the nonlinear Ansatz, ``relieving'' incompatibility at $t=0$.
More generally, it would be interesting to treat lower regularity perturbations than piecewise $H^2$, for example
in piecewise Lipshitz class by a paradifferential damping estimate following \cite{Me}.
To treat perturbations admitting shocks would also be interesting, but appears to require new ideas.
Likewise, in the setting of more general balance laws not admitting a damping estimate, it is not clear how to proceed
even for the case of arbitrarily smooth compatible initial perturbations.
As noted in \cite{JLW}, for example, the time-asymptotic stability of piecewise smooth Zeldovich--von Neumann--Doering 
(ZND) detonations is an important open problem.

Finally, it would be very interesting to attack by techniques like those used here
the open problem cited in \cite{JNRYZ} of nonlinear time-asymptotic
stability of discontinuous periodic ``roll wave'' solutions of \eqref{sv} or its $3\times 3$ analog (RG)
in the hydrodynamically unstable regime $F>2$.
It would appear that a Bloch wave analog of the linear analysis here would apply also for periodic waves,
similar to that of \cite{JZN,JNRZ} in the viscous periodic case; for the requisite Bloch wave framework for
discontinuous waves, see \cite{JNRYZ}.\footnote{
Though, note the degeneracy at $\lambda=0$ of spectral curves of roll wave solutions of
\eqref{sv} described in \cite[Rmks. 2.1 and 5.1]{JNRYZ}, making this case more complicated.
}
A difficulty is the apparent lack of a nonlinear damping estimate given instability of constant states.
However, as suggested by L. M. Rodrigues \cite{R}, one may hope that
an ``averaged'' energy estimate using ``gauge functions'', or specially chosen
weights generalizing the Goodman- and Kawashima-type estimates here,
as used to obtain damping estimates in the viscous case in \cite{RZ} might yield a nonlinear damping estimate here as well.

\medskip

{\bf Note:} Our numerical conclusions have subsequently been verified analytically
by generalized Sturm--Liouville considerations in \cite{SYZ}, yielding a complete analytical proof of stability.

\medskip

{\bf Acknowledgement.} 
We thank L. Miguel Rodrigues, Pascal Noble, and Mat Johnson for numerous enlightening discussions on 
the Saint-Venant equations and shallow water flow.
In particular, discussions in the course of our collaboration \cite{JNRYZ} 
on the parallel case of discontinuous periodic waves,
and especially ideas of Rodrigues \cite{R} toward the associated nonlinear stability problem,
were crucial in our approach to the simpler case of discontinuous shock profiles treated here.
Thanks also to Alexei Mailybaev and Dan Marchesin \cite{MM} for discussions on singular detonation waves 
in relaxation models for combustion that were the immediate impetus for our study of hydraulic shock profiles. Thanks to University Information Technology Services (UITS) division from Indiana University for providing the Karst supercomputer environment in which most of our computations were carried out. This research was supported in part by Lilly Endowment, Inc., through its support for the Indiana University Pervasive Technology Institute, and the Indiana METACyt Initiative. The Indiana METACyt Initiative at IU was also supported in part by Lilly Endowment, Inc.

\section{Hydraulic shock profiles of Saint-Venant equations}\label{s:profiles}
We begin by categorizing the family of hydraulic shock profiles, or piecewise smooth
traveling wave solutions of \eqref{sv} with discontinuities consisting of entropy-admissible shocks.
For closely related analysis, see the study of periodic ``Dressler'' waves in \cite[\S 2]{JNRYZ}; as
discussed in Remark \ref{dresslerrmk}, this corresponds to the degenerate case $H_s=H_L$, $F>2$ in our study here.
As the first-order derivative part of \eqref{sv} comprises the familiar equations of isentropic gas dynamics,
entropy-admissble discontinuities are in this case Lax $1$- or $2$-shocks satisfying the Rankine-Hugoniot jump
conditions and Lax characteristic conditions \cite{La,Sm}.

Consider the Saint-Venant equations \eqref{sv}
$$
\displaystyle
\d_th+\d_xq=0,\quad \d_tq+\d_{x}\left(\frac{q^2}{h}+\frac{h^2}{2F^2}\right)=h-\frac{|q|q}{h^2}.
$$
We seek a traveling wave solution $(h,q)=(H,Q)(x-ct)$ with $c$ constant and $(H(\xi),Q(\xi))$ smooth 
with 
\be\label{prof2}
\lim_{\xi\rightarrow-\infty}(H,Q)(\xi)=(H_L,Q_L), \quad \lim_{\xi\rightarrow +\infty}(H,Q)(\xi)=(H_R,Q_R),
\ee
with Lax $1$- or $2$-shocks at each discontinuity.
In smooth regions, we have therefore
\be\label{smooth}
-cH'+(Q)'=0,\quad -cQ'+\left(\frac{Q^2}{H}+ \frac{H^2}{2F^2}\right)'=H-\frac{|Q|Q}{H^2},
\ee
and at sub-shock discontinuities $\xi_j$, we have the Rankine-Hugoniot jump conditions
\be\label{jump}
-c[H]+[Q]=0,\quad -c[Q]+\left[\frac{Q^2}{H}+\frac{H^2}{2F^2}\right]=0,
\ee
where $[f]$ denotes the jump $f(\xi_j^+)-f(\xi_j^-)$ of a quantity $f$ at discontiuity $\xi_j$.

Our first observation is the standard one, true for general $n\times n$ relaxation systems of block structure
$w_t+F(w)_x=\bp 0\\r(w)\ep$, that $(H_L,Q_L)$ and $(H_R,Q_R)$ must necessarily be equilibria, with the
triple $(H_L,H_R,c)$ satisfying the Rankine-Hugoniot conditions 
\be\label{redRH}
c[H]=[q_*(H)]:= [H^{3/2}]
\ee
of the reduced equilibrium system \eqref{CE}, i.e., a (not necessarily entropy-admissible) shock of \eqref{CE}.

Integrating the first equation of \eqref{smooth}, and combining with the first equation of \eqref{jump} gives
\be\label{UHrel}
Q-c H\equiv \const =:-q_0.
\ee
Meanwhile, taking $(H',Q')\to 0$ in \eqref{smooth}(ii), we find that $H_L$ and $H_R$ must be equilibria of
the relaxation system \eqref{sv}, satisfying $Q_{L,R}=q_*(H_{L,R})=H_{L,R}^{3/2}$: in particular, note therefore that
{\it $Q_L, Q_R>0$} in the physical regime $H>0$ that we consider.
Substituting $Q_{L,R}=q_*(H_{L,R})$ into \eqref{UHrel} then gives \eqref{redRH}.
As $q_*(h )=h^{3/2}$ is convex, there are at most two such equilibrium solutions of \eqref{UHrel} for a given 
value of $q_0$, hence, for each possible left state $(H_L,Q_L)$ of \eqref{prof2}, and choice of speed $c$,
there is at most one possible right state $(H_R,Q_R)\neq (H_L,Q_L)$.
Moreover, for such a nontrivial right state to exist, since then $c=[q_*(h)]/[h]$ is given by the Rankine-Hugoniot
conditions for \eqref{CE}, {\it $c$ must necessarily be positive;} from now on, therefore, we take $c>0$.

Next, substituting \eqref{UHrel} in the second equation of \eqref{smooth}, we obtain the scalar ODE
\be\label{altsmooth}
\left(\frac{-q_0^2}{H^2}+\frac{H}{F^2}\right)H'=H-\left|-q_0+cH\right|(-q_0+cH)/H^2
\ee
and, substituting in the second equation of \eqref{jump}, the scalar jump condition
\be\label{altjump}
\left[\frac{q_0^2}{H}+\frac{H^2}{2F^2}\right]=0.
\ee
Since $-q_0+cH=Q$ is monotone in $H$, and (as noted just above) is positive at equilibria $(H_L,Q_L)$
and $(H_R,Q_R)$, we have that $Q$ is positive on $[H_L,H_R]$ and so we may drop the absolute values in
\eqref{altsmooth} in this regime, and in the larger regime $Q>0$, replacing \eqref{altsmooth} by
$$
\left(\frac{-q_0^2}{H^2}+\frac{H}{F^2}\right)H'=\frac{H^3-(-q_0+cH)^2}{H^2}.
$$
As the righthand side is cubic, with zeros at equilibria $H_L$ and $H_R$,
it factors as $(H-H_R)(H-H_L)(H-H_3)$, where $H_3$ is a third root that-- since as observed above,
there can be at most two-- is {\it not} an equilibrium of \eqref{sv}.
It follows that $Q_3=-q_0+cH_3$ must be negative, or else we would have a contradiction;
thus, $H_3< \min\{H_L,H_R\}$; this gives in passing $q_0>0$.

Writing \eqref{sv} in abstract form as $w_t + F(w)_x=(0, r(w))^T$, so that \eqref{smooth} becomes
$(dF(w)-c\Id)W'= (0, r(W))^T$, we see that \eqref{smooth} is singular precisely when the eigenvalues
$\alpha_\pm$ of $(dF-c\Id)$ take value $0$, where (see, e.g., \cite{Sm})
$\alpha_\pm= Q/H \pm \sqrt{H/F^2}-c$, hence by \eqref{UHrel}
\be\label{alphapm}
\alpha_\pm= -q_0/H \pm \sqrt{H}/F
\ee
along a shock profile.  As $q_0>0$, this happens precisely at the ``sonic point'' where $\alpha_+=0$, 
i.e., the shock speed agrees with a characteristic speed of the hyperbolic relaxation system, or, solving:
$ -q_0^2/H^2 + H/F^2=0$.
Comparing with \eqref{altsmooth}, we see that the scalar ODE becomes singular at the same value of $H$.
Following \cite{JNRYZ}, we denote this point as
\be\label{Hs}
H_s:= (q_0 F)^{2/3}.
\ee

Evidently along the profile, the signs of $\alpha_\pm$ are constant for $H$ to the right and left of $H_s$.
Taking $H\to + \infty$, we see that 
\be\label{sig}
\hbox{\rm $\alpha_-< 0 <\alpha_+$ for $H>H_s$ and
$\alpha_-, \alpha_+< 0$ for $H< H_s$.}
\ee
Recalling the Lax characteristic conditions \cite{La,Sm}, we find that the only possible entropy-admissible shock
connections are Lax $2$-shocks from points $\tilde H_L>H_s$ to points $\tilde H_R<H_s$, i.e.,
shocks for which $\alpha_-(\tilde H_L)<0<\alpha_+(\tilde H_L)$ and $\alpha_-(\tilde H_R), \alpha_+(\tilde H_R)<0$.
In particular, any such discontinuities are {\it decreasing in $H$}, with, moreover, $\tilde H_R< H_s < \tilde H_L$.

We find it convenient to introduce a fifth point $H_*$, defined as satisfying the scalar jump condition \eqref{altjump} 
(and thus, along the profile, by \eqref{UHrel}, the full jump conditions \eqref{jump})
when paired with value $H_R$.  Combining all information, we have
\be
\label{eqts}
\begin{aligned}
&H_L-\frac{Q_L^2}{H_L^2}=0,&&H_R-\frac{Q_R^2}{H_R^2}=0,\\
&Q_L-cH_L=Q_R-cH_R=-q_0,&&\frac{q_0^2}{H_*}+\frac{H_*^2}{2F^2}=\frac{q_0^2}{H_R}+\frac{H_R^2}{2F^2}.
\end{aligned}
\ee
Setting $\nu:=\sqrt{\frac{H_L}{H_R}}>1$ and solving for $c,q_0,H_*$ yields
\ba
c=\frac{\nu^2+\nu+1}{\nu+1}\sqrt{H_R}\;
,\quad q_0=\frac{\nu^2}{\nu+1}\sqrt{H_R^3},\quad
H_*=\left\{\begin{aligned}
&H_R\\
&\frac{-\nu-1+\sqrt{8F^2\nu^4+\nu^2+2\nu+1}}{2\left(\nu+1\right)}H_R
\\
&\frac{-\nu-1-\sqrt{8F^2\nu^4+\nu^2+2\nu+1}}{2\left(\nu+1\right)}H_R
\end{aligned}
\right.
\ea
from which we keep the nontrivial physically relevant (positive) solution 
\be 
\label{Hstar}
H_*:=\frac{-\nu-1+\sqrt{8F^2\nu^4+\nu^2+2\nu+1}}{2\left(\nu+1\right)}H_R.
\ee

Substituting $c,q_0$ in (\ref{altsmooth}) now yields
\be
\label{profileODE}
H'=\frac{F^2 \left(H - H_L\right) \left(H - H_R\right) \left(H-H_3\right)}{(H-H_s)(H^2+HH_s+H_s^2)}
\ee
where
\be
\label{H3Hs}
H_3:=\frac{\nu^2}{\nu^2+2\nu+1}H_R,\quad H_s:=\left(\frac{F\nu^2}{\nu+1}\right)^{\frac{2}{3}}H_R.
\ee
Since $\nu>1$, we have $H_3<H_R<H_L$, recovering our earlier observation on the ordering of roots $H_j$.

Our analysis of hydraulic shock profiles is based on the following case structure.

\bl\label{H*lem}
With the notation above:
\begin{itemize}
\item[i.]{$H_s>H_R$ is equivalent to $F\nu^2-\nu-1>0$, 
or $H_L>H_R\frac{1+2F+\sqrt{1+4F}}{2F^2}$. It is always satisfied when $F>2$.}
\item[ii.]{$H_s<H_L$ is equivalent to $F<\nu^2+\nu$, 
or $H_L>H_R\frac{1+2F -\sqrt{1+4F}}{2}$. It is always satisfied when $F<2$,
as is $H_*<H_L$.}
\end{itemize}
\el

\begin{proof}
The quadratic conditions in $\nu$ follow immediately from \eqref{H3Hs}, whence
the boundaries in terms of $H_L$ and $H_R$ follow by the quadratic formula.
Likewise, applying \eqref{Hstar}, we find that
$H_*<H_L$ is equivalent to $2F^2<\nu^2+\frac{1}{\nu^2}+2\nu+\frac{2}{\nu}+2$, which is
always satisfied for $F<2$, by the inequality $z+1/z\geq  2$ for $z>0$.
\eqref{Hstar}

\end{proof}

\bl\label{Hslem}
With the notation above, $H_s$ lies between $H_R$ and $H_*$, and there is an admissible Lax $2$-shock
between the larger of $H_*$, $H_R$ and the smaller.
\el

\begin{proof}
	The function $\tilde q(H):= q_0^2/H + H^2/2F^2$ appearing in the scalar jump condition $[\tilde q]=0$
	is convex, with $c'(H)=-q_0^2/H^2 + H/F^2$ equal to the prefactor in the lefthand side of \eqref{altsmooth},
	with $c'(H_s)=0$ uniquely specifying $H_s$. 
	By convexity, $c(H_*)=c(H_R)$ implies by Rolle's theorem that $c'(H_*)$ and $c'(H_R)$ have opposite signs, with
	$c'>0$ at the larger of the two points, and  $c'$
	vanishes somewhere between, hence $H_s\in (H_*, H_R)$.
	Recalling \eqref{sig}, we see that there is then an (entropy-admissible)
	Lax $2$-shock connecting the larger of $H_*$, $H_R$ to the smaller.
\end{proof}

\begin{proof}[Proof of Proposition \ref{existprop}]
As noted in the discussion above, in all cases necessarily $c>0$ for any shock profile, and $Q>0$ for 
$H \geq H_R$. Since smooth solutions of \eqref{profileODE} cannot cross equilibrium $H_R$, and entropy admissible 
shocks can only decrease $H$, we have that connecting profiles must satisfy $H>H_R$, and thus $Q>0$, for any
choice of parameters.

({\it Case $F<2$}.)
When $0<F<2$, $H_L>H_R\frac{1+2F+\sqrt{1+4F}}{2F^2}$, 
then $H_R<H_s<H_*$, and so, by the factorization \eqref{profileODE}, $H'<0$ on $(H_s,H_L)$,
and thus on $(H_*,H_L)$.
It follows that there exist discontinuous traveling wave solutions as depicted in Figure \ref{profile}(a),
consisting of a smooth piece emanating from the equilibrium of \eqref{altsmooth} at $H_L$ and continuing down to
$H_*$, followed by a Lax $2$-shock from $H_*$ to $H_R$.
However, there does not exist a smooth profile, as the solution emanating from $H_L$ cannot cross the
singular point $H_s$ to reach $H_R$; indeed, one may see by the factorization \eqref{profileODE} that $H'>0$
on $(H_R,H_s)$.

In the limiting case when $H_L=H_R\frac{1+2F+\sqrt{1+4F}}{2F^2}$, 
for which $H_s=H_*=H_R$, there exist piecewise smooth traveling wave solutions as depicted in Figure \ref{profile}(b),
with discontinuous derivative at the endpoint $H_R=H_s$.

In the small amplitude region $H_R<H_L<H_R\frac{1+2F+\sqrt{1+4F}}{2F^2}$, for which 
$H_*<H_s <H_R$, the corresponding smooth traveling wave profile does not pass the singular point, and so
there exist smooth traveling wave solutions as depicted in Figure \ref{profile}(c). 
However, there exist no solutions containing subshocks, as these would necessarily jump below $H_s<H_R$, and so the
solution could never return past $H_s$, since $H'<0$ on $(H_*,H_s)$ blocks approach by smooth solution, and since
any admissible discontinuities can only decrease the value of $H$. See Figure 3 (b) for domain of existence for traveling waves.

({\it Case $F>2$}.) The case $F>2$ goes similarly.  When $H_s>H_L$, we have,
examining the factorization \eqref{profileODE} and using $F>2$, that $H'>0$ on $(H_R,H_L)$, 
and so there exists a smooth ``reverse'' connection from $H_R$ to $H_L$.  
As $H_R<H_s<H_*$, we have in this case that also $H_*>H_L$, and, since also $H'<0$ on $(H_L,H_s)$, there is no way 
to reach $H_*$ starting from either $H_R$ or $H_L$, and so there can be no discontinuous profile connecting equilibria
$H_L$ and $H_R$ in either direction.
In the degenerate case $H_L=H_s$, we find that the factor $(H-H_s)$ in the singular prefactor
$-q_0^2/H^2 + H/F^2$ on the lefthand side of \eqref{profileODE} exactly cancels with the factor $(H-H_L)$ on
the righthand side, and so \eqref{profileODE} reduces to the nonsingular scalar ODE
\be
\label{redprofileODE}
H'=\frac{F^2  \left(H - H_R\right) \left(H-H_3\right)}{(H^2+HH_s+H_s^2)},
\ee
from which we find that $H'>0$ for all $H>H_R$, with no special significance to the point $H_L$.
Noting that $H_*>H_s=H_L$, we see that there exists an entropy-admissible piecewise smooth homoclinic profile
consisting of a smooth part initiating from $H_R$ and increasing to $H_*$, followed by a Lax $2$-shock from 
$H_*$ back to $H_R$, and finally a constant piece $H\equiv H_R$.  As $H_L$ is not an equilibrium of the 
reduced ODE \eqref{redprofileODE}, it cannot be an asymptotic limit and there is no profile connecting to it.
Since $H_R$ is a repellor, it can only be a limit at $+\infty$ if the profile is constant there, and so any connecting profile must be a discontinuous solution starting with a smooth piece from $H_R$ at $-\infty$ and ending with a constant piece
$H\equiv H_R$ near $+\infty$.
However, there exists an uncountable family of multiple-discontinuity homoclinic profiles, in which intermediate
shocks $(H_{2j},H_{2j+1}$ with  $H_*>  H_{2j} > H_s>  H_{2j+1}>H_R$ 
are arbitrarily placed in between, with smooth
pieces connecting  $H_{2j+1}$ to $H_{2j+2}$, where $H_{2j+1}<H_s<H_{2j+2}$.
In the remaining case $H_R<H_s<H_*< H_L$, we have $H'>0$ on $(H_s,H_L)$ and $H'<0$
on $(H_R,H_s)$, hence there is no smooth solution leaving either $H_R$ or $H_L$, and the only admissible
shock is from $H_*$ to $H_R$.  Thus, there is no admissible piecewise smooth profile joining the
two equilibria $H_L$, $H_R$ in either sense.
\end{proof}

\begin{figure}
\begin{center}
\includegraphics[scale=0.32]{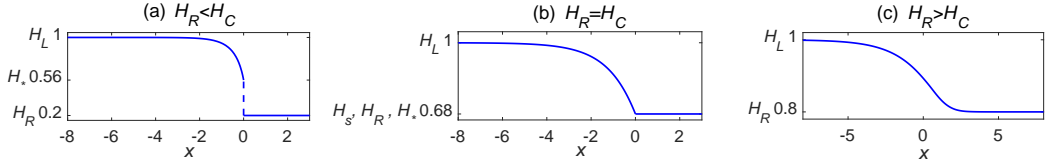}
\end{center}
\caption{Hydraulic shock profiles with $F=1.5$, $H_L=1$ and (a) $H_R=0.2$; (b) $H_R=\frac{9}{8+2\sqrt{7}}$; (c) $H_R=0.8$.}
\label{profile}
\end{figure}

\br\label{dresslerrmk}
The scenario \eqref{redprofileODE} treated in the degenerate case $H_s=H_L$, $F>2$ may be recognized as the
same one considered in \cite[\S 2]{JNRYZ} with regard to existence of periodic
entropy-admissible piecewise smooth relaxation profiles; indeed, existence of periodic and quasiperiodic
profiles follows by essentially the same construction used here to show existence of homoclinic ones.
\er

\begin{obs}[Rescaling]\label{scaleobs}
By scale-invariance of the Saint-Venant equations \cite{BL,JNRYZ}, we may perform the rescaling 
$$
\underline{H}(x)=H_LH(x/H_L),\;\underline{H}_R=\frac{H_R}{H_L}=\frac{1}{\nu^2},\;\underline{H}	_L=1
$$
to obtain a solution $\underline{H}$ for which the left limiting water height is $1$.
From now on, we omit the underline in $\underline{H}$, and simply take $H_L=1$. 
After rescaling, the domain of existence of hydraulic shock profiles with a sub-shock discontinuity is 
\be 
\label{domainexistence}
0<F<2,\;0<H_R<H_C:=\frac{2F^2}{1+2F+\sqrt{1+4F}}.
\ee
\end{obs}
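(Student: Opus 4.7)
The plan is to exhibit the underlying one-parameter scaling symmetry of the Saint-Venant system \eqref{sv} and then specialize it to normalize the left end state of the profile to unity. First, I would check directly by substitution that for any $a>0$ the change of variables $(x,t,h,q)=(a\tilde x,\,a^{1/2}\tilde t,\,a\tilde h,\,a^{3/2}\tilde q)$ preserves \eqref{sv} without altering the Froude number $F$. The continuity equation $\partial_t h+\partial_x q=0$ acquires a common factor of $a^{1/2}$ on both sides and is trivial. For the momentum equation, a short calculation shows that each of the five terms $\partial_t q$, $\partial_x(q^2/h)$, $\partial_x(h^2/(2F^2))$, $h$, and $q^2/h^2$ rescales with the \emph{same} factor $a$, so the equation is preserved when written in the tilde variables. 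The exponents $(1,1/2,1,3/2)$ are in fact forced, up to the single free parameter $a$, by the requirement of matching these five scaling factors; note that the choice $q\sim h^{3/2}$ is exactly what makes the rescaling compatible with the equilibrium manifold $q_*(h)=h^{3/2}$ used in \eqref{CE}.

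Next, applying this symmetry with the choice $a=H_L$ to the traveling wave $(H,Q)(x-ct)$ produced by Proposition \ref{existprop} yields a rescaled traveling wave whose left end state is $\underline H_L=H_L/H_L=1$ and whose right end state is $\underline H_R=H_R/H_L=1/\nu^2$, with speed rescaled as $\underline c=c/\sqrt{H_L}$. Since the scaling acts monotonically in $h$ and linearly in $q$, with the matching speed rescaling of $c$, both the Lax characteristic conditions and the Rankine--Hugoniot relations \eqref{jump} are preserved pointwise; hence the rescaled profile is again a nondegenerate entropy-admissible hydraulic shock profile of the same type as in Proposition \ref{existprop}.

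Finally, to deduce the rescaled domain of existence \eqref{domainexistence} I would divide the inequality $0<H_R<H_L\cdot 2F^2/(1+2F+\sqrt{1+4F})$ of the subshock case in Proposition \ref{existprop} through by $H_L$, obtaining $0<\underline H_R<H_C$ with $H_C$ as defined in \eqref{domainexistence}; the bound $0<F<2$ is unchanged because $F$ is a dimensionless quantity that the scaling leaves invariant. There is no substantive obstacle here — once the dimensional bookkeeping has been carried out, the conclusion is immediate — but some care is needed at the step of verifying that the gravity and turbulent friction source terms on the right-hand side of the momentum equation rescale with the same exponent as the flux and time-derivative on the left, which is what fixes the specific weights assigned to $q$ and $t$ above.
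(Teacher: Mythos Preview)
Your proposal is correct and, in fact, supplies considerably more detail than the paper does: the paper treats this as an observation, simply citing the scale-invariance of \eqref{sv} from \cite{BL,JNRYZ} and recording the resulting normalization and domain \eqref{domainexistence} without writing out a verification. Your explicit check that the scaling $(x,t,h,q)\mapsto (a\tilde x,\,a^{1/2}\tilde t,\,a\tilde h,\,a^{3/2}\tilde q)$ preserves both equations of \eqref{sv}, together with the remark that the Rankine--Hugoniot and Lax conditions are dimensionally consistent under this scaling, is exactly the computation underlying the cited invariance, so the approaches coincide in substance.
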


\begin{obs}[Positivity]\label{posobs}
We have shown that $H$ and $Q$ are positive along hydraulic shock profiles $\overline{W}$, hence
also in their vicinity.  It follows that for purposes of investigating stability their stability,
we can drop the absolute value in \eqref{sv}(ii) and write the source term simply as $h-q^2/h^2$,
as we shall do from now on.
We see, further, that $u,c>0$ for steady flow down an incline.
\end{obs}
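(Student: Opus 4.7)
The plan is to assemble the positivity claim mostly from facts that were already used in the proof of Proposition~\ref{existprop}, and then extend positivity from the profile to a neighborhood in an appropriate norm.

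First I would verify positivity along the profile itself. From \eqref{UHrel} we have $Q = -q_0 + cH$ along any smooth segment, which is affine in $H$ and, since $c>0$, monotone increasing. At the equilibria, $Q_{L,R}=q_*(H_{L,R})=H_{L,R}^{3/2}>0$. By the case analysis in the proof of Proposition~\ref{existprop}, the smooth part of the profile satisfies $H(\xi)\in[H_*,H_L]$ (or the analogous range in the reverse-shock case), and any Lax $2$-subshock jumps between $H_*$ and $H_R$; in all cases $H\in[H_R,H_L]\subset(0,\infty)$ throughout. Monotonicity of $Q$ in $H$ together with positivity of $Q_L,Q_R$ then gives $Q(\xi)\geq\min(Q_L,Q_R)=H_R^{3/2}>0$ along the entire profile, including at jumps (where \eqref{jump}(i) forces the Rankine--Hugoniot relation $[Q]=c[H]$ and both $Q(\xi_j^\pm)$ lie in this positive range). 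So there exists $\delta>0$ with $H(\xi),Q(\xi)\geq\delta$ uniformly in $\xi$, the limits at $\pm\infty$ included.

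Next I would extend positivity to a vicinity of the profile in a manner appropriate for the stability analysis that follows. For a perturbation $v=(\tilde h,\tilde q)$ satisfying $|v|_{L^\infty}<\delta/2$, the perturbed quantities $h=H+\tilde h$ and $q=Q+\tilde q$ remain bounded below by $\delta/2>0$. For the initial data considered in Theorem~\ref{main} this pointwise smallness follows from Sobolev embedding $H^s\hookrightarrow L^\infty$ with $s\geq 2$, applied with sufficiently small $\eps$. (Propagation of this smallness in time is a separate matter, handled by the nonlinear stability analysis; the point of the observation is merely that within the regime where the analysis is to be carried out, $q>0$ holds.) In particular, on any such neighborhood the source term simplifies as
\[
h-\frac{|q|q}{h^2}=h-\frac{q^2}{h^2},
\]
and we may use this form throughout the sequel.

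Finally, $u=q/h>0$ is immediate as the ratio of two positive quantities, and $c>0$ was established in the discussion preceding \eqref{altsmooth}, where we observed that $c=[q_*(H)]/[H]>0$ because $q_*$ is strictly increasing and $H_L\neq H_R$. The main ``obstacle'' is really one of bookkeeping rather than analysis, since all the ingredients appear already in the existence proof; the only new content is uniformity of the lower bound $\delta$ and its persistence under small $L^\infty$ perturbation.
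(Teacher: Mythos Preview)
Your argument is correct and follows essentially the same route as the paper: the observation has no separate proof in the paper but simply refers back to the facts established in the proof of Proposition~\ref{existprop}, namely that $H\ge H_R>0$ along the profile (smooth pieces cannot cross $H_R$ and admissible subshocks only decrease $H$), that $Q=-q_0+cH$ is monotone in $H$ with $Q_R=H_R^{3/2}>0$, and that $c>0$ from the Rankine--Hugoniot relation for the convex $q_*$. Your addition of a uniform lower bound $\delta$ and the $L^\infty$/Sobolev argument for persistence under small perturbation is a reasonable spelling-out of what the paper means by ``hence also in their vicinity.''
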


\section{Majda's type coordinate change and perturbation equations}\label{s:3}
We next recall the general framework introduced by Majda \cite{Ma,Me} for the study of stability of shock waves, 
converting the original free-boundary problem to a standard initial boundary-value problem on a fixed domain.
Consider a general system of balance laws
\be
\label{balancelaws}
w_t+F(w)_x-R(w)=0, \qquad w\in \R^n,
\ee
admitting a traveling wave solution $\overline{W}(x-ct)=\overline{W}(\xi)$ that is smooth and solves \eqref{balancelaws} on $\xi\gtrless 0$ and at $\xi=0$ has a discontinuity satisfying the Rankine-Hugoniot condition:
\be 
\label{rh1}
-c[\overline{W}]+[F(\overline{W})]=0
\ee 
where $[f(\xi)]=f(0^+)-f(0^-)$.

Let $w(x,t;s)$ be a family of perturbed solutions to \eqref{balancelaws} with shock at $x=\zeta(t;s)$ 
and 
$$w(x,t;0)=\overline{W}(x-ct),\quad \zeta(t;0)=ct.$$
Perform the Majda's type coordinate change \cite{Ma} $\tilde{t}=t$, $\xi=\xi(x,t;s)=x-\zeta(t;s)$ and set 
$$
u(\xi,\tilde{t};s):=w(x,t;s)
$$
so that in $u$ the shock front is fixed at $\xi=0$. In $u(\xi,\tilde{t};s)$, balance laws \eqref{balancelaws} become
\be 
\label{relaxation1}
u_{\tilde{t}}+\xi_{t}u_{\xi}+F(u)_\xi-R(u)=0,
\ee
and the Rankine-Hugoniot condition \eqref{rh1} becomes
\be 
\label{rh2}
\xi_{t}\Big|_{\xi=0}[u]+[F(u)]=0.
\ee
Now substituting 
\be 
\label{perturbation}
\xi(x,t;s)=x-ct+\eta(\tilde{t}),\; u(\xi,\tilde{t};s)=\overline{W}(\xi)+v(\xi,\tilde{t})
\ee 
in the interior equation \eqref{relaxation1} and putting linear order terms on the left and 
quadratic order terms on the right, we obtain that perturbations $\eta$, $v$ satisfy
\be
\label{perturbedinterior}
v_{\tilde{t}}+\eta_{\tilde{t}}\overline{W}'+\left((dF(\overline{W})-c\;\Id)v\right)_\xi-dR(\overline{W})v=-\eta_{\tilde{t}}v_\xi-N_1(v,v)_\xi+N_2(v,v)
\ee
where $N_j(v,v)=O(|v|^2)$.
Likewise, substituting \eqref{perturbation} in the Rankine-Hugoniot condition \eqref{rh2} and putting 
linear order terms on the left and quadratic order terms on the right, we obtain, on the boundary $\xi=0$, 
that perturbations $\eta$, $v$ satisfy
\be
\label{perturbedexterior}
\eta_{\tilde{t}}[\overline{W}]+[\left(dF(\overline{W})-c\;\Id\right)v]=-\eta_{\tilde{t}}[v]-[N_1(v,v)].
\ee
\begin{obs}
Specialized to the Saint-Venant equations \eqref{sv}, $N_1(v,v)$, $N_2(v,v)$ are 
\ba
N_1(v,v)=&\left(\begin{array}{c}0\\v^t\int_0^1(1-s)\left(\begin{array}{rr}2\frac{(Q+s v_2)^2}{(H+s v_1)^3}+\frac{1}{F^2}&-\frac{2(Q+s v_2)}{(H+s v_1)^2}\\-\frac{2(Q+s v_2)}{(H+s v_1)^2}&\frac{2}{H+s v_1}\end{array}\right)ds v\end{array}\right),\\
N_2(v,v)=&\left(\begin{array}{c}0\\v^t\int_0^1(1-s)\left(\begin{array}{rr}-\frac{6(Q+s v_2)^2}{(H+s v_1)^4}&\frac{4(Q+s v_2)}{(H+s v_1)^3}\\\frac{4(Q+s v_2)}{(H+s v_1)^3}&\frac{-2}{(H+s v_1)^2}\end{array}\right)ds v\end{array}\right).
\ea
\end{obs}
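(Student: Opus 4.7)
The claim is essentially a direct computation: the quantities $N_1(v,v)$ and $N_2(v,v)$ defined implicitly by \eqref{perturbedinterior} are the second-order Taylor remainders of the flux $F$ and the source $R$, respectively, evaluated at the base point $\overline{W}=(H,Q)$. My plan is to identify these two functions explicitly for Saint-Venant and then compute their Hessians.

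First I would note that, setting $w=(h,q)$, the Saint-Venant system \eqref{sv} takes the abstract form $w_t+F(w)_x=R(w)$ with
\[
F(w)=\begin{pmatrix} q \\ \dfrac{q^2}{h}+\dfrac{h^2}{2F^2}\end{pmatrix},\qquad
R(w)=\begin{pmatrix} 0 \\ h-\dfrac{q^2}{h^2}\end{pmatrix},
\]
where the absolute value in the friction term has been dropped in light of Observation \ref{posobs}. Matching \eqref{perturbedinterior} to the expansion of \eqref{relaxation1} around $w=\overline{W}$ as done in the derivation of \eqref{perturbedinterior} identifies
\[
N_1(v,v)=F(\overline{W}+v)-F(\overline{W})-dF(\overline{W})v,\qquad
N_2(v,v)=R(\overline{W}+v)-R(\overline{W})-dR(\overline{W})v.
\]
Applying Taylor's theorem with integral remainder to each component then yields
\[
N_j(v,v)=\int_0^1 (1-s)\, d^2 G_j(\overline{W}+sv)(v,v)\, ds,
\]
where $G_1=F$ and $G_2=R$.

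Next I would carry out the componentwise Hessian calculation. Since the first components of $F$ and $R$ are linear in $(h,q)$ (namely $q$ and $0$, respectively), their Hessians vanish identically, which accounts for the zero entries in the first row of each $N_j$. For the second components, a direct differentiation of $F_2(h,q)=q^2/h+h^2/(2F^2)$ gives
\[
d^2 F_2(h,q)=\begin{pmatrix} \dfrac{2q^2}{h^3}+\dfrac{1}{F^2} & -\dfrac{2q}{h^2} \\[1ex] -\dfrac{2q}{h^2} & \dfrac{2}{h}\end{pmatrix},
\]
and similarly for $R_2(h,q)=h-q^2/h^2$,
\[
d^2 R_2(h,q)=\begin{pmatrix} -\dfrac{6q^2}{h^4} & \dfrac{4q}{h^3} \\[1ex] \dfrac{4q}{h^3} & -\dfrac{2}{h^2}\end{pmatrix}.
\]

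Finally, substituting $w=\overline{W}+sv=(H+sv_1,Q+sv_2)$ into these Hessians and inserting them into the integral remainder formula produces exactly the stated expressions for $N_1(v,v)$ and $N_2(v,v)$. There is no genuine obstacle here; the only care required is bookkeeping of signs and the factor $(1-s)$ from the integral form of Taylor's remainder, together with noting that the positivity of $H,Q$ along the profile (Observation \ref{posobs}) justifies the smoothness of the integrand in a neighborhood of $\overline{W}$ and hence the absence of absolute-value complications in the friction term.
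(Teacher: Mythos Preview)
Your proposal is correct and is exactly the natural direct computation; the paper in fact states this Observation without proof, so there is nothing to compare against. The only minor point is that your bookkeeping of the Hessians and the integral-remainder form of Taylor's theorem matches the stated formulas precisely.
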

\section{The Evans-Lopatinsky determinant}\label{s:lop}
Continuing, we derive now a generalized spectral stability condition following \cite{Kr,Ma,Me,Er1,Er2,JLW,Z1,Z2} in the form of an appropriate
``stability function'', or {\it Evans-Lopatinsky determinant}.
Combining \eqref{perturbedinterior}, \eqref{perturbedexterior} along with initial conditions gives:
\ba
\label{lineareq}
v_{\tilde{t}}+\eta_{\tilde{t}}\overline{W}'+\left(Av\right)_\xi-Ev=&-\eta_{\tilde{t}}v_\xi-N_1(v,v)_\xi+N_2(v,v):=I_S(\eta_{\tilde{t}},v,v_\xi),\\
\eta_{\tilde{t}}[\overline{W}]+[Av]=&-\eta_{\tilde{t}}[v]-[N_1(v,v)]:=B_S(\eta_{\tilde{t}},v),\\
v(0,\xi)=&v_0(\xi),\\
\eta(0)=&\eta_0,
\ea
or in ``good unknown'' $\tilde{v}:=v+\eta\overline{W}'$ \cite{JLW,Z1,Z2,JNRYZ}:
\ba
\label{lineareqgood0}
\tilde{v}_{\tilde{t}}+\left(A\tilde{v}\right)_\xi-E\tilde{v}=&I_S,\\
\eta_{\tilde{t}}[\overline{W}]-\eta[R(\overline{W})]+[A\tilde{v}]=&B_S,\\
\tilde{v}(0,\xi)=&v_0(\xi)+\eta_0\overline{W}',\\
\eta(0)=&\eta_0,
\ea
where $A:=dF(\overline{W})-c\;\Id$ and $E:=dR(\overline{W})$. 

\begin{obs}\label{symmobs}
Specialized to the Saint-Venant equation \eqref{sv} with hydraulic shock profile, $A$ and $E$ are
\be
\label{AE}
A=\left(\begin{array}{cc} -c & 1\\ \frac{H}{F^2}-\frac{Q^2}{H^2} & \frac{2Q}{H}-c \end{array}\right),\quad E=\left(\begin{array}{cc} 0 & 0\\ \frac{2Q^2}{H^3}+1 & -\frac{2Q}{H^2} \end{array}\right).
\ee
From \eqref{AE}, we see in passing that the Saint-Venant equations are \emph{simultaneously symmetrizable} in the sense that
there exists a positive definite matrix 
$$
A^0= \left(\begin{array}{cc} \frac{2Q\left(F^2H^3+F^2Q^2+H^3\right)}{F^2H} & -H^3-2Q^2\\ -H^3-2Q^2 & 2HQ \end{array}\right)
$$
such that $A^0A$ and $A^0E$ are symmetric, and $A^0E$ is negative semidefinite.
\end{obs}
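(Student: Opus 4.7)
The proof is a direct calculation in two stages: identifying the matrices $A$ and $E$, and then constructing and validating the simultaneous symmetrizer $A^0$. For the first stage I would simply differentiate $F(w)=(q,\,q^2/h+h^2/(2F^2))^\top$ and $R(w)=(0,\,h-q^2/h^2)^\top$ (using Observation \ref{posobs} to justify replacing $|Q|Q$ by $Q^2$), evaluate at $\overline{W}=(H,Q)$, and read off the entries of $A=dF(\overline{W})-c\,\Id$ and $E=dR(\overline{W})$ as in \eqref{AE}.

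For the symmetrizer I would begin with a generic symmetric Ansatz $A^0=\bp a&b\\b&d\ep$. The requirement that $A^0A$ and $A^0E$ both be symmetric gives two linear constraints on $(a,b,d)$, namely $a+2bQ/H=dH/F^2-dQ^2/H^2$ and $b=-d(H^3+2Q^2)/(2QH)$; solving with the normalization $d=2HQ$ produces exactly the entries $(a,b,d)$ displayed in the Observation, so symmetry of both products holds by construction. Negative semidefiniteness of $A^0E$ is then confirmed by direct multiplication: since $E$ has a vanishing first row, $A^0E$ has rank at most one, and one computes diagonal entries $-(H^3+2Q^2)^2/H^3$ and $-4Q^2/H$, both strictly negative by positivity of $H,Q$ (Observation \ref{posobs}), together with determinant zero, so the eigenvalues are $0$ and the negative trace.

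The only nontrivial point is positive definiteness of $A^0$ at each point of the profile. The $(1,1)$ entry is manifestly positive, so the question reduces to showing
\[
\det A^0=\frac{H^3}{F^2}\bigl(4Q^2-F^2H^3\bigr)=\frac{H^3}{F^2}\bigl(2Q-FH^{3/2}\bigr)\bigl(2Q+FH^{3/2}\bigr)>0,
\]
i.e., to the sign of $g(H):=2Q-FH^{3/2}$ along the profile. Here I would use that, by \eqref{UHrel}, $Q=cH-q_0$ is affine in $H$ along $\overline{W}$, so $g''(H)=-\tfrac{3F}{4\sqrt{H}}<0$ and $g$ is strictly concave in $H$; at the equilibria $Q=H^{3/2}$ gives $g=(2-F)H^{3/2}>0$ precisely because $F<2$. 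Concavity of $g$, combined with positivity at $H_L$ and $H_R$, propagates $g>0$ to the entire interval $[H_R,H_L]$ and hence to all values of $H$ attained along the profile (namely $[H_R,H_L]$ in the smooth case and $\{H_R\}\cup[H_*,H_L]\subset[H_R,H_L]$ in the subshock case, by Proposition \ref{existprop} and Lemma \ref{Hslem}). This subcharacteristic-style step is really the only nontrivial part of the verification; everything else is routine matrix algebra.
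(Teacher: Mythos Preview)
Your verification is correct. The paper states this as an observation without proof, leaving the matrix algebra as a routine check; your write-up supplies that check in full and, importantly, isolates the one genuinely nontrivial ingredient---positive definiteness of $A^0$ along the profile---and handles it cleanly. The computation $\det A^0=(H^3/F^2)(4Q^2-F^2H^3)$ is right, and the concavity argument for $g(H)=2(cH-q_0)-FH^{3/2}$ on $[H_R,H_L]$, with endpoint positivity $(2-F)H_{L,R}^{3/2}>0$ coming precisely from the standing hypothesis $F<2$, is the natural way to close it. This is exactly the subcharacteristic condition \eqref{hydrostab} expressed at the level of the symmetrizer, so your proof also makes transparent \emph{why} simultaneous symmetrizability holds in the regime where profiles exist.
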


Setting $\tilde{\tilde{v}}=\tilde{v}-\eta_0\overline{W}'$ and $\tilde{\eta}=\eta-\eta_0$, then yields
\ba
\label{lineareqgood}
\tilde{\tilde{v}}_{\tilde{t}}+\left(A\tilde{\tilde{v}}\right)_\xi-E\tilde{\tilde{v}}=&I_S,\\
\tilde{\eta}_{\tilde{t}}[\overline{W}]-\tilde{\eta}[R(\overline{W})]+[A\tilde{\tilde{v}}]=&B_S,\\
\tilde{\tilde{v}}(0,\xi)=&v_0(\xi),\\
\tilde{\eta}(0)=&0.
\ea
Hereafter we use $t$, $x$ in place of $\tilde{t}$, $\xi$.

System \eqref{lineareqgood} is essentially the same set of equations studied in \cite{JLW,Z1,Z2} in the context of detonation waves
of the ZND model. As noted in \cite{JLW}, short time existence and continuous dependence in $H^s$, $s\geq 2$, is provided
by the (much simpler, one-d version of the multi-d) analysis of Majda and M\'etivier \cite{Ma,Me} for general
conservation laws; see Section \ref{s:damping} for further details.
In particular, we have for $H^s$ initial data, that a solution exists, is continuous in $H^s$ with respect to time,
and grows in $H^s$ at no more than exponential rate $Ce^{\alpha t}$, so long as $|v|_{H^2(\tilde R)}$ remains bounded;
that is, the solution is of ``exponential type''.
It follows from \cite{D} that the Laplace transform $\check v(x,\lambda):=\int_0^{+\infty} e^{-\lambda s} \tilde{\tilde{v}}(x,s)ds$ 
with respect to $t$ of a bounded solution $\tilde{\tilde{v}}\in H^s$ is well-defined in $H^s$, and that the original solution $\tilde{\tilde{v}}$ 
is recoverable by the inverse Laplace transform formula
\ba\label{ILT}
\tilde{\tilde{v}}(x,t)&:= \frac{1}{2\pi i}P.V.\int_{a-i\infty}^{a+i\infty}e^{\lambda t} \check v(x,\lambda) d\lambda,\\
\tilde{\eta}(t)&:= \frac{1}{2\pi i}P.V.\int_{a-i\infty}^{a+i\infty}e^{\lambda t} \check \eta(\lambda) d\lambda.
\ea

We now solve \eqref{lineareqgood} using the Laplace transform. Carrying out the Laplace transform on 
\eqref{lineareqgood}(i)--(ii) and denoting Laplace transform of $\tilde{\tilde{v}},\;\tilde{\eta},\;I_S,\;B_S$ as $\check{v},\;\check{\eta},\;\check{I}_S,\;\check{B}_S$, yields 
\ba
\label{lineareqgoodLP}
\check{v}_x=&A^{-1}(E-\lambda I-A_x)\check{v}+A^{-1}\check{I}_S(\lambda)+A^{-1}v_0:=\mathcal{A}(\lambda)\check{v}+A^{-1}\check{I}_S(\lambda)+A^{-1}v_0,\\
\check{B}_S(\lambda)=&\check{\eta}[\lambda\overline{W}-R(\overline{W})]+[A\check{v}].
\ea
\begin{definition}
Dropping the inhomogeneous source terms in \eqref{lineareqgoodLP}, the associated eigenvalue equation is defined as 
\ba
\label{eigen-eq}
\lambda\check{v}+(A\check{v})_x=&E\check{v},\\
\check{\eta}[\lambda\overline{W}-R(\overline{W})]+[A\check{v}]=&0.
\ea
\end{definition}
To solve \eqref{lineareqgoodLP}, by the conjugation lemma of \cite{MeZ}, we need to calculate eigenvalues of matrices $\lim_{x\rightarrow \pm \infty}\mathcal{A}(\lambda)=A^{-1}_\pm(E_\pm-\lambda I)$.\\
At $x=-\infty$, the two eigenvalues are
{\tiny
\ba
\label{eigenm}
\gamma_{1,-}(\lambda)=&\frac{F\nu\left(\nu+1\right)\left(-2F+F\nu+F\nu^2-2F\lambda+\sqrt{F^2{\left(\nu^2+\nu-2\right)}^2+4\lambda \nu\left(\nu+1\right)\left(-F^2+2\nu^2+2\nu\right)+4{\lambda}^2\nu^2{\left(\nu+1\right)}^2}\right)}{2\left(-F^2+\nu^4+2\nu^3+\nu^2\right)},\\
\gamma_{2,-}(\lambda)=&\frac{F\nu\left(\nu+1\right)\left(-2F+F\nu+F\nu^2-2F\lambda-\sqrt{F^2{\left(\nu^2+\nu-2\right)}^2+4\lambda \nu\left(\nu+1\right)\left(-F^2+2\nu^2+2\nu\right)+4{\lambda}^2\nu^2{\left(\nu+1\right)}^2}\right)}{2\left(-F^2+\nu^4+2\nu^3+\nu^2\right)}.
\ea}
At $x=+\infty$, the two eigenvalues are
{\tiny
\ba
\label{eigenp}
\gamma_{1,+}(\lambda)=&\frac{F\nu\left(\nu+1\right)\left(F\nu+F\nu^2-2F\nu^3-2F\lambda \nu^2+\sqrt{F^2\nu^2{\left(-2\nu^2+\nu+1\right)}^2+4\lambda \nu\left(\nu+1\right)\left(-F^2\nu^2+2\nu+2\right)+4{\lambda}^2{\left(\nu+1\right)}^2}\right)}{2\left(-F^2\nu^4+\nu^2+2\nu+1\right)},\\ \gamma_{2,+}(\lambda)=&\frac{F\nu\left(\nu+1\right)\left(F\nu+F\nu^2-2F\nu^3-2F\lambda \nu^2-\sqrt{F^2\nu^2{\left(-2\nu^2+\nu+1\right)}^2+4\lambda \nu\left(\nu+1\right)\left(-F^2\nu^2+2\nu+2\right)+4{\lambda}^2{\left(\nu+1\right)}^2}\right)}{2\left(-F^2\nu^4+\nu^2+2\nu+1\right)}.
\ea}
It is easy to verify that in the domain \eqref{domainexistence} there holds
\ba 
\label{signgamma}
&\Re\gamma_{1,-}(\lambda)>0,\;\Re\gamma_{2,-}(\lambda)<0,\quad\text{for all $\Re\lambda>0,\;F<2,\;\nu>1$,}\\
&\Re\gamma_{1,+}(\lambda)>0,\;\Re\gamma_{2,+}(\lambda)>0,\quad\text{for all $\Re\lambda>0,\;\nu>\frac{1+\sqrt{1+4F}}{2F}$}.
\ea
\begin{definition}
We define the domain of consistent splitting $\Lambda$ as 
\be 
\label{consistent-splitting}
\Lambda:=\left\{\lambda:\Re\gamma_{1,-}(\lambda)>0,\;\Re\gamma_{2,-}(\lambda)<0,\;\Re\gamma_{1,+}(\lambda)>0,\;\Re\gamma_{2,+}(\lambda)>0\right\}.
\ee
By \eqref{signgamma}, we see $\{\lambda:\Re\lambda>0\}\subset\Lambda$. (See Figure \ref{consistentdomain} (a) for an example of domain of consistent splitting).
\end{definition}
\begin{figure}
\begin{center}
\includegraphics[scale=0.32]{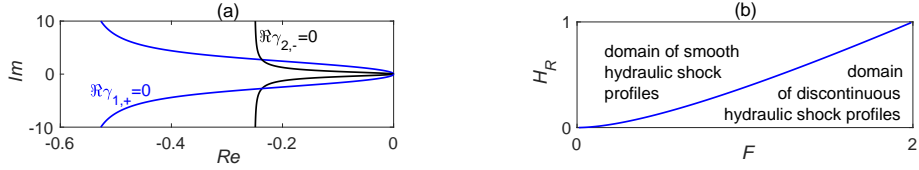}
\end{center}
\caption{(a) Domain of consistent splitting (region to the right of the blue and black curves for $F=1.5$, $H_R=0.2$); (b) Domain of smooth hydraulic shock profiles and domain of discontinuous hydraulic shock profiles (separating by curve $H_C=\frac{2F^2}{1+2F+\sqrt{1+4F}}$).}
\label{consistentdomain}
\end{figure}
By the conjugation lemma of \cite{MeZ}, there exist locally analytic coordinate changes $T_\pm(\lambda,x)$ ($T_+\equiv Id$) on $x\gtrless 0$, converging exponentially to $Id$ as $x\rightarrow\pm\infty$, such that $\check{v}=T_\pm z_\pm$, $A^{-1}(\check{I}_S(\lambda)+v_0)=T_\pm g$ reduce resolvent equation \ref{lineareqgoodLP}(i) to constant coefficients:
\be
\label{con-resolvent}
z_x=A_\pm^{-1}(E_\pm-\lambda I)z+g=\mathcal{A}_\pm(\lambda) z+g.
\ee
Letting $P_{1,2,\pm}(\lambda)$ be the eigenprojections of $\mathcal{A}_\pm(\lambda)$ associated with
eigenvalues $\gamma_{1,2,\pm}(\lambda)$, the solution of \eqref{lineareqgoodLP}(i) on $x\gtrless 0$ can be written as
\be 
\label{checkv}
\check{v}(\lambda,x)=\left\{\begin{aligned}
&T_-(\lambda,x)\Big(e^{\gamma_{1,-}(\lambda)x}P_{1,-}(\lambda)T_-^{-1}(\lambda,0^-)\check{v}(\lambda,0^-)\\
&+\int_{0^-}^x e^{\gamma_{1,-}(\lambda)(x-y)}P_{1,-}(\lambda)T_-^{-1}(\lambda,y)A^{-1}(y)\left(v_0(y)+\check{I}_S(\lambda,y)\right)dy\\&-\int_x^{-\infty}e^{\gamma_{2,-}(\lambda)(x-y)}P_{2,-}(\lambda)T_-^{-1}(\lambda,y)A^{-1}(y)\left(v_0(y)+\check{I}_S(\lambda,y)\right)dy\Big),&x<0,\\
&-\int_x^{+\infty}e^{\mathcal{A}_+(\lambda)(x-y)}A_+^{-1}\left(v_0(y)+\check{I}_S(\lambda,y)\right)dy,&x>0.\end{aligned}\right.
\ee
Here again $P_{1,-}(\lambda)$ is the projection onto the unstable subspace of $\mathcal{A}_-(\lambda)$ and  $P_{2,-}(\lambda)$ is the projection onto the stable subspace of $\mathcal{A}_-(\lambda)$. 
Setting $x=0^\pm$ in \eqref{checkv} yields
\ba 
\label{vpm}
\check{v}(\lambda,0^+)=&-\int_{0^+}^{+\infty}e^{-\mathcal{A}_+(\lambda)y}A_+^{-1}\left(v_0(y)+\check{I}_S(\lambda,y)\right)dy,\\
T_-^{-1}(\lambda,0^-)\check{v}(\lambda,0^-)=&P_{1,-}(\lambda)T_-^{-1}(\lambda,0^-)\check{v}(\lambda,0^-)\\
&-\int_{0^-}^{-\infty}e^{-\gamma_{2,-}(\lambda)y}P_{2,-}(\lambda)T_-^{-1}(\lambda,y)A^{-1}(y)\left(v_0(y)+\check{I}_S(\lambda,y)\right)dy,
\ea
which implies
\ba 
\check{v}(\lambda,0^+)=&-\int_{0^+}^{+\infty}e^{-\mathcal{A}_+(\lambda)y}A_+^{-1}\left(v_0(y)+\check{I}_S(\lambda,y)\right)dy,\\
P_{2,-}(\lambda)T_-^{-1}(\lambda,0^-)\check{v}(\lambda,0^-)=&-\int_{0^-}^{-\infty}e^{-\gamma_{2,-}(\lambda)y}P_{2,-}(\lambda)T_-^{-1}(\lambda,y)A^{-1}(y)\left(v_0(y)+\check{I}_S(\lambda,y)\right)dy.
\ea
Now set $P_{1,-}(\lambda)T_-^{-1}(\lambda,0^-)\check{v}(\lambda,0^-)=\alpha z_{1,-}(\lambda)$ with the scale of $z_{1,-}(\lambda)$ chosen such that
\be 
\label{scalev1m}
T_-(0,x)e^{\gamma_{1,-}(0)x}z_{1,-}(0)=\overline{W}'(x).
\ee
Then, $A(0^-)\check{v}(\lambda,0^-)$ can be written as
\ba
\label{AV0m}
&A(0^-)\check{v}(\lambda,0^-)\\
=&A(0^-)T_-(\lambda,0^-)\left(P_{1,-}(\lambda)+P_{2,-}(\lambda)\right)T_-^{-1}(\lambda,0^-)\check{v}(\lambda,0^-)\\
=&A(0^-)T_-(\lambda,0^-)\alpha z_{1,-}(\lambda)+A(0^-)T_-(\lambda,0^-)P_{2,-}(\lambda)T_-^{-1}(\lambda,0^-)\check{v}(\lambda,0^-)\\
=&A(0^-)T_-(\lambda,0^-)\alpha z_{1,-}(\lambda)\\
&-A(0^-)T_-(\lambda,0^-)\int_{0^-}^{-\infty}e^{-\gamma_{2,-}(\lambda)y}P_{2,-}(\lambda)T_-^{-1}(\lambda,y)A^{-1}(y)\left(v_0(y)+\check{I}_S(\lambda,y)\right)dy.
\ea
Plugging \eqref{AV0m} along with \eqref{vpm}(i) into the matching condition \eqref{lineareqgoodLP}(ii) implies
\ba 
\label{matchingcon}
\check{B}_S(\lambda)=&\check{\eta}[\lambda\overline{W}-R(\overline{W})]+A_+\check{v}(\lambda,0^+)-A(0^-)\check{v}(\lambda,0^-)\\
=&\check{\eta}[\lambda\overline{W}-R(\overline{W})]-\alpha A(0^-)T_-(\lambda,0^-) z_{1,-}(\lambda)\\
&-A_+\int_{0^+}^{+\infty}e^{-\mathcal{A}_+(\lambda)y}A_+^{-1}\left(v_0(y)+\check{I}_S(\lambda,y)\right)dy\\
&+A(0^-)T_-(\lambda,0^-)\int_{0^-}^{-\infty}e^{-\gamma_{2,-}(\lambda)y}P_{2,-}(\lambda)T_-^{-1}(\lambda,y)A^{-1}(y)\left(v_0(y)+\check{I}_S(\lambda,y)\right)dy.
\ea

\begin{definition}
Setting $M(\lambda):=\left[[\lambda W-R(W)]\;\Big|\;A(0^-)T_-(\lambda,0^-)z_{1,-}(\lambda)\right]$, on the domain of consistent splitting, we define the Evans-Lopatinsky determinant function $\Delta(\lambda)$ as
\be\label{lopatinsky}
\Delta(\lambda):=\det(M(\lambda)).
\ee
\end{definition}

By construction, the Evans-Lopatinsky function is analytic on the set of consistent splitting,
in particular on $\{ \lambda: \; \Re \lambda \geq 0\}\setminus \{0\}$.
Moreover, by separation of eigenvalues of $\mathcal{A}_-$ at $\lambda=0$, the associated eigenvectors and projections
may be extended analytically to a neighborhood of $\lambda=0$, allowing us to extend $\Delta$ analytically to
a neighborhood of $\{\lambda:\; \Re \lambda \geq 0\}$.
(For origins of this standard argument, see, e.g., \cite{PW,GZ,ZH}.)

\begin{definition}
Following \cite{Er1,JLW,Z1,Z2,GZ}, 
we say that a profile $\overline{W}$ is \emph{Evans-Lopatinsky stable} if $\Delta(\lambda)$
has no zeros on $\{\Re \lambda \geq 0\}$ save for a single, multiplicity-one root at $\lambda=0$.
\end{definition}

\br\label{genspecrmk}
Evidently, Evans-Lopatinsky stability is a generalized spectral stability condition
correponding with the usual notion of spectral stability on the set of consistent
splitting, namely, absence of eigenvalues, but also including information on
the embedded eigenvalue $\lambda=0$ lying on the boundary of the domain of consistent splitting.
\er

\section{Integral kernels and representation formula}\label{s:ker}
With the defined Evans-Lopatinsky determinant matrix $M(\lambda)$, equation \eqref{matchingcon} rewrites as
\ba 
&M(\lambda)\left(\begin{array}{c}\check{\eta}\\-\alpha\end{array}\right)\\=&\check{B}_S(\lambda)+A_+\int_{0^+}^{+\infty}e^{-\mathcal{A}_+(\lambda)y}A_+^{-1}\left(v_0(y)+\check{I}_S(\lambda,y)\right)dy\\&-A(0^-)T_-(\lambda,0^-)\int_{0^-}^{-\infty}e^{-\gamma_{2,-}(\lambda)y}P_{2,-}(\lambda)T_-^{-1}(\lambda,y)A^{-1}(y)\left(v_0(y)+\check{I}_S(\lambda,y)\right)dy.
\ea
When $M$ is invertible ($\Delta\neq 0$), solving for $\alpha$, $\check\eta$ yields solutions for equation \eqref{lineareqgood}:
{\small
\ba
\label{inverseresolvent}
\check{v}(\lambda,x)
=&\left\{\begin{aligned}&T_-(\lambda,x)\Bigg(e^{\gamma_{1,-}(\lambda)x}z_{1,-}(\lambda)\left(\begin{array}{rr}0 &-1\end{array}\right)M^{-1}(\lambda)\Big(\check{B}_S(\lambda)\\
&+A_+\int_{0^+}^{+\infty}e^{-\mathcal{A}_+(\lambda)y}A_+^{-1}\left(v_0(y)+\check{I}_S(\lambda,y)\right)dy\\&-A(0^-)T_-(\lambda,0^-)\int_{0^-}^{-\infty}e^{-\gamma_{2,-}(\lambda)y}P_{2,-}(\lambda)T_-^{-1}(\lambda,y)A^{-1}(y)\left(v_0(y)+\check{I}_S(\lambda,y)\right)dy\Big)\\
&+\int_0^xe^{\gamma_{1,-}(\lambda)(x-y)}P_{1,-}(\lambda)T_-^{-1}(\lambda,y)A^{-1}(y)\left(v_0(y)+\check{I}_S(\lambda,y)\right)dy\\
&-\int_x^{-\infty}e^{\gamma_{2,-}(\lambda)(x-y)}P_{2,-}(\lambda)T_-^{-1}(\lambda,y)A^{-1}(y)\left(v_0(y)+\check{I}_S(\lambda,y)\right)dy\Bigg),&x<0,\\&-\int_{x}^{+\infty}e^{\mathcal{A}_+(\lambda)(x-y)}A_+^{-1}\left(v_0(y)+\check{I}_S(\lambda,y)\right)dy,&x>0,\\\end{aligned}\right.\\
\check{\eta}(\lambda)=&\left(\begin{array}{rr}1 &0\end{array}\right)M^{-1}(\lambda)\Bigg(\check{B}_S(\lambda)+A_+\int_0^{+\infty}e^{-\mathcal{A}_+(\lambda)y}A_+^{-1}\left(v_0(y)+\check{I}_S(\lambda,y)\right)dy\\
&-A(0^-)T_-(\lambda,0^-)\int_0^{-\infty}e^{-\gamma_{2,-}(\lambda)y}P_{2,-}(\lambda)T_-^{-1}(\lambda,y)A^{-1}(y)\left(v_0(y)+\check{I}_S(\lambda,y)\right)dy\Bigg).
\ea}
Following the standard analysis in \cite{ZH}\cite{MZ}, we define the interior source resolvent kernel functions $\tilde{G}_\lambda$, $G_{1,\lambda}$, and $G_\lambda$ as follows.
\begin{definition} Setting $\check{B}_S(\lambda)=0$ in \eqref{inverseresolvent} and gathering terms in different $x$, $y$ locations, the interior source $\check{v}$-resolvent kernel $\tilde{G}_\lambda(x;y)$ is defined as
\ba
&\tilde{G}_\lambda(x;y):=\\
&\left\{\begin{aligned}
&-e^{\mathcal{A}_+(\lambda)(x-y)}A_+^{-1},
&0<x<y,\\
&0,&0<x,y<x,\\
&T_-(\lambda,x)e^{\gamma_{1,-}(\lambda)x}z_{1,-}(\lambda)\left(\begin{array}{rr}0 &-1\end{array}\right)M^{-1}(\lambda)A_+e^{-\mathcal{A}_+(\lambda)y}A_+^{-1},&x<0,y>0,\\
&-T_-(\lambda,x)e^{\gamma_{1,-}(\lambda)(x-y)}P_{1,-}(\lambda)T_-^{-1}(\lambda,y)A^{-1}(y)+T_-(\lambda,x)e^{\gamma_{1,-}(\lambda)x}z_{1,-}(\lambda)\times\\&\left(\begin{array}{rr}0 &-1\end{array}\right)M^{-1}(\lambda)A(0^-)T_-(\lambda,0^-)e^{-\gamma_{2,-}y}P_{2,-}(\lambda)T_-^{-1}(\lambda,y)A^{-1}(y),&x<y<0,\\
&T_-(\lambda,x)e^{\gamma_{2,-}(\lambda)(x-y)}P_{2,-}(\lambda)T_-^{-1}(\lambda,y)A^{-1}(y)+T_-(\lambda,x)e^{\gamma_{1,-}(\lambda)x}z_{1,-}(\lambda)\times\\&\left(\begin{array}{rr}0 &-1\end{array}\right)M^{-1}(\lambda)A(0^-)T_-(\lambda,0^-)e^{-\gamma_{2,-}y}P_{2,-}(\lambda)T_-^{-1}(\lambda,y)A^{-1}(y),&y<x<0,
\end{aligned}
\right.
\ea
and the interior source $\check{\eta}$-resolvent kernel $G_{1,\lambda}$ is defined as
\be \label{G1lambda}
G_{1,\lambda}(y):=\left\{\begin{aligned}&\left(\begin{array}{rr}1 &0\end{array}\right)M^{-1}(\lambda)A_+e^{-\mathcal{A}_+(\lambda)y}A_+^{-1}
,&y>0,\\
 &\left(\begin{array}{rr}1 &0\end{array}\right)M^{-1}(\lambda)A(0^-)T_-(\lambda,0^-)e^{-\gamma_{2,-}y}P_{2,-}(\lambda)T_-^{-1}(\lambda,y)A^{-1}(y),&y<0.\end{aligned}\right.
\ee
Let 
\ba 
G_\lambda(x;y):=\tilde{G}_\lambda(x;y)-\overline{W}'(x)G_{1,\lambda}(y),
\ea
and split $G_\lambda$ into two parts $G_\lambda=G^1_\lambda+G^2_\lambda$, where $G^1_\lambda$, $G^2_\lambda$ are defined as
\ba
\label{resolventkernel1}
G^1_\lambda(x;y):=&\left\{\begin{aligned}
&-e^{\mathcal{A}_+(\lambda)(x-y)}A_+^{-1},
&0<x<y,\\
&-T_-(\lambda,x)e^{\gamma_{1,-}(\lambda)(x-y)}P_{1,-}(\lambda)T_-^{-1}(\lambda,y)A^{-1}(y),
&x<y<0,\\
&T_-(\lambda,x)e^{\gamma_{2,-}(\lambda)(x-y)}P_{2,-}(\lambda)T_-^{-1}(\lambda,y)A^{-1}(y), &y<x<0,\\
&0, &otherwise,\\
\end{aligned}
\right.\\
G^2_\lambda(x;y):=&\left\{\begin{aligned}&T_-(\lambda,x)e^{\gamma_{1,-}(\lambda)x}z_{1,-}(\lambda)\left(\begin{array}{rr}0 &-1\end{array}\right)M^{-1}(\lambda)A_+e^{-\mathcal{A}_+(\lambda)y}A_+^{-1}\\
				&-T_-(0,x)e^{\gamma_{1,-}(0)x}z_{1,-}(0)\left(\begin{array}{rr}1 &0\end{array}\right)M^{-1}(\lambda)A_+e^{-\mathcal{A}_+(\lambda)y}A_+^{-1},&x<0,y>0,\\
    &T_-(\lambda,x)e^{\gamma_{1,-}(\lambda)x}z_{1,-}(\lambda)\left(\begin{array}{rr}0 &-1\end{array}\right)M^{-1}(\lambda)A(0^-)T_-(\lambda,0^-)\times\\&e^{-\gamma_{2,-}y}P_{2,-}(\lambda)T_-^{-1}(\lambda,y)A^{-1}(y)\\
    &-T_-(0,x)e^{\gamma_{1,-}(0)x}z_{1,-}(0)\left(\begin{array}{rr}1 &0\end{array}\right)M^{-1}(\lambda)A(0^-)T_-(\lambda,0^-)\times\\&e^{-\gamma_{2,-}y}P_{2,-}(\lambda)T_-^{-1}(\lambda,y)A^{-1}(y),&x<0,y<0,\\
&0,&x>0.
\end{aligned}\right.
\ea
These can be written alternatively as
{\scriptsize
\ba
\label{resolventkernel2}
G^1_\lambda(x;y):=&\left\{\begin{aligned}
&-\mathcal{F}_{\lambda}^{y\rightarrow x}A_+^{-1},
&0<x<y,\\
&-\mathcal{F}_{\lambda}^{y\rightarrow x}\Pi_{\lambda,s}(y)A^{-1}(y),&x<y<0,\\
&\mathcal{F}_{\lambda}^{y\rightarrow x}\Pi_{\lambda,u}(y)A^{-1}(y),&y<x<0,\\
&0,&otherwise,\\
\end{aligned}
\right.\\
G^2_\lambda(x;y):=&\left\{\begin{aligned}&-\left(\mathcal{F}_\lambda^{0^-\rightarrow x}\Pi_{\lambda,s}(0^-)\left(\begin{array}{rr}0 &1\end{array}\right)+\overline{W}'(x)\left(\begin{array}{rr}1 &0\end{array}\right)\right)M^{-1}(\lambda)A_+\mathcal{F}_\lambda^{y\rightarrow 0^+}A_+^{-1}
,&x<0,y>0,\\
 &-\left(\mathcal{F}_\lambda^{0^-\rightarrow x}\Pi_{\lambda,s}(0^-)\left(\begin{array}{rr}0 &1\end{array}\right)+
\overline{W}'(x)\left(\begin{array}{rr}1 &0\end{array}\right)\right)M^{-1}(\lambda)A(0^-)\mathcal{F}_\lambda^{y\rightarrow 0^-}\Pi_{\lambda,u}(y)A^{-1}(y),&x<0,y<0,\\
&0,&x>0,\end{aligned}\right.
\ea}
where $\mathcal{F}_\lambda^{y\rightarrow x}$ is the solution operator from $y$ to $x$ of eigenvalue equation \eqref{eigen-eq} and $\Pi_{\lambda,s}$ ($\Pi_{\lambda,u}$) is the projection onto the stable (unstable) flow as $x\rightarrow-\infty$.
\end{definition}

In addition to these interior source kernels analogous to those of the smooth profile case \cite{ZH}\cite{MZ}, 
we define the boundary source 
$\check{v}$-, $\check{\eta}$-resolvent kernel functions $\tilde{K}_\lambda$, $K_{1,\lambda}$ as follows.
\begin{definition} \label{bsourceGkernel}
Setting $\check{I}_S(\lambda,y)=0$, $v_0(y)=0$ in \eqref{inverseresolvent} and gathering terms in different $x$ locations, 
the boundary source $\check{v}$-resolvent kernel $\tilde{K}_\lambda(x)$ is defined as
\be 
\tilde{K}_\lambda(x)=\left\{\begin{aligned}&0,&x>0,\\
									      &T_-(\lambda,x)e^{\gamma_{1,-}(\lambda)x}z_{1,-}(\lambda)\left(\begin{array}{rr}0 &-1\end{array}\right)M^{-1}(\lambda),&x<0,
\end{aligned}\right.
\ee
and the boundary source $\check{\eta}$-resolvent kernel $K_{1,\lambda}$ is defined as
\be
\label{K1lambda}
K_{1,\lambda}=\left(\begin{array}{rr}1&0\end{array}\right)M^{-1}(\lambda),
\ee
and we set
\be 
K_\lambda(x):=\tilde{K}_\lambda(x)-\overline{W}'(x)K_{1,\lambda}.
\ee
\end{definition}

\begin{lemma}
\label{GKanalytic} $G_\lambda$ and $K_\lambda$ are analytic near $\lambda=0$.
\end{lemma}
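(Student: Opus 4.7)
The plan is to reduce the claim to showing that the simple pole of $M^{-1}(\lambda)$ at $\lambda=0$ is cancelled by the left factors multiplying it in the definitions of $G^2_\lambda$ and $K_\lambda$; every other ingredient of the kernels is already analytic near $\lambda=0$. Indeed, by the conjugation lemma together with the separation of the eigenvalues $\gamma_{j,\pm}(0)$ of $\mathcal{A}_\pm(0)$ noted just before the lemma, the factors $T_\pm(\lambda,\cdot)$, $\gamma_{j,\pm}(\lambda)$, $P_{j,\pm}(\lambda)$, $e^{\mathcal{A}_\pm(\lambda)(x-y)}$, and $\mathcal{F}_\lambda^{y\rightarrow x}$ are all analytic in $\lambda$ near $0$; in particular $G^1_\lambda$, which contains no $M^{-1}$ factor, is already analytic there. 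Under Evans-Lopatinsky stability, $\lambda=0$ is a simple zero of $\Delta$, so $M^{-1}(\lambda)=\mathrm{adj}(M(\lambda))/\Delta(\lambda)$ has at worst a simple pole at $\lambda=0$ with residue proportional to $\mathrm{adj}(M(0))$.

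The next step is to compute $M(0)$ using two identities coming from the profile geometry. Since $H_R$ is an equilibrium of the profile ODE \eqref{profileODE}, the profile is identically $(H_R,Q_R)$ on $\{x>0\}$; being at equilibrium, this forces $R(\overline{W}(0^+))=0$, and hence $[R(\overline{W})]=-R(\overline{W}(0^-))$. Moreover, on the smooth side $x<0$, integrating the traveling-wave equation yields $A\overline{W}'=R(\overline{W})$ pointwise, so in particular $A(0^-)\overline{W}'(0^-)=R(\overline{W}(0^-))$. Combined with the normalization \eqref{scalev1m}, which at $x=0^-$, $\lambda=0$ reads $T_-(0,0^-)z_{1,-}(0)=\overline{W}'(0^-)$, both columns of $M(0)$ therefore equal $R(\overline{W}(0^-))=(0,r^-)^T$, where $r^-:=H(0^-)-Q(0^-)^2/H(0^-)^2$. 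A direct two-by-two calculation then gives
\[
\mathrm{adj}(M(0))=\begin{pmatrix} r^- & 0 \\ -r^- & 0\end{pmatrix}.
\]

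Finally, each nonzero branch of $G^2_\lambda(x,y)$ and $K_\lambda(x)$ (with $x<0$) is rewritten in the factored form $L_\lambda(x)M^{-1}(\lambda)N_\lambda(y)$, where
\[
L_\lambda(x):=T_-(\lambda,x)e^{\gamma_{1,-}(\lambda)x}z_{1,-}(\lambda)(0,-1)-\overline{W}'(x)(1,0),
\]
and $N_\lambda(y)$ (taken $\equiv I$ for $K_\lambda$) is analytic in $\lambda$ near $0$. By \eqref{scalev1m}, $L_0(x)=-\overline{W}'(x)(1,1)$, whence
\[
L_0(x)\,\mathrm{adj}(M(0))=-\overline{W}'(x)(1,1)\begin{pmatrix} r^- & 0 \\ -r^- & 0\end{pmatrix}=-\overline{W}'(x)(r^- - r^-,0)=0.
\]
Therefore the simple pole of $L_\lambda M^{-1}(\lambda)$ at $\lambda=0$ cancels, and $L_\lambda M^{-1}$ is analytic near $\lambda=0$. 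Analyticity of $G^2_\lambda$ and of $K_\lambda(x)$ for $x<0$ follows; for $x>0$, $K_\lambda\equiv 0$ (since $\tilde K_\lambda\equiv 0$ by definition and $\overline{W}'(x)=0$, the profile being constant on $x>0$), and $G^2_\lambda\equiv 0$ by definition.

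The one conceptual point (and essentially the only obstacle) is to recognize the cancellation mechanism built into $G_\lambda=\tilde G_\lambda-\overline{W}'G_{1,\lambda}$ and $K_\lambda=\tilde K_\lambda-\overline{W}'K_{1,\lambda}$: the $\overline{W}'$-weighted subtractions are precisely what align the $\check\eta$-row and the $\check v$-coupling so that, when regrouped, the combined left factor $L_\lambda$ lies in the left kernel of $\mathrm{adj}(M(0))$ at $\lambda=0$. Once this structural observation is isolated, the verification is short and uses only the two profile identities $R(\overline{W}(0^+))=0$ and $A(0^-)\overline{W}'(0^-)=R(\overline{W}(0^-))$.
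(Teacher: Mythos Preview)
Your proof is correct and follows essentially the same approach as the paper: both reduce to showing that $(1,1)\,\mathrm{adj}(M(0))=0$ via the profile identities $R(\overline{W}(0^+))=0$ and $A(0^-)\overline{W}'(0^-)=R(\overline{W}(0^-))$, combined with the normalization \eqref{scalev1m}. Your exposition is somewhat more explicit about the factorization $L_\lambda M^{-1}N_\lambda$ and the identification $L_0(x)=-\overline{W}'(x)(1,1)$, whereas the paper compresses this into the single sentence ``it suffices to show that $(1,1)M^{-1}(\lambda)$ is analytic at $0$,'' but the underlying computation is identical.
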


\begin{proof}
It suffices to show that $\left(\begin{array}{rr}1 &1\end{array}\right)M^{-1}(\lambda)$ is analytic at $0$. 
\be
\left(\begin{array}{rr}1 &1\end{array}\right)M^{-1}(\lambda)=\left(\begin{array}{rr}1 &1\end{array}\right)\frac{1}{\det(M(\lambda))}\left(\begin{array}{rr}(Aw_{1,-}(\lambda,0^-))_2&-(Aw_{1,-}(\lambda,0^-))_1\\ {(-\lambda Q+(H-\frac{Q^2}{H^2}))}&{(\lambda H)}\end{array}\right).
\ee
Since $0$ is a simple root of $\det(M(\lambda))$, $0$ will not be a pole of $\left(\begin{array}{rr}1 &1\end{array}\right)M^{-1}(\lambda)$ if
\ba
&\left(\begin{array}{rr}1 &1\end{array}\right)\left(\begin{array}{rr}(A\overline{W}'(0^-))_2&-(A\overline{W}'(0^-))_1\\ {(H-\frac{Q^2}{H^2})}&0\end{array}\right)\\
=&\left(\begin{array}{rr}(A\overline{W}'(0^-))_2 -R(\overline{W}(0^-))_2+R(\overline{W}(0^+))_2&-(A\overline{W}'(0^-))_1\end{array}\right)
\ea
vanishes. But, it does vanish because $\overline{W}$ is a traveling wave solution to \eqref{sv}.
\end{proof}

\begin{definition}\label{kerdef}
The corresponding interior/boundary source Green kernels are defined as
\ba
\label{Greenkernel}
\tilde{G}(x,t;y):=&\frac{1}{2\pi i}P.V.\int_{a-i\infty}^{a+i\infty}e^{\lambda t}\tilde{G}_\lambda(x;y)d\lambda,&G_1(t;y):=&\frac{1}{2\pi i}P.V.\int_{a-i\infty}^{a+i\infty}e^{\lambda t}G_{1,\lambda}(y)d\lambda,\\
G(x,t;y):=&\frac{1}{2\pi i}P.V.\int_{a-i\infty}^{a+i\infty}e^{\lambda t}G_\lambda(x;y)d\lambda
,&G^{1,2}(x,t;y):=&\frac{1}{2\pi i}P.V.\int_{a-i\infty}^{a+i\infty}e^{\lambda t}G^{1,2}_\lambda(x;y)d\lambda,\\
\tilde{K}(x,t):=&\frac{1}{2\pi i}P.V.\int_{a-i\infty}^{a+i\infty}e^{\lambda t}\tilde{K}_\lambda(x)d\lambda,&K_1(t):=&\frac{1}{2\pi i}P.V.\int_{a-i\infty}^{a+i\infty}e^{\lambda t}K_{1,\lambda}d\lambda,\\
K(x,t):=&\frac{1}{2\pi i}P.V.\int_{a-i\infty}^{a+i\infty}e^{\lambda t}K_\lambda(x)d\lambda,
\ea
where $a$ is a sufficiently large number.
\end{definition}

\begin{proposition}
\label{kernelrelation}
The interior/boundary source Green kernels satisfy 
\be
\tilde{K}(x,t)-\overline{W}'(x)K_1(t)=K(x,t),\quad \tilde{G}(x,t;y)-\overline{W}'(x)G_1(t;y)=G(x,t;y).
\ee
\end{proposition}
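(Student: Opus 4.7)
The proposition is essentially a direct consequence of the definitions together with linearity of the inverse Laplace transform, so my plan is simply to unpack those definitions carefully and invoke linearity.

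First I would recall that, by the very definitions given just above the proposition, the resolvent kernels satisfy the pointwise identities
\be
K_\lambda(x) \;=\; \tilde{K}_\lambda(x) - \overline{W}'(x)\,K_{1,\lambda}, \qquad
G_\lambda(x;y) \;=\; \tilde{G}_\lambda(x;y) - \overline{W}'(x)\,G_{1,\lambda}(y),
\ee
holding on the contour of consistent splitting (and in particular on $\{\Re\lambda = a\}$ for $a$ sufficiently large, where they decay in $|\Im\lambda|$). Since $\overline{W}'(x)$ is independent of $\lambda$, these are genuine linear combinations of Laplace-side objects with $\lambda$-independent coefficients.

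Next I would apply the inverse Laplace transform contour integral $\tfrac{1}{2\pi i}\,P.V.\!\int_{a-i\infty}^{a+i\infty}e^{\lambda t}(\cdot)\,d\lambda$ from Definition \ref{kerdef} to both sides of each identity. By linearity of the integral and the fact that the factor $\overline{W}'(x)$ can be pulled outside the $\lambda$-integration (it depends only on $x$), this yields
\ba
K(x,t) &= \tilde{K}(x,t) - \overline{W}'(x)\,K_1(t),\\
G(x,t;y) &= \tilde{G}(x,t;y) - \overline{W}'(x)\,G_1(t;y),
\ea
which is exactly the claim.

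The only point that requires a bit of care — and the one I would call out as the sole potential obstacle — is justifying that the principal-value contour integrals defining $\tilde K$, $K_1$, $K$ and $\tilde G$, $G_1$, $G$ individually converge in the distributional/$P.V.$ sense used in Definition \ref{kerdef}, so that the linear decomposition survives passage to the time domain. This is already implicit in the preceding setup: by Lemma \ref{GKanalytic} each resolvent kernel is analytic near $\lambda=0$, the exponential decay estimates \eqref{signgamma} for $\gamma_{j,\pm}(\lambda)$ give decay of $\tilde K_\lambda, \tilde G_\lambda^1$ in the interior pieces, and the bounded high-frequency behavior of $M(\lambda)^{-1}$ on $\{\Re\lambda = a\}$ (for $a$ large) controls the boundary-source and $G^2$ contributions, so each $P.V.$ integral is well defined on the same contour and linearity applies term-by-term. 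With that justification in hand, the stated identities follow immediately.
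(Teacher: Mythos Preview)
Your argument is correct and matches the paper's approach: the proposition is immediate from the definitional identities $K_\lambda=\tilde{K}_\lambda-\overline{W}'(x)K_{1,\lambda}$ and $G_\lambda=\tilde{G}_\lambda-\overline{W}'(x)G_{1,\lambda}$ together with linearity of the inverse Laplace transform, and indeed the paper states the proposition without proof for this reason. Your additional remarks on convergence of the $P.V.$ integrals are more than the paper provides at this point (that justification is deferred to the proof of Proposition~\ref{p:integral_rep}), but they are not incorrect.
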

With these definitions, equations \eqref{inverseresolvent} can be rewritten in the concise form 
\ba 
\check{v}(\lambda,x)=&\tilde{K}_\lambda(x)\check{B}_S(\lambda)+\int_{-\infty}^\infty \tilde{G}_\lambda(x;y)\left(v_0(y)+\check{I}_S(\lambda,y)\right)dy,\\
\check{\eta}(\lambda)=&K_{1,\lambda}\check{B}_S(\lambda)+\int_{-\infty}^\infty G_{1,\lambda}(y)\left(v_0(y)+\check{I}_S(\lambda,y)\right)dy.
\ea 
Formally exchanging the order of integration in the inverse Laplace tranform formula \eqref{ILT},
we get finally, a formal description of the solution to \eqref{lineareqgood} as
\ba 
\label{tildesol}
\tilde{\tilde{v}}(x,t)=&\int_0^t\tilde{K}(x,t-s)B_S(s)ds+\int_{-\infty}^\infty \tilde{G}(x,t;y)v_0(y)dy+\int_0^t\int_{-\infty}^\infty \tilde{G}(x,t-s;y)I_S(s,y)dyds,\\
\tilde{\eta}(t)=&\int_0^t K_1(t-s)B_S(s)ds+\int_{-\infty}^\infty G_1(t;y)v_0(y)dy+\int_0^t\int_{-\infty}^\infty G_1(t-s;y)I_S(s,y)dyds
.\ea 
Translating from good unknowns back to original coordinates
and validating rigorously the formal exchange of integration,
we consolidate our results in the following integral representation.

\begin{proposition}\label{p:integral_rep}
For $v$ uniformly bounded in $H^2$, the solution of \eqref{lineareq} may be written as
\ba 
\label{originalsol}
v(x,t)=&\int_0^tK(x,t-s)B_S(s)ds+\int_{-\infty}^\infty G(x,t;y)v_0(y)dy+\int_0^t\int_{-\infty}^\infty G(x,t-s;y)I_S(s,y)dyds,\\
\eta(t)=&\eta_0+\int_0^t K_1(t-s)B_S(s)ds+\int_{-\infty}^\infty G_1(t;y)v_0(y)dy+\int_0^t\int_{-\infty}^\infty G_1(t-s;y)I_S(s,y)dyds,
\ea 
where $K$, $G$, $K_1$, and $G_1$ defined in \eqref{Greenkernel} are distributions of order at most two, i.e., expressible 
as the sum of at most second-order derivatives of measurable functions.\footnote{In fact as we show in the following 
section, they are precisely of order one.}
\end{proposition}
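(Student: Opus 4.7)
The plan is to realize the formal calculation already carried out in Section~\ref{s:ker} as a rigorous identity, then read off the distributional order from the high-frequency behavior of the resolvent kernels.

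First, under the standing hypothesis that $v$ is uniformly bounded in $H^2$, I invoke the Majda--M\'etivier short-time $H^s$ theory cited above \cite{Ma,Me,JLW}, together with our damping estimate of Section~\ref{s:damping}, to conclude that the solution of \eqref{lineareq} (equivalently, of \eqref{lineareqgood} in good unknowns) is of exponential type in $H^s$, $s\geq 2$. Consequently, for $a>0$ sufficiently large, the temporal Laplace transforms $\check v(x,\lambda)$, $\check \eta(\lambda)$ converge absolutely in $H^s_x$ on $\Re \lambda = a$, and by an application of Paley--Wiener (as in \cite{D}), the inverse Laplace transform formula \eqref{ILT} recovers $\tilde{\tilde v},\tilde \eta$ distributionally. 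The transformed equations are exactly \eqref{lineareqgoodLP}, whose solution on the set of consistent splitting is uniquely determined by the conjugation lemma of \cite{MeZ} together with the requirement of bounded growth as $x\to\pm\infty$; hence $\check v, \check \eta$ must agree with \eqref{inverseresolvent}, producing the formal representation \eqref{tildesol} for $\tilde{\tilde v}, \tilde \eta$.

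Second, to pass from the formal inverse Laplace transform in \eqref{tildesol} to the pointwise-in-$(x,t)$ integral representation \eqref{originalsol}, I need to justify exchange of the $\lambda$-contour integral with the $y$- and $s$-integrals. This is a Fubini argument which reduces to controlling the resolvent kernels $\tilde G_\lambda, G_{1,\lambda}, \tilde K_\lambda, K_{1,\lambda}$ as $|\Im\lambda|\to\infty$ on $\Re\lambda = a$. At high frequency the coefficient matrix $\mathcal{A}_\pm(\lambda) = A_\pm^{-1}(E_\pm - \lambda I)$ is dominated by its $\lambda$-part, so the growth rates satisfy $\gamma_{j,\pm}(\lambda) \sim -\lambda/\alpha_j^\pm$ with $\alpha_j^\pm$ the eigenvalues of $A_\pm$, uniformly bounded eigenprojectors $P_{j,\pm}(\lambda)$, and $T_\pm(\lambda,\cdot)\to \mathrm{Id}$; meanwhile, by the Erpenbeck/Majda high-frequency analysis \cite{Er1,Ma,Me} applied to the principal hyperbolic part (the isentropic $\gamma$-law gas dynamics component of \eqref{sv}), the Evans--Lopatinsky determinant $\Delta(\lambda)$ satisfies the uniform bound $|\Delta(\lambda)| \geq c|\lambda|$ for $|\lambda|$ large, $\Re\lambda\geq a$, reflecting the classical Lopatinsky condition for the Lax subshock (which is a genuine 2-shock of the hyperbolic part). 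Combining these gives $\|G_\lambda^2(x;\cdot)\|, \|K_\lambda(x)\| = O(1)$ and, via the exponential factors $e^{\gamma_{j,\pm}(x-y)}$, $L^1_y$-integrability after at most two $\lambda$-differentiations in the contour, so the inverse Laplace transforms are distributions of order at most two (in fact one, as we refine in the next section). Fubini then applies to the inhomogeneous terms, since $I_S, B_S$ are at worst quadratic in $H^2$ quantities.

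Third, to pass from good unknowns back to the original variables, I use the substitutions $\tilde{\tilde v} = \tilde v - \eta_0 \overline{W}'$, $\tilde \eta = \eta - \eta_0$, and $\tilde v = v + \eta \overline W'$, yielding $v = \tilde{\tilde v} - \tilde \eta\, \overline{W}'$ and $\eta = \tilde \eta + \eta_0$. Substituting \eqref{tildesol} and applying Proposition~\ref{kernelrelation}, which bundles the $\overline W'(x)$ contributions from $\tilde K$ and $\tilde G$ into the unified kernels $K = \tilde K - \overline W' K_1$, $G = \tilde G - \overline W' G_1$, produces \eqref{originalsol} directly.

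The main obstacle in this plan is the high-frequency lower bound $|\Delta(\lambda)| \geq c|\lambda|$ needed to invert $M(\lambda)$ with controlled loss; this is not automatic, since the Evans--Lopatinsky determinant combines an interior Evans factor with a Lopatinsky jump factor and one must show neither vanishes at infinity along $\Re\lambda = a$. The key observation is that to leading order in $|\lambda|$, the relaxation term $E$ drops out of $\mathcal{A}_\pm(\lambda)$ and of $z_{1,-}(\lambda)$, so $\Delta(\lambda)$ asymptotes to the pure hyperbolic Lopatinsky determinant of the Lax 2-shock connecting $(H_*,Q_*)$ to $(H_R,Q_R)$ for the isentropic $\gamma=2$ Euler system, multiplied by a linear-in-$\lambda$ factor from $[\lambda \overline W - R(\overline W)]$; the former is nonzero by Majda's classical analysis of Lax shocks in gas dynamics \cite{Ma,Me}, yielding the claimed lower bound. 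Once this is in hand, the remaining estimates on $T_\pm, P_{j,\pm}, z_{1,-}$ are straightforward perturbations of their $\lambda=\infty$ asymptotic values.
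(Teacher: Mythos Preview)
Your overall architecture matches the paper's: recover $\check v,\check\eta$ from \eqref{inverseresolvent} via exponential-type bounds and \eqref{ILT}, justify Fubini in the double integrals, then pass from good unknowns back to $(v,\eta)$ via Proposition~\ref{kernelrelation}. You also correctly isolate the high-frequency lower bound $|\Delta(\lambda)|\geq c|\lambda|$ as the crucial input, and your identification of its source (the Majda/Lopatinsky condition for the Lax $2$-subshock of the underlying $\gamma=2$ isentropic Euler system) is exactly what the paper proves as Proposition~\ref{hfprop}.

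The one place your writeup is genuinely incomplete is the Fubini step itself. You assert ``$L^1_y$-integrability after at most two $\lambda$-differentiations in the contour,'' but differentiating in $\lambda$ along $\Re\lambda=a$ does not produce decay in $|\lambda|$; it produces factors of $t$ or $(x-y)$. The resolvent kernels $\tilde G_\lambda, G_{1,\lambda}$ are only $O(1)$ as $|\Im\lambda|\to\infty$ (not $o(1)$), so the contour integral is not absolutely convergent and Fubini cannot be invoked directly. The paper closes this gap by the standard semigroup device \cite[\S1.7]{Pa}: from the defining relation $(\lambda-L)\tilde G_\lambda=\delta_y$ one factors
\[
\tilde G_\lambda = \lambda^{-2}L^2\tilde G_\lambda + \lambda^{-2}L\delta_y + \lambda^{-1}\delta_y,
\]
so that, using the crude high-frequency bound \eqref{crudebd}, the first term has integrand $e^{\lambda t}\lambda^{-2}\tilde G_\lambda$ absolutely integrable in $(\lambda,y)$, while the remaining two are explicitly evaluable. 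This is what simultaneously justifies the interchange and exhibits $G$ as $L^2$ applied to a measurable function plus lower-order pieces, i.e.\ a distribution of order at most two. An analogous factorization $\tilde K_\lambda=\lambda^{-2}L^2\tilde K_\lambda$ handles the boundary kernel, and the expansion $\lambda K_{1,\lambda}=V_h+O(|\lambda|^{-1})$ handles $K_1$. Once you insert this mechanism in place of the vague ``$\lambda$-differentiations,'' your argument is complete and coincides with the paper's.
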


\begin{proof}
Using $\tilde{\tilde{v}}-\overline{W}'\tilde{\eta}=v$ and Proposition \ref{kernelrelation}, 
\eqref{originalsol} follows formally by subtracting $\overline{W}'$ times \eqref{tildesol}(ii) from \eqref{tildesol}(i). 
Thus, the issue is to show that, interpreted in the sense of distributions,
the order of integration may be exchanged in the double-integral terms of \eqref{ILT} expanded as
\ba\label{ILTvexpand}
\tilde{\tilde{v}}(x,t)&= \frac{1}{2\pi i}P.V.\int_{a-i\infty}^{a+i\infty}e^{\lambda t} \check v(x,\lambda) d\lambda\\
&= \frac{1}{2\pi i}P.V.\int_{a-i\infty}^{a+i\infty}e^{\lambda t} \tilde{K}_\lambda(x)\check B_S(\lambda)  d\lambda
+
\frac{1}{2\pi i}P.V.\int_{a-i\infty}^{a+i\infty}e^{\lambda t} 
\int_{-\infty}^{+\infty} \tilde{G}_\lambda(x,y) v_0(y) dy\, d\lambda\\
&\quad+
\frac{1}{2\pi i}P.V.\int_{a-i\infty}^{a+i\infty}e^{\lambda t} 
\int_{-\infty}^{+\infty} \tilde{G}_\lambda(x,y)\check I_S(y,\lambda) dy\, d\lambda,
\ea
and
\ba\label{ILTetaexpand}
\tilde \eta(x,t)&= \frac{1}{2\pi i}P.V.\int_{a-i\infty}^{a+i\infty}e^{\lambda t} \check \eta(x,\lambda) d\lambda\\
&= \frac{1}{2\pi i}P.V.\int_{a-i\infty}^{a+i\infty}e^{\lambda t} K_{1,\lambda} \check B_S(\lambda)  d\lambda
+
\frac{1}{2\pi i}P.V.\int_{a-i\infty}^{a+i\infty}e^{\lambda t} 
\int_{-\infty}^{+\infty} G_{1,\lambda}(y) v_0(y) dy\, d\lambda\\
&\quad+
\frac{1}{2\pi i}P.V.\int_{a-i\infty}^{a+i\infty}e^{\lambda t} 
\int_{-\infty}^{+\infty} G_{1,\lambda}(y)\check I_S(y,\lambda) dy\, d\lambda,
\ea
the single-integral terms being treatable by the standard property that the inverse transform of a product is the 
convolution of inverse transforms of its factors.

The double-integral terms may be treated similarly as in \cite{ZH,MZ,MZ2}
by a standard device used in semigroup theory to validate the inverse Laplace transform representation 
of the solution operator \cite[\S 1.7, pp. 28-29]{Pa}, adapted to the context of integral kernels.
Namely, applying the resolvent kernel identity $ \tilde G_\lambda= (L\tilde G_\lambda + \delta_y)/\lambda$
deriving from the defining property $(\lambda-L) \tilde G_\lambda= \delta_y$ of the interior resolvent kernel $\tilde G_\lambda$,
we may factor
$$
\tilde G_\lambda= L^2 \tilde G_\lambda/\lambda^2  + L\delta_y/\lambda^2 + \delta_y/\lambda.
$$

By the crude high-frequency bound 
\be\label{crudebd}
(d/dx)^k \tilde G_\lambda(x,y)\leq Ce^{-\eta|x-y|}
\ee
for $k\geq 0$ and $\Re \lambda \geq \alpha$, $\alpha$ sufficiently large, carried out in Section \ref{HighFrequency},
we have therefore that term
$
\frac{1}{2\pi i}P.V.\int_{a-i\infty}^{a+i\infty}e^{\lambda t} 
\int_{-\infty}^{+\infty} \tilde G_\lambda(x,y)\check I_S(y,\lambda) dy \,d\lambda
$
in \eqref{ILTvexpand} may be expanded as $L^2$ applied to the integral
$ \frac{1}{2\pi i}P.V.\int_{a-i\infty}^{a+i\infty}e^{\lambda t} 
\int_{-\infty}^{+\infty} \tilde G_\lambda(x,y)v_0(y)/\lambda^2 dy\, d\lambda $
plus two explicitly evaluable terms.

Observing for $\Re \lambda=a$ fixed that the integrand $e^{\lambda t}  \tilde G_\lambda(\cdot,y) v_0(y)/\lambda^2$
is absolutely integrable in $(y, \lambda)$, we have by Fubini's theorem that we may switch the order of integration to 
obtain instead $L^2$ applied to the limit
$
\frac{1}{2\pi i} \int_{-\infty}^{+\infty} P.V.\int_{a-i\infty}^{a+i\infty}e^{\lambda t} 
\tilde G_\lambda(x,y)  v_0(y)/\lambda^2  d\lambda\, dy,
$
which, since limits and derivatives of distributions freely exchange, is equal to 
$$
\frac{1}{2\pi i} \int_{-\infty}^{+\infty} P.V.\int_{a-i\infty}^{a+i\infty}e^{\lambda t} 
L^2 \tilde G_\lambda(x,y)v_0(y)/\lambda^2  d\lambda\, dy.
$$
We find in passing that the result is a distribution of at most order $2$, since it is expressible as
the second-order derivative operator $L^2$ applied to a measurable function.

Likewise, we find by standard inverse Laplace transform computations that the order of integration may be exchanged in
$
\frac{1}{2\pi i}P.V.\int_{a-i\infty}^{a+i\infty}e^{\lambda t} \int_{-\infty}^{+\infty} 
\delta_y/\lambda^2 dy\, d\lambda
$
and
$
\frac{1}{2\pi i}P.V.\int_{a-i\infty}^{a+i\infty}e^{\lambda t} \int_{-\infty}^{+\infty} 
\delta_y/\lambda \,dy\, d\lambda,
$
validating the exchange in order for the entire $\tilde G$-term in \eqref{ILTvexpand}.
The first term is at most order $2$ since expressible as the first-order operator applied to an order-$1$ distribution
(the delta-function), while the second is order $1$.  Thus, the entire term is at most of order $2$.

The $G_1$ term in \eqref{ILTetaexpand} goes similarly, using the defining relation $(\lambda-L)G_{1,\lambda}=0$.
Thus, the order of integration may be exchanged also for double-integral terms of
\eqref{ILTetaexpand} may be expanded as $L^2$ applied to the integral
\eqref{originalsol}(ii), justifying \eqref{originalsol}(ii); at the same time this shows that $G_1$ is a distribution
of at most order $2$.
(Alternatively, observing that the terms in the representation of $\tilde \eta$ are expressible as functions of $\tilde{\tilde {v}}(0,t)$, 
we may conclude \eqref{tildesol}(ii) directly from \eqref{tildesol}(i).)

Similarly, using the property $\tilde K_\lambda=L \tilde K_{\lambda}/\lambda$, and the uniform bound $|\tilde K_\lambda|_{H^s}\leq C$ for
$\Re \lambda$ sufficiently large obtained in Section \ref{HighFrequency}, we find that 
$$
\tilde K(x,t):= \frac{1}{2\pi i}P.V.\int_{a-i\infty}^{a+i\infty}e^{\lambda t} \tilde K_\lambda(x) d\lambda
=
L^2\frac{1}{2\pi i}P.V.\int_{a-i\infty}^{a+i\infty}e^{\lambda t} \tilde K_\lambda(x)/\lambda^2 d\lambda
$$
factors as $L^2$ applied to an $H^s$ function defined by the absolutely convergent integral of 
$$e^{\lambda t} \tilde K_\lambda(x)/\lambda^2 d\lambda=O(1/|\lambda|^2),$$ so is a distribution of order at most $2$.
Finally, using the large-$|\lambda|$ bound $K_{1,\lambda}=V_h/\lambda + O(1/|\lambda|^2)$ obtained in 
\eqref{lambdaK1lambda}, Section \ref{HighFrequency},
we find that $K_1(t):= \frac{1}{2\pi i}P.V.\int_{a-i\infty}^{a+i\infty}e^{\lambda t} K_{1,\lambda} d\lambda$ decomposes
into the sum of an explicitly evaluable, constant term 
$$V_h \frac{1}{2\pi i}P.V.\int_{a-i\infty}^{a+i\infty}e^{\lambda t}\lambda^{-1}d\lambda=V_h$$
and an absolutely convergent integral
$\frac{1}{2\pi i}P.V.\int_{a-i\infty}^{a+i\infty}e^{\lambda t}O(|\lambda|^{-2})d\lambda $, hence is a $C^0$ function 
with respect to $t$. 
\end{proof}

\br\label{movermk}
Noting (Section \ref{HighFrequency}) that the crude
high-frequency estimate \eqref{crudebd} holds for $\Re \lambda \geq -b$
and $|\lambda|\geq R$ for $b>0$ sufficiently small and $R>0$ sufficiently large, we find by the same analysis
used to justify exchange of integration order in the proof of Proposition \ref{p:integral_rep}
that the contour $P.V. \int_{a-i\infty}^{a+i\infty}$ in \eqref{Greenkernel}, Definition \ref{kerdef}
(interpreted in distributional sense) may be deformed to
\be\label{deformed}
\lim_{M\to \infty}\Big(
\int_{-b-iM}^{-b-iR}
+\int_{-b-iR}^{a-iR}
+\int_{a-iR}^{a+iR}
+\int_{a+iR}^{-b+iR}
+\int_{-b+iR}^{-b+iM}
\Big)
\ee
for $b>0$ sufficiently small and $R>0$ sufficiently large.
This simplifies somewhat the corresponding analysis of \cite{MZ} based on more detailed bounds.
\er

\section{Resolvent estimates}\label{s:resolvent}
We now derive bounds on the various resolvent kernels, on the crucial large- and small-$|\lambda|$ regimes,
corresponding via the usual frequency/temporal duality for the Laplace transform to small- and large-$t$ behavior of the associated time-evolutionary Green kernels.
These are obtained with no a priori assumption of spectral stability, that is, we establish in the course of our analysis
{\it rigorous high- and low-frequency Evans-Lopatinsky stability}.
Intermediate frequencies $1/C\leq |\lambda|\leq C$ for $C>0$ yield by construction immediately {\it uniform exponential estimates}
\be\label{intest}
|\tilde G_\lambda(x,y)|\leq Ce^{-\bar \eta |x-y|}, \quad \bar \eta>0,
\ee
and etc., provided the Evans-Lopatinsky condition is satisfied, hence their analysis is trivial in this sense.
On the other hand, the verification of the Evans-Lopatinsky condition appears to be quite complicated in this regime,
and we find it necessary to carry this out numerically (see Section \ref{s:numerics}).

\subsection{High Frequency analysis} \label{HighFrequency}
We now study behavior of system \eqref{eigen-eq}(i) in the high frequency regime. Denote $w=\check{v}$ and write \eqref{eigen-eq}(i) as
\be
\label{originalsystem}
w_x=-\lambda A^{-1}w+A^{-1}(E-A_x)w
\ee
and perform two diagonalizations $U:=\tilde{R}R^{-1}w$ similar to procedures in High Frequency analysis in \cite{JNRYZ}, we reach a 2 by 2 system in which $U$ satisfies
\be 
\label{diageigen}
U'=\left(\lambda\left(\begin{array}{rr}\mu_1(x)&0\\0&\mu_2(x)\end{array}\right)+\left(\begin{array}{rr}M_{11}(x)&0\\0&M_{22}(x)\end{array}\right)+\frac{1}{\lambda}N(\lambda,x)\right)U:=\left(\Lambda(\lambda,x)+\frac{1}{\lambda}N(\lambda,x)\right)U
\ee
where 
\ba 
\label{originRdef}
&-A^{-1}=R\left(\begin{array}{rr}\mu_1&0\\0&\mu_2\end{array}\right)R^{-1},\quad M=R^{-1}(A^{-1}E-A^{-1}A_x)R-R^{-1}R_x,\\
&\mu_{1,2}=\frac{FH(\sqrt{H_R}+1)}{FH_R\pm H^{\frac{3}{2}}(\sqrt{H_R}+1)},\quad \tilde{R}=Id+\left(\begin{array}{rr}-\frac{M_{12}M_{21}}{(\mu_1-\mu_2)^2\lambda^2}&\frac{M_{12}}{\mu_1-\mu_2)\lambda}\\-\frac{M_{21}}{(\mu_1-\mu_2)\lambda}&0\end{array}\right),\\
&R=\left(\begin{array}{cc} -FH & FH\\ H^{3/2}-FH\sqrt{H_R}-\frac{F(H-H_R)}{\sqrt{H_R}+1} & H^{3/2}+FH\sqrt{H_R}+\frac{F(H-H_R)}{\sqrt{H_R}+1} \end{array}\right),
\ea
and $|N(\lambda,x)|< C(F,H_R)$ uniformly in $|\lambda|>1,x\gtrless0$.

\begin{lemma}\label{cone}
Let $U(x)=\left(\begin{array}{rr}U_1(x)&U_2(x)\end{array}\right)^T$ be stable flow (as $x\rightarrow-\infty$) of \eqref{diageigen} and define $\Phi_1=U_2/U_1$. Let $\tilde{U}(x)=\left(\begin{array}{rr}\tilde{U}_1(x)&\tilde{U}_2(x)\end{array}\right)^T$ be unstable flow (as $x\rightarrow-\infty$) of \eqref{diageigen} and define $\Phi_2=\tilde{U}_1/\tilde{U}_2$.  For $\Re \lambda>-\bar\eta$ $(\bar\eta>0,\;\bar\eta$ sufficiently small$)$ and  $|\lambda|$ sufficiently large, we then have $\Phi_{1,2}(x,\lambda)=O(1/|\lambda|)$ uniformly in $x<0$.
\end{lemma}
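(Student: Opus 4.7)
The plan is to convert the coupled $2\times 2$ system \eqref{diageigen} into scalar Riccati equations for $\Phi_1$ and $\Phi_2$, then close by a contraction-mapping argument using that the linear coefficient is of order $|\lambda|$ while the inhomogeneous forcing is of order $1/|\lambda|$. Applying the quotient rule to $\Phi_1 = U_2/U_1$ using \eqref{diageigen} yields
\begin{equation*}
\Phi_1' = a(\lambda,x)\,\Phi_1 + \frac{N_{21}}{\lambda} + \frac{N_{22}-N_{11}}{\lambda}\Phi_1 - \frac{N_{12}}{\lambda}\Phi_1^2, \qquad a(\lambda,x) := \lambda(\mu_2-\mu_1) + (M_{22}-M_{11}),
\end{equation*}
and $\Phi_2 = \tilde U_1/\tilde U_2$ satisfies the analogous equation with the indices $1$ and $2$ interchanged. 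Because the eigenvalues $\mu_1(x), \mu_2(x)$ of $-A^{-1}(x)$ are real, distinct, and uniformly separated along the smooth part of the profile (where $H > H_s$ so that $A$ is uniformly invertible), $|a(\lambda,x)| \gtrsim |\lambda|$ for $|\lambda|$ sufficiently large, while the remaining coefficients $N_{jk}/\lambda$ are uniformly $O(1/|\lambda|)$.

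Next I would establish the asymptotic boundary data at $x=-\infty$. By standard matrix perturbation applied to the constant-coefficient limit of $\Lambda + N/\lambda$, whose unperturbed eigenvalues are separated by order $|\lambda|$, the stable and unstable eigenvectors equal $(1, O(1/|\lambda|))^T$ and $(O(1/|\lambda|), 1)^T$ up to scalar multiple, so $\Phi_j(-\infty) = O(1/|\lambda|)$. Recasting the Riccati equation via the integrating factor $\exp\bigl(-\int^x (a+g)\bigr)$ with $g = (N_{22}-N_{11})/\lambda$ and running a contraction in the ball $\{\|\Phi\|_{L^\infty((-\infty,0])} \leq C/|\lambda|\}$, the forcing $N_{21}/\lambda$ has modulus $\lesssim 1/|\lambda|$ and, when divided by the large linear coefficient $|a|\gtrsim |\lambda|$, contributes at the effective scale $1/|\lambda|^2$, well inside the cone; the quadratic self-interaction is smaller still. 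The argument for $\Phi_2$ is identical.

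The principal obstacle is that $\Re a = (\mu_2-\mu_1)\Re\lambda + O(1)$ need not be of order $|\lambda|$ when $\Re\lambda$ lies in $(-\bar\eta, \mathrm{const}]$, so the pointwise modulus of $e^{\int a}$ does not, on its own, decay in $|x-y|$. This is resolved by labelling $U_1$ along the stable eigenspace of the asymptotic system — by continuity from $\Re\lambda > 0$, where \eqref{signgamma} fixes the labelling unambiguously, this choice extends to $\Re\lambda > -\bar\eta$ with $|\lambda|$ large — so that $\Re\int_y^x(a+g)\,dz \leq 0$ for $y\leq x$ and the integrating factor is bounded by a constant on the relevant half-line, letting the $1/|\lambda|$ smallness of the forcing drive the contraction. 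Equivalently, one may view this step as a third conjugation in the spirit of \cite{MeZ}, now at the scale $1/|\lambda|$, asymptotically removing the $N/\lambda$ off-diagonal perturbation and producing the invariant cone $\{|\Phi_{1,2}| \leq C/|\lambda|\}$ on the spectral domain $\{\Re\lambda > -\bar\eta, |\lambda| \text{ large}\}$.
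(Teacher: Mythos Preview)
Your Riccati reduction and the overall contraction strategy match the paper's approach, but your resolution of the ``principal obstacle'' does not close. You propose to arrange $\Re\int_y^x(a+g)\,dz\le 0$ so that the integrating factor is merely bounded, and then let the $1/|\lambda|$ smallness of the forcing carry the contraction. But boundedness of the kernel is not enough: the Duhamel integral for $\Phi_1$ runs over the infinite half-line $(-\infty,x]$, and $\int_{-\infty}^x 1\cdot O(1/|\lambda|)\,dy$ diverges. Likewise, the earlier heuristic of ``dividing the forcing by $|a|\gtrsim|\lambda|$'' is not how the integral operator actually acts; oscillation of $e^{\int a}$ from large $\Im a$ gives no decay for a non-integrable integrand, so this does not produce the claimed $O(1/|\lambda|^2)$ contribution.

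What actually closes the argument --- and what the paper uses --- is that the $\lambda$-\emph{independent} part of $\Re a = (\mu_2-\mu_1)\Re\lambda + (M_{22}-M_{11})$ is already strictly negative: $M_{22}(x)-M_{11}(x)<-2c<0$ uniformly along the profile. Hence for $\Re\lambda\ge -\bar\eta$ with $\bar\eta$ small enough one has $\Re a\le -c$ uniformly in $x$ and $\lambda$, the kernel decays like $e^{-c(x-y)}$, and the Duhamel map sends the ball of radius $L$ into itself with bound $\lesssim C/(c|\lambda|)$. For $\Phi_2$ the sign of $\Re(\Lambda_{11}-\Lambda_{22})$ reverses and one integrates from $0$ rather than from $-\infty$, with the same exponential kernel decay. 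This uniform negativity, coming from the zeroth-order diagonal entries $M_{jj}$ rather than from $\lambda$, is the missing ingredient in your sketch.
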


\begin{proof}
We find after a brief calculation that 
\be
\label{Phi1eq}
\Phi_1'=(\Lambda_{22}-\Lambda_{11})\Phi_1+\frac{1}{\lambda}\left(N_{21}+N_{22}\Phi_1-N_{11}\Phi_1-N_{12}\Phi_1^2\right).
\ee
For $x<0$, let $\tilde{\mathcal{F}}_\lambda$ denotes the flow of equation $\Phi'=(\Lambda_{22}(\lambda)-\Lambda_{11}(\lambda))\Phi$. For $\Re\lambda\ge-\bar{\eta}$, 
\ba
\Re(\Lambda_{22}(\lambda)-\Lambda_{11}(\lambda))=&\left(\mu_2-\mu_1\right)\Re\lambda+M_{22}-M_{11}\\
\le&\frac{2FH^{5/2}{\left(\sqrt{H_R}+1\right)}^2}{H^3H_R+H^3-F^2{H_R}^2+2H^3\sqrt{H_R}}\bar\eta+M_{22}-M_{11}\\
\le&\frac{M_{22}-M_{11}}{2}<-c<0.
\ea
Define bounded operator $\mathcal{T}_\lambda$ on Banach space $\mathcal{B}=C^b((-\infty,0],\;\mathbb{C})$\footnote{Here $C^b((-\infty,0],\;\mathbb{C})$ is the space of bounded continuous function on $(-\infty,0]$ associated with the sup norm.} by
\ba 
\label{operatorPhi1}
(\mathcal{T}_\lambda\Phi)(x):=\int_{-\infty}^x\tilde{\mathcal{F}}_\lambda^{y\rightarrow x}\frac{1}{\lambda}\left(N_{21}(y)+N_{22}(y)\Phi(y)-N_{11}(y)\Phi(y)-N_{12}(y)\Phi^2(y)\right)dy.
\ea
Claim one: For $L>0$, the operator $\mathcal{T}_\lambda$ is a contraction map on $\{\Phi:||\Phi||_\infty\le L,\Phi\in\mathcal{B}\}$ provided that $|\lambda|\ge\max\{\frac{C(1+L)^2}{cL},\frac{4C(1+L)}{c}\}$. This follows from inequalities
\ba
&|(\mathcal{T}_\lambda\Phi)(x)|\le\int_{-\infty}^xe^{-c(x-y)}\frac{C+2CL+CL^2}{|\lambda|}dy=\frac{C(1+L)^2}{c|\lambda|}\le L,\\
&|(\mathcal{T}_\lambda\Phi-\mathcal{T}_\lambda\tilde{\Phi})(x)|\le\int_{-\infty}^xe^{-c(x-y)}\frac{2C(1+L)||\Phi-\tilde{\Phi}||_\infty}{|\lambda|}dy\le\frac{1}{2}||\Phi-\tilde{\Phi}||.
\ea
Claim two: For $|\lambda|>\frac{8C}{c}$, $\mathcal{T}_\lambda$ is a contraction map for $L:=\frac{4C}{c|\lambda|}<\frac{1}{2}$. This is because
$$
\frac{C(1+L)^2}{cL}\le\frac{C2^2}{c\frac{4C}{c|\lambda|}}=|\lambda|,\quad \frac{4C(1+L)}{c}<\frac{8C}{c}<|\lambda|.
$$
Claim two then follows from Claim one.

The unique solution to \eqref{Phi1eq} guaranteed by the contraction mapping theorem will be in the ball of radius $L=\frac{4C}{c|\lambda|}$, which is of $O(1/|\lambda|)$.
On the other hand 
\be
\label{Phi2eq}
\Phi_2'=(\Lambda_{11}-\Lambda_{22})\Phi_2+\frac{1}{\lambda}\left(N_{12}+N_{11}\Phi_2-N_{22}\Phi_2-N_{21}\Phi_2^2\right).
\ee
Let $\tilde{\mathcal{F}}_\lambda$ now denote the flow of equation $\Phi'=(\Lambda_{11}(\lambda)-\Lambda_{22}(\lambda))\Phi$. For $\Re\lambda\ge-\eta$, $\Re(\Lambda_{11}(\lambda)-\Lambda_{22}(\lambda))>c>0$,
we define bounded operator $\mathcal{T}_\lambda$ on Banach space $C^b((-\infty,0],\;\mathbb{C})$
\ba 
\label{operatorPhi2}
(\mathcal{T}_\lambda\Phi)(x):=\int_{0}^x\tilde{\mathcal{F}}_\lambda^{y\rightarrow x}\frac{1}{\lambda}\left(N_{12}(y)+N_{11}(y)\Phi(y)-N_{22}(y)\Phi(y)-N_{21}(y)\Phi^2(y)\right)dy.
\ea
Again inequalities
\ba
&|(\mathcal{T}_\lambda\Phi)(x)|\le\int_{x}^0e^{c(x-y)}\frac{C+2CL+CL^2}{|\lambda|}dy=\frac{C(1+L)^2}{c|\lambda|}\le L,\\
&|(\mathcal{T}_\lambda\Phi-\mathcal{T}_\lambda\tilde{\Phi})(x)|\le\int_{x}^0e^{c(x-y)}\frac{2C(1+L)||\Phi-\tilde{\Phi}||_\infty}{|\lambda|}dy\le\frac{1}{2}||\Phi-\tilde{\Phi}||
\ea
yield claims one and two, completing the lemma.
\end{proof}

\begin{lemma}\label{orihflemma}
Writing $R$ in \eqref{originRdef} as $R=\left(\begin{array}{rr}R_1&R_2\end{array}\right)$ and setting $\left(\begin{array}{c}L_1\\L_2\end{array}\right)=\left(\begin{array}{rr}R_1&R_2\end{array}\right)^{-1}$, for $\Re\lambda>-\bar\eta$ $(\bar\eta>0,\;\bar\eta$ sufficiently small$)$, $|\lambda|$ sufficiently large, the solution operator $\mathcal{F}_\lambda^{y\rightarrow x}$ of system \eqref{originalsystem} on $x>0$ is
\ba
\label{solutionoperator}
\mathcal{F}_\lambda^{y\rightarrow x}=&e^{\left(\lambda\mu_{1,+}+M_{11,+}+\frac{1}{\lambda}N_{11,+}(\lambda)+\frac{1}{\lambda}N_{12,+}(\lambda)\right)(x-y)}\left(R_{1,+}L_{1,+}+O(1/|\lambda|)\right)\\
+&e^{\left(\lambda\mu_{2,+}+M_{22,+}+\frac{1}{\lambda}N_{22,+}(\lambda)+\frac{1}{\lambda}N_{21,+}(\lambda)\right)(x-y)}\left(R_{2,+}L_{2,+}+O(1/|\lambda|)\right),&0<x<y.
\ea
Moreover, the stable and unstable flow operators of system (\eqref{originalsystem}) on $x<0$ are
\ba
\label{stableunstablefloww}
\mathcal{F}_\lambda^{y\rightarrow x}\Pi_{\lambda,s}(y)=&e^{\int_y^x\left(\lambda\mu_1(z)+M_{11}(z)+\frac{1}{\lambda}N_{11}(\lambda,z)+\frac{1}{\lambda}N_{12}(\lambda,z)\right)dz}\left(R_1(x)L_1(y)+O(1/|\lambda|)\right),&x<y<0,\\
\mathcal{F}_\lambda^{y\rightarrow x}\Pi_{\lambda,u}(y)=&e^{\int_y^x\left(\lambda\mu_2(z)+M_{22}(z)+\frac{1}{\lambda}N_{22}(\lambda,z)+\frac{1}{\lambda}N_{21}(\lambda,z)\right)dz}\left(R_2(x)L_2(y)+O(1/|\lambda|)\right),&y<x<0,\\
\ea
where $\mu_{1,2}$, $M$ as in \eqref{originRdef} are independent of $\lambda$, $|N(\lambda,x)|< C(F,H_R)$ uniformly in $|\lambda|>1,x\gtrless0$, and the bound $O(1/|\lambda|)$ is independent of $x$, $y$. 
\begin{proof}
By lemma \ref{cone}, the stable flow of \eqref{diageigen} may be written as $U=\left(\begin{array}{rr}U_1&\Phi_1 U_1\end{array}\right)^T$ with $\Phi_1=O(1/|\lambda|)$. The equation for $U_1$ then reads $U_1'=\left(\Lambda_{11}+\frac{1}{\lambda}N_{11}+\frac{1}{\lambda}N_{12}\Phi_1\right)U_1$. Integrating from $0$ to $x$ yields solution
\be
U_1(\lambda,x)=e^{\int_0^x\left(\Lambda_{11}(\lambda,y)+\frac{1}{\lambda}N_{11}(\lambda,y)+\frac{1}{\lambda}N_{12}(\lambda,y)\Phi_1(\lambda,y)\right)dy}U_1(0).
\ee
Hence the full solution to \eqref{diageigen} is
\be
\left(\begin{array}{c}U_1\\U_2\end{array}\right)=\left(\begin{array}{c}U_1\\\Phi_1U_1\end{array}\right)=e^{\int_0^x\left(\Lambda_{11}(\lambda,y)+\frac{1}{\lambda}N_{11}(\lambda,y)+\frac{1}{\lambda}N_{12}(\lambda,y)\Phi_1(\lambda,y)\right)dy}\left(\begin{array}{c}1\\\Phi_1(\lambda,x)\end{array}\right)U_1(0).
\ee
Transforming back to $w$ coordinates by $w=R\tilde{R}^{-1}U$ and using estimate $\Phi_1(\lambda,x)=O(1/|\lambda|)$, $\tilde{R}^{-1}=Id+O(1/|\lambda|)$, 
we obtain 
\be
\label{stablesolution}
w(\lambda,x)=e^{\int_0^x\left(\Lambda_{11}(\lambda,y)+\frac{1}{\lambda}N_{11}(\lambda,y)+\frac{1}{\lambda}N_{12}(\lambda,y)\Phi_1(\lambda,y)\right)dy}\left(R_1(x)+O(1/|\lambda|)\right).
\ee
The projection onto the stable manifold $\Pi_{\lambda,s}(y)$ is approximately $R_1(y)L_1(y)+O(1/|\lambda|)$; following the flow from $y$ to $x$ 
thus yields \eqref{stableunstablefloww}(i). The unstable flow operator \eqref{stableunstablefloww}(ii) can be derived similarly.
\end{proof}
\end{lemma}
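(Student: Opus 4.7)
The plan is to exploit Lemma \ref{cone} to collapse the two-dimensional diagonalized system \eqref{diageigen} into a pair of uncoupled scalar ODEs, one each for the stable and unstable invariant directions, and then to integrate them explicitly. Once the scalar solutions are in hand, the formulas for $\mathcal{F}_\lambda^{y\to x}$ in the original $w$-variable follow by the change-of-coordinates $w=R\tilde{R}^{-1}U$ together with the near-identity bound $\tilde{R}^{-1}=\mathrm{Id}+O(1/|\lambda|)$ (visible from \eqref{originRdef}).

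For the stable flow on $x<0$, write $U=U_1\bigl(1,\Phi_1\bigr)^T$ with $\Phi_1=O(1/|\lambda|)$ from Lemma \ref{cone}. Substituting into \eqref{diageigen} yields the scalar linear equation
\[
U_1'=\Bigl(\Lambda_{11}(\lambda,x)+\tfrac{1}{\lambda}N_{11}(\lambda,x)+\tfrac{1}{\lambda}N_{12}(\lambda,x)\Phi_1(\lambda,x)\Bigr)U_1,
\]
which integrates directly. Using $\Phi_1=O(1/|\lambda|)$, the $\tfrac{1}{\lambda}N_{12}\Phi_1$ term contributes $O(1/|\lambda|^2)$ to the exponent, which after exponentiation is absorbed in an overall $1+O(1/|\lambda|)$ factor; the other pieces in the exponent are exactly $\lambda\mu_1+M_{11}+\tfrac{1}{\lambda}N_{11}$, matching \eqref{stableunstablefloww}(i). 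Transforming back by $w=R\tilde R^{-1}U$, combining with the fact that the stable projector of \eqref{diageigen} is $(1,0)(1,\Phi_1)^T$ up to $O(1/|\lambda|)$, and converting this in $w$-coordinates gives $\Pi_{\lambda,s}(y)=R_1(y)L_1(y)+O(1/|\lambda|)$. Propagating from $y$ to $x$ then yields the claimed product $R_1(x)L_1(y)+O(1/|\lambda|)$ outside the exponential.

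The unstable flow \eqref{stableunstablefloww}(ii) is obtained symmetrically by parametrizing $\tilde U=\tilde U_2(\Phi_2,1)^T$ with $\Phi_2=O(1/|\lambda|)$, integrating the analogous scalar equation for $\tilde U_2$, and using $R_2,L_2$ in place of $R_1,L_1$. For $x>0$, the coefficients $\mu_{i,+},M_{ii,+},N_{ij,+}$ are constant (the profile has already reached its $+\infty$ limit), so the two invariant directions can simply be summed to recover the full solution operator, producing the two-exponential expression \eqref{solutionoperator}.

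The main obstacle will be bookkeeping: ensuring that the $O(1/|\lambda|)$ prefactor errors, as well as the absorption of the $\tfrac{1}{\lambda}N_{12}\Phi_1$ term into the prefactor rather than the exponent, are genuinely \emph{uniform} in $x,y<0$ including when both are far from the transition region of the profile. This relies on the exponential convergence of $\mu_i,M_{ij},N_{ij}$ to their $\pm\infty$ limits (inherited from that of $\overline{W}$), which makes the contraction mapping \eqref{operatorPhi1} well-posed on all of $(-\infty,0]$ and keeps the bounds on $\Phi_{1,2}$ uniform. A secondary care point is matching conventions: verifying that the projection $\Pi_{\lambda,s}(y)A^{-1}(y)$ in the statement of the lemma indeed reproduces the $L_1(y)$ factor after accounting for the $R^{-1}$ and $A^{-1}$ factors in \eqref{originRdef}.
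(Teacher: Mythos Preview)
Your overall plan is the same as the paper's: parametrize the stable/unstable directions of \eqref{diageigen} via $\Phi_{1,2}=O(1/|\lambda|)$ from Lemma~\ref{cone}, integrate the resulting scalar ODE for $U_1$ (respectively $\tilde U_2$), and push back through $w=R\tilde R^{-1}U$ with $\tilde R^{-1}=\mathrm{Id}+O(1/|\lambda|)$. That part is fine and matches the paper exactly.

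The one genuine gap is your absorption step. You propose to drop $\tfrac{1}{\lambda}N_{12}\Phi_1$ from the exponent on the grounds that it is $O(1/|\lambda|^2)$ pointwise and hence becomes, after exponentiation, a $1+O(1/|\lambda|)$ prefactor. But the integrated contribution is $\int_y^x \tfrac{1}{\lambda}N_{12}\Phi_1\,dz = O(|x-y|/|\lambda|^2)$, and $e^{O(|x-y|/|\lambda|^2)}$ is \emph{not} $1+O(1/|\lambda|)$ uniformly in $x,y$: it grows without bound as $|x-y|\to\infty$ at fixed $\lambda$. The exponential convergence of the coefficients to their endstate values, which you invoke, does not help here since $N_{12}$ is merely bounded, not decaying. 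The paper's proof simply \emph{keeps} the term $\tfrac{1}{\lambda}N_{12}\Phi_1$ in the exponent; the statement of the lemma has a typo---compare with \eqref{stablesolution} in the proof and with the exponents in Proposition~\ref{highestimates}, all of which carry the $\Phi_1$ factor explicitly. With that term retained in the exponent, the prefactor error $O(1/|\lambda|)$ comes solely from $(1,\Phi_1)^T=(1,0)^T+O(1/|\lambda|)$ and $\tilde R^{-1}=\mathrm{Id}+O(1/|\lambda|)$, both of which \emph{are} uniform in $x$ by Lemma~\ref{cone} and the explicit form of $\tilde R$ in \eqref{originRdef}. So the fix is trivial: do not attempt the absorption, and the uniformity concern you correctly flagged evaporates.
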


\subsection{Pointwise estimates on resolvent kernels}

\subsubsection{Large $\lambda\sim$ small time}
\begin{proposition}\label{highestimates}
For $\Re\lambda>-\bar\eta$ $(\bar\eta>0,\;\bar\eta$ sufficiently small$)$ and $|\lambda|$ sufficiently large, $G^1_\lambda$, $G^2_\lambda$, and $K_\lambda$ can be written as 
\small
\ba
\label{highresolvent1}
&G^1_\lambda=\\&\left\{\begin{aligned}&-e^{\left(\lambda\mu_{1,+}+M_{11,+}+\frac{1}{\lambda}N_{11,+}(\lambda)+\frac{1}{\lambda}N_{12,+}(\lambda)\Phi_{1,+}(\lambda)\right)(x-y)}\left(R_{1,+}L_{1,+}+O(1/|\lambda|)\right)A^{-1}_+\\
&-e^{\left(\lambda\mu_{2,+}+M_{22,+}+\frac{1}{\lambda}N_{22,+}(\lambda)+\frac{1}{\lambda}N_{21,+}(\lambda)\Phi_{2,+}(\lambda)\right)(x-y)}\left(R_{2,+}L_{2,+}+O(1/|\lambda|)\right)A^{-1}_+,&0<x<y,\\&-e^{\int_y^x\left(\lambda\mu_1(z)+M_{11}(z)+\frac{1}{\lambda}N_{11}(\lambda,z)+\frac{1}{\lambda}N_{12}(\lambda,z)\Phi_1(\lambda,z)\right)dz}\left(R_1(x)L_1(y)+O(1/|\lambda|)\right)A^{-1}(y),&x<y<0,\\
&e^{\int_y^{x}\left(\lambda\mu_2(z)+M_{22}(z)+\frac{1}{\lambda}N_{22}(\lambda,z)+\frac{1}{\lambda}N_{21}(\lambda,z)\Phi_2(\lambda,z)\right)dz}\left(R_2(x)L_2(y)+O(1/|\lambda|)\right)A^{-1}(y),&y<x<0,
\end{aligned}\right.
\ea
\ba
\label{highresolvent2}
&G^2_\lambda=\\
&\left\{\begin{aligned}&-\Bigg(e^{\int_0^x\left(\lambda\mu_1(z)+M_{11}(z)+\frac{1}{\lambda}N_{11}(\lambda,z)+\frac{1}{\lambda}N_{12}(\lambda,z)\Phi_1(\lambda,z)\right)dz}\left(R_1(x)L_1(0^-)
+O(1/|\lambda|)\right)V+\\&\overline{W}'(x)O(1/|\lambda|)\Bigg)A_+\Bigg(e^{-\left(\lambda\mu_{1,+}+M_{11,+}+\frac{1}{\lambda}N_{11,+}(\lambda)+\frac{1}{\lambda}N_{12,+}(\lambda)\Phi_{1,+}(\lambda)\right)y}\left(R_{1,+}L_{1,+}+O(1/|\lambda|)\right)\\
&+e^{-\left(\lambda\mu_{2,+}+M_{22,+}+\frac{1}{\lambda}N_{22,+}(\lambda)+\frac{1}{\lambda}N_{21,+}(\lambda)\Phi_{2,+}(\lambda)\right)y}\left(R_{2,+}L_{2,+}+O(1/|\lambda|)\right)\Bigg)A^{-1}_+,&x<0,y>0,\\
&-\Bigg(e^{\int_0^x\left(\lambda\mu_1(z)+M_{11}(z)+\frac{1}{\lambda}N_{11}(\lambda,z)+\frac{1}{\lambda}N_{12}(\lambda,z)\Phi_1(\lambda,z)\right)dz}\left(R_1(x)L_1(0^-)
+O(1/|\lambda|)\right)V+\\&\overline{W}'(x)O(1/|\lambda|)\Bigg)A(0^-)\times\\
&e^{\int_y^0\left(\lambda\mu_2(z)+M_{22}(z)+\frac{1}{\lambda}N_{22}(\lambda,z)+\frac{1}{\lambda}N_{21}(\lambda,z)\Phi_2(\lambda,z)\right)dz}\left(R_2(0^-)L_2(y)+O(1/|\lambda|)\right)A^{-1}(y),&x<0,y<0,\end{aligned}
\right.
\ea
\ba
&K_\lambda=\\&\left\{\begin{aligned}&0,&x>0,\\&-e^{\int_0^x\left(\lambda\mu_1(z)+M_{11}(z)+\frac{1}{\lambda}N_{11}(\lambda,z)+\frac{1}{\lambda}N_{12}(\lambda,z)\Phi_1(\lambda,z)\right)dz}\left(R_1(x)L_1(0^-)
+O(1/|\lambda|)\right)V+\overline{W}'(x)O(1/|\lambda|),&x<0,\end{aligned}\right.
\ea
\normalsize
where $\mu_{1,2}$, $R$, $L$, $M_{11}$, $M_{22}$ as in \eqref{originRdef}, $V$ as in \eqref{defV} are explicitly calculable and independent of $\lambda$, $\Phi_{1,2}(\lambda,x)$ as in Lemma \ref{cone} are $O(1/|\lambda|)$ terms uniformly in $x$, $y$. Moreover, they can be decomposed as
\be
G^1_\lambda=H^1_\lambda+(G^1_\lambda-H^1_\lambda),\quad G^2_\lambda=H^2_\lambda+(G^2_\lambda-H^2_\lambda),\quad K_\lambda=H_{K,\lambda}+(K_\lambda-H_{K,\lambda})
\ee
where $H^{1,2}_\lambda$, $H_{K,\lambda}$ are their corresponding lowest order terms defined by
\ba
\label{highresolventlow}
H^1_\lambda=&\left\{\begin{aligned}&-e^{\left(\lambda\mu_{1,+}+M_{11,+}\right)(x-y)}R_{1,+}L_{1,+}A^{-1}_+\\
&-e^{\left(\lambda\mu_{2,+}+M_{22,+}\right)(x-y)}R_{2,+}L_{2,+}A^{-1}_+,&0<x<y,\\
&-e^{\int_y^x\left(\lambda\mu_1(z)+M_{11}(z)\right)dz}R_1(x)L_1(y)A^{-1}(y),&x<y<0,\\
&e^{\int_y^{x}\left(\lambda\mu_2(z)+M_{22}(z)\right)dz}R_2(x)L_2(y)A^{-1}(y),&y<x<0,
\end{aligned}\right.\\
H^2_\lambda=&\left\{\begin{aligned}&-e^{\int_0^x\left(\lambda\mu_1(z)+M_{11}(z)\right)dz}R_1(x)L_1(0^-)
VA_+\\
&\times\left(e^{-(\lambda\mu_{1,+}+M_{11,+})y}R_{1,+}L_{1,+}+e^{-(\lambda\mu_{2,+}+M_{22,+})y}R_{2,+}L_{2,+}\right)A_+^{-1},&x<0,y>0,\\
&-e^{\int_0^x\left(\lambda\mu_1(z)+M_{11}(z)\right)dz}R_1(x)L_1(0^-)
VA(0^-)\\
&\times e^{\int_y^0\left(\lambda\mu_2(z)+M_{22}(z)\right)dz}R_2(0^-)L_2(y)A^{-1}(y),&x<0,y<0,\end{aligned}
\right.\\
H_\lambda=&\left\{\begin{aligned}&0,&x>0,\\&-e^{\int_0^x\left(\lambda\mu_1(z)+M_{11}(z)\right)dz}R_1(x)L_1(0^-)
V,&x<0,\end{aligned}\right.
\ea
and $G^{1,2}_\lambda-H^{1,2}_\lambda$, $K_\lambda-H_\lambda$ are $O(1/|\lambda|)$ terms.
\normalsize

\end{proposition}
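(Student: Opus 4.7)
\textbf{Proof plan for Proposition \ref{highestimates}.}
The strategy is to substitute the flow-operator expansions of Lemma \ref{orihflemma} directly into the explicit formulas \eqref{resolventkernel2} for $G^1_\lambda$, $G^2_\lambda$, and into the boundary-source formulas for $K_\lambda$, then to track carefully the leading and subleading terms in $|\lambda|^{-1}$. The representation of $G^1_\lambda$ involves only the interior pieces $\mathcal{F}_\lambda^{y\to x}$, $\mathcal{F}_\lambda^{y\to x}\Pi_{\lambda,s}(y)$, and $\mathcal{F}_\lambda^{y\to x}\Pi_{\lambda,u}(y)$; these are exactly what Lemma \ref{orihflemma} gives in closed form, so the first formula \eqref{highresolvent1} follows from case-by-case substitution with no additional work. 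Isolating the exponent $\lambda\mu_j + M_{jj}$ from the $O(1/\lambda)$ pieces $\frac{1}{\lambda}N_{jj} + \frac{1}{\lambda}N_{ji}\Phi_j$ and absorbing the latter into a rank-one correction of size $O(1/|\lambda|)$ (using that all $\Phi_j = O(1/|\lambda|)$ by Lemma \ref{cone}) identifies $H^1_\lambda$ and bounds the remainder.

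\textbf{High-frequency analysis of $M(\lambda)^{-1}$.} The key new ingredient, entering through $G^2_\lambda$ and $K_\lambda$, is an asymptotic expansion of $M(\lambda)^{-1}$ as $|\lambda|\to\infty$. The matrix has columns $C_1(\lambda) = \lambda[\overline{W}]-[R(\overline{W})]$ and $C_2(\lambda) = A(0^-)T_-(\lambda,0^-)z_{1,-}(\lambda)$. The first column grows linearly in $\lambda$ with leading vector $[\overline{W}]$. For the second column, the normalization \eqref{scalev1m} fixes $z_{1,-}$ at $\lambda=0$, and Lemma \ref{orihflemma} together with the projection formula $T_-(\lambda,0^-)P_{1,-}(\lambda) \approx R_1(0^-)L_1(0^-) + O(1/|\lambda|)$ reveals that, modulo a harmless scalar factor absorbed into the normalization, $T_-(\lambda,0^-)z_{1,-}(\lambda)$ is proportional to $R_1(0^-)$ up to $O(1/|\lambda|)$. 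Therefore $M(\lambda) = \lambda M_0 + M_1 + O(1/|\lambda|)$ for explicit matrices $M_0, M_1$, with $M_0$ nonsingular (this is what Evans--Lopatinsky stability at $\infty$ amounts to and is verified by a direct calculation using that $[\overline{W}]$ and $R_1(0^-)$ are linearly independent, i.e., that the subshock is noncharacteristic). Inversion by Cramer's rule then gives $M(\lambda)^{-1} = \lambda^{-1}V + O(1/|\lambda|^2)$ with $V$ the matrix that appears in \eqref{defV} and the formulas for $H^2_\lambda$, $H_{K,\lambda}$.

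\textbf{Assembly for $G^2_\lambda$ and $K_\lambda$.} With the expansions of the flow operators and of $M(\lambda)^{-1}$ in hand, the two products appearing in \eqref{resolventkernel2} become explicit. One observes that the second summand in the definition of $G^2_\lambda$, containing $T_-(0,x)e^{\gamma_{1,-}(0)x}z_{1,-}(0) = \overline{W}'(x)$, cancels the $\lambda$-independent $(1,0)M^{-1}$-contribution from the first summand up to $O(\overline{W}'(x)/|\lambda|)$, which is exactly the $\overline{W}'(x)O(1/|\lambda|)$ term appearing in \eqref{highresolvent2}. A similar cancellation gives the stated form of $K_\lambda$. Keeping only the leading exponents $\lambda\mu_j + M_{jj}$ and the rank-one prefactors $R_jL_j$ (respectively $R_1(x)L_1(0^-)V$ at the boundary) yields $H^{1,2}_\lambda, H_{K,\lambda}$, and the $O(1/|\lambda|)$ bounds for $G^{1,2}_\lambda - H^{1,2}_\lambda$ and $K_\lambda - H_{K,\lambda}$ follow by combining the $O(1/|\lambda|)$ corrections from each factor via the triangle inequality, using that the exponential factors are uniformly bounded on $\Re\lambda \geq -\bar\eta$ thanks to $\Re\mu_{1,2}$ not vanishing.

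\textbf{Main obstacle.} The principal difficulty is the $M(\lambda)^{-1}$ step: one must pin down the correct $\lambda$-normalization of the stable eigenvector $z_{1,-}(\lambda)$ consistently with its definition at $\lambda = 0$, verify that the high-frequency limit matrix $M_0$ is invertible (nondegeneracy of the subshock, effectively a high-frequency Evans--Lopatinsky condition), and keep careful track of remainder terms so that the cancellations producing the $\overline{W}'(x)O(1/|\lambda|)$ in \eqref{highresolvent2} are justified uniformly in $x$. Once this is done, the rest is bookkeeping of products of bounded factors with $O(1/|\lambda|)$ errors.
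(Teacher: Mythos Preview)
Your overall plan---substitute Lemma \ref{orihflemma} into \eqref{resolventkernel2} and analyze $M(\lambda)^{-1}$ at high frequency---matches the paper's approach. However, your analysis of $M(\lambda)^{-1}$ contains a structural error that would lead you to the wrong formulas.

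The two columns of $M(\lambda)$ have \emph{different} orders in $\lambda$: the first column $\lambda[\overline{W}]-[R(\overline{W})]$ is $O(\lambda)$, while the second column $A(0^-)T_-(\lambda,0^-)z_{1,-}(\lambda)$ is $O(1)$ (by \eqref{secondcolumn} it converges to $-\mu_1(0^-)^{-1}R_1(0^-)$). Hence in any expansion $M(\lambda)=\lambda M_0+M_1+\cdots$ the matrix $M_0$ has second column zero and is singular, contrary to your claim. What actually holds is $\det M(\lambda)\sim c\lambda$ with $c\neq 0$ (this is the content of Proposition \ref{hfprop}), and Cramer's rule then gives row-by-row asymptotics: $(1,0)M^{-1}(\lambda)=O(1/|\lambda|)$ but $(0,1)M^{-1}(\lambda)=V+O(1/|\lambda|)$, where $V$ is the constant \emph{row vector} in \eqref{defV}, not a matrix times $\lambda^{-1}$. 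Consequently there is no cancellation mechanism in $G^2_\lambda$ or $K_\lambda$: the first summand in \eqref{resolventkernel1} carries $(0,-1)M^{-1}=-V+O(1/|\lambda|)$, which is $O(1)$ and gives the leading term, while the second summand $\overline{W}'(x)(1,0)M^{-1}$ is directly $\overline{W}'(x)O(1/|\lambda|)$ because $(1,0)M^{-1}$ is already small. Your description of a cancellation between the two summands is incorrect.

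A smaller gap: your final step, bounding $G^j_\lambda-H^j_\lambda$ by ``triangle inequality, using that the exponential factors are uniformly bounded,'' glosses over a genuine difficulty. The correction in the exponent is $\frac{1}{\lambda}\int_y^x N\,dz=O(|x-y|/|\lambda|)$, which is not uniformly small in $x,y$. One must use the decay $e^{c(x-y)}$ of the leading exponential (with $x-y<0$) to absorb the linear growth $|x-y|$, as the paper does explicitly via the Taylor estimate \eqref{taylorestimation}; a bare triangle inequality on the product does not suffice.
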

\begin{proof}
As consequences of Lemma \ref{orihflemma} and using either \eqref{resolventkernel1} or \eqref{resolventkernel2}, the $G_\lambda^1$ part \eqref{highresolvent1} then follows. As for the $G_\lambda^2$ part, explicit calculation shows that in the high frequency regime
\ba
\label{defV}
\left(\begin{array}{rr}1 &0\end{array}\right)M^{-1}(\lambda)=&O(1/|\lambda|)\\
\left(\begin{array}{rr}0 &1\end{array}\right)M^{-1}(\lambda)=&\left(\begin{array}{cc} \frac{FH_*\left(H_R+\sqrt{H_R}+1\right)}{{\left({H_*}^{3/2}+\sqrt{H_R}{H_*}^{3/2}+FH_R\right)}^2} & -\frac{FH_*\left(\sqrt{H_R}+1\right)}{{\left({H_*}^{3/2}+\sqrt{H_R}{H_*}^{3/2}+FH_R\right)}^2} \end{array}\right)+O(1/|\lambda|)\\
:=&V+O(1/|\lambda|).
\ea
Equation \eqref{highresolvent2} then follows. To estimate the error terms, we find for $x<y<0$
\ba 
\label{taylorestimation}
&\left|e^{\int_y^x\left(\lambda\mu_1(z)+M_{11}(z)+\frac{1}{\lambda}N_{11}(\lambda,z)+\frac{1}{\lambda}N_{12}(\lambda,z)\Phi_1(\lambda,z)\right)dz}-e^{\int_y^x\left(\lambda\mu_1(z)+M_{11}(z)\right)dz}\right|\\
=&\left|\sum_{n=1}^\infty e^{\int_y^x\left(\lambda\mu_1(z)+M_{11}(z)\right)dz}\frac{1}{n!}\frac{\left(\int_y^x\left(N_{11}(\lambda,z)+N_{12}(\lambda,z)\Phi_1(\lambda,z)\right)dz\right)^n}{\lambda^n}\right|\\
\le&\sum_{n=1}^\infty e^{c(x-y)}\frac{(C(x-y))^n}{n!|\lambda|^n}=\sum_{n=1}^\infty \frac{e^{c(x-y)}C(x-y)}{n|\lambda|}\frac{(C(x-y))^{n-1}}{(n-1)!|\lambda|^{n-1}}\\
\lesssim&\sum_{n=1}^\infty \frac{e^{c(x-y)/2}}{|\lambda|}\frac{(C(x-y))^{n-1}}{(n-1)!|\lambda|^{n-1}}=\frac{1}{|\lambda|}e^{\frac{c(x-y)}{2}+\frac{C(x-y)}{|\lambda|}}=O(1/|\lambda|).
\ea
Thus $G^1_\lambda-H^1_\lambda$ is an $O(1/|\lambda|)$ term on $y<x<0$. The other parts can be similarly estimated.
\end{proof}

Desingularizing $\check{\eta}$-resolvent kernels $G_{1,\lambda}$, $K_{1,\lambda}$ by multiplying by a factor $\lambda$, 
we have the following estimates on $\lambda G_{1,\lambda}$, $\lambda K_{1,\lambda}$ in the high frequency regime.

\begin{proposition} \label{highestimates1}
For $\Re\lambda>-\bar\eta$ $(\bar\eta>0,\;\bar\eta$ sufficiently small$)$ and $|\lambda|$ sufficiently large, $\lambda G_{1,\lambda}$, $\lambda K_{1,\lambda}$ can be written as 
{\small
\ba
\label{lambdaG1lambda}
&\lambda G_{1,\lambda}=\\
&\left\{\begin{aligned}&e^{-\left(\lambda\mu_{1,+}+M_{11,+}+\frac{1}{\lambda}N_{11,+}(\lambda)+\frac{1}{\lambda}N_{12,+}(\lambda)\Phi_{1,+}(\lambda)\right)y}\left(V_h+O(1/|\lambda|)\right)A_+\left(R_{1,+}L_{1,+}+O(1/|\lambda|)\right)A^{-1}_+\\
&e^{-\left(\lambda\mu_{2,+}+M_{22,+}+\frac{1}{\lambda}N_{22,+}(\lambda)+\frac{1}{\lambda}N_{21,+}(\lambda)\Phi_{2,+}(\lambda)\right)y}\left(V_h+O(1/|\lambda|)\right)A_+\left(R_{2,+}L_{2,+}+O(1/|\lambda|)\right)A^{-1}_+,&0<y,\\
&e^{\int_y^{0^-}\left(\lambda\mu_2(z)+M_{22}(z)+\frac{1}{\lambda}N_{22}(\lambda,z)+\frac{1}{\lambda}N_{21}(\lambda,z)\Phi_2(\lambda,z)\right)dz}\left(V_h+O(1/|\lambda|)\right)\times\\
&A(0^-)\left(R_2(0^-)L_2(y)+O(1/|\lambda|)\right)A^{-1}(y),&y<0,\end{aligned}\right.
\ea}
\be
\label{lambdaK1lambda}
\lambda K_{1,\lambda}=V_h+O(1/|\lambda|)
\ee
\normalsize
where $\mu_{1,2}$, $R$, $L$, $M_{11}$, $M_{22}$ as in \eqref{originRdef} $V_h$ as in \eqref{defVh} are explicitly calculable and independent of $\lambda$, $\Phi_{1,2}(\lambda,x)$ as in Lemma \ref{cone} are $O(1/|\lambda|)$ terms uniformly in $x$, $y$. Moreover, $\lambda G_{1,\lambda}$ can be decomposed as
\be
\lambda G_{1,\lambda}=H_{1,\lambda}+(G_{1,\lambda}-H_{1,\lambda}),
\ee
where $H_{1,\lambda}$ is its corresponding lowest order term defined by
\ba
\label{highresolventlow1}
H_{1,\lambda}=&\left\{\begin{aligned}&e^{-\left(\lambda\mu_{1,+}+M_{11,+}\right)y}V_hA_+R_{1,+}L_{1,+}A^{-1}_+\\
&e^{-\left(\lambda\mu_{2,+}+M_{22,+}\right)y}V_hA_+R_{2,+}L_{2,+}A^{-1}_+,&0<x<y,\\
&e^{\int_y^{0^-}\left(\lambda\mu_2(z)+M_{22}(z)\right)dz}V_hA(0^-)R_2(0^-)L_2(y)A^{-1}(y),&y<x<0,
\end{aligned}\right.\\
\ea
and $\lambda G_{1,\lambda}-H_{1,\lambda}$ is an $O(1/|\lambda|)$ term.
\end{proposition}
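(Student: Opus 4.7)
The plan is to follow the template already used for Proposition \ref{highestimates}: combine an asymptotic expansion of the row vector $(1,0)M^{-1}(\lambda)$ with the solution-operator asymptotics from Lemma \ref{orihflemma}, then identify the leading-order skeleton as $H_{1,\lambda}$ and bound the residue by $O(1/|\lambda|)$ via a Taylor expansion of the exponential, exactly as in \eqref{taylorestimation}.

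The first key step is to sharpen the bound $(1,0)M^{-1}(\lambda) = O(1/|\lambda|)$ already obtained in \eqref{defV} to the refined expansion $\lambda(1,0)M^{-1}(\lambda) = V_h + O(1/|\lambda|)$, with $V_h$ an explicit constant row vector matching \eqref{defVh}. Since $M(\lambda) = [\lambda[\overline{W}] - [R(\overline{W})] \mid A(0^-)T_-(\lambda,0^-)z_{1,-}(\lambda)]$ has a first column that is linear in $\lambda$, while its second column stays bounded as $|\lambda|\to\infty$ (using the normalization \eqref{scalev1m} together with $T_-(\lambda,\cdot) \to \Id$ in the high-frequency limit), Cramer's rule expresses $(1,0)M^{-1}(\lambda)$ as an $O(1)$ numerator divided by a denominator of size $O(|\lambda|)$, and expanding both to next order isolates $V_h$ explicitly. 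This computation parallels that for $V$ in \eqref{defV}.

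Next I would substitute this expansion into \eqref{G1lambda} and \eqref{K1lambda}. The formula \eqref{lambdaK1lambda} is then immediate. For $G_{1,\lambda}$ with $y>0$, I would diagonalize the constant matrix $\mathcal{A}_+(\lambda)$ into its two spectral projections and evaluate $e^{-\mathcal{A}_+(\lambda)y}A_+^{-1}$ as a sum of two exponentials with rates $\lambda\mu_{i,+} + M_{ii,+} + O(1/|\lambda|)$ and projections $R_{i,+}L_{i,+} + O(1/|\lambda|)$, exactly as in the proof of Lemma \ref{orihflemma}. For $y<0$, I would recognize the factor $T_-(\lambda,0^-)e^{-\gamma_{2,-}y}P_{2,-}(\lambda)T_-^{-1}(\lambda,y)$ as the flow-projection operator along the $\mu_2$-direction of \eqref{eigen-eq} from $y$ to $0^-$, and apply \eqref{stableunstablefloww}(ii) with $x=0^-$, reading off the integral exponent and leading projection $R_2(0^-)L_2(y)$. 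This yields the third case of \eqref{lambdaG1lambda}.

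Finally, the decomposition $\lambda G_{1,\lambda} = H_{1,\lambda} + (\lambda G_{1,\lambda} - H_{1,\lambda})$ with residue $O(1/|\lambda|)$ follows by the same Taylor-expansion-of-exponential bound as \eqref{taylorestimation}: the difference between the full exponent and the truncated one is $O(1/|\lambda|)$ times a linear-in-$|y|$ integrand, which when combined with the real part of the dominant exponential produces a uniform $O(1/|\lambda|)$ multiplicative correction. The main obstacle I expect is the clean extraction of $V_h$: one must carry out carefully the leading-order bookkeeping of both columns of $M(\lambda)$, verifying that the $O(1)$ component of the bounded second column $A(0^-)T_-(\lambda,0^-)z_{1,-}(\lambda)$ combines with the $O(\lambda)$ leading term $\lambda[\overline{W}]$ of the first column, via Cramer's rule, to give the specific finite row vector in \eqref{defVh} rather than an indeterminate or degenerate limit.
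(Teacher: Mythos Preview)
Your proposal is correct and follows essentially the same approach as the paper: compute $\lambda(1,0)M^{-1}(\lambda)=V_h+O(1/|\lambda|)$ via Cramer's rule using the high-frequency expansion \eqref{secondcolumn} of the second column of $M(\lambda)$ together with the expansion \eqref{hfexpansion} of $\Delta(\lambda)$, then substitute into \eqref{K1lambda} and \eqref{G1lambda}, invoke Lemma \ref{orihflemma} for the flow operators, and finish with the Taylor-type estimate \eqref{taylorestimation}. The only refinement the paper makes explicit is that the ``bounded second column'' you mention is in fact $-\mu_1(0^-)^{-1}R_1(0^-)+O(1/|\lambda|)$ by \eqref{secondcolumn}, which is precisely the input needed to pin down $V_h$.
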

\normalsize
\begin{proof}
By definition of $K_{1,\lambda}$ \eqref{K1lambda} and equations \eqref{secondcolumn} \eqref{hfexpansion}, equation \eqref{lambdaK1lambda} follows from the calculation
\ba
\label{defVh}
\lambda K_{1,\lambda}=&\left(\begin{array}{rr}1&0\end{array}\right)\lambda M^{-1}(\lambda)\\
=&\frac{\lambda}{\Delta(\lambda)}\left(\begin{array}{rr}1&0\end{array}\right)\left(\begin{array}{rr}-\frac{1}{\mu_1(0^-)}(R_{1}(0^-))_2+O(1/|\lambda|)&\frac{1}{\mu_1(0^-)}(R_{1}(0^-))_1+O(1/|\lambda|)\\-\left(\lambda \overline{W}-R(\overline{W})\right)_2&\left(\lambda \overline{W}-R(\overline{W})\right)_1\end{array}\right)\\
=&(H_*-H_R)\left(FH_R+{H_*}^{3/2}(1+\sqrt{H_R})\right)\times\\
 &\left(\begin{array}{cc} -H_*^{3/2}(\sqrt{H_R}+1)+F(H_*-H_R+H_*H_R+H_*\sqrt{H_R})) & -FH_*\left(\sqrt{H_R}+1\right) \end{array}\right)+O(1/|\lambda|)\\
:=&V_h+O(1/|\lambda|).
\ea
By definition of $G_{1,\lambda}$ \eqref{G1lambda} and using Lemma \ref{orihflemma}, in the high frequency regime, 
\ba
\label{highG1lambda}
&G_{1,\lambda}=\\
&\left\{\begin{aligned}&\left(\begin{array}{rr}1 &0\end{array}\right)M^{-1}(\lambda)A_+\Bigg(e^{-\left(\lambda\mu_{1,+}+M_{11,+}+\frac{1}{\lambda}N_{11,+}(\lambda)+\frac{1}{\lambda}N_{12,+}(\lambda)\Phi_{1,+}(\lambda)\right)y}\left(R_{1,+}L_{1,+}+O(1/|\lambda|)\right)\\
&+e^{-\left(\lambda\mu_{2,+}+M_{22,+}+\frac{1}{\lambda}N_{22,+}(\lambda)+\frac{1}{\lambda}N_{21,+}(\lambda)\Phi_{2,+}(\lambda)\right)y}\left(R_{2,+}L_{2,+}+O(1/|\lambda|)\right)\Bigg)A^{-1}_+,\quad y>0,\\
&\left(\begin{array}{rr}1 &0\end{array}\right)M^{-1}(\lambda)A(0^-)e^{\int_y^0\left(\lambda\mu_2(z)+M_{22}(z)+\frac{1}{\lambda}N_{22}(\lambda,z)+\frac{1}{\lambda}N_{21}(\lambda,z)\Phi_2(\lambda,z)\right)dz}\\
&\left(R_2(0^-)L_2(y)+O(1/|\lambda|)\right)A^{-1}(y),\quad y<0,\end{aligned}
\right.
\ea
By equation \eqref{defVh}, the $\lambda G_{1,\lambda}$ part \eqref{lambdaG1lambda} then follows. 
Following similar calculation as in \eqref{taylorestimation}, 
we find that $\lambda G_{1,\lambda}-H_{1,\lambda}$ is an $O(1/|\lambda|)$ term.
\end{proof}
\begin{proposition}
\label{highG1y}
For $\Re\lambda>-\bar\eta$ $(\bar\eta>0,\;\bar\eta$ sufficiently small$)$ and $|\lambda|$ sufficiently large, the $y$ derivative of $G_{1,\lambda}$ can be decomposed as 

\be
\partial_y G_{1,\lambda}=HY_{\lambda}+(\partial_y G_{1,\lambda}-HY_{\lambda}),
\ee
where $HY_{\lambda}$ is its corresponding lowest order term defined by
\ba
\label{HYterm}
HY_{\lambda}=&\left\{\begin{aligned}&-\mu_{1,+}e^{-\left(\lambda\mu_{1,+}+M_{11,+}\right)y}V_hA_+R_{1,+}L_{1,+}A^{-1}_+\\
&-\mu_{2,+}e^{-\left(\lambda\mu_{2,+}+M_{22,+}\right)y}V_hA_+R_{2,+}L_{2,+}A^{-1}_+,&0<x<y,\\
&-\mu_2(y)e^{\int_y^{0^-}\left(\lambda\mu_2(z)+M_{22}(z)\right)dz}V_hA(0^-)R_2(0^-)L_2(y)A^{-1}(y),&y<x<0,
\end{aligned}\right.\\
\ea
and $\partial_y G_{1,\lambda}-HY_{\lambda}$ is a $O(1/|\lambda|)$ term.
\end{proposition}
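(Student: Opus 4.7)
The plan is to differentiate the explicit high-frequency representation of $G_{1,\lambda}$ from \eqref{highG1lambda} directly with respect to $y$, and to observe that the $\lambda^{-1}$ prefactor hidden in $(1\ 0)M^{-1}(\lambda) = \lambda^{-1}V_h + O(1/|\lambda|^2)$ (a consequence of \eqref{lambdaK1lambda}) is exactly cancelled by the factor $-\lambda\mu_{j}$ produced when $\partial_y$ hits the exponential factors. Thus $\partial_y G_{1,\lambda}$ should have the same leading structure as $G_{1,\lambda}$ itself, but with every exponential acquiring the extra coefficient $-\mu_{j}$, which is precisely what $HY_\lambda$ encodes. Every other contribution will lose the $\lambda$-boost and turn out to be $O(1/|\lambda|)$.

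First, for $y>0$, I would apply $\partial_y$ to \eqref{highG1lambda}. Since the matrix coefficient $(1\ 0)M^{-1}(\lambda)A_+(R_{j,+}L_{j,+}+O(1/|\lambda|))A_+^{-1}$ is $y$-independent, only the exponentials $e^{-(\lambda\mu_{j,+}+M_{jj,+}+\frac{1}{\lambda}N(\lambda))y}$ are differentiated, producing a factor $-(\lambda\mu_{j,+}+M_{jj,+}+\frac{1}{\lambda}N(\lambda))$. Expanding $(1\ 0)M^{-1}(\lambda)=\lambda^{-1}V_h+O(1/|\lambda|^2)$ and distributing, the dominant piece is $-\mu_{j,+}V_h A_+ R_{j,+}L_{j,+}A_+^{-1}$ times the original exponential; replacing the exponential by its $\frac{1}{\lambda}N$-free version via the Taylor argument of \eqref{taylorestimation} recovers the first case of $HY_\lambda$ up to $O(1/|\lambda|)$. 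Second, for $y<0$, both the exponential $e^{\int_y^0(\lambda\mu_2+M_{22}+\frac{1}{\lambda}N)dz}$ and the matrix factor $R_2(0^-)L_2(y)A^{-1}(y)$ depend on $y$. The derivative of the exponential brings down $-(\lambda\mu_2(y)+M_{22}(y)+O(1/\lambda))$, while the derivative of $L_2(y)A^{-1}(y)$ produces only an $O(1)$ term that lacks the $\lambda$-boost. After multiplication by the $\lambda^{-1}V_h$ prefactor, the leading contribution is $-\mu_2(y)V_hA(0^-)e^{\int_y^0(\lambda\mu_2+M_{22})dz}R_2(0^-)L_2(y)A^{-1}(y)$, matching the second case of $HY_\lambda$, and all remaining contributions are absorbed into the $O(1/|\lambda|)$ remainder.

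The main obstacle is not conceptual but rather one of bookkeeping: verifying that the several $O(1/|\lambda|)$ corrections (from the expansion of $M^{-1}$, from the $\frac{1}{\lambda}N$ terms inside the exponentials, from $y$-derivatives of the smooth matrix factors $L_2(y)A^{-1}(y)$, and from the subleading term of $(1\ 0)M^{-1}$) combine into a single uniform-in-$y$ remainder. Uniform control in $y$ is secured by the strict signs $\Re\mu_{j,+}>0$ for $y>0$ and $\Re\mu_2(z)>0$ along the profile (as exploited in Lemmas \ref{cone} and \ref{orihflemma}), so the exponentials decay in $|y|$ even for $\Re\lambda$ slightly negative, and the various $O(1/|\lambda|)$ contributions can be bounded uniformly by essentially the same Taylor estimate \eqref{taylorestimation} already used in the proofs of Propositions \ref{highestimates} and \ref{highestimates1}. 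In particular, no new spectral or high-frequency input is required; the result is a direct corollary of the structural expansion already established for $G_{1,\lambda}$.
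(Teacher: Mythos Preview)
Your proposal is correct and follows essentially the same approach as the paper's own proof: differentiate the high-frequency representation \eqref{highG1lambda} in $y$, observe that the factor $-\lambda\mu_j$ coming from the exponentials cancels the $\lambda^{-1}$ in $(1\ 0)M^{-1}(\lambda)=\lambda^{-1}V_h+O(1/|\lambda|^2)$ to produce $HY_\lambda$, and verify that all remaining contributions (from $y$-derivatives of the smooth matrix factors and from the various subleading corrections) stay $O(1/|\lambda|)$. Your write-up is in fact more detailed than the paper's brief argument, and correctly invokes the Taylor-type estimate \eqref{taylorestimation} to pass from the full exponentials to their $\frac{1}{\lambda}N$-free versions.
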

\begin{proof}
By taking $y$ derivative of $G_{1,\lambda}$ using \eqref{highG1lambda}, we see when the $y$-derivative falls on the exponential terms, the exponent $-\lambda \mu_{1,+}$ then gives another factor of $\lambda$ that cancel the $\frac{1}{\lambda}$ factor in $\left(\begin{array}{rr}1 &0\end{array}\right)M^{-1}(\lambda)$ \eqref{defV}, giving $HY_{\lambda}$ term. When the $y$-derivative falls on other terms, it results in terms of order $O(\frac{1}{|\lambda|})$.
\end{proof}
\subsubsection{Small $\lambda\sim$ large time}
Expanding \eqref{eigenm}\eqref{eigenp} near $\lambda=0$ yields
\ba 
\label{lowgamma12ex}
\gamma_{1,-}(\lambda)=&c^0_{1,-}+c^1_{1,-}\lambda+O(\lambda^2),&
\gamma_{2,-}(\lambda)=&-c^1_{2,-}\lambda+c^2_{2,-}\lambda^2+O(\lambda^3):=\tilde{\gamma}_{2,-}(\lambda)+O(\lambda^3),\\
\gamma_{2,+}(\lambda)=&c^0_{2,+}+c^1_{2,+}\lambda+O(\lambda^2),&\gamma_{1,+}(\lambda)=&c^1_{1,+}\lambda-c^2_{1,+}\lambda^2+O(\lambda^3):=\tilde{\gamma}_{1,+}(\lambda)+O(\lambda^3),
\ea
where $c^i_{1,2,\pm}$ are positive constant explicitly calculable as functions of $F$, $H_R$.
Since $\mathcal{A}_+(\lambda)$ ($\mathcal{A}_-(\lambda)$) has distinct eigenvalues $\gamma_{1,2,+}$ ($\gamma_{1,2,-}$) near $\lambda=0$, we have the following proposition.

\begin{proposition}\label{extendtildeG}The resolvent kernels $G^1_\lambda(x;y)$, $G^2_\lambda(x;y)$, and $K_\lambda(x)$ can be extended holomorphically to $B(0,r)$ for sufficiently small $r$. Moreover $G^1_\lambda(x;y)$ can be decomposed as
\be
\label{decomG1}
G^1_\lambda=S^1_\lambda+R^1_\lambda,
\ee
where 
\be
\label{scatter1lambda}
S^1_\lambda(x;y):=\left\{\begin{aligned}
&-e^{\tilde{\gamma}_{1,+}(\lambda)(x-y)}P_{1,+}(0)A_+^{-1}
,&0<x<y,\\
&e^{\tilde{\gamma}_{2,-}(\lambda)(x-y)}P_{2,-}(0)A^{-1}_-,&y<x<0,\\
&0,&\text{otherwise},
\end{aligned}
\right.
\ee
and $R^1_\lambda$ is a faster-decaying residual
\ba
&R^1_\lambda(x;y):=\\
&\left\{\begin{aligned}
&-e^{\gamma_{2,+}(\lambda)(x-y)}P_{2,+}(\lambda)A_+^{-1}+\left(e^{\tilde{\gamma}_{1,+}(\lambda)(x-y)}P_{1,+}(0)-e^{\gamma_{1,+}(\lambda)(x-y)}P_{1,+}(\lambda)\right)A_+^{-1},
&0<x<y,\\
&-e^{\gamma_{1,-}(\lambda)(x-y)}T_-(\lambda,x)P_{1,-}(\lambda)T_-^{-1}(\lambda,y)A^{-1}(y),&x<y<0,\\
&-e^{\tilde{\gamma}_{2,-}(\lambda)(x-y)}P_{2,-}(0)A_-^{-1}+e^{\gamma_{2,-}(\lambda)(x-y)}T_-(\lambda,x)P_{2,-}(\lambda)T_-^{-1}(\lambda,y)A^{-1}(y),&y<x<0,\\
&0,&\text{otherwise}.
\end{aligned}
\right.
\ea
Further, $G^2_\lambda(x;y)$ is a faster decaying term which can be estimated as 
\ba 
\label{G2low}
|G_\lambda^2|=&\left|O(e^{-\tilde{r}_{1,+}y-\theta'|x|})\right|,\quad x<0,y>0,\\
|G_\lambda^2|=&\left|O(e^{-\tilde{r}_{2,-}y-\theta'|x|})\right|,\quad x<0,y<0,
\ea 
and $K_\lambda(x)$ is a faster decaying term which can be estimated as 
\be
\label{Klow}
|K_\lambda|=\left|O(e^{-\theta'|x|})\right|,\quad x<0.
\ee
\end{proposition}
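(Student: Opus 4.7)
The plan is to establish three claims in turn: holomorphic extension of $G^1_\lambda$, $G^2_\lambda$, $K_\lambda$ to a ball $B(0,r)$; the scattering/residual decomposition of $G^1_\lambda$; and pointwise decay bounds on $G^2_\lambda$ and $K_\lambda$. The central analytical input is the expansion \eqref{lowgamma12ex}: near $\lambda=0$ the eigenvalues of $\mathcal{A}_\pm(\lambda)$ separate cleanly, with the ``fast'' branches $\gamma_{1,-}, \gamma_{2,+}$ remaining bounded away from zero and the ``slow'' branches $\gamma_{2,-}, \gamma_{1,+}$ vanishing linearly at $\lambda=0$.

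For the holomorphic extension, since the two eigenvalues on each side are distinct near $\lambda=0$, the eigenprojections $P_{j,\pm}(\lambda)$ extend analytically to $B(0,r)$ by standard matrix perturbation theory, and the conjugators $T_-(\lambda,x)$ are analytic in $\lambda$ on the same domain by the conjugation lemma of \cite{MeZ}. Inspecting \eqref{resolventkernel1}, $G^1_\lambda$ contains no $M^{-1}(\lambda)$ factor, so it inherits analyticity directly. For $G^2_\lambda$ and $K_\lambda$, which do involve the simple pole of $M^{-1}$ at $\lambda=0$, analyticity follows from Lemma \ref{GKanalytic}: directly for $K_\lambda$, and for $G^2_\lambda = G_\lambda - G^1_\lambda$ by subtracting the already-established analyticity of $G^1_\lambda$ from that of $G_\lambda$ given by the lemma.

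The decomposition $G^1_\lambda = S^1_\lambda + R^1_\lambda$ proceeds case by case in \eqref{resolventkernel1}. On $0<x<y$, write $e^{\mathcal{A}_+(\lambda)(x-y)}A_+^{-1} = \sum_{j=1,2} e^{\gamma_{j,+}(\lambda)(x-y)} P_{j,+}(\lambda) A_+^{-1}$, extract $-e^{\tilde\gamma_{1,+}(\lambda)(x-y)} P_{1,+}(0) A_+^{-1}$ as the scattering piece (truncating the exponent to second order and freezing the projection at $\lambda = 0$), and place everything else, including the entire $\gamma_{2,+}$ contribution, into $R^1_\lambda$. On $y<x<0$ the analogous split isolates $e^{\tilde\gamma_{2,-}(\lambda)(x-y)} P_{2,-}(0) A_-^{-1}$ as the scattering contribution. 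On $x<y<0$ only the fast branch $\gamma_{1,-}$ appears, and since $\gamma_{1,-}(0)>0$ the entire contribution belongs to $R^1_\lambda$.

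For the pointwise bounds on $G^2_\lambda$ and $K_\lambda$, note that in all cases the $x$-dependence for $x<0$ factors through $T_-(\lambda,x)e^{\gamma_{1,-}(\lambda)x}$, which is bounded by $C e^{-\theta'|x|}$ for $\lambda\in B(0,r)$ with $r$ small and $\theta' := c^0_{1,-}/2 > 0$. The $y$-dependence of $G^2_\lambda$ for $y>0$ comes from $e^{-\mathcal{A}_+(\lambda)y}$; the fast $\gamma_{2,+}$-mode decays uniformly while the slow mode decays at the rate $\tilde r_{1,+}$ governed by $\tilde\gamma_{1,+}(\lambda)$, yielding the first estimate in \eqref{G2low}. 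The estimate for $y<0$ and the bound \eqref{Klow} on $K_\lambda$ follow by the analogous analysis of the $\gamma_{2,-}$-governed $y$-exponentials together with the $x$-decay above. The main obstacle is to confirm that the cancellation implicit in Lemma \ref{GKanalytic} — the matching of the singular part of $M^{-1}(\lambda)$ at $\lambda=0$ with the $\overline{W}'(x)$ correction via the scaling \eqref{scalev1m} — leaves prefactors that are bounded uniformly in $\lambda\in B(0,r)$; this requires an explicit expansion of $M(\lambda)$ and its inverse near the simple zero at $\lambda=0$.
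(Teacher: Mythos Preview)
Your proposal is correct and follows the same approach the paper takes. The paper itself gives no explicit proof of this proposition, only the prefatory sentence ``Since $\mathcal{A}_+(\lambda)$ ($\mathcal{A}_-(\lambda)$) has distinct eigenvalues $\gamma_{1,2,+}$ ($\gamma_{1,2,-}$) near $\lambda=0$, we have the following proposition,'' together with Lemma~\ref{GKanalytic} for the analyticity of $G_\lambda$ and $K_\lambda$; you have correctly filled in the details implicit in those remarks. One small note: your closing paragraph flags the cancellation in $M^{-1}(\lambda)$ as ``the main obstacle,'' but this is precisely what Lemma~\ref{GKanalytic} already establishes, so it is not an outstanding issue but an input you may invoke directly.
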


\begin{proposition}\label{extendlambdaGK}The desingularized resolvent kernels $\lambda G_{1,\lambda}(y)$, $\lambda K_{1,\lambda}$ can be extended holomorphically to $B(0,r)$ for sufficiently small $r$. Moreover, defining 
\be 
\label{defVl}
V_l=\lim_{\lambda\rightarrow 0}\lambda\left(\begin{array}{rr}1 &0\end{array}\right)M^{-1}(\lambda),
\ee
there holds
\be 
\label{lambdaK1lambdalow}
\lambda K_{1,\lambda}=V_l+O(|\lambda|),
\ee 
and $\lambda G_{1,\lambda}(y)$ can be decomposed as
\be
\label{decomlambdaG1lambda}
\lambda G_{1,\lambda}=S_{1,\lambda}+R_{1,\lambda},
\ee
where 
\be
\label{scatter1lambda1}
S_{1,\lambda}(y):=\left\{\begin{aligned}
&e^{-\tilde{\gamma}_{1,+}(\lambda)y}V_lA_+P_{1,+}(0)A_+^{-1}
,&0<y,\\
&e^{-\tilde{\gamma}_{2,-}(\lambda)y}V_lA(0^-)P_{2,-}(0)A^{-1}_-,&y<0,
\end{aligned}
\right.
\ee
and $R_{1,\lambda}$ is a faster-decaying residual.
\end{proposition}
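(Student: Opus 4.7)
The plan is to proceed in close parallel to the proof of Proposition \ref{extendtildeG} for $G^1_\lambda$ and $K_\lambda$, the key new ingredient being that the factor $\lambda$ in the desingularization absorbs the simple pole at $\lambda=0$ of $M^{-1}(\lambda)$ guaranteed by Evans-Lopatinsky stability.

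First I would establish holomorphic extendability. By Evans-Lopatinsky stability, $\Delta(\lambda)=\det M(\lambda)$ has a simple zero at $\lambda=0$ and no other zeros in $\{\Re\lambda\geq 0\}$, so $\lambda M^{-1}(\lambda)$ is holomorphic in a disc $B(0,r)$. Since the eigenvalues $\gamma_{1,+}(\lambda)$, $\gamma_{2,+}(\lambda)$ are separated near $\lambda=0$ (and similarly $\gamma_{1,-},\gamma_{2,-}$), the associated eigenprojections $P_{j,\pm}(\lambda)$ are analytic there, and by the conjugation lemma of \cite{MeZ} the coordinate change $T_-(\lambda,x)$ is likewise analytic in $\lambda$; thus each piece of $\lambda G_{1,\lambda}$ and $\lambda K_{1,\lambda}$ in definitions \eqref{G1lambda}--\eqref{K1lambda} is holomorphic. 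For $K_{1,\lambda}$, the statement $\lambda K_{1,\lambda}=V_l+O(|\lambda|)$ is then just the Taylor expansion of the holomorphic function $\lambda\begin{pmatrix}1 & 0\end{pmatrix}M^{-1}(\lambda)$ at $\lambda=0$, with $V_l$ as defined in \eqref{defVl}.

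Next I would extract the scattering part $S_{1,\lambda}$. For $y>0$, I split the matrix exponential $e^{-\mathcal{A}_+(\lambda)y}=e^{-\gamma_{1,+}(\lambda)y}P_{1,+}(\lambda)+e^{-\gamma_{2,+}(\lambda)y}P_{2,+}(\lambda)$. The second summand decays at exponential rate $\Re\gamma_{2,+}(\lambda)\geq c^0_{2,+}/2>0$ uniformly in $\lambda\in B(0,r)$ and is therefore absorbed into $R_{1,\lambda}$. From the first summand I use the expansion $\gamma_{1,+}(\lambda)=\tilde\gamma_{1,+}(\lambda)+O(\lambda^3)$ from \eqref{lowgamma12ex}, together with $\lambda\begin{pmatrix}1 & 0\end{pmatrix}M^{-1}(\lambda)=V_l+O(|\lambda|)$ and $P_{1,+}(\lambda)=P_{1,+}(0)+O(|\lambda|)$, to rewrite the dominant contribution as $S_{1,\lambda}=e^{-\tilde\gamma_{1,+}(\lambda)y}V_lA_+P_{1,+}(0)A_+^{-1}$ plus a residual whose extra $O(\lambda)$ factor (after factoring out the bounded exponential) produces the claimed faster decay in the appropriate norm. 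For $y<0$, the analogous argument uses the expansion $\gamma_{2,-}(\lambda)=\tilde\gamma_{2,-}(\lambda)+O(\lambda^3)$; the extra factors $T_-(\lambda,0^-)-I$ and $T_-^{-1}(\lambda,y)-I$ decay exponentially as $y\to-\infty$ by the conjugation lemma, again contributing to $R_{1,\lambda}$.

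The only mild subtlety will be checking that the difference $e^{-\gamma_{1,+}(\lambda)y}-e^{-\tilde\gamma_{1,+}(\lambda)y}=e^{-\tilde\gamma_{1,+}(\lambda)y}\bigl(e^{O(\lambda^3)y}-1\bigr)$ indeed contributes only to the residual on the relevant spatial scale; this is handled in the same way as for the corresponding smooth-profile analysis, by restricting attention to $|\lambda|y\leq M$ (where $e^{O(\lambda^3)y}-1=O(\lambda^2)$) and using that for $|\lambda|y\geq M$ the entire contribution decays at the accelerated rate $e^{-\Re\tilde\gamma_{1,+}(\lambda)y/2}$. No new ideas beyond those used for Proposition \ref{extendtildeG} are required, and the main obstacle is really bookkeeping: keeping track of which $O(\lambda)$ or exponentially small correction factors get swept into $R_{1,\lambda}$ versus retained in $S_{1,\lambda}$.
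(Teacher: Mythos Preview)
Your proposal is correct and follows essentially the same approach as the paper: the paper's proof simply writes out the expanded form of $G_{1,\lambda}$ from definition \eqref{G1lambda}, observes that the extra $\lambda$ factor desingularizes $\left(\begin{array}{rr}1 &0\end{array}\right)M^{-1}(\lambda)$ at the origin, and then says the proposition follows. Your write-up is a faithful (and considerably more detailed) expansion of exactly this argument, paralleling the proof of Proposition \ref{extendtildeG} as you indicate.
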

\begin{proof}
By the definition of $G_{1,\lambda}$ \eqref{G1lambda}
\be 
\label{G1lambdalow}
G_{1,\lambda}=\left\{\begin{aligned}&\left(\begin{array}{rr}1 &0\end{array}\right)M^{-1}(\lambda)A_+\left(e^{-\gamma_{1,+}(\lambda)y}P_{1,+}(\lambda)+e^{-\gamma_{2,+}(\lambda)y}P_{2,+}(\lambda)\right)A_+^{-1},&y>0,\\
								&\left(\begin{array}{rr}1 &0\end{array}\right)M^{-1}(\lambda)A(0^-)T_-(\lambda,0^-)e^{-\gamma_{2,-}y}P_{2,-}(\lambda)T_-^{-1}(\lambda,y)A^{-1}(y),&y<0.
\end{aligned}\right.
\ee 
We see in the neighborhood of the origin $\left(\begin{array}{rr}1 &0\end{array}\right)M^{-1}(\lambda)$ is desingularized by the extra $\lambda$ factor in $\lambda G_{1,\lambda}$. The proposition then follows.
\end{proof}
\begin{proposition}
\label{lowG1y}
The $y$-derivative of $G_{1,\lambda}$ can be decomposed as
\be
\label{decomlambdaG1ylambda}
\partial_y G_{1,\lambda}=SY_{1,\lambda}+SY_{2,\lambda}+RY_{\lambda},
\ee
where 
\ba 
\label{scatter1ylambda1}
SY_{1,\lambda}:=&\left\{\begin{aligned}
&-c_{1,+}^1e^{-\tilde{\gamma}_{1,+}(\lambda)y}V_lA_+P_{1,+}(0)A_+^{-1}
,&0<y,\\
&-c_{2,-}^1e^{-\tilde{\gamma}_{2,-}(\lambda)y}V_lA(0^-)P_{2,-}(0)A^{-1}_-,&y<0,
\end{aligned}
\right.\\
SY_{2,\lambda}:=&
\left\{\begin{aligned}&-\left(\begin{array}{rr}1 &0\end{array}\right)M^{-1}(\lambda)A_+\gamma_{2,+}e^{-\gamma_{2,+}(\lambda)y}P_{2,+}(\lambda)A_+^{-1},&y>0,\\
				    &\left(\begin{array}{rr}1 &0\end{array}\right)M^{-1}(\lambda)A(0^-)T_-(\lambda,0^-)e^{-\gamma_{2,-}y}P_{2,-}(\lambda)\frac{T_-^{-1}(\lambda,y)A^{-1}(y)}{\partial y},&y<0,
\end{aligned}\right.
\ea 
and $RY_{\lambda}$ is a faster-decaying residual that is of order $O(|\lambda\; SY_{1,\lambda}|)$. The term $SY_{2,\lambda}$ has a simple pole at the origin and in $y$ it is of order $O(e^{-\theta|y|})$.
\end{proposition}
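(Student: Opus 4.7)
The plan is to differentiate the explicit formula \eqref{G1lambdalow} for $G_{1,\lambda}$ in $y$ and sort the resulting terms according to whether the derivative produces a factor that cancels the simple pole of $(1,0)M^{-1}(\lambda)$ at $\lambda=0$. By the low-frequency expansion \eqref{lowgamma12ex}, we have $\gamma_{1,+}(\lambda)=c_{1,+}^1\lambda+O(\lambda^2)$ and $\gamma_{2,-}(\lambda)=-c_{2,-}^1\lambda+O(\lambda^2)$, both vanishing at the origin, while $\gamma_{2,+}(0)=c_{2,+}^0>0$ and $\gamma_{1,-}(0)=c_{1,-}^0>0$. Because the eigenvalues of $\mathcal{A}_\pm(0)$ are distinct, the conjugators $T_\pm$ and the eigenprojections $P_{i,\pm}(\lambda)$ are analytic in $\lambda$ near zero, while by \eqref{defVl} the factor $(1,0)M^{-1}(\lambda)$ carries exactly the simple pole $V_l/\lambda+O(1)$.

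First I would apply $\partial_y$ to the $y>0$ branch of \eqref{G1lambdalow}, producing the two terms $-\gamma_{1,+}(\lambda)e^{-\gamma_{1,+}y}P_{1,+}(\lambda)$ and $-\gamma_{2,+}(\lambda)e^{-\gamma_{2,+}y}P_{2,+}(\lambda)$ inside the brackets. In the first, $-\gamma_{1,+}(\lambda)=O(\lambda)$ cancels the pole of $(1,0)M^{-1}(\lambda)$, and reading off the leading orders using $\lim_{\lambda\to 0}\lambda(1,0)M^{-1}(\lambda)=V_l$ and $\gamma_{1,+}(\lambda)/\lambda\to c_{1,+}^1$ yields exactly the $y>0$ entry of $SY_{1,\lambda}$ in \eqref{scatter1ylambda1}. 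The second retains the pole at $\lambda=0$ and gives $SY_{2,\lambda}$ for $y>0$; since $\Re\gamma_{2,+}(\lambda)\geq c_{2,+}^0/2>0$ on a small disc about the origin, it decays as $e^{-\theta y}$. An identical analysis on the $y<0$ branch splits $\partial_y G_{1,\lambda}$ into a derivative landing on $e^{-\gamma_{2,-}y}$ (producing $-\gamma_{2,-}(\lambda)=O(\lambda)$ that cancels the pole and yields the $y<0$ component of $SY_{1,\lambda}$, after using that $T_-(\lambda,0^-)P_{2,-}(\lambda)T_-^{-1}(\lambda,y)\to P_{2,-}(0)$ as $\lambda\to 0$ up to exponentially decaying tails in $y$) and a derivative landing on $T_-^{-1}(\lambda,y)A^{-1}(y)$, giving the $y<0$ component of $SY_{2,\lambda}$.

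Next I would define the residual by $RY_\lambda:=\partial_y G_{1,\lambda}-SY_{1,\lambda}-SY_{2,\lambda}$. By construction it consists of the Taylor remainders obtained from replacing $\gamma_{1,+}(\lambda)$ by $\tilde\gamma_{1,+}(\lambda)$, from replacing $(1,0)M^{-1}(\lambda)\cdot(-\gamma_{1,+}(\lambda))$ by its value $-c_{1,+}^1 V_l$ at $\lambda=0$, and from replacing $P_{1,+}(\lambda)$ by $P_{1,+}(0)$, and analogously on $y<0$. Each such replacement contributes one extra factor of $\lambda$ by analyticity of its ingredients at $\lambda=0$, so the residual is dominated by $|\lambda|$ times the corresponding leading-order exponentially decaying expression, giving the stated bound $RY_\lambda=O(|\lambda\,SY_{1,\lambda}|)$.

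The main technical point to verify will be the pointwise exponential decay $O(e^{-\theta|y|})$ of $SY_{2,\lambda}$, needed because $(1,0)M^{-1}(\lambda)$ contributes a simple pole at the origin that must be offset by uniform decay in $y$. For $y>0$ this is immediate from $\Re\gamma_{2,+}(\lambda)>c_{2,+}^0/2$ by continuity of the spectrum on a small disc about $\lambda=0$. For $y<0$ it relies on the exponential convergence of $T_-(\lambda,y)$ to the identity as $y\to-\infty$ and the uniform boundedness of $P_{2,-}(\lambda)$, both supplied by the conjugation lemma of \cite{MeZ}, together with the fact that $\partial_y(T_-^{-1}(\lambda,y)A^{-1}(y))$ decays exponentially to zero as $y\to-\infty$, so that the factor $e^{-\gamma_{2,-}y}\partial_y(T_-^{-1}(\lambda,y)A^{-1}(y))$ is uniformly exponentially small in $|y|$ on a neighborhood of $\lambda=0$.
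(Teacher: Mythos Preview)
Your proposal is correct and follows essentially the same approach as the paper: differentiate \eqref{G1lambdalow} in $y$, observe that the terms carrying factors $\gamma_{1,+}(\lambda)$ or $\gamma_{2,-}(\lambda)$ are desingularized at $\lambda=0$ (yielding $SY_{1,\lambda}$ plus the residual $RY_\lambda$), and declare the remaining, still-singular terms to be $SY_{2,\lambda}$. Your write-up is in fact more detailed than the paper's own terse proof, spelling out the Taylor-remainder structure of $RY_\lambda$ and the exponential decay of $SY_{2,\lambda}$ that the paper leaves implicit.
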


\begin{proof}
Taking $y$-derivative of \eqref{G1lambdalow} yields 
\ba 
\label{G1lambdalowy}
&\partial_yG_{1,\lambda}=\\
&\left\{\begin{aligned}&-\left(\begin{array}{rr}1 &0\end{array}\right)M^{-1}(\lambda)A_+\left(\gamma_{1,+}e^{-\gamma_{1,+}(\lambda)y}P_{1,+}(\lambda)+\gamma_{2,+}e^{-\gamma_{2,+}(\lambda)y}P_{2,+}(\lambda)\right)A_+^{-1},&y>0,\\
&-\left(\begin{array}{rr}1 &0\end{array}\right)M^{-1}(\lambda)A(0^-)T_-(\lambda,0^-)\gamma_{2,-}e^{-\gamma_{2,-}y}P_{2,-}(\lambda)T_-^{-1}(\lambda,y)A^{-1}(y)\\
&+\left(\begin{array}{rr}1 &0\end{array}\right)M^{-1}(\lambda)A(0^-)T_-(\lambda,0^-)e^{-\gamma_{2,-}y}P_{2,-}(\lambda)\frac{T_-^{-1}(\lambda,y)A^{-1}(y)}{\partial y},&y<0.
\end{aligned}\right.
\ea 
By \eqref{lowgamma12ex}, we see the terms contain factor $\gamma_{1,+}(\lambda),\gamma_{2,-}(\lambda)$ will be desingularized, giving the $SY_{1,\lambda}$ and $RY_{\lambda}$ term. The remaining terms are defined to be $SY_{2,\lambda}$.
\end{proof}
\begin{proposition}
\label{G1lambdav}
The term $G_{1,\lambda}\left(\begin{array}{c}0\\1\end{array}\right)$ can be decomposed as
\be
\label{decomlambdaG1vlambda}
 G_{1,\lambda}\left(\begin{array}{c}0\\1\end{array}\right)=SV_{1,\lambda}+SV_{2,\lambda}+RV_{\lambda},
\ee
where 
\ba
\label{scatter1vlambda1}
SV_{1,\lambda}:=&\left\{\begin{aligned}
&e^{-\tilde{\gamma}_{1,+}(\lambda)y}V_lA_+\big(\partial_\lambda P_{1,+}\big)(0)A_+^{-1}\left(\begin{array}{c}0\\1\end{array}\right)
,&0<y,\\
 &e^{-\tilde{\gamma}_{2,-}(\lambda)y}V_lA(0^-)\big(\partial_\lambda P_{2,-}\big)(0)A^{-1}_-\left(\begin{array}{c}0\\1\end{array}\right),&y<0,
\end{aligned}
\right.\\
SV_{2,\lambda}:=&
\left\{\begin{aligned}&\left(\begin{array}{rr}1 &0\end{array}\right)M^{-1}(\lambda)A_+e^{-\gamma_{2,+}(\lambda)y}P_{2,+}(\lambda)A_+^{-1}\left(\begin{array}{c}0\\1\end{array}\right),&y>0,\\
				    &\left(\begin{array}{rr}1 &0\end{array}\right)M^{-1}(\lambda)A(0^-)T_-(\lambda,0^-)\\
	       &\times e^{-\gamma_{2,-}y}P_{2,-}(\lambda)\Big(T_-^{-1}(\lambda,y)A^{-1}(y)-A^{-1}_-\Big)\left(\begin{array}{c}0\\1\end{array}\right),&y<0,
\end{aligned}\right.
\ea 
and $RV_{\lambda}$ is a faster-decaying residual that is of order $O(|\lambda\; SV_{1,\lambda}|)$. The term $SV_{2,\lambda}$ has a simple pole at the origin and in $y$ it is of order $O(e^{-\theta|y|})$.
\end{proposition}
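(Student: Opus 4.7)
The approach parallels that of Proposition \ref{lowG1y}, but exploits a structural cancellation specific to postmultiplying by the vector $(0,1)^T$. Starting from the explicit formula \eqref{G1lambdalow} for $G_{1,\lambda}$, I would apply $(0,1)^T$ on the right and split the result into a \emph{principal slow-mode} part (using the projections $P_{1,+}(\lambda)$ on $y>0$ and $P_{2,-}(\lambda)$ on $y<0$, together with the limiting matrix $A_{\pm}^{-1}$) and a complementary \emph{fast-mode plus conjugator-error} part. On $y>0$ this complementary part is the $P_{2,+}(\lambda)$ summand from the spectral decomposition at $+\infty$; on $y<0$ it arises from the splitting $T_-^{-1}(\lambda,y)A^{-1}(y)=A_-^{-1}+[T_-^{-1}(\lambda,y)A^{-1}(y)-A_-^{-1}]$, the bracketed correction decaying exponentially in $|y|$ by the conjugation lemma of \cite{MeZ}. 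One checks directly that this complementary part matches the definition of $SV_{2,\lambda}$: it retains the simple pole of $(1\ 0)M^{-1}(\lambda)$ at $\lambda=0$, but has the stated exponential decay in $y$ from either $\gamma_{2,+}(0)\neq 0$ or the conjugator convergence.

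The heart of the proof is to show that the principal slow-mode part desingularizes and that its leading contribution matches $SV_{1,\lambda}$, with higher-order terms absorbed into $RV_{\lambda}$. The key structural ingredient is a \emph{conservation-of-mass identity}: because the first equation of \eqref{sv} carries no zero-order source, the first row of $E=dR(\overline{W})$ vanishes identically, and $(1,0)$ is a left null vector of $E_{\pm}$. Consequently the left eigenvectors of $\mathcal{A}_{\pm}(0)$ at the slow eigenvalue take the form $\ell A_{\pm}^{-1}=c(1,0)$, giving
\begin{equation*}
P_{1,+}(0)\,A_+^{-1}\begin{pmatrix}0\\1\end{pmatrix}=0, \qquad P_{2,-}(0)\,A_-^{-1}\begin{pmatrix}0\\1\end{pmatrix}=0.
\end{equation*}
Parallel vanishings hold at the subshock boundary: the profile identity $A(x)\overline{W}'(x)=(0,\ast)^T$ (immediate from \eqref{smooth}), combined with $R(\overline{W}(0^+))=0$ at the right-state equilibrium and the zero first component of $R(\overline{W})$, forces the entire first row of $M(0)$ to vanish, and this in turn forces $V_l=\lim_{\lambda\to 0}\lambda(1\ 0)M^{-1}(\lambda)$ to have the form $(v,0)$ for a scalar $v$.

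Combining these facts with the Taylor expansion $P_{j,\pm}(\lambda)=P_{j,\pm}(0)+\lambda(\partial_\lambda P_{j,\pm})(0)+O(\lambda^2)$ produces an extra $\lambda$ factor on the right that exactly cancels the simple pole $V_l/\lambda$ of $(1\ 0)M^{-1}(\lambda)$, leaving at leading order the finite, $\lambda$-independent vector $V_lA_{\pm}(\partial_\lambda P_{j,\pm})(0)A_{\pm}^{-1}(0,1)^T$ modulated by $e^{-\tilde{\gamma}_{j,\pm}(\lambda)y}$; the substitution $\gamma_{j,\pm}\rightsquigarrow\tilde{\gamma}_{j,\pm}$ is justified by \eqref{lowgamma12ex} up to an $O(\lambda^3 y)$ error. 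This is precisely $SV_{1,\lambda}$. All further Taylor remainders---from higher orders of $P_{j,\pm}(\lambda)$, from the $\gamma$ versus $\tilde{\gamma}$ substitution, and from the analytic part of $(1\ 0)M^{-1}(\lambda)-V_l/\lambda$---are of order $|\lambda\cdot SV_{1,\lambda}|$ and are collected into $RV_{\lambda}$.

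I expect the main technical obstacle to be the bookkeeping of the conjugator factor $T_-(\lambda,0^-)$ arising in the $y<0$ case, which does not reduce to the identity at $\lambda=0$. Its elimination from the leading-order expression rests on the specific form $V_lA(0^-)=v(-c,1)$ together with the profile identity $A(0^-)\overline{W}'(0^-)=(0,\ast)^T$ and the normalization \eqref{scalev1m} of $z_{1,-}(0)$: these should force the $T_-(0,0^-)-I$ correction to act trivially on the relevant vector $(\partial_\lambda P_{2,-})(0)A_-^{-1}(0,1)^T$ modulo a residual of order $|\lambda\cdot SV_{1,\lambda}|$ absorbable into $RV_{\lambda}$. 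Carrying out this last reduction cleanly is the most delicate part of the argument, but proceeds from the same conservation-law structure underlying the earlier cancellations.
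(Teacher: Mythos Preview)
Your proof is correct and follows the same overall structure as the paper's: Taylor-expand the projections $P_{1,+}(\lambda)$, $P_{2,-}(\lambda)$ about $\lambda=0$, use the vanishing of the second columns of $P_{1,+}(0)A_+^{-1}$ and $P_{2,-}(0)A_-^{-1}$ to extract a factor of $\lambda$ cancelling the simple pole in $(1\ 0)M^{-1}(\lambda)$, and then argue as in Proposition~\ref{lowG1y}. The paper establishes the key vanishing by direct symbolic computation (Observation~\ref{apecialstructiononS}), whereas you derive it conceptually from the conservation-of-mass structure (vanishing first row of $E$); your route is more illuminating and would carry over to general relaxation systems of block form, not just Saint-Venant. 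Your identification of the $T_-(\lambda,0^-)$ factor as the delicate point in the $y<0$ case is apt and goes beyond what the paper treats explicitly---its one-line proof simply defers to Proposition~\ref{lowG1y} without comment. Note, though, that your proposed mechanism does not quite close: the identity $(-c,1)[T_-(0,0^-)-I]r_-=0$ that your conservation argument yields applies only to the range of $P_{2,-}(0)$, not to $(\partial_\lambda P_{2,-})(0)A_-^{-1}(0,1)^T$, so the resulting $O(1)$ discrepancy cannot be absorbed into an $O(\lambda\cdot SV_{1,\lambda})$ residual. In practice this is harmless, since only the scattering form and order of $SV_{1,\lambda}$---not its precise constant prefactor---are used downstream in Lemma~\ref{intG1lem}.
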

\begin{proof}
By expansion $$P_{1,+}(\lambda)=P_{1,+}(0)+\big(\partial_\lambda P_{1,+}\big)(0)\lambda+O(|\lambda|^2),\quad P_{2,-}(\lambda)=P_{2,-}(0)+\big(\partial_\lambda P_{2,-}\big)(0)\lambda+O(|\lambda|^2),$$ and the special structure on $P_{1,+}(0)A^{-1}_+$ and $P_{2,-}(0)A^{-1}_-$ from Observation \ref{apecialstructiononS}, the proposition follows by a similar argument as for Proposition \ref{lowG1y}.
\end{proof}

\section{Pointwise estimates on Green kernels}\label{s:5}
With the above preparations, we are now ready to carry out our main linear estimates,
obtaining detailed pointwise bounds on the Green kernels of the time-evolution problem.

\begin{theorem}\label{pointwiseintvGreen}
The interior source $v$-Green kernel function $G$ defined in \eqref{Greenkernel} may be decomposed as
\be
\label{tildeGdecompose}
G=H^1+H^2+S^1+R,
\ee
where, assuming Evans-Lopatinsky stability, 
\ba 
\label{H1term}
&H^1(x,t;y):=\\&\left\{\begin{aligned}
&-e^{-\bar\eta t+\left(-\bar\eta\mu_{1,+}+M_{11,+}\right)(x-y)}R_{1,+}L_{1,+}A^{-1}_+ \delta\big(t+\mu_{1,+}(x-y)\big)\\
&-e^{-\bar\eta t+\left(-\bar\eta\mu_{2,+}+M_{22,+}\right)(x-y)}R_{2,+}L_{2,+}A^{-1}_+ \delta\big(t+\mu_{2,+}(x-y)\big),&0<x<y,\\&
-e^{-\bar\eta t+\int_y^x\left(-\bar\eta\mu_1(z)+M_{11}(z)\right)dz}R_1(x)L_1(y)A^{-1}(y) \delta\big(t+\int_y^x\mu_1(z)dz\big),&x<y<0,\\
&e^{-\bar\eta t+\int_y^x\left(-\bar\eta\mu_2(z)+M_{22}(z)\right)dz}R_2(x)L_2(y)A^{-1}(y) \delta\big(t+\int_y^x\mu_2(z)dz\big),&y<x<0,
\end{aligned}\right.
\ea
{\small
\ba
\label{H2term}
&H^2(x,t;y):=\\
&\left\{\begin{aligned}&-e^{-\bar\eta t+\int_0^x\left(-\bar\eta\mu_1(z)+M_{11}(z)\right) dz}R_1(x)L_1(0^-)VA_+\Bigg(\delta\left(t-\mu_{1,+}y+\int_0^x\mu_1(z)dz\right)R_{1,+}L_{1,+}\\
&\times e^{-(-\bar{\eta}\mu_{1,+}+M_{11,+})y}+\delta\left(t-\mu_{2,+}y+\int_0^x\mu_1(z)dz\right)R_{2,+}L_{2,+}e^{-(-\bar{\eta}\mu_{2,+}+M_{22,+})y}\Bigg)A_+^{-1},&x<0,y>0,\\
&-e^{-\bar\eta t+\int_0^x\left(-\bar\eta\mu_1(z)+M_{11}(z) \right)dz+\int_y^0\left(-\bar\eta\mu_2(z)+M_{22}(z)\right)dz}R_1(x)L_1(0^-)VA(0^-)R_2(0^-)L_2(y)\\
&\times A^{-1}(y)\delta\left(t+\int_0^x\mu_1(z)dz+\int_y^0\mu_2(z)dz\right),& x<0,y<0,
\end{aligned}
\right.
\ea
}
\be
\label{S1term}
S^1(x,t;y):=\left\{\begin{aligned}&\chi_{t\ge 1}\frac{-\sqrt{c^1_{1,+}}}{\sqrt{4c^2_{1,+}\pi t}}e^{-\frac{c^1_{1,+}(t+c^1_{1,+}(x-y))^2}{4c^2_{1,+}t}}P_{1,+}(0)A_+^{-1},&0<x<y,\\&\chi_{t\ge 1}\frac{\sqrt{c^1_{2,-}}}{\sqrt{4c^2_{2,-}\pi t}}e^{-\frac{c^1_{2,-}(t-c^1_{2,-}(x-y))^2}{4c^2_{2,-}t}}P_{2,-}(0)A_-^{-1},&y<x<0,\\
&0,&otherwise,\\
\end{aligned}\right.
\ee
\be 
\label{S1yterm}
S^1_y(x,t;y)=\chi_{{}_{t\ge 1,\, 0<x<y}}e^{-\frac{(t+c^1_{1,+}(x-y))^2}{Mt}}O\left(\frac{1}{t}\right)+\chi_{{}_{t\ge 1,\, y<x<0}}e^{-\frac{(t-c^1_{2,-}(x-y))^2}{Mt}}O\left(\frac{1}{t}\right),
\ee
\ba 
\label{Rbound}
R(x,t;y)=&\chi_{{}_{t\ge 1,\, 0<x<y}}e^{-\frac{(t+c^1_{1,+}(x-y))^2}{Mt}}O\left(\frac{1}{t}\right)+\chi_{{}_{t\ge 1,\, y<x<0}}e^{-\frac{(t-c^1_{2,-}(x-y))^2}{Mt}}O\left(\frac{1}{t}+\frac{1}{\sqrt{t}}e^{-\theta|x|}\right)\\
&+\chi_{{}_{t\ge 1,\, x<0}}\frac{1}{\sqrt{t}}e^{-\theta'|x|}O\left(\chi_{{}_{0<y}}e^{-\frac{(t-c^1_{1,+}y)^2}{Mt}}+\chi_{{}_{y<0}}e^{-\frac{(t+c^1_{2,-}y)^2}{Mt}}\right)+O(e^{-\bar\eta(|x-y|+t)}),
\ea 
\ba \label{Rybound}
R_y(x,t;y)=&\chi_{{}_{t\ge 1,\, 0<x<y}}e^{-\frac{(t+c^1_{1,+}(x-y))^2}{Mt}}O\left(\frac{1}{t^{\frac{3}{2}}}\right)+\chi_{{}_{t\ge 1,\, y<x<0}}e^{-\frac{(t-c^1_{2,-}(x-y))^2}{Mt}}O\left(\frac{1}{t^{\frac{3}{2}}}+\frac{1}{t}e^{-\theta|x|}\right)\\
&+\chi_{{}_{t\ge 1,\, x<0}}\frac{1}{t}e^{-\theta'|x|}O\left(\chi_{{}_{0<y}}e^{-\frac{(t-c^1_{1,+}y)^2}{Mt}}+\chi_{{}_{y<0}}e^{-\frac{(t+c^1_{2,-}y)^2}{Mt}}\right)+O(e^{-\bar\eta(|x-y|+t)}),
\ea
\be 
\label{s1second}
S^1(x,t;y)\left(\begin{array}{c}0\\1\end{array}\right)=0,
\ee 
and moreover
\ba 
\label{R2bound}
&R(x,t;y)\left(\begin{array}{c}0\\1\end{array}\right)\\
=&\chi_{{}_{t\ge 1,\, 0<x<y}}e^{-\frac{(t+c^1_{1,+}(x-y))^2}{Mt}}O\left(\frac{1}{t^{\frac{3}{2}}}\right)+\chi_{{}_{t\ge 1,\, y<x<0}}e^{-\frac{(t-c^1_{2,-}(x-y))^2}{Mt}}O\left(\frac{1}{t^{\frac{3}{2}}}\right)\\
&+\chi_{{}_{t\ge 1,\, x<0}}\frac{1}{t}e^{-\theta'|x|}O\left(\chi_{{}_{0<y}}e^{-\frac{(t-c^1_{1,+}y)^2}{Mt}}+\chi_{{}_{y<0}}e^{-\frac{(t+c^1_{2,-}y)^2}{Mt}}\right)+O(e^{-\bar\eta(|x-y|+t)}),
\ea
where $M$ is some sufficiently big constant and $\bar\eta$ is a sufficiently small positive constant.
\end{theorem}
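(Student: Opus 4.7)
The plan is to unfold the representation formula
\[
G(x,t;y)=\frac{1}{2\pi i}\,\text{P.V.}\int_{a-i\infty}^{a+i\infty}e^{\lambda t}G_\lambda(x;y)\,d\lambda,
\]
together with the splitting $G_\lambda=G^1_\lambda+G^2_\lambda$ of Section \ref{s:ker}, and then deform the contour into the configuration \eqref{deformed} from Remark \ref{movermk}, picking up no residues since Evans--Lopatinsky stability eliminates all zeros of $\Delta$ on $\{\Re\lambda\ge 0\}\setminus\{0\}$ (and the $\lambda=0$ singularity of $G^1_\lambda$, $G^2_\lambda$ is already removed by construction; cf.\ Lemma \ref{GKanalytic} and Proposition \ref{extendtildeG}). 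On the vertical segments $\Re\lambda=-\bar\eta$ with $|\Im\lambda|\ge R$ I will use the large-$\lambda$ estimates of Section \ref{HighFrequency}; on the horizontal/arc pieces with $|\lambda|\le R$ I will use the small-$\lambda$ expansions of Section \ref{s:resolvent}. A standard dyadic / Paley--Wiener truncation lets me control the contributions from $|\Im\lambda|\to\infty$ where $G_\lambda=O(1/|\lambda|)$, so that all integrals converge in the distributional sense of Proposition \ref{p:integral_rep}.

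On the high-frequency pieces, I substitute Proposition \ref{highestimates}: the leading-order terms $H^1_\lambda$, $H^2_\lambda$, being pure exponentials in $\lambda$ of the form $e^{\lambda\Phi(x,y)+\Psi(x,y)}$ (with $\Phi=\int\mu$ and $\Psi=\int M_{jj}$ along the appropriate characteristic arcs), produce under inverse Laplace transform exactly the propagating delta distributions $\delta(t+\Phi(x,y))$ appearing in \eqref{H1term}--\eqref{H2term}. The $e^{-\bar\eta t}$ prefactors arise from shifting the contour to $\Re\lambda=-\bar\eta$, generating the multiplicative factors $e^{-\bar\eta\mu_j}$ inside the exponents along the way. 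The residuals $G^j_\lambda-H^j_\lambda=O(1/|\lambda|)$ give, after integration against $e^{\lambda t}$ on $\Re\lambda=-\bar\eta$, contributions bounded by $Ce^{-\bar\eta(|x-y|+t)}$, absorbed into the last term of \eqref{Rbound}; the key is that the integrand is absolutely integrable once two powers of $1/\lambda$ have been extracted (via the resolvent identity as in the proof of Proposition \ref{p:integral_rep}), so the distributional inverse Laplace transform is a bounded function on $\{t\ge\delta\}$ decaying as stated.

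On the low-frequency pieces, Proposition \ref{extendtildeG} yields $G^1_\lambda=S^1_\lambda+R^1_\lambda$ and gives pointwise control on $G^2_\lambda$. The crux is the scattering piece
\[
\frac{1}{2\pi i}\int_\Gamma e^{\lambda t}\,e^{\tilde\gamma_{j,\pm}(\lambda)(x-y)}\,d\lambda
\]
with $\tilde\gamma_{1,+}(\lambda)=c^1_{1,+}\lambda-c^2_{1,+}\lambda^2$ and similarly for $\tilde\gamma_{2,-}$. Because the exponent is quadratic in $\lambda$, this integral is explicitly Gaussian, and by completing the square I obtain exactly the heat-kernel factor $\frac{1}{\sqrt{4\pi c^2 t}}e^{-(t+c^1(x-y))^2/(4c^2 t)}$ times the projector $P_{j,\pm}(0)A_\pm^{-1}$ that appears in $S^1(x,t;y)$. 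The cutoff $\chi_{t\ge 1}$ comes from restricting to the regime in which the small-$\lambda$ expansion is valid (for $t\le 1$ these terms are absorbed into the exponentially decaying remainder). The derivative $S^1_y$ in \eqref{S1yterm} follows by differentiating the Gaussian. For the residual $R^1_\lambda$ and for $G^2_\lambda$, the corresponding exponents pick up an extra factor of $\lambda$ (or spatial exponential decay $e^{-\theta'|x|}$, from \eqref{G2low}--\eqref{Klow}), which upon inverse Laplace transform saddle-point evaluation improves the temporal decay by $t^{-1/2}$ or yields the spatial exponential factor, giving the terms of $R$ in \eqref{Rbound}. The second-component statements \eqref{s1second} and \eqref{R2bound} follow because $P_{1,+}(0)A_+^{-1}$ and $P_{2,-}(0)A_-^{-1}$ annihilate the second standard basis vector (an algebraic feature reflecting that the equilibrium characteristic mode lies in the $h$-component), so the leading Gaussian term vanishes and the first subleading correction in the expansion $P_{j,\pm}(\lambda)=P_{j,\pm}(0)+\lambda\partial_\lambda P_{j,\pm}(0)+\cdots$ becomes the leading contribution, one power of $t^{-1/2}$ faster.

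The main obstacle is the stationary-phase/saddle-point analysis on the low-frequency contour, where I must (i) choose the deformed contour to pass through the saddle of $\lambda t+\tilde\gamma_{j,\pm}(\lambda)(x-y)$ in each of the ray regimes $x-y\sim\mp c^1 t$, (ii) verify that the contour can always be deformed to lie in the domain of consistent splitting and stay to the right of the branch points of $\gamma_{j,\pm}(\lambda)$ visible in \eqref{eigenm}--\eqref{eigenp}, and (iii) extract cleanly the separation into principal Gaussian, $O(1/t)$ corrections, and spatially exponentially localized pieces. The matching of the high- and low-frequency regions on the arcs of \eqref{deformed} is routine but must be handled uniformly so that the constants in the exponentials $e^{-(t+c^1(x-y))^2/(Mt)}$ are genuine and independent of the truncation scale $R$.
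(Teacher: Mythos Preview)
Your proposal is correct and follows essentially the same route as the paper: contour deformation to $\Re\lambda=-\bar\eta$ (using Evans--Lopatinsky stability and Lemma~\ref{GKanalytic} to pass through $\lambda=0$), extraction of the propagating deltas $H^1,H^2$ from the leading high-frequency pieces $H^1_\lambda,H^2_\lambda$ of Proposition~\ref{highestimates}, and a saddle-point (equivalently, completion of the square in the quadratic exponent $\tilde\gamma_{j,\pm}$) analysis of $S^1_\lambda$ from Proposition~\ref{extendtildeG} to produce the Gaussians $S^1$; the annihilation $P_{j,\pm}(0)A_\pm^{-1}(0,1)^T=0$ is exactly the paper's Observation~\ref{apecialstructiononS}.

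Two points the paper makes explicit that you leave implicit: first, the regime $|x-y|/t$ large is treated separately (Case~I), where one shows $G\equiv 0$ by taking $a\to+\infty$ in the inverse Laplace formula; this finite-speed observation is what upgrades the $e^{-\bar\eta t}$ bounds to $e^{-\bar\eta(|x-y|+t)}$ in \eqref{Rbound}. Second, the low-frequency analysis is carried out not by a single completion of the square over an infinite contour, but on the finite segment $[-\bar\eta-ir,-\bar\eta+ir]$, deformed through the saddle $\eta_*=\bar\alpha/p$ only when $|\bar\alpha/p|\le\varepsilon$ (the ``comparability'' regime $x-y\sim t/c^1_{2,-}$); outside this regime the integral is already exponentially small. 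The resulting case-splitting ($I^1_{S1},I^1_{S2},I^1_{S3}$ and the further decomposition of $I^1_{S2}$) is precisely the careful version of the obstacle you flag at the end, and is where most of the work in the paper's proof (and Appendix~\ref{ApEstimate}) resides.
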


The interior source kernel estimates of Theorem \ref{pointwiseintvGreen} may be recognized as essentially those of
the smooth profile case \cite{MZ,MZ2}.  Namely, as displayed in Figure \ref{cartoonfig}, the principal high-frequency component consists of time-decaying delta-functions moving along hyperbolic characteristics of \eqref{sv}
and refracting/reflecting from the shock, while the principal low-frequency component consists of 
time-algebraically decaying Gaussian signals moving along characteristics of the reduced, equilibrium 
system \eqref{CE}.

\begin{figure}[htbp]
\begin{center}
\includegraphics[scale=0.32]{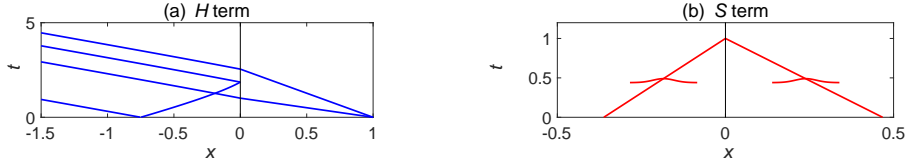}
\end{center}
\caption{Schematic of low- and high-frequency parts of the Green function $G$: a) numerically-computed 
support of high-frequency hyperbolic ($H$) terms in the $x$-$t$ plane for fixed initial $y$, showing  
transport along hyperbolic characteristics and refraction/reflection at the subshock. b) centers of Gaussians
making up Low-frequency scattering ($S$) terms, indicating approximate propagaion along characteristics of 
equilibrium system \eqref{CE}, with values frozen at end states $\overline{W}_\pm$.
}
\label{cartoonfig}
\end{figure}

The behavior of additional, boundary kernels in the discontinuous (subshock) case is similar.

\begin{theorem}\label{pointwisebouvGreen}
For $x>0$, the boundary source $v$-Green kernel function $K(x,t)$ defined in \eqref{Greenkernel} is identically 0. 
For $x<0$, it may be decomposed as
\be 
\label{Kdecompose}
K(x,t)=H_K+R_K,
\ee
where, assuming Evans-Lopatinsky stability,
\ba
H_K(x,t):=&-e^{-\bar\eta t+\int_0^x\left(-\bar\eta\mu_1(z)+M_{11}(z)\right)dz}R_1(x)L_1(0^-)V\delta\left(t+\int_0^x\mu_1(z)dz\right),& x<0,\\
R_K(x,t)=&O(e^{-\bar\eta(|x|+t)}),\quad x<0.
\ea
\end{theorem}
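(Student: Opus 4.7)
\textbf{Proof plan for Theorem~\ref{pointwisebouvGreen}.}

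\emph{Case $x>0$.} This is immediate. By Definition~\ref{bsourceGkernel}, $\tilde{K}_\lambda(x)\equiv 0$ for $x>0$, and by Proposition~\ref{existprop} (in the rescaled coordinates of Observation~\ref{scaleobs}) the hydraulic shock profile is constant $(H_R,Q_R)$ on $x>0$, so $\overline{W}'(x)\equiv 0$ there. Hence $K_\lambda(x)\equiv 0$, and $K(x,t)\equiv 0$ follows from the inverse Laplace transform formula in Definition~\ref{kerdef}.

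\emph{Case $x<0$.} I would mirror the deformation/splitting scheme used to prove Theorem~\ref{pointwiseintvGreen}, simplified by the fact that $K_\lambda$ is analytic at $\lambda=0$ (Lemma~\ref{GKanalytic}), so no low-frequency Gaussian scattering component arises and only the hyperbolic characteristic piece plus an exponentially-decaying residual survive. First, deform the contour $\{\Re\lambda=a\}$ in Definition~\ref{kerdef} to the contour of Remark~\ref{movermk} lying in $\{\Re\lambda\geq-\bar\eta\}$; this is justified by Evans-Lopatinsky stability on $\{\Re\lambda\geq 0\}\setminus\{0\}$, by Lemma~\ref{GKanalytic} at $\lambda=0$, and by the uniform high-frequency bounds of Proposition~\ref{highestimates}. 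Then split $K_\lambda=H_{K,\lambda}+(K_\lambda-H_{K,\lambda})$ as in that proposition.

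For the leading piece, compute $H_K$ by explicit contour integration: the $\lambda$-dependence of $H_{K,\lambda}$ sits only in the scalar exponential $e^{\lambda\int_0^x\mu_1(z)\,dz}$, whose inverse Laplace transform on the line $\{\Re\lambda=-\bar\eta\}$ is $e^{-\bar\eta(t+\int_0^x\mu_1\,dz)}\delta(t+\int_0^x\mu_1\,dz)$; multiplying by the $\lambda$-independent matrix prefactor $-R_1(x)L_1(0^-)V e^{\int_0^x M_{11}\,dz}$ recovers the claimed formula for $H_K$. For the residual $R_K(x,t)=\tfrac{1}{2\pi i}\int e^{\lambda t}(K_\lambda-H_{K,\lambda})\,d\lambda$, split the shifted contour into a high-frequency portion $|\Im\lambda|\geq R$ and a compact intermediate-frequency portion $|\Im\lambda|\leq R$. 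On the high-frequency part, use the $O(1/|\lambda|)$ bound of Proposition~\ref{highestimates} together with one integration by parts in $\lambda$ (via $e^{\lambda t}=t^{-1}\partial_\lambda e^{\lambda t}$) to obtain an absolutely convergent integral of order $O(t^{-1}e^{-\bar\eta t})$. On the intermediate-frequency part, the bound $|K_\lambda(x)|=O(e^{-\theta'|x|})$ from \eqref{Klow} in Proposition~\ref{extendtildeG} extends by continuity across the compact piece of the deformed contour, and direct integration yields an $O(e^{-\bar\eta t}e^{-\theta'|x|})$ contribution.

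The main technical obstacle is combining the separately obtained temporal decay $e^{-\bar\eta t}$ and spatial decay into the single double-exponential bound $O(e^{-\bar\eta(|x|+t)})$. The idea is to use the explicit $e^{\int_0^x M_{11}(z)\,dz}$ damping factor in the exponential structure of $K_\lambda$ (decaying as $x\to-\infty$) together with the causality constraint that significant support of the residual lies near the characteristic $t+\int_0^x\mu_1\,dz=0$: in that region $t$ and $|x|$ are comparable, so the $e^{-\bar\eta t}$ factor transfers into the desired double-exponential form after shrinking $\bar\eta$ if needed. This step parallels the derivation of the analogous $O(e^{-\bar\eta(|x-y|+t)})$ term in \eqref{Rbound} of Theorem~\ref{pointwiseintvGreen}, but is substantially simpler here given the absence of a low-frequency scattering contribution and the fact that there is only one spatial variable to track.
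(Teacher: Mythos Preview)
Your plan is essentially the same as the paper's, which likewise reduces Theorem~\ref{pointwisebouvGreen} to ``follow the high-frequency analysis of $III$ in the proof of Theorem~\ref{pointwiseintvGreen}, and use \eqref{Klow} at low frequency so that no scattering component appears.'' The vanishing for $x>0$, the contour deformation, the extraction of $H_K$ from $H_{K,\lambda}$, and the causality/comparability argument converting $e^{-\bar\eta t}$ into $e^{-\bar\eta(|x|+t)}$ all match the paper's scheme.

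The one substantive difference is your treatment of the high-frequency residual $K_\lambda-H_{K,\lambda}$. You propose integrating by parts via $e^{\lambda t}=t^{-1}\partial_\lambda e^{\lambda t}$, which yields only $O(t^{-1}e^{-\bar\eta t})$ and so does not directly give a uniform bound near $t=0$; you would then need to patch this by a separate small-$t$ argument (boundedness on the compact contour plus Case~I vanishing for $|x|/t$ large). The paper instead uses the finer expansion implicit in Proposition~\ref{highestimates}, writing the residual as $h(x)/\lambda$ plus an $O(1/|\lambda|^2)$ term: the $1/\lambda$ piece gives a uniformly bounded principal-value integral (a sine-integral--type object), and the $O(1/|\lambda|^2)$ piece is absolutely integrable, so one gets $O(e^{-\bar\eta t})$ directly with no $t^{-1}$ factor. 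Either route closes, but the paper's avoids the small-$t$ patch.
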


\begin{theorem}\label{pointwiseinttimeetaGreen}
The time derivative of the interior source $\eta$-Green kernel function $G_{1}(t;y)$ defined in \eqref{Greenkernel} may be decomposed as
\be 
\label{G1tdecompose}
G_{1t}=H_1+S_1+R_1,
\ee
where, assuming Evans-Lopatinsky stability,
\be 
H_1(t;y):=\left\{\begin{aligned}&e^{-\bar\eta t-\left(-\bar\eta\mu_{1,+}+M_{11,+}\right)y}V_hA_+R_{1,+}L_{1,+}A^{-1}_+ \delta\big(t-\mu_{1,+}y\big)\\
&+e^{-\bar\eta t-\left(-\bar\eta\mu_{2,+}+M_{22,+}\right)y}V_hA_+R_{2,+}L_{2,+}A^{-1}_+ \delta\big(t-\mu_{2,+}y\big),&0<y,\\
&e^{-\bar\eta t+\int_y^0\left(-\bar\eta\mu_2(z)+M_{22}(z)\right)dz}V_hA(0^-)R_2(0^-)L_2(y)A^{-1}(y) \delta\big(t+\int_y^0\mu_2(z)dz\big),&y<0,\end{aligned}\right.
\ee
\be
S_1(t;y):=\left\{\begin{aligned}&\chi_{t\ge 1}\frac{\sqrt{c^1_{1,+}}}{\sqrt{4c^2_{1,+}\pi t}}e^{-\frac{c^1_{1,+}(t-c^1_{1,+}y)^2}{4c^2_{1,+}t}}V_lA_+P_{1,+}(0)A_+^{-1},&0<y,\\&\chi_{t\ge 1}\frac{\sqrt{c^1_{2,-}}}{\sqrt{4c^2_{2,-}\pi t}}e^{-\frac{c^1_{2,-}(t+c^1_{2,-}y)^2}{4c^2_{2,-}t}}V_lA(0^-)P_{2,-}(0)A_-^{-1},&y<0,
\end{aligned}\right.
\ee
\be 
S_{1y}(t;y)=\chi_{{}_{t\ge 1,\, 0<y}}e^{-\frac{(t-c^1_{1,+}y)^2}{Mt}}O\left(\frac{1}{t}\right)+\chi_{{}_{t\ge 1,\, y<0}}e^{-\frac{(t+c^1_{2,-}y)^2}{Mt}}O\left(\frac{1}{t}\right),
\ee
\be
R_1(t;y)=\chi_{{}_{t\ge 1,\, 0<y}}e^{-\frac{(t-c^1_{1,+}y)^2}{Mt}}O\left(\frac{1}{t}\right)+\chi_{{}_{t\ge 1,\, y<0}}e^{-\frac{(t+c^1_{2,-}y)^2}{Mt}}O\left(\frac{1}{t}\right)+O(e^{-\bar\eta(|y|+t)}),
\ee
\be
R_{1,y}(t;y)=\chi_{{}_{t\ge 1,\, 0<y}}e^{-\frac{(t-c^1_{1,+}y)^2}{Mt}}O\left(\frac{1}{t^{\frac{3}{2}}}\right)+\chi_{{}_{t\ge 1,\, y<0}}e^{-\frac{(t+c^1_{2,-}y)^2}{Mt}}O\left(\frac{1}{t^{\frac{3}{2}}}\right)+O(e^{-\bar\eta(|y|+t)}),
\ee
\be 
\label{s_1second}
S_1(t;y)\left(\begin{array}{c}0\\1\end{array}\right)=0,
\ee 
and moreover
\be
R_1(t;y)\left(\begin{array}{c}0\\1\end{array}\right)=\chi_{{}_{t\ge 1,\, 0<y}}e^{-\frac{(t-c^1_{1,+}y)^2}{Mt}}O\left(\frac{1}{t^{\frac{3}{2}}}\right)+\chi_{{}_{t\ge 1,\, y<0}}e^{-\frac{(t+c^1_{2,-}y)^2}{Mt}}O\left(\frac{1}{t^{\frac{3}{2}}}\right)+O(e^{-\bar\eta(|y|+t)}),
\ee
where $M$ is some sufficiently big constant, $\bar\eta$ is a sufficiently small positive constant, and $V_h$, $V_l$ defined in \eqref{defVh}, \eqref{defVl} are constant vectors.
\end{theorem}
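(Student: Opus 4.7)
The plan is to mirror the proof of Theorem \ref{pointwiseintvGreen} but applied to the desingularized quantity $\lambda G_{1,\lambda}$, since differentiating the inverse Laplace representation of $G_1$ in Definition \ref{kerdef} formally gives $G_{1t}(t;y) = \frac{1}{2\pi i} P.V.\int_{a-i\infty}^{a+i\infty} e^{\lambda t}\,\lambda G_{1,\lambda}(y)\, d\lambda$. First I would deform the contour $\{\Re\lambda = a\}$ to the one described in Remark \ref{movermk}, consisting of vertical rays at $\Re\lambda = -\bar\eta$ joined through a bounded central rectangle. This is legitimate by analyticity of $\lambda G_{1,\lambda}$ on $\{\Re\lambda \geq -\bar\eta\}$ for $\bar\eta > 0$ small, which follows from the Evans-Lopatinsky stability assumption (ruling out zeros of $\Delta$ other than the simple one at $\lambda=0$ already cancelled by the $\lambda$ factor) together with the uniform high-frequency bounds of Section \ref{HighFrequency}. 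I would then split $\lambda G_{1,\lambda}$ via a cutoff in $|\lambda|$ into high- and low-frequency contributions. The universal prefactor $e^{-\bar\eta t}$ in $H_1$ and in the exponential remainders comes from the shift of the real part of the contour, while the $y$-decay factors arise from evaluating the exponentials $e^{-\lambda\mu_{j,\pm}y}$ (and $e^{\int\lambda\mu\,dz}$) at $\Re\lambda = -\bar\eta$.

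On the high-frequency segments $\{\Re\lambda = -\bar\eta,\,|\Im\lambda|\geq R\}$, I would substitute the decomposition $\lambda G_{1,\lambda} = H_{1,\lambda} + (\lambda G_{1,\lambda}-H_{1,\lambda})$ of Proposition \ref{highestimates1}. The leading term $H_{1,\lambda}$ is a $\lambda$-independent matrix multiplying a pure exponential $e^{-\lambda\mu_{j,\pm}y}$ on $y > 0$ or $e^{\int_y^{0^-}\lambda\mu_2(z)\,dz}$ on $y < 0$, so its inverse Laplace transform along the shifted contour is exactly a delta-function $\delta(t - \mu_{j,\pm}y)$ (respectively $\delta(t+\int_y^0\mu_2(z)\,dz)$) multiplied by the corresponding $\lambda$-independent matrix, yielding the claimed $H_1$ formula precisely. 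The $O(1/|\lambda|)$ remainder $\lambda G_{1,\lambda}-H_{1,\lambda}$ contributes, after integrating by parts once or twice in $\lambda$ to extract $O(|\lambda|^{-2})$, an absolutely convergent integral bounded by $O(e^{-\bar\eta(|y|+t)})$, absorbed into $R_1$. Intermediate frequencies on the deformed contour contribute the same $O(e^{-\bar\eta(|y|+t)})$ bound by the uniform estimate \eqref{intest} available under Evans-Lopatinsky stability.

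On the small-$|\lambda|$ central portion of the contour I would use the decomposition $\lambda G_{1,\lambda} = S_{1,\lambda} + R_{1,\lambda}$ from Proposition \ref{extendlambdaGK} together with the expansions $\tilde\gamma_{1,+}(\lambda) = c^1_{1,+}\lambda - c^2_{1,+}\lambda^2 + O(\lambda^3)$ and $\tilde\gamma_{2,-}(\lambda) = -c^1_{2,-}\lambda + c^2_{2,-}\lambda^2 + O(\lambda^3)$ from \eqref{lowgamma12ex}. A standard Riemann saddlepoint evaluation of $\int e^{\lambda t - \tilde\gamma(\lambda)y}\,d\lambda$ with quadratic phase produces, up to cubic error, the Gaussian profile $\frac{\sqrt{c^1}}{\sqrt{4\pi c^2 t}} e^{-c^1(t\mp c^1 y)^2/(4c^2 t)}$ multiplied by $V_l A_\pm P_{j,\pm}(0) A_\pm^{-1}$, delivering $S_1$ exactly as stated. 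Cubic and higher corrections in the phase, together with $R_{1,\lambda}$, yield the faster-decaying Gaussian tail $R_1 = O((1/t)e^{-(t\mp c^1 y)^2/(Mt)})$. The $y$-derivative bounds follow by differentiating under the integral: each $\partial_y$ brings down a factor $\tilde\gamma(\lambda) = O(\lambda)$, and under the saddle scaling $\lambda_s = O(t^{-1/2})$ this provides an extra $t^{-1/2}$, giving the improved $t^{-1}$ (respectively $t^{-3/2}$) rate. The vanishing identity $S_1(t;y)\left(\begin{array}{c}0\\1\end{array}\right) = 0$ and the sharper bound on $R_1\left(\begin{array}{c}0\\1\end{array}\right)$ stem from the structural observation noted in the proof of Proposition \ref{G1lambdav}, namely $P_{1,+}(0)A_+^{-1}\left(\begin{array}{c}0\\1\end{array}\right) = P_{2,-}(0)A_-^{-1}\left(\begin{array}{c}0\\1\end{array}\right) = 0$, a direct consequence of the block form $R(w) = (0, r(w))^T$ in \eqref{sv}. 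The main obstacle is the uniform control of remainders in the Gaussian saddlepoint evaluation; however this follows the standard machinery of \cite{ZH,MZ,YZ,NZ} and runs entirely parallel to the corresponding analysis behind Theorem \ref{pointwiseintvGreen}, from which the present result can essentially be read off with the $\lambda$-factor modification above.
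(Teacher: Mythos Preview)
Your proposal is correct and follows essentially the same approach as the paper. The paper's own proof is extremely terse---it simply observes that the time derivative contributes the factor $\lambda$ which desingularizes $M^{-1}$ at $\lambda=0$, then shifts the contour to $\{\Re\lambda=-\bar\eta\}$ and invokes Propositions \ref{highestimates1} and \ref{extendlambdaGK}; the detailed saddlepoint and remainder analysis you outline is exactly what the paper intends by its reference to the parallel computation in Theorem \ref{pointwiseintvGreen}, and your identification of Observation \ref{apecialstructiononS} as the source of the $(0,1)^T$ annihilation is also correct.
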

\begin{theorem}\label{pointwiseboutimeetaGreen}
The time derivative of boundary source $\eta$-Green kernel function $K_{1}(t)$ defined in \eqref{Greenkernel}may be decomposed as
\be 
\label{estimateK1t}
K_{1t}=H_{K_1}+R_{K_1},
\ee
where, assuming Evans-Lopatinsky stability, 
\be\label{K1tests}
\hbox{\rm $H_{K_1}(t)=V_h\delta(t)$ and $R_{K_1}=O(e^{-\bar{\eta}t})$.}
\ee
\end{theorem}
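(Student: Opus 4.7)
The proof mirrors that of Theorem \ref{pointwiseinttimeetaGreen} but is considerably simpler, because $K_{1,\lambda}$ carries no spatial variable and so no scattering contribution appears. My plan is to start from the defining inverse-Laplace-transform representation
$$
K_1(t) = \frac{1}{2\pi i}\,P.V.\!\int_{a-i\infty}^{a+i\infty} e^{\lambda t}\, K_{1,\lambda}\, d\lambda
$$
(understood as $0$ for $t<0$), and to read off $K_{1t}$ distributionally in $t$, obtaining formally
$$
K_{1t}(t) = \frac{1}{2\pi i}\,P.V.\!\int_{a-i\infty}^{a+i\infty} e^{\lambda t}\,\bigl(\lambda K_{1,\lambda}\bigr)\, d\lambda,
$$
where multiplication by $\lambda$ encodes time differentiation and the ``extra'' delta at $t=0$ accounts for the jump of $K_1$ from the zero extension.

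Next, I would invoke the high-frequency expansion \eqref{lambdaK1lambda} from Proposition \ref{highestimates1}, namely $\lambda K_{1,\lambda}=V_h+O(1/|\lambda|)$, and split the integrand as $\lambda K_{1,\lambda} = V_h + R(\lambda)$ with $R(\lambda):=\lambda K_{1,\lambda}-V_h = O(1/|\lambda|)$. The constant piece $V_h$ is the Laplace transform of $V_h\,\delta(t)$, yielding the claimed $H_{K_1}$. It then remains to estimate
$$
R_{K_1}(t) = \frac{1}{2\pi i}\,P.V.\!\int_{a-i\infty}^{a+i\infty} e^{\lambda t}\, R(\lambda)\, d\lambda.
$$

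For the residual bound, I would deform the Bromwich contour to the left. By Evans-Lopatinsky stability, the only singularity of $K_{1,\lambda}$ on $\{\Re\lambda\geq 0\}$ is the simple pole at $\lambda=0$, which is cancelled by the factor $\lambda$; combined with the high-frequency analyticity strip furnished by Section \ref{HighFrequency} and compactness on intermediate frequencies, $\lambda K_{1,\lambda}$, and hence $R$, extends analytically to $\{\Re\lambda\geq -\bar\eta\}$ for some small $\bar\eta>0$. Using the contour-deformation device of Remark \ref{movermk}, I would shift the contour to $\Re\lambda=-\bar\eta$, picking up no residue, so that the integrand acquires the factor $|e^{\lambda t}|=e^{-\bar\eta t}$.

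The main technical obstacle is that the mere $O(1/|\lambda|)$ decay of $R$ renders the shifted integral only conditionally convergent. To overcome this I would integrate by parts once in $\lambda$: since $R$ is analytic in the strip and $O(1/|\lambda|)$, Cauchy's formula applied on small disks in the analyticity region gives $R'(\lambda)=O(1/|\lambda|^2)$, making the resulting integral absolutely convergent and yielding $|R_{K_1}(t)|\le C e^{-\bar\eta t}$ for $t$ bounded away from $0$; the delta-function contribution $V_h\,\delta(t)$ absorbs the behavior at $t=0$ so that the decomposition \eqref{estimateK1t}--\eqref{K1tests} holds in the distributional sense consistent with its usage elsewhere in the paper.
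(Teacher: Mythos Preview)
Your approach is essentially the same as the paper's: differentiate in $t$ to produce the factor $\lambda$, use Proposition~\ref{highestimates1} to split $\lambda K_{1,\lambda}=V_h+O(1/|\lambda|)$, invoke Evans--Lopatinsky stability and Proposition~\ref{extendlambdaGK} to deform the contour to $\Re\lambda=-\bar\eta$, and identify the constant $V_h$ with the delta contribution. The paper's proof is a single sentence pointing to exactly these ingredients.

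One minor remark on the tail estimate: your integration by parts produces a factor $1/t$, so the bound $Ce^{-\bar\eta t}$ is obtained only for $t$ bounded away from zero, and your final sentence papers over the region $0<t\lesssim 1$. The paper, when it carries out the analogous step explicitly (see the treatment of $III^1_c$ in the proof of Theorem~\ref{pointwiseintvGreen}), instead expands the remainder one order further as $c/\lambda+O(1/|\lambda|^2)$, computes the $c/\lambda$ piece as a bounded principal-value integral, and bounds the $O(1/|\lambda|^2)$ piece by an absolutely convergent integral; this gives a uniform bound valid down to $t=0^+$. Either method closes the argument, but the second avoids the small-$t$ hand-wave.
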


\begin{obs}[Special structure on $P_{2,-}(0)A_-^{-1}$ and $P_{1,+}(0)A_+^{-1}$ ]\label{apecialstructiononS} 
The matrices $P_{2,-}(0)A_-^{-1}$ and $P_{1,+}(0)A_+^{-1}$ can be computed symbolically to be
{\small
$$
P_{2,-}(0)A_-^{-1}=\left(\begin{array}{cc} \frac{2+2\sqrt{H_R}}{1-2H_R+\sqrt{H_R}} & 0\\ -\frac{9H_R-3+6{H_R}^{3/2}}{1-5H_R+4{H_R}^2} & 0 \end{array}\right),\quad
P_{1,+}(0)A_+^{-1}=\left(\begin{array}{cc} \frac{2+2\sqrt{H_R}}{H_R-2+\sqrt{H_R}} & 0\\ -\frac{9H_R-3{H_R}^2+6\sqrt{H_R}}{4-5H_R+{H_R}^2} & 0 \end{array}\right).
$$}
In particular, the second columns vanish and equations \eqref{s1second} \eqref{s_1second} follow.
\end{obs}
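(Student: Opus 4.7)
\smallskip

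\noindent\textbf{Proof proposal.} The plan is to reduce the two symbolic identities to a single structural fact and then read off the consequences \eqref{s1second}, \eqref{s_1second} directly from the definitions of $S^1$ and $S_1$. The structural fact is that the relaxation source $R(w)=(0,\,h-q^2/h^2)^T$ has only a nonzero second component, so by \eqref{AE}
\begin{equation*}
E_\pm=dR(\overline{W}_\pm)=\begin{pmatrix}0 & 0\\ 2H_\pm^{-1}+1 & -2H_\pm^{-1/2}\end{pmatrix},
\end{equation*}
where I use $Q_\pm=H_\pm^{3/2}$ at the equilibrium endstates. In particular $\operatorname{Range}(E_\pm)=\operatorname{span}(e_2)$ with $e_2=(0,1)^T$, hence
\begin{equation*}
\operatorname{Range}\bigl(\mathcal{A}_\pm(0)\bigr)=\operatorname{Range}\bigl(A_\pm^{-1}E_\pm\bigr)=A_\pm^{-1}\operatorname{span}(e_2)=\operatorname{span}\bigl(A_\pm^{-1}e_2\bigr).
\end{equation*}

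Next I would use that $\mathcal{A}_\pm(0)$ is semisimple with distinct eigenvalues $\gamma_{1,\pm}(0),\gamma_{2,\pm}(0)$ (namely $\gamma_{1,+}(0)=\gamma_{2,-}(0)=0$ and the other eigenvalues nonzero by \eqref{signgamma} extended to $\lambda=0$). For a semisimple operator, the range equals the span of the eigenspaces associated with its nonzero eigenvalues. Applied at the $+\infty$ endstate this gives $\operatorname{Range}(\mathcal{A}_+(0))=\operatorname{Range}(P_{2,+}(0))=\ker P_{1,+}(0)$, and symmetrically at $-\infty$, $\operatorname{Range}(\mathcal{A}_-(0))=\operatorname{Range}(P_{1,-}(0))=\ker P_{2,-}(0)$. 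Combining with the previous paragraph,
\begin{equation*}
P_{1,+}(0)A_+^{-1}e_2=0,\qquad P_{2,-}(0)A_-^{-1}e_2=0,
\end{equation*}
which is exactly the statement that the second columns of $P_{1,+}(0)A_+^{-1}$ and $P_{2,-}(0)A_-^{-1}$ vanish.

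To obtain the first columns in closed form, I would compute the $0$-eigenvectors of $\mathcal{A}_\pm(0)$, i.e.\ $\ker E_\pm=\operatorname{span}\bigl((2H_\pm^{1/2},\,2H_\pm+1)^T\bigr)$, choose the corresponding left eigenvectors (readily written from the rows of $\mathcal{A}_\pm(0)$), normalize by the biorthogonality condition defining the spectral projector, and multiply on the right by the explicit inverse $A_\pm^{-1}$ obtained from \eqref{AE} together with $Q_\pm=H_\pm^{3/2}$, $c=(\nu^2+\nu+1)\sqrt{H_R}/(\nu+1)$ from \eqref{eqts}. Setting $H_L=1$ (so $H_+=H_R$, $H_-=1$) per Observation~\ref{scaleobs} and simplifying gives the two fractions displayed in the Observation; this step is symbolic algebra, most cleanly carried out with a CAS, and I expect it to be the main computational burden, though conceptually routine. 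The main \emph{conceptual} obstacle is nothing more than the rank-one range observation above.

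Finally, \eqref{s1second} and \eqref{s_1second} are immediate. Inspecting \eqref{S1term}, each nonzero branch of $S^1(x,t;y)$ is a scalar Gaussian times either $P_{1,+}(0)A_+^{-1}$ or $P_{2,-}(0)A_-^{-1}$, and acting on $(0,1)^T$ picks out precisely the second columns, which vanish by what we just proved; this yields \eqref{s1second}. The same argument applied to \eqref{scatter1lambda1} in $S_1(t;y)$ (with the extra left factors $V_lA_+$ and $V_lA(0^-)$, which do not interact with the right multiplication by $(0,1)^T$) yields \eqref{s_1second}.
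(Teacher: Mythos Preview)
Your argument is correct and more illuminating than the paper's. The paper treats this as a pure symbolic computation---the Observation simply records the output of a CAS and notes ``the second columns vanish''---whereas you explain \emph{why}: the first row of $E_\pm=dR(\overline{W}_\pm)$ is identically zero because $R$ acts only in the second component, so $\mathcal{A}_\pm(0)=A_\pm^{-1}E_\pm$ has rank one with range $\operatorname{span}(A_\pm^{-1}e_2)$; since $\mathcal{A}_\pm(0)$ has one zero and one nonzero eigenvalue, this range is exactly the nonzero eigenspace, i.e.\ the kernel of the complementary projection, giving $P_{2,-}(0)A_-^{-1}e_2=0$ and $P_{1,+}(0)A_+^{-1}e_2=0$ without any matrix arithmetic. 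Your argument would apply to any relaxation system in the block form $w_t+F(w)_x=(0,r(w))^T$ with scalar equilibrium equation, whereas the paper's bare symbolic verification is tied to the specific Saint-Venant matrices. The derivation of \eqref{s1second} and \eqref{s_1second} from the vanishing second columns is identical to the paper's.

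One minor slip: your explicit $E_\pm$ has the $(2,1)$ entry wrong. Substituting $Q_\pm=H_\pm^{3/2}$ into $2Q^2/H^3+1$ from \eqref{AE} gives $3$, not $2H_\pm^{-1}+1$; correspondingly $\ker E_\pm=\operatorname{span}\bigl((2,\,3H_\pm^{1/2})^T\bigr)$, not what you wrote. This does not touch the range argument for the vanishing second column, but you will need the correct kernel vector when you actually carry out the first-column computation you defer to a CAS.
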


\begin{proof}[Proof of Theorem \ref{pointwiseintvGreen}]

\textbf{Case I. $|x-y|/t$ sufficiently large}
Following \cite{MZ}, we note, for $|x-y|/t>S$, $S$ sufficiently large, that $G=0$.
Taking $a$ sufficiently large in \eqref{Greenkernel}, we can use Proposition \ref{highestimates} to estimate $G^{1,2}(x,t;y)$. For example on $x<y<0$, 
\ba
\label{vanishingG}
|G^1(x,t;y)|=&\left|\frac{1}{2\pi}P.V.\int_{a-i\infty}^{a+i\infty}e^{\lambda t}e^{\int_y^x\left(\lambda\mu_1(z)+M_{11}(z)\right) dz}\left(R_1(x)L_1(y)A^{-1}(y)+O(1/|\lambda|)\right)d\lambda\right|\\
\le&\left|\frac{1}{2\pi}P.V.\int_{a-i\infty}^{a+i\infty}e^{\lambda t}e^{\int_y^x\left(\lambda\mu_1(z)+M_{11}(z)\right) dz}R_1(x)L_1(y)A^{-1}(y)d\lambda\right|\\
&+\left|\frac{1}{2\pi}P.V.\int_{a-i\infty}^{a+i\infty}e^{\lambda t}e^{\int_y^x\left(\lambda\mu_1(z)+M_{11}(z)\right) dz}O(1/|\lambda|)d\lambda\right|\\
\lesssim&e^{a(t+\int_y^x\mu_1(z)dz)+\int_y^xM_{11}(z)dz}\left|P.V.\int_{-\infty}^\infty e^{i\xi\big(t+\int_y^x\mu_1(z)dz\big)}d\xi\right|\\
&+e^{a(t+\int_y^x\mu_1(z)dz)+\int_y^xM_{11}(z)dz}\int_{-\infty}^{\infty}\frac{1}{\sqrt{a^2+\xi^2}}d\xi\\
=&e^{a\big(t+\int_y^x\mu_1(z)dz\big)+\int_y^xM_{11}(z)dz}\delta\Big(t+\int_y^x\mu_1(z)dz\Big)\\
&+e^{a\big(t+\int_y^x\mu_1(z)dz\big)+\int_y^xM_{11}(z)dz}\int_{-\infty}^{\infty}\frac{1}{\sqrt{a^2+\xi^2}}d\xi.
\ea
In "$\lesssim$" line, the integral can be explicitly computed because $\mu_1,M_{11},R_1,L_1$, and $A$ are independent of $\lambda$. And, on the next line, using the triangle inequality for the integral yields the bound. Since there are $c$, $C$ depending only on $F$, $H_R$ such that 
$$-C<\mu_2(H(z))<-c<0<c<\mu_1(z)<C,$$
$\delta\Big(t+\int_y^x\mu_1(z)dz\Big)$ will be $0$ provided that $\frac{|x-y|}{t}<\frac{1}{C}$ or $\frac{|x-y|}{t}>C$ for some $C$ sufficiently large. As for the term in the last row of \eqref{vanishingG}, for $|x-y|/t$ sufficiently large, $t+\int_y^x\mu_1(z)dz$ will be a negative number. Thus by sending $a$ to $+\infty$ this term also vanishes. The same result holds on $y<x<0$ and $0<x<y$. Similarly, $G^2(x,t;y)$ also vanishes.

\medskip

\textbf{Case II. $|x-y|/t$ bounded}. First, observe that $|x-y|\leq Ct$ yields for $\theta>0$ that
$$
e^{-\theta t}\leq e^{-\theta_1 (t+|x-y|)}
$$
for some $\theta_1>0$, a contribution absorbable in error term $R$.  Thus, in this regime, it is enough to show that terms 
are time-exponentially small in order to verify that they are absorbable in $R$.

By our construction of resolvent kernels, they are meromorphic on the set of consistent splitting, with poles
precisely at zeros of the Evans-Lopatinsky function $\Delta$ \eqref{lopatinsky}.
Function $\Delta$ is nonvanishing on $\{\lambda:\Re\lambda\ge -a,|\lambda|>r\}$ by a combination of Proposition \ref{hfprop} and the assumed Evans-Lopatinsky stablity, that is, $M$ is invertible on $\{\lambda:\Re\lambda\ge -a,|\lambda|>r\}$. 
As observed in Remark \ref{movermk}, we can deform the contour of integration in \eqref{Greenkernel} to 
\eqref{deformed}.
Since by Lemma \ref{GKanalytic}, $G_\lambda$ is holomorphic in a small neighborhood of the origin, we can further deform the contour to the left of the origin and obtain 
\ba
\label{lmh}
G(x,t;y) &=\frac{1}{2\pi i}\int_{-\bar\eta-ir}^{-\bar\eta+ir}e^{\lambda t}G_\lambda d\lambda+\frac{1}{2\pi i}\left(\int_{-\bar\eta-iR}^{-\bar\eta-ir}+\int_{-\bar\eta+ir}^{-\bar\eta+iR}\right)e^{\lambda t}G_\lambda d\lambda\\
&+
P.V.  \frac{1}{2\pi i}
\left(\int_{-\bar\eta-i\infty}^{-\bar\eta-iR}+\int_{-\bar\eta+iR}^{-\bar\eta+i\infty}\right)e^{\lambda t}G_\lambda d\lambda
:=I+II+III
\ea
for $\bar\eta>0$, $\bar\eta$ sufficiently small. We will use superscript $1,2$ to denote contributions from $G_\lambda^{1,2}$ to $G$.

\textbf{Intermediate frequency contribution $II$.} 
For $\lambda$ in the intermediate frequency regime $[-\bar\eta-iR,-\bar\eta-ir]$ and $[-\bar\eta+ir,-\bar\eta+iR]$, the resolvent kernel is bounded. Therefore term $II$ is time-exponentially small of order $e^{-\bar\eta t}$ and absorbable in $R$.

\textbf{High frequency contribution $III$.}
In this regime, we can again use Proposition $\ref{highestimates}$. The term $III^1$ can be written as
\ba
III^1=&\frac{1}{2\pi i}P.V.\int_{-\bar\eta-i\infty}^{-\bar\eta+i\infty}e^{\lambda t}H^1_\lambda d\lambda-\frac{1}{2\pi i}\int_{-\bar\eta-iR}^{-\bar\eta+iR}e^{\lambda t}H^1_\lambda d\lambda\\
&+\frac{1}{2\pi i}P.V.\left(\int_{-\bar\eta-i\infty}^{-\bar\eta-iR}+\int_{-\bar\eta+iR}^{-\bar\eta+i\infty}\right)e^{\lambda t}(G_\lambda^1-H^1_\lambda)d\lambda\\
:=&III^1_{a}+III^1_{b}+III^1_{c}.
\ea
The term $III^1_{a}$ can be explicitly computed to be
\ba
III^1_{a}=&-e^{-\bar\eta t+\int_y^x\left(-\bar\eta\mu_1(z)+M_{11}(z)\right)dz}R_1(x)L_1(y)A^{-1}(y) \frac{1}{2\pi }P.V.\int_{-\infty}^\infty e^{i\xi\big(t+\int_y^x\mu_1(z)dz\big)}d\xi\\
=&-e^{-\bar\eta t+\int_y^x\left(-\bar\eta\mu_1(z)+M_{11}(z)\right)dz}R_1(x)L_1(y)A^{-1}(y) \delta\big(t+\int_y^x\mu_1(z)dz\big),
\ea
which gives contribution $H^1$ on $x<y<0$. As for the term $III^1_b$, it can be bounded by
\ba
|III^1_{b}|=&\left|e^{-\bar\eta t+\int_y^x\left(-\bar\eta \mu_1(z)+M_{11}(z)\right)dz}R_1(x)L_1(y)A^{-1}(y) \frac{1}{2\pi }\int_{-R}^R e^{i\xi\big(t+\int_y^x\mu_1(z)dz\big)}d\xi\right|\\
\lesssim&e^{-\bar\eta t+\int_y^x\left(-\bar\eta\mu_1(z)+M_{11}(z)\right)dz}\\ 
\le&e^{-\bar\eta\left(t+(y-x)\right)}
\ea
in which the last inequality follow by $-\bar\eta\mu_1(z)+M_{11}(z)>c>\bar\eta>0$ for $\bar\eta$ sufficiently small and all $z<0$. Hence $III^1_b$ is absorbable in $R$. By Proposition \ref{highestimates}, $G_\lambda^1-H^1_\lambda$ 
expands as $1/\lambda$ times a bounded function $h(x,y)$ plus an error term of
order $O(\frac{1}{|\lambda|^2})$ on the contour of integral $III^1_c$. Thus,
\ba
|III^1_{c}|\lesssim 
&
e^{-\bar\eta t}h(x,y) P.V. (\int_{R}^{\infty}+\int_{-\infty}^R)\lambda^{-1} d\lambda
+ e^{-\bar\eta t}\int_{R}^{\infty}\frac{1}{\bar\eta^2+\xi^2}d\xi\lesssim e^{-\bar\eta t},
\ea
which again is absorbable in $R$. Similar analysis can be carried out on $y<x<0$ and for $G^2_\lambda$.\\

\textbf{Low frequency contribution $I$.} 

({\it Case $t\le 1$}). Estimates in the short-time regime $t\le 1$ are trivial. Since then $e^{\lambda t}G_\lambda$ is uniformly bounded on the compact set $[-\bar\eta-ir,-\bar\eta+ir]$, we have $|I|\lesssim e^{-\bar\eta t}$ is absorbable in $R$.

({\it Case $t\ge 1$}). Next, consider $I^1$, $I^2$ on the critical regime $t\ge 1$ and $y<x<0$.

\textbf{$I^1$:} Decompose $G^1_\lambda=S^1_\lambda+R^1_\lambda$ and write $I^1$ as
\be 
I^1=\frac{1}{2\pi i}\int_{-\bar\eta-ir}^{-\bar\eta+ir}e^{\lambda t}S^1_\lambda d\lambda+\frac{1}{2\pi i}\int_{-\bar\eta-ir}^{-\bar\eta+ir}e^{\lambda t}R^1_\lambda d\lambda:=I^1_S+I^1_R.
\ee
We then analyze $I^1_S,I^1_R$ separately.

\textbf{$I^1_S$:} Deform the integral to write $I^1_S$ as
\be 
I^1_S=\frac{1}{2\pi i}\left(\int_{-\bar\eta-ir}^{\eta_*-ir}+\int_{\eta_*-ir}^{\eta_*+ir}+\int_{\eta_*+ir}^{-\bar\eta+ir}\right)e^{\lambda t}S_\lambda d\lambda:=I^1_{S1}+I^1_{S2}+I^1_{S3},
\ee
where the saddle point $\eta_*(x,y,t)$ is defined as 
\be
\eta_*(x,y,t):=\left\{\begin{aligned}
&\frac{\bar\alpha}{p}, &if\quad \left|\frac{\bar\alpha}{p}\right|\le \eps,\\
&\pm\eps, &if\quad\frac{\bar\alpha}{p}\gtrless\pm\eps,
\end{aligned}
\right.
\ee
with
\be 
\bar\alpha:=\frac{x-y-\frac{1}{c^1_{2,-}}t}{2t},\quad p:=\frac{c^2_{2,-}(x-y)}{c^1_{2,-}t}.
\ee 
A key observation is:
When $\left|\frac{\bar\alpha}{p}\right|\le \eps$, $\eps$ sufficiently small, $\frac{t}{c^1_{2,-}},x-y$ are comparable, that is we have
\be 
\label{comparable}
\frac{1}{2}(x-y)<(1-\frac{2c^2_{2,-}\eps}{c^1_{2,-}})(x-y)<\frac{t}{c^1_{2,-}}<(1+\frac{2 c^2_{2,-}\eps}{c^1_{2,-}})(x-y)<2(x-y).
\ee 

\begin{obs}\label{expyestimate} Assuming the comparability condition \eqref{comparable} and $y<x<0$, $e^{-\theta'|y|}$ is time-exponentially decaying.
\end{obs}

\begin{proof}
By the comparability condition, $y<x-\frac{t}{2c_{2,-}^1}<-\frac{t}{2c_{2,-}^1}$, we have
$e^{-\theta'|y|}<e^{-\frac{\theta'}{2c_{2,-}^1}t}$.
\end{proof}

\textbf{$I^1_{S2}:$} i. When $\left|\frac{\bar\alpha}{p}\right|\le\eps$, $I^1_{S2}$ can be explicitly computed to be
\ba 
I^1_{S2}=&\frac{1}{2\pi}e^{-\frac{\left(t-c^1_{2,-}(x-y)\right)^2}{4c^2_{2,-}\left(x-y\right)}}\int_{-r}^re^{-c^2_{2,-}(x-y)\xi^2}d\xi P_{2,-}(0)A_-^{-1}\\
=&\frac{1}{\sqrt{4c^2_{2,-}\pi(x-y)}}e^{-\frac{(t-c^1_{2,-}(x-y))^2}{4c^2_{2,-}(x-y)}}P_{2,-}(0)A_-^{-1}\\
&-\frac{1}{2\pi}e^{-\frac{\left(t-c^1_{2,-}(x-y)\right)^2}{4c^2_{2,-}\left(x-y\right)}}\left(\int_{-\infty}^{-r}+\int_r^\infty\right)e^{-c^2_{2,-}(x-y)\xi^2}d\xi P_{2,-}(0)A_-^{-1}\\
=&
\frac{\sqrt{c^1_{2,-}}}{\sqrt{4c^2_{2,-}\pi t}}e^{-\frac{c^1_{2,-}(t-c^1_{2,-}(x-y))^2}{4c^2_{2,-}t}}P_{2,-}(0)A_-^{-1}\\&+\left(-\frac{\sqrt{c^1_{2,-}}}{\sqrt{4c^2_{2,-}\pi t}}e^{-\frac{c^1_{2,-}(t-c^1_{2,-}(x-y))^2}{4c^2_{2,-}t}}+\frac{1}{\sqrt{4c^2_{2,-}\pi(x-y)}}e^{-\frac{(t-c^1_{2,-}(x-y))^2}{4c^2_{2,-}(x-y)}}\right)P_{2,-}(0)A_-^{-1}\\
&-\frac{1}{2\pi}e^{-\frac{\left(t-c^1_{2,-}(x-y)\right)^2}{4c^2_{2,-}\left(x-y\right)}}\left(\int_{-\infty}^{-r}+\int_r^\infty\right)e^{-c^2_{2,-}(x-y)\xi^2}d\xi P_{2,-}(0)A_-^{-1}\\
:=&S^1+I^1_{S2Ri}+I^1_{S2Rii},
\ea
where $S^1$ gives contribution $S^1$ in \eqref{tildeGdecompose} and $I^1_{S2Ri}$, $I^1_{S2Rii}$ are shown in Appendix \ref{ApEstimate} to be absorbable in $R$.

$S^1_y$: By direct calculation,
\ba 
|S^1_y|=\left|P_{2,-}(0)A_-^{-1}\frac{\sqrt{c^1_{2,-}}}{\sqrt{4c^2_{2,-}\pi t}}e^{-\frac{c^1_{2,-}(t-c^1_{2,-}(x-y))^2}{4c^2_{2,-}t}}\frac{{c^1_{2,-}}(t-c^1_{2,-}(x-y))}{2c^2_{2,-}t}\right|\lesssim\frac{1}{t}e^{-\frac{c^1_{2,-}(t-c^1_{2,-}(x-y))^2}{8c^2_{2,-}t}}.
\ea
ii. When $\frac{\bar\alpha}{p}> \eps$, the term $I^1_{S2}$ can be bounded by
\ba 
|I^1_{S2}|\lesssim& e^{\eps(t-c^1_{2,-}(x-y))+\eps^2 c^2_{2,-}(x-y)}\int_{-r}^re^{-c^2_{2,-}(x-y)\xi^2}d\xi\\
\lesssim& e^{\eps(t-c^1_{2,-}(x-y)+\eps c^2_{2,-}(x-y))}\\
\le&e^{\frac{1}{2}(t-c^1_{2,-}(x-y))\eps}\le e^{-\eps't},
\ea
Hence it is absorbable in $R$. Similarly when $\frac{\bar\alpha}{p}<-\eps$, $I_{S_2}$ is also time-exponentially small.

\textbf{$I^1_{S1}$ and $I^1_{S3}:$} i. When $\left|\frac{\bar\alpha}{p}\right|\le\eps$, the term $I^1_{S1}$ and $I^1_{S3}$ can be estimated as
\ba
|I^1_{S1}|, |I^1_{S3}|\lesssim e^{-c^2_{2,-}r^2(x-y)}\left|\int_{-\bar\eta}^{\eta_*}e^{\left(t-c^1_{2,-}(x-y)\right)\xi+c^2_{2,-}(x-y)\xi^2}d\xi\right|.
\ea
Since $\eta_*$ is the critical point of quadratic function $\left(t-c^1_{2,-}(x-y)\right)\xi+c^2_{2,-}(x-y)\xi^2$, we then have 
\be
|I^1_{S1}|, |I^1_{S3}|\lesssim e^{-c^2_{2,-}r^2(x-y)}e^{-\left(t-c^1_{2,-}(x-y)\right)\bar\eta+c^2_{2,-}(x-y)\bar\eta^2}\left|\eta_*+\bar\eta\right|.
\ee
Choosing $\bar\eta$ sufficiently small with respect to $r^2$ and using comparability of $\frac{t}{c^1_{2,-}}$ and $x-y$, $I^1_{S1}, I^1_{S3}$ is then time-exponentially decaying.

ii. When $\left|\frac{\bar\alpha}{p}\right|>\eps$, we have
\ba
&|I^1_{S1}|,|I^1_{S3}|\\\lesssim& e^{-c^2_{2,-}r^2(x-y)}\left|\int_{-\bar\eta}^{\pm\eps}e^{\left(t-c^1_{2,-}(x-y)\right)\xi+c^2_{2,-}(x-y)\xi^2}d\xi\right|\\
\lesssim&e^{-c^2_{2,-}r^2(x-y)}\left(e^{-\left(t-c^1_{2,-}(x-y)\right)\bar\eta+c^2_{2,-}(x-y)\bar\eta^2}+e^{-\left(t-c^1_{2,-}(x-y)\right)\eps+c^2_{2,-}(x-y)\eps^2}\right)(\eps+\bar\eta)\\
=&\left(e^{-\bar\eta t+(c^1_{2,-}\bar\eta+c^2_{2,-}\bar\eta^2-c^2_{2,-}r^2)(x-y)}+e^{-\eps t+(c^1_{2,-}\eps+c^2_{2,-}\eps^2-c^2_{2,-}r^2)(x-y)}\right)(\eps+\bar\eta).
\ea
Again choosing $\bar\eta$, $\eps$ sufficiently small with respect to $r$ yields that $I_{S1}$, $I_{S3}$ are time-exponentially small.

\textbf{$I^1_R$:} Using $P_{2,-}(\lambda)=P_{2,-}(0)+O(|\lambda|)$, $T_-(\lambda,x)=Id+O(e^{-\theta'|x|})$, $T_-^{-1}(\lambda,y)=Id+O(e^{-\theta'|y|})$, $\gamma_{2,-}(\lambda)=\tilde{\gamma}_{2,-}(\lambda)+O(|\lambda|^3)$, and $A^{-1}(y)=A^{-1}_-+O(e^{-\theta y})$, $R^1_\lambda$ can be estimated as
\ba 
\label{IR1decom}
R^1_\lambda=&-e^{\tilde{\gamma}_{2,-}(\lambda)(x-y)}P_{2,-}(0)A_-^{-1}+e^{\gamma_{2,-}(\lambda)(x-y)}T_-(\lambda,x)P_{2,-}(\lambda)T_-^{-1}(\lambda,y)A^{-1}(y)\\
=&-e^{\tilde{\gamma}_{2,-}(\lambda)(x-y)}P_{2,-}(0)A_-^{-1}+e^{\tilde{\gamma}_{2,-}(\lambda)(x-y)}\left(1+O(|\lambda^3(x-y)|)\right)\left(Id+O(e^{-\theta'|x|})\right)\\
&\times\left(P_{2,-}(0)+O(|\lambda|)\right)\left(Id+O(e^{-\theta'|y|})\right)\left(A^{-1}_-+O(e^{-\theta y})\right)\\
=&e^{\tilde{\gamma}_{2,-}(\lambda)(x-y)}\left(O(|\lambda^3(x-y)|)+O(e^{-\theta |x|})\right)P_{2,-}(0)A_-^{-1}\\
&+e^{\tilde{\gamma}_{2,-}(\lambda)(x-y)}\left(O(|\lambda|)A_-^{-1}+P_{2,-}(0)O(e^{-\theta'|y|})A_-^{-1}+P_{2,-}(0)O(e^{-\theta|y|})\right)\\
:=&\#_1+\#_2.
\ea
Deform the contour as before to write $I^1_R$ as
\be 
I^1_R=\frac{1}{2\pi i}\left(\int_{-\bar\eta-ir}^{\eta_*-ir}+\int_{\eta_*-ir}^{\eta_*+ir}+\int_{\eta_*+ir}^{-\bar\eta+ir}\right)e^{\lambda t}R^1_\lambda d\lambda:=I^1_{R1}+I^1_{R2}+I^1_{R3}.
\ee
\textbf{$I^1_{R2}$:} On the contour $[\eta_*-ir,\eta_*+ir]$, we notice that 
\be 
O(|\lambda|)=O(|\eta_*|)+O(|\xi|),\quad O(|\lambda^3(x-y)|)=\sum_{i=0}^3 O(|\eta_*|^i||\xi|^{3-i}|x-y|) .
\ee

i. When $\left|\frac{\bar\alpha}{p}\right|\le \eps$, $I^1_{R2}$ can be estimated as
\ba
\label{IR2}
|I^1_{R2}|\lesssim& e^{-\frac{\left(t-c^1_{2,-}(x-y)\right)^2}{4c^2_{2,-}\left(x-y\right)}}\int_{-r}^re^{-\xi^2c^2_{2,-}(x-y)}O\left(|\eta_*|+|\xi|+\sum_{i=0}^3 |\eta_*|^i|\xi|^{3-i}|x-y|+e^{-\theta'|x|}+e^{-\theta'|y|}\right) d\xi\\
\lesssim &e^{-\frac{\left(t-c^1_{2,-}(x-y)\right)^2}{4c^2_{2,-}\left(x-y\right)}}\int_{-r}^re^{-\xi^2c^2_{2,-}(x-y)}O\left(|\eta_*|+|\xi|+\sum_{i=0}^3 |\eta_*|^i|\xi|^{3-i}|x-y|\right)d\xi\\
&+\frac{1}{\sqrt{c^2_{2,-}(x-y)}}e^{-\frac{\left(t-c^1_{2,-}(x-y)\right)^2}{4c^2_{2,-}\left(x-y\right)}}O(e^{-\theta'|x|}+e^{-\theta'|y|})\\
:=&I^1_{R2i}+I^1_{R2ii}.
\ea
The $I^1_{R2ii}$ term in the last line of \eqref{IR2} is absorbable in $R$ because $\frac{t}{c^1_{2,-}}$ and $x-y$ are comparable so $\frac{1}{\sqrt{c^2_{2,-}(x-y)}}e^{-\frac{\left(t-c^1_{2,-}(x-y)\right)^2}{4c^2_{2,-}\left(x-y\right)}}O(e^{-\theta'|x|})$ is absorbable in $R$ \eqref{Rbound} and by Observation \ref{expyestimate} the term $\frac{1}{\sqrt{c^2_{2,-}(x-y)}}e^{-\frac{\left(t-c^1_{2,-}(x-y)\right)^2}{4c^2_{2,-}\left(x-y\right)}}O(e^{-\theta'|y|})$ is time-exponentially small hence also absorbable. The term $I^1_{R2i}$ is shown in Appendix \ref{ApEstimate} to be absorbable in $R$.

ii. When $\frac{\bar\alpha}{p}>\eps$, following part ii above in the estimation of $I^1_{S2}$, we find
that $I^1_{R2}$ is also time-exponentially decaying.
Using \eqref{IR1decom} and imitating the way of estimating $I^1_{S1}$ and $I^1_{S3}$,
we find that $I^1_{R1}$ and $I^1_{R3}$ are also time-exponentially decaying.

In the regime $t\ge 1$ and $x<y<0$, by Proposition \ref{extendtildeG}, $G^1_\lambda=R^1_\lambda$. Because $\Re\gamma_{1,-}(\lambda)>c>0$ in a small neighborhood of the origin, $G^1_\lambda(x;y)$ is then uniformly bounded in $x<y<0$, 
and so the term $I^1$ in \eqref{lmh} is time-exponentially decaying.

Following the way of estimating $I^1_R$ and using estimates \eqref{G2low}, $I^2$ can be estimated in a similar way and absorbed in $R$.

\textbf{$R\left(\begin{array}{c}0\\1\end{array}\right)$:} In $R$\footnote{Refer to Appendix \ref{Decompositionmap} equation \eqref{decomposition2}(iii) for decomposition of $R$.}, the terms $II$, $III_{b,c}^{1,2}$, and $\chi_{t\le 1}I$ are time-exponentially small, hence absorbable in \eqref{R2bound}. By Observation \ref{apecialstructiononS}, any terms in $R$ that has  a labeling ``$S$" will become $0$ when right multiplied by $\left(\begin{array}{c}0\\1\end{array}\right)$. The only term 
remaining to be analyzed is $I_R^1$. By \eqref{IR1decom}, $\#_1\left(\begin{array}{c}0\\1\end{array}\right)=0$, the other term $\#_2\left(\begin{array}{c}0\\1\end{array}\right)$ is absorbable in \eqref{R2bound} by Observation \ref{expyestimate}.

Finally estimation of $R_y$ can be done by estimating $y$-derivatives of terms in \eqref{decomposition2}(iii) separately. That is for
\begin{itemize}
\item $II$:  The $y$-derivative of the resolvent kernel is bounded on the intermediate frequency regime, hence $II_y$ is time-exponentially small;
\item $III_{b,c}^{1,2}$: Direct computation shows they are time-exponentially small;
\item $\chi_{t\le 1}I$: $e^{\lambda t}G_{\lambda,y}$ uniformly bounded, so time-exponentially small;
\item $I^1_{S1,S3},\chi_{\frac{\bar{\alpha}}{p}>\eps}I^1_{S2}$: When the $y$-derivative hits the exponential term, 
	this will bring down only the order-one exponential rate, with no improvement due to differentiation.
	But, this term is already uniformly bounded for low frequencies. So, these terms again are time-exponentially 
	small by following the estimates in the undifferentiated case;
\item $\chi_{\frac{\bar{\alpha}}{p}\le\eps}I_{S2Ri}^1$: See Appendix \ref{ApEstimate};
\item $\chi_{\frac{\bar{\alpha}}{p}\le\eps}I_{S2Rii}^1$: See Appendix \ref{ApEstimate};
\item $I_R^1$: Again we demonstrate how to estimate 
$\frac{\partial I_R^1}{\partial y}$ on the critical regime $y<x<0$. By direct computation,
\ba
&\frac{\partial}{\partial y}\left(-e^{\tilde{\gamma}_{2,-}(\lambda)(x-y)}P_{2,-}(0)A_-^{-1}+e^{\gamma_{2,-}(\lambda)(x-y)}T_-(\lambda,x)P_{2,-}(\lambda)T_-^{-1}(\lambda,y)A^{-1}(y)\right)\\=&\tilde{\gamma}_{2,-}(\lambda)e^{\tilde{\gamma}_{2,-}(\lambda)(x-y)}P_{2,-}(0)A_-^{-1}-\gamma_{2,-}e^{\gamma_{2,-}(\lambda)(x-y)}T_-(\lambda,x)P_{2,-}(\lambda)T_-^{-1}(\lambda,y)A^{-1}(y)\\
&+e^{\gamma_{2,-}(\lambda)(x-y)}T_-(\lambda,x)P_{2,-}(\lambda)\frac{\partial}{\partial y}\left(T_-^{-1}(\lambda,y)A^{-1}(y)\right)\\
:=&\#_3+\#_4,
\ea
and
\ba
\#_3=&\tilde{\gamma}_{2,-}(\lambda)e^{\tilde{\gamma}_{2,-}(\lambda)(x-y)}P_{2,-}(0)A_-^{-1}-\left(\tilde{\gamma}_{2,-}(\lambda)+O(|\lambda|^3)\right)e^{\tilde{\gamma}_{2,-}(\lambda)(x-y)}\left(1+O(|\lambda^3(x-y)|)\right)\\
&\times\left(Id+O(e^{-\theta' |x|})\right)\left(P_{2,-}(0)+O(|\lambda|)\right)\left(Id+O(e^{-\theta'|y|})\right)\left(A_-^{-1}+O(e^{-\theta |y|})\right)\\
=&\left(O(|\lambda|^3)e^{\tilde{\gamma}_{2,-}(\lambda)(x-y)}+\tilde{\gamma}_{2,-}(\lambda)e^{\tilde{\gamma}_{2,-}(\lambda)(x-y)}\left(O(|\lambda^3(x-y)|+O(e^{-\theta' |x|})\right)\right)P_{2,-}(0)A_-^{-1}\\
&+\tilde{\gamma}_{2,-}(\lambda)e^{\tilde{\gamma}_{2,-}(\lambda)(x-y)}\left(O(|\lambda|)A_-^{-1}+P_{2,-}(0)O(e^{-\theta'|y|})A_-^{-1}+P_{2,-}(0)O(e^{-\theta|y|})\right)\\
=&\left(O(|\lambda|^3)e^{\tilde{\gamma}_{2,-}(\lambda)(x-y)}+O(|\lambda|)e^{\tilde{\gamma}_{2,-}(\lambda)(x-y)}\left(O(|\lambda^3(x-y)|+O(e^{-\theta' |x|})\right)\right)P_{2,-}(0)A_-^{-1}\\
&+O(|\lambda|)e^{\tilde{\gamma}_{2,-}(\lambda)(x-y)}\left(O(|\lambda|)A_-^{-1}+P_{2,-}(0)O(e^{-\theta'|y|})A_-^{-1}+P_{2,-}(0)O(e^{-\theta|y|})\right).
\ea
We then see 
\be
|\#_3|=|e^{\tilde{\gamma}_{2,-}(\lambda)(x-y)}|O\left(|\lambda|^2+|\lambda|^4|x-y|+|\lambda|e^{-\theta|x|}+|\lambda|e^{-\theta'|y|}\right).
\ee
The estimation is then similar to that of $I^1_R$ and we gain another $\frac{1}{\sqrt{t}}$ from the extra $|\lambda|$. Because
\be
\#_4=e^{\tilde{\gamma}_{2,-}(\lambda)(x-y)}P_{2,-}(0)O(e^{-\theta'|y|}).
\ee
The estimation is again similar to that of $I^1_R$ in particular by Observation \ref{expyestimate}, this term is time-exponentially small.
\end{itemize}
\end{proof}

\begin{proof}[Proof of Theorem \ref{pointwisebouvGreen}]
Following a similar analysis as for {\it High frequency contribution III} in the estimation 
of $G$ above and noting the bound \eqref{Klow} at low frequencies, we find that
the boundary source $v$-resolvent kernel $K_\lambda$ is exponentially decaying in $|x|$,
hence the boundary source $v$-Green kernel $K$ decomposes into only an $H_K$ part and an $R$ part 
as written in \eqref{Kdecompose}. 
\end{proof}

\begin{proof}[Proof of Theorems \ref{pointwiseinttimeetaGreen} and \ref{pointwiseboutimeetaGreen}]
Taking time derivatives of $K_1$, $G_1$ in \eqref{Greenkernel}, we get an extra $\lambda$ which removes the singularity of $M^{-1}$ at $\lambda=0$. The decompositions for $K_{1t}$ \eqref{estimateK1t}, $G_{1t}$ \eqref{G1tdecompose} 
then follow by shifting the contour to $\{\lambda:\Re \lambda =-\bar\eta\}$ and using 
Propositions \ref{highestimates1} and \ref{extendlambdaGK}. 
\end{proof}

\section{Linear stability}\label{s:linstab}
From the pointwise estimates of Theorem \ref{pointwiseintvGreen} we 
obtain the following linear $L^q\to L^p$ stability estimates, from which we will ultimately derive nonlinear
stability and asymptotic orbital stability.

\subsection{Linear orbital stability estimates}\label{s:linorb}

\begin{lemma}\label{etadotlem}
The time derivative of equation \eqref{originalsol}(ii) is
\ba 
\label{timeprimeoriginalsol2}
\dot \eta(t)=&K_1(0)B_S(t)+\int_0^t K_{1t}(t-s)B_S(s)ds+\int_{-\infty}^\infty G_{1t}(t;y)v_0(y)dy\\
&+\int_{-\infty}^\infty G_{1}(0;y)I_S(t;y)dy+\int_0^t\int_{-\infty}^\infty G_{1t}(t-s;y)I_S(s,y)dyds.
\ea 
\end{lemma}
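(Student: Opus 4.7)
The formula is a standard Leibniz-type differentiation of \eqref{originalsol}(ii), term by term, with care taken for the distributional nature of $K_1$ and $G_1$ established in Proposition \ref{p:integral_rep}. My plan is to use the decompositions of Theorems \ref{pointwiseinttimeetaGreen}--\ref{pointwiseboutimeetaGreen} to isolate the singular contributions and reduce to routine convolution calculus.

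First, I would note that $K_{1t}(t)=V_h\delta(t)+R_{K_1}(t)$ with $R_{K_1}=O(e^{-\bar\eta t})$ exponentially decaying on $t>0$, so antidifferentiating shows $K_1(t)=V_h H(t)+\int_0^t R_{K_1}(s)\,ds$ in the distributional sense, whence the right-hand limit $K_1(0^+)=V_h$ is well-defined; analogously, $G_1(t;y)$ admits a well-defined right-hand limit $G_1(0^+;y)$ as $t\downarrow 0$ via the decomposition $G_{1t}=H_1+S_1+R_1$ of Theorem \ref{pointwiseinttimeetaGreen}, consistent with the large-$|\lambda|$ expansions of $\lambda K_{1,\lambda}$ and $\lambda G_{1,\lambda}$ in Proposition \ref{highestimates1}.

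With this in hand, the four summands on the right of \eqref{originalsol}(ii) are differentiated as follows. The constant $\eta_0$ contributes $0$. For the convolution $\int_0^t K_1(t-s)B_S(s)\,ds$, the Leibniz rule yields the boundary contribution $K_1(0^+)B_S(t)$ at $s=t$ plus the interior term $\int_0^t K_{1t}(t-s)B_S(s)\,ds$, where on the open interval $s\in(0,t)$ the distribution $K_{1t}(t-s)$ coincides with $R_{K_1}(t-s)$ and the Dirac piece of $K_{1t}$ has already been extracted into the boundary term, so no double counting occurs. The third summand $\int_{-\infty}^\infty G_1(t;y)v_0(y)\,dy$ is differentiated simply by passing $\partial_t$ inside, justified by dominated convergence applied piecewise to the bounds in Theorem \ref{pointwiseinttimeetaGreen}. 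The double integral is handled like the first convolution: differentiation in the outer integration variable gives the boundary contribution $\int G_1(0^+;y)I_S(t;y)\,dy$ together with the interior term $\int_0^t\int G_{1t}(t-s;y)I_S(s,y)\,dy\,ds$.

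The main obstacle is the bookkeeping at the endpoint $s=t$: one must cleanly separate the Dirac singularity in $K_{1t}$ and in $G_{1t}$ (the $V_h\delta(t)$ and $H_1(t;y)$ terms, supported on characteristics meeting $t=0$) from the locally integrable remainder, so that the boundary contributions are not double-counted. This is precisely what the decompositions in Theorems \ref{pointwiseinttimeetaGreen}--\ref{pointwiseboutimeetaGreen} achieve. As an alternative verification, one may take the Laplace transform of both sides of the proposed identity: since $\mathcal{L}\{\dot{\tilde\eta}\}=\lambda\check{\tilde\eta}(\lambda)$ (using $\tilde\eta(0)=0$), and since $\lambda K_{1,\lambda}=V_h+O(1/|\lambda|)$ together with the analogous expansion of $\lambda G_{1,\lambda}$ encode exactly the splittings above, the right-hand side of \eqref{timeprimeoriginalsol2} matches $\lambda$ times \eqref{inverseresolvent}(ii) termwise; inversion via the argument used in Proposition \ref{p:integral_rep} then completes the proof.
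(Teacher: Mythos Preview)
Your proposal is correct and takes the same approach as the paper: the paper's proof is the single sentence ``Directly by taking time derivative of \eqref{originalsol}(ii),'' and your argument simply fills in the Leibniz-rule bookkeeping that the paper leaves implicit. Your additional care about the distributional endpoint contributions and the alternative Laplace-transform verification go well beyond what the paper actually supplies, but they are consistent with it.
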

\begin{proof}
Directly by taking time derivative of \eqref{originalsol}(ii).
\end{proof}
Taking $B_S=0$, $I_S=0$ in \eqref{originalsol} and taking the time derivative of the $\eta$- equation yields
the linearized integral equations:
\be
\label{linearintegral}
v(x,t)=\int_{-\infty}^\infty G(x,t;y)v_0(y)dy,\quad \dot \eta(t)=\int_{-\infty}^\infty G_{1t}(t;y)v_0(y)dy.
\ee
Linear asymptotic orbital stability follows immediately from Theorem \ref{pointwiseintvGreen}. Splitting $G$ \eqref{tildeGdecompose} into singular part $H:=H^1+H^2$ and regular part $\bar{G}:=S^1+R$, $K$ \eqref{Kdecompose} into singular part $H_K$ and regular part $R_K$, and $G_{1t}$ \eqref{G1tdecompose} into singular part $H_1$ and regular part $\bar{G}_{1t}:=S_1+R_1$, we then have the following lemmas.

\begin{lemma}\label{Glem}
Assuming Evans-Lopatinsky stability, for the splitting $G=\bar G+H$, there hold:
\be
\left|\int_{-\infty}^{+\infty}\bar{G}(\cdot\;,t;y)f(y)dy\right|_{L^p}\le C(1+t)^{-\frac{1}{2}(1-1/r)}|f|_{L^q}+Ce^{-\bar{\eta}t}|f|_{L^p},
\ee
\be
\left|\int_{-\infty}^{+\infty}\bar{G}(\cdot\;,t;y)\left(\begin{array}{c}0\\f(y)\end{array}\right)dy\right|_{L^p}\le C(1+t)^{-\frac{1}{2}(2-1/r)}|f|_{L^q}+Ce^{-\bar{\eta}t}|f|_{L^p},
\ee
\be
\left|\int_{-\infty}^{+\infty}\bar{G}_y(\cdot\;,t;y)f(y)dy\right|_{L^p}\le C(1+t)^{-\frac{1}{2}(2-1/r)}|f|_{L^q}+Ce^{-\bar{\eta}t}|f|_{L^p},
\ee
and
\be\label{GlemH}
\left|\int_{-\infty}^{+\infty}H(\cdot\;,t;y)f(y)dy\right|_{L^p}\le Ce^{-\bar{\eta}t}|f|_{L^p},
\ee
for all $t\ge 0$, some $C,\bar\eta>0$, for any $1\le r\le p$ and $f\in L^q$ (resp. $L^p$), where $1/r+1/q=1+1/p$.
\end{lemma}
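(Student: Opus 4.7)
The plan is to decompose each kernel exactly as in Theorem \ref{pointwiseintvGreen} (respectively as in \eqref{Kdecompose} and \eqref{G1tdecompose}), then estimate every summand by a generalized Young's inequality of the form: for a translation-variant kernel $K(x,y)$ satisfying $|K(x,y)|\le \phi(x-y)$ with $\phi\in L^r(\RR)$, the operator $f\mapsto \int K(\cdot,y)f(y)\,dy$ maps $L^q\to L^p$ with norm $\le |\phi|_{L^r}$ whenever $1/r+1/q=1+1/p$. For the pieces whose bound is not purely translation-invariant (those carrying an $e^{-\theta'|x|}$ prefactor) a direct H\"older application in $y$ followed by taking $L^p$-norm in $x$ will do the job. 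The regimes $|x-y|/t$ large (where the contribution is zero or $O(e^{-\bar\eta(|x-y|+t)})$) and $t\le 1$ (where the $L^p\to L^p$ short-time bound absorbs everything into the $Ce^{-\bar\eta t}|f|_{L^p}$ term) are disposed of at the start.

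For the principal bound on $\bar G$, write $\bar G=S^1+R$. The Gaussian kernel $S^1$ from \eqref{S1term} satisfies $|S^1(x,t;y)|\le Ct^{-1/2}\exp(-\eta(x-y-c t)^2/t)$ along the respective characteristic, so Young's inequality gives
\[
\Bigl|\int S^1(\cdot,t;y)f(y)\,dy\Bigr|_{L^p}\le Ct^{-\tfrac12(1-1/r)}|f|_{L^q}.
\]
The bound \eqref{Rbound} decomposes $R$ into three pieces: translation-invariant Gaussians with $t^{-1}$ amplitude (yielding, by the same Young argument, the strictly better rate $t^{-(1-1/(2r))}$, absorbable in the main rate); a non-translation-invariant term $t^{-1/2}e^{-\theta'|x|}$ times a Gaussian in $y$, which by H\"older in $y$ and the fact that $e^{-\theta'|x|}\in L^p(\RR)$ is itself $O(t^{-\frac{1}{2}(1-1/r)})$ after combining the $L^q\to L^\infty$ Gaussian bound with the $L^p$-norm of $e^{-\theta'|x|}$; and the exponentially small residual $O(e^{-\bar\eta(|x-y|+t)})$, which is controlled by $Ce^{-\bar\eta t}|f|_{L^p}$ via Young's inequality on the $L^1$ function $e^{-\bar\eta|x-y|}$. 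Combining gives the first estimate of the lemma.

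The improved $(0,f)^T$ bound exploits the vanishing identity \eqref{s1second}, which kills the leading $S^1$ contribution entirely; only $R(x,t;y)(0,1)^T$ survives, and by \eqref{R2bound} it obeys the same structural bound as $R$ but with one extra factor of $t^{-1/2}$ on every piece, so the same Young/H\"older analysis yields the desired rate $(1+t)^{-\frac12(2-1/r)}$. The $\bar G_y$ bound is identical in spirit, using \eqref{S1yterm} for the derivative of the Gaussian part ($t^{-1}$ amplitude with the same Gaussian spread) and \eqref{Rybound} for the residual, each step gaining the required extra $t^{-1/2}$ over the undifferentiated estimate.

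It remains to prove \eqref{GlemH}. By \eqref{H1term}--\eqref{H2term}, $H$ is a finite sum of terms of the shape $e^{-\bar\eta t}\,m(x,y)\,\delta(t+\Phi(x,y))$, where $\Phi$ is one of the characteristic functions $\int_y^x\mu_j(z)\,dz$ (or its analogue for the reflected pieces) and $m$ is a bounded, smooth matrix weight. Since $\mu_j$ is bounded away from $0$ and $\infty$ with a definite sign, for each fixed $x$ and $t$ the distributional identity $\delta(t+\Phi(x,y))=|\partial_y\Phi(x,y_*)|^{-1}\delta(y-y_*)$ picks out a single point $y_*=y_*(x,t)$ with uniformly bounded Jacobian, so $\int H(x,t;y)f(y)\,dy=e^{-\bar\eta t}\tilde m(x,t)f(y_*(x,t))$ for a bounded $\tilde m$. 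The map $x\mapsto y_*(x,t)$ is a bi-Lipschitz change of coordinates with uniform constants (from uniform bounds on $\mu_j$), so the change of variable yields $|\int H(\cdot,t;y)f(y)\,dy|_{L^p}\le Ce^{-\bar\eta t}|f|_{L^p}$ as required. The only potential obstacle is the reflection/refraction piece $H^2$ involving three characteristic segments at once, but the same argument applies verbatim since at each discontinuity the characteristic speeds are again uniformly transverse to $t=$const, and composition of bi-Lipschitz changes of variable remains bi-Lipschitz. This completes the plan.
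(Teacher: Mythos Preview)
Your proposal is correct and follows essentially the same approach as the paper: the paper's own proof simply reads ``These follow by direct calculation from our more detailed pointwise Green function bounds, exactly as in the proof of \cite[Lemma 7.1]{MZ},'' and what you have written is precisely a reconstruction of that Hausdorff--Young/Young's inequality argument applied term-by-term to the decomposition of Theorem~\ref{pointwiseintvGreen}. One small point worth tightening: in the $H$-estimate you assert that the weight $m(x,y)$ multiplying the delta is bounded, but the explicit weight in \eqref{H1term} is $e^{-\bar\eta t+\int_y^x(-\bar\eta\mu_1+M_{11})}$, which is only seen to be $O(e^{-\bar\eta' t})$ after restricting to the support of the delta (where $\int_y^x\mu_1=-t$) and using the sign of $M_{11}$ established in the high-frequency analysis; this is routine but should be noted.
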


\begin{lemma}\label{Klem}
Assuming Evans-Lopatinsky stability, for the splitting $K=H_K+R_K$, there hold:
\be
\left| \int_0^t H_K(\cdot\;,t-s)g(s)ds\right|_{L^p}\le C|e^{-\bar \eta (t-\cdot)}g|_{L^p(0,t)}
\ee
and
\be
\left| \int_0^t R_K(\cdot\;,t-s)g(s)ds\right|_{L^p}\le C|e^{-\bar \eta (t-\cdot)}g|_{L^1(0,t)},
\ee
for all $t\ge 0$, some $C,\bar\eta>0$, for any $1\le p$ and $g\in L^p$ (resp. $L^1$).
\end{lemma}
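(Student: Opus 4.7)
\medskip

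\noindent\emph{Proof proposal.} The two bounds correspond to the two pieces of the decomposition $K=H_K+R_K$ in Theorem \ref{pointwisebouvGreen}: $H_K$ is a single Dirac mass riding along the outgoing hyperbolic characteristic $t+\int_0^x \mu_1(z)\,dz=0$ on $x<0$, while $R_K$ is a smooth remainder enjoying the pointwise bound $|R_K(x,t)|\le Ce^{-\bar\eta(|x|+t)}$. The strategy is to treat each piece separately, reducing the first to a one-dimensional change of variables along the characteristic and the second to a direct pointwise estimate.

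For the $H_K$-bound, I would plug the explicit formula for $H_K$ from Theorem \ref{pointwisebouvGreen} into the convolution and use the Dirac mass $\delta((t-s)+\int_0^x\mu_1\,dz)$ to collapse the $s$-integral. Setting $\tau(x):=-\int_0^x\mu_1(z)\,dz$, which is strictly positive, smooth, and monotone-increasing as $x$ decreases from $0$ to $-\infty$ (with bounded-below derivative $-\mu_1(x)$ by the nondegeneracy assumption and positivity of $\mu_1$ along the profile), the delta forces the unique contribution at $s^{*}=t-\tau(x)$, provided $0\le\tau(x)\le t$. One finds
\[
\int_0^t H_K(x,t-s)g(s)\,ds = b(x)\,e^{-\bar\eta(t-s^{*})}g(s^{*})\,\chi_{0\le\tau(x)\le t},
\]
with $b(x):=-e^{\int_0^x(-\bar\eta\mu_1(z)+M_{11}(z))\,dz}\,R_1(x)L_1(0^-)V$, where the factor $e^{-\bar\eta(t-s)}$ has been isolated from the original $e^{-\bar\eta(t-s)+\int_0^x(-\bar\eta\mu_1+M_{11})\,dz}$. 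Taking $|\cdot|^p$, integrating in $x<0$, and performing the substitution $\sigma=\tau(x)$ (with Jacobian $|dx|=\mu_1(x(\sigma))^{-1}d\sigma$ bounded) yields
\[
\Big|\int_0^t H_K(\cdot,t-s)g(s)\,ds\Big|_{L^p}^{p}\le C\int_0^t |b(x(\sigma))|^{p}\,e^{-p\bar\eta(t-s)}|g(s)|^{p}\,ds,
\]
after the final substitution $s=t-\sigma$. The claim then follows once one verifies that $b(x)$ is uniformly bounded on $(-\infty,0)$.

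For the $R_K$-bound, the pointwise inequality $|R_K(x,t-s)|\le Ce^{-\bar\eta|x|}e^{-\bar\eta(t-s)}$ directly gives
\[
\Big|\int_0^t R_K(x,t-s)g(s)\,ds\Big|\le Ce^{-\bar\eta|x|}\int_0^t e^{-\bar\eta(t-s)}|g(s)|\,ds = Ce^{-\bar\eta|x|}\,|e^{-\bar\eta(t-\cdot)}g|_{L^1(0,t)}.
\]
Since $K\equiv0$ for $x>0$ and $|e^{-\bar\eta|x|}|_{L^p_x(-\infty,0)}\le C$, taking the $L^p_x$-norm produces the second estimate.

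The \emph{main obstacle} is the uniform-in-$x$ boundedness of $b(x)$ in the $H_K$-bound. Here $R_1(x)$ is already bounded along the profile by its construction in \eqref{originRdef}, and $L_1(0^-)V$ is a fixed constant vector, so what must be checked is that $\int_0^x(-\bar\eta\mu_1+M_{11})(z)\,dz$ is bounded above uniformly on $x<0$. This is exactly the damping condition $M_{11}(z)\ge \bar\eta\mu_1(z)$ along the characteristic, built into the very choice of $\bar\eta>0$ sufficiently small in Theorem \ref{pointwisebouvGreen}; it encodes the decay of the principal-symbol amplitude along the outgoing $\mu_1$-characteristic from the subshock boundary $x=0^-$, and is verified from the explicit formulas for $\mu_1$ and $M_{11}$ in \eqref{originRdef} in tandem with the profile equation \eqref{profileODE}. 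Once this boundedness is in hand, both estimates follow by the calculations above.
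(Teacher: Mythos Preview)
Your proposal is correct and is precisely the ``direct calculation from our more detailed pointwise Green function bounds'' that the paper invokes (without details) in its one-line proof of Lemmas \ref{Glem}--\ref{K1lem}. The key observation you isolate---that $-\bar\eta\mu_1(z)+M_{11}(z)>c>0$ for $\bar\eta$ small, forcing $\int_0^x(-\bar\eta\mu_1+M_{11})\,dz\le 0$ on $x<0$ and hence $|b(x)|\le C$---is exactly the inequality used in the paper's estimate of $III^1_b$ in the proof of Theorem \ref{pointwiseintvGreen}, so no new verification is needed.
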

\begin{lemma}\label{G1lem}
	Assuming Evans-Lopatinsky stability, for the splitting $G_{1t}=\bar{G}_{1t}+H_1$, there hold:
\be 
\label{barG1t}
\left|\int_{-\infty}^{\infty}\bar{G}_{1t}(t;y)f(y)dy\right|\le C(1+t)^{-\frac{1}{2q}}|f|_{L^q},
\ee
\be 
\left|\int_{-\infty}^{\infty}\bar{G}_{1t}(t;y)\left(\begin{array}{c}0\\f(y)\end{array}\right)dy\right|\le C(1+t)^{-\frac{1}{2}-\frac{1}{2q}}|f|_{L^q},
\ee
\be 
\left|\int_{-\infty}^{\infty}\bar{G}_{1ty}(t;y)f(y)dy\right|\le C(1+t)^{-\frac{1}{2}-\frac{1}{2q}}|f|_{L^q},
\ee
and
\be 
\left|\int_{-\infty}^{\infty}H_1(t;y)f(y)dy\right|\le Ce^{-\bar\eta t}|f|_{L^\infty},
\ee
for all $t\ge 0$, some $C,\bar\eta>0$, for any $1\le q$ and $f\in L^q$ (resp. $L^\infty$).
\end{lemma}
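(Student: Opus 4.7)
The plan is to derive all four estimates directly from the decomposition $G_{1t} = H_1 + S_1 + R_1$ given in Theorem \ref{pointwiseinttimeetaGreen}, together with the algebraic identity \eqref{s_1second}, by pairing the pointwise bounds with Hausdorff--Young/Hölder inequalities. First I would dispose of \eqref{GlemH}-type bound for $H_1$: since $H_1(t;y)$ is a finite sum of terms of the form $e^{-\bar\eta t}(\text{smooth}(y))\,\delta(t-\mu_{1,+}y)$, $e^{-\bar\eta t}(\text{smooth}(y))\,\delta(t-\mu_{2,+}y)$, and $e^{-\bar\eta t}(\text{smooth}(y))\,\delta(t+\int_y^0\mu_2(z)dz)$, each delta picks out a single value $y_\ast(t)$, so the pairing against any $f\in L^\infty$ is bounded by $Ce^{-\bar\eta t}|f|_{L^\infty}$ after absorbing the bounded prefactor coefficients.

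Next, for \eqref{barG1t} I would apply Hölder with dual exponent $q'$, $1/q+1/q'=1$, to obtain
\[
\left|\int_{-\infty}^{+\infty}\bar G_{1t}(t;y)f(y)\,dy\right|\le \big(\|S_1(t;\cdot)\|_{L^{q'}}+\|R_1(t;\cdot)\|_{L^{q'}}\big)|f|_{L^q}.
\]
A one-line change of variable $z=(t-c^1_{1,+}y)/\sqrt{t}$ (or the analogous substitution on $y<0$) applied to the Gaussian profile in the definition of $S_1$ shows $\|S_1(t;\cdot)\|_{L^{q'}}\lesssim t^{-1/2+1/(2q')}=t^{-1/(2q)}$ for $t\ge 1$. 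The same computation applied to the dominant Gaussian piece of $R_1$ yields the faster rate $t^{-1-1/(2q)}$, while the remainder $O(e^{-\bar\eta(|y|+t)})$ contributes pure exponential decay in $t$. For $0\le t\le 1$ all kernels are uniformly bounded in $L^{q'}$, giving the claimed $(1+t)^{-1/(2q)}$ bound.

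The estimates \eqref{barG1t} with source $(0,f)^T$ and for $\bar G_{1ty}$ follow along the same lines, but with the improvement coming from two distinct mechanisms. For the second-column estimate, the key observation is the identity \eqref{s_1second} from Observation \ref{apecialstructiononS}, which kills the leading Gaussian contribution: $S_1(t;y)(0,1)^\top\equiv 0$. Thus only the residual $R_1(t;y)(0,1)^\top$ contributes, and by the corresponding pointwise estimate its dominant Gaussian prefactor is $O(t^{-3/2})$ rather than $O(t^{-1/2})$; the same Hölder computation then yields $\|R_1(t;\cdot)(0,1)^\top\|_{L^{q'}}\lesssim t^{-3/2+1/(2q')}=t^{-1-1/(2q)}\lesssim(1+t)^{-1/2-1/(2q)}$. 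For the $y$-derivative estimate, the pointwise bounds on $S_{1y}$ and $R_{1,y}$ in Theorem \ref{pointwiseinttimeetaGreen} carry an additional factor of $t^{-1/2}$ relative to the undifferentiated versions, translating by the same Hölder argument into the extra $(1+t)^{-1/2}$ factor.

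I do not expect any serious obstacle: the work is essentially the bookkeeping of $L^{q'}$ norms of Gaussian kernels against the catalog of pointwise bounds already established. The only subtlety worth flagging is the use of the algebraic identity \eqref{s_1second} to extract the improved rate for the second-column estimate; without this the scattering term $S_1$ would produce the wrong rate $(1+t)^{-1/(2q)}$. Finally, one should note that the exponentially localized residual $O(e^{-\bar\eta(|y|+t)})$ in $R_1$ factors as $e^{-\bar\eta t}\cdot e^{-\bar\eta|y|}$, so its Hölder pairing against $f\in L^q$ is absorbed into the stated $Ce^{-\bar\eta t}|f|_{L^p}$ (or smaller) terms, completing the proof.
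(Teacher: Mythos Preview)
Your proposal is correct and follows essentially the same approach as the paper, which simply states that Lemmas \ref{Glem}--\ref{K1lem} ``follow by direct calculation from our more detailed pointwise Green function bounds, exactly as in the proof of \cite[Lemma 7.1]{MZ}.'' Your write-up is a faithful expansion of that one-line argument: Hölder pairing against the $L^{q'}$ norm of the Gaussian kernels $S_1,R_1$ from Theorem \ref{pointwiseinttimeetaGreen}, the algebraic cancellation \eqref{s_1second} for the second-column estimate, and the delta-function structure of $H_1$ for the $L^\infty$ bound. (One minor arithmetic slip: in the first estimate the $R_1$ contribution has $L^{q'}$ norm $\lesssim t^{-1+1/(2q')}=t^{-1/2-1/(2q)}$, not $t^{-1-1/(2q)}$; this is still dominated by the $S_1$ rate $t^{-1/(2q)}$, so the conclusion is unaffected.)
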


\begin{lemma}\label{K1lem}
For $K_{1t}$, there holds
\be
\left| \int_0^t  {K_{1t}}(t-s)g(s)ds\right|= V_h g(t)+O(\int_0^t e^{-\bar \eta (t-s)}|g(s)|ds) 
\le C\sup_{t/2\leq s\leq t}|g(s)| + e^{-\bar \eta t/2}|g|_{L^\infty(0,t)},
\ee
for all $t\ge 0$, some $C,\bar\eta>0$, and $g\in L^\infty(0,t)$.
\end{lemma}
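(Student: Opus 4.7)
The plan is to apply directly the decomposition of $K_{1t}$ provided by Theorem~\ref{pointwiseboutimeetaGreen}, namely $K_{1t} = H_{K_1} + R_{K_1}$ with $H_{K_1}(t) = V_h \delta(t)$ and $R_{K_1}(t) = O(e^{-\bar\eta t})$. Substituting this decomposition into the convolution and using the distributional action of the delta at the endpoint $s=t$ immediately yields
\ba
\int_0^t K_{1t}(t-s) g(s)\, ds
= V_h g(t) + \int_0^t R_{K_1}(t-s) g(s)\, ds,
\ea
which is the first equality of the lemma once the residual integral is absorbed into the $O(\cdot)$ term via the pointwise bound $|R_{K_1}(t-s)| \leq C e^{-\bar\eta(t-s)}$.

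For the second (scalar) inequality, the plan is to split the residual convolution at the midpoint $s = t/2$ and estimate each piece separately. On the near-diagonal piece $s \in [t/2, t]$, the exponential weight integrates to an $O(1)$ constant and the supremum of $|g|$ may be taken over $[t/2,t]$, producing the contribution $C \sup_{t/2 \leq s \leq t} |g(s)|$. On the tail piece $s \in [0, t/2]$, the factor $e^{-\bar\eta(t-s)}$ is uniformly bounded by $e^{-\bar\eta t/2}$, so pulling out $|g|_{L^\infty(0,t)}$ and integrating gives the second contribution $e^{-\bar\eta t/2} |g|_{L^\infty(0,t)}$. The delta-function term $V_h g(t)$ contributes trivially to the first of these, since $|g(t)| \leq \sup_{t/2 \leq s \leq t} |g(s)|$.

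There is no serious obstacle here: the lemma is essentially a triviality once Theorem~\ref{pointwiseboutimeetaGreen} is in hand, because $K_{1t}$ is a sum of a delta distribution at the origin plus an exponentially decaying kernel, and the convolution of such a kernel against a bounded function is immediately controlled by the claimed pointwise bound. The only mild subtlety is the endpoint behavior of the delta in the convolution $\int_0^t V_h \delta(t-s) g(s)\, ds$, which we interpret (consistent with the derivation of $K_{1t}$ in the proof of Theorem~\ref{pointwiseboutimeetaGreen} via the inverse Laplace contour) as pairing on the closed interval and thus returning $V_h g(t)$; this is the same convention used implicitly throughout Section~\ref{s:ker} when translating resolvent identities into time-domain representations.
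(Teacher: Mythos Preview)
Your proof is correct and follows exactly the approach indicated in the paper: the paper's own proof of Lemmas~\ref{Glem}--\ref{K1lem} simply says ``these follow by direct calculation from our more detailed pointwise Green function bounds, exactly as in the proof of \cite[Lemma 7.1]{MZ},'' and your argument is precisely that direct calculation spelled out, using the decomposition $K_{1t}=V_h\delta(t)+O(e^{-\bar\eta t})$ from Theorem~\ref{pointwiseboutimeetaGreen} together with the standard split of the convolution at $s=t/2$.
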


\begin{proof}[Proof of Lemmas \ref{Glem}--\ref{K1lem}]
These follow by direct calculation from our more detailed pointwise Green function bounds,
exactly as in the proof of \cite[Lemma 7.1]{MZ}.
\end{proof}

\subsection{Linear phase estimates}\label{s:phase}

\begin{lemma}\label{intK1lem}
Assuming Evans-Lopatinsky stability, for $K_1$, there hold:
\be\label{K1bd}
K_1(t) =K_1(0)+V_h + O(1-e^{-\bar \eta t}), 
\ee
\be\label{intK1eq}
 \left| \int_0^t  {K_{1}}(t-s)g(s)ds\right| \le 
 C|g|_{L^1(0,t)},
 \ee
for all $t\ge 0$, some $C,\bar\eta>0$, and any $g\in L^1$.
\end{lemma}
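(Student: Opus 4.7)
The plan is to deduce both assertions directly from the bound on $K_{1t}$ supplied by Theorem~\ref{pointwiseboutimeetaGreen}, which decomposes $K_{1t}(t)=V_h\delta(t)+R_{K_1}(t)$ with $R_{K_1}(t)=O(e^{-\bar\eta t})$. The first bound \eqref{K1bd} should follow by integrating this identity in time: interpreting the delta-function contribution in the distributional sense as giving a unit jump, one obtains
\[
K_1(t)-K_1(0)=V_h+\int_0^t R_{K_1}(s)\,ds,
\]
and the exponential integrand yields $\int_0^t R_{K_1}(s)\,ds=O(1-e^{-\bar\eta t})$, establishing \eqref{K1bd}. Strictly speaking, because $K_1$ is only guaranteed by Proposition~\ref{p:integral_rep} (and the remarks following it) to be a $C^0$ function with respect to $t$ expressible as the sum of a constant $V_h$-term and an absolutely convergent inverse Laplace integral of size $O(|\lambda|^{-2})$, one may verify the jump relation either directly from this explicit representation or by applying it to a test function and using the distributional identity $\partial_t K_1=K_{1t}$.

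Once \eqref{K1bd} is in hand, the estimate \eqref{intK1eq} is a one-line consequence: \eqref{K1bd} shows in particular that $K_1\in L^\infty(\mathbb{R}_+)$ with $\|K_1\|_{L^\infty}\le|K_1(0)|+|V_h|+C$, so Young's convolution inequality gives
\[
\Big|\int_0^t K_1(t-s)g(s)\,ds\Big|\le\|K_1\|_{L^\infty(0,t)}\,|g|_{L^1(0,t)}\le C|g|_{L^1(0,t)}.
\]

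There is no substantive obstacle here; the only point requiring care is the rigorous justification of the identity $K_1(t)=K_1(0)+\int_0^t K_{1t}(s)\,ds$ when $K_{1t}$ is interpreted as a distribution of the form $V_h\delta+R_{K_1}$. This is handled exactly as in the inversion argument of Proposition~\ref{p:integral_rep}, by factoring $K_{1,\lambda}=V_h/\lambda+O(|\lambda|^{-2})$ (from \eqref{lambdaK1lambda}) and evaluating the inverse Laplace transform of each piece separately, so the delta contribution integrates to the Heaviside step $V_h\chi_{t\ge 0}$ and the $O(|\lambda|^{-2})$ remainder yields an absolutely continuous function whose derivative is $R_{K_1}$.
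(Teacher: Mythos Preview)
Your proof is correct and follows essentially the same approach as the paper: integrate the decomposition $K_{1t}=V_h\delta(t)+R_{K_1}$ from Theorem~\ref{pointwiseboutimeetaGreen} to obtain \eqref{K1bd}, then use the resulting $L^\infty$ bound on $K_1$ to deduce \eqref{intK1eq}. Your additional remarks on the distributional justification of $K_1(t)=K_1(0)+\int_0^t K_{1t}(s)\,ds$ via the factorization $K_{1,\lambda}=V_h/\lambda+O(|\lambda|^{-2})$ are more explicit than the paper's treatment but not a different argument.
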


\begin{proof}
\eqref{K1bd} follows by direct computation using Theorem \ref{pointwiseboutimeetaGreen}:
$$
K_1(t)=K_1(0)+\int_0^t K_{1t}(s)ds= K_1(0)+V_h +\int_0^t O(e^{-\bar \eta (t-s)})ds.
$$
Likewise, \eqref{intK1eq} follows immediately from \eqref{K1bd}, using
$|K_1(0)+V_h + O(1-e^{-\bar \eta t})|\leq C$. 
\end{proof}

\begin{lemma}\label{intG1lem}
	Assuming Evans-Lopatinsky stability, for $G_{1}$, there hold:
\be \label{G1bounded}
 |G_{1}(t;y)|\le C,\quad \text{where $C$ is independent of $t,y$,}
 \ee
\be \label{G1y}
 \left|\int_{-\infty}^\infty \partial_y G_{1}(t;z)f(z)dz\right|\le C(1+t)^{-\frac{1}{2q}}|f|_{L^q}+e^{-\bar{\eta}t}|f|_{L^\infty}+
|e^{-\theta|\cdot|}f(\cdot)|_{L^1},
 \ee
 and
 \be \label{G1v}
 \left|\int_{-\infty}^\infty G_{1}(t;z)\left(\begin{array}{c}0\\f(z)\end{array}\right)dz\right|\le C(1+t)^{-\frac{1}{2q}}|f|_{L^q}+
|e^{-\theta|\cdot|}f(\cdot)|_{L^1},
 \ee
 for all $t\ge 0$, some $C,\bar\eta>0$, for any $1\le q$ and $f\in L^q\cap L^\infty$. 
\end{lemma}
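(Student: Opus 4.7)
The plan is to work directly from the inverse Laplace representation \eqref{Greenkernel} of $G_1$, exploiting the fact that the only singularity of $G_{1,\lambda}$ in $\{\Re \lambda \ge -\bar\eta\}$ is a simple pole at $\lambda=0$. As in Remark \ref{movermk}, I would deform the contour $\{\Re \lambda = a\}$ to $\{\Re \lambda = -\bar\eta\}$, picking up the residue at the origin and then controlling the shifted contour integral by combining the high-frequency bound of Proposition \ref{highestimates1} ($\lambda G_{1,\lambda} = V_h + O(1/|\lambda|)$) with the low-frequency scattering decompositions of Propositions \ref{extendlambdaGK}, \ref{lowG1y}, and \ref{G1lambdav}. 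Throughout, $\bar\eta>0$ will be chosen small enough that the whole apparatus is analytic on $\{-\bar\eta \le \Re\lambda\}\setminus\{0\}$, and the intermediate-frequency annulus contributes a uniformly $O(e^{-\bar\eta t})$ error by compactness and Evans--Lopatinsky stability.

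For \eqref{G1bounded}, first factor $G_{1,\lambda}(y) = \lambda^{-1}(\lambda G_{1,\lambda}(y))$. After contour shift past $\lambda=0$, the residue equals $\lim_{\lambda\to 0}\lambda G_{1,\lambda}(y)$, which by \eqref{decomlambdaG1lambda}, \eqref{scatter1lambda1} and the fact that $\tilde\gamma_{1,+}(0)=\tilde\gamma_{2,-}(0)=0$ is a bounded matrix uniformly in $y$ (equal to $V_l A_+ P_{1,+}(0)A_+^{-1}$ for $y>0$ and $V_l A(0^-)P_{2,-}(0)A_-^{-1}$ for $y<0$, plus an $R_{1,0}(y)$ piece that is bounded by construction). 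The shifted integral splits into a $V_h/\lambda$ principal part (contributing another bounded constant after passage across the pole) and an $O(1/|\lambda|^2)$ absolutely convergent tail, each producing an $O(1)+O(e^{-\bar\eta t})$ contribution.

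For \eqref{G1y} and \eqref{G1v}, I would plug the low-frequency decompositions of Propositions \ref{lowG1y} and \ref{G1lambdav} into the inverse Laplace formula, treating the three pieces $SY_{1,\lambda}$, $SY_{2,\lambda}$, $RY_\lambda$ (resp.\ $SV_{1,\lambda}$, $SV_{2,\lambda}$, $RV_\lambda$) separately. The scattering piece $SY_{1,\lambda}$ (resp.\ $SV_{1,\lambda}$) is analyzed by the same stationary/saddle-point argument used for $S^1$ in the proof of Theorem \ref{pointwiseintvGreen}, producing a Gaussian time-evolutionary kernel of the form $\chi_{t\ge 1}t^{-1/2}e^{-(t\mp c^1_{\cdot,\pm}y)^2/(Mt)}$ times a bounded matrix; Young's inequality then supplies the $C(1+t)^{-1/(2q)}|f|_{L^q}$ term in both bounds. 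The simple-pole term $SY_{2,\lambda}$ (resp.\ $SV_{2,\lambda}$), whose residue at $\lambda=0$ is $O(e^{-\theta|y|})$ (directly from the formulas in Propositions \ref{lowG1y}, \ref{G1lambdav} and the exponential convergence of $T_-(\lambda,y)\to \mathrm{Id}$, $A(y)\to A_-$), contributes a constant-in-$t$ kernel of size $O(e^{-\theta|y|})$, whose convolution against $f$ is bounded by $|e^{-\theta|\cdot|}f|_{L^1}$, together with an $e^{-\bar\eta t}$-small remainder. The residuals $RY_\lambda$, $RV_\lambda$ are of order $O(|\lambda|)$ times the scattering pieces and give a faster-decaying Gaussian absorbable in the leading term. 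The extra $e^{-\bar\eta t}|f|_{L^\infty}$ summand present in \eqref{G1y} but absent in \eqref{G1v} comes from the high-frequency $HY_\lambda$ piece of Proposition \ref{highG1y}, whose inverse Laplace transform is a damped delta along the characteristic $t+\mu y=0$; that contribution vanishes identically in \eqref{G1v} because the analogous leading high-frequency piece has the form $H_1(t;y)$, whose second column vanishes by Observation \ref{apecialstructiononS} (equivalently, by \eqref{s_1second}).

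The main obstacle I expect is ensuring that the Gaussian rate $(1+t)^{-1/(2q)}$ survives the two operations under consideration. For \eqref{G1y}, differentiating in $y$ brings down a factor $\gamma_{\cdot,\pm}(\lambda)\sim c\lambda$ which, combined with the $\lambda^{-1}$ desingularization coming from $G_{1,\lambda}=\lambda^{-1}(\lambda G_{1,\lambda})$, must produce a bounded rather than a singular kernel --- this requires that the $\partial_y$ be routed onto the exponential $e^{-\tilde\gamma(\lambda)y}$ in \eqref{scatter1lambda1} and not onto the slowly-varying prefactors, which is exactly the content of the decomposition $HY_\lambda$ in Proposition \ref{highG1y}. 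For \eqref{G1v}, there is no derivative to provide the requisite $\lambda$-factor, and the Gaussian decay must instead be extracted from the algebraic vanishing of the second column of $P_{1,+}(0)A_+^{-1}$ and $P_{2,-}(0)A_-^{-1}$ (Observation \ref{apecialstructiononS}); the key bookkeeping task will be to verify that this cancellation propagates through the contour deformation so that only the next-order $\partial_\lambda P_{\cdot,\pm}(0)$ piece in \eqref{scatter1vlambda1} survives, yielding one extra power of $\lambda$ that is precisely what is needed to convert the naive bounded bound into Gaussian decay.
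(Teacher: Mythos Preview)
Your approach is essentially the same as the paper's: contour shift to $\{\Re\lambda=-\bar\eta\}$, residue extraction at $\lambda=0$, and then the low-frequency decompositions of Propositions \ref{lowG1y} and \ref{G1lambdav} for the scattering, pole, and residual pieces, with the high-frequency piece of Proposition \ref{highG1y} supplying the damped delta in \eqref{G1y}.

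One correction is needed in your explanation of why \eqref{G1v} has no $e^{-\bar\eta t}|f|_{L^\infty}$ term. You attribute this to the second column of $H_1$ vanishing via Observation \ref{apecialstructiononS}, but that observation concerns the \emph{low-frequency} projections $P_{j,\pm}(0)A_\pm^{-1}$, not the high-frequency matrices $R_{j,\pm}L_{j,\pm}A_\pm^{-1}$ appearing in $H_1$; there is no reason for those to have vanishing second column. The actual reason, as the paper notes, is simpler: for $G_{1,\lambda}$ itself (no $y$-derivative), one has $G_{1,\lambda}=O(1/|\lambda|)$ at high frequency by \eqref{defV}, so no delta-function term arises at all in the inverse transform. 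The difference with \eqref{G1y} is precisely that $\partial_y$ brings down a factor $\sim\lambda\mu_{j,\pm}$ from the exponential, promoting the $O(1/|\lambda|)$ to $O(1)$ and generating the $HY_\lambda$ delta piece; without the derivative (and since multiplying by $(0,1)^T$ supplies no such factor), the high-frequency contribution to $G_1(t;y)(0,1)^T$ is already time-exponentially small and absorbable. Once you replace your vanishing-column argument with this observation, the rest of your outline matches the paper's proof.
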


\begin{proof}
By equations \eqref{highG1lambda} and \eqref{defV}, in the high frequency regime $G_{1,\lambda}=O(\frac{1}{|\lambda|})$, 
hence there will be no $\delta$-function contribution to $G_{1}(t;y)$. 
Integrating the term $G_{1,\lambda}$ in the high frequency regime will contribute to $G_1(t;y)$ a time and space exponentially decaying term, which is harmless. In the low frequency regime, by shifting the integral contour to the left of the origin and applying the Residue theorem, we get a residue term that is independent of time and decaying in space plus a time exponentially small term. Therefore \eqref{G1bounded} follows.

By Proposition \ref{highG1y}, integrating the term $HY_\lambda$ in the high frequency regime will contribute to $\partial_yG_1(t;y)$ a term containing a $\delta$-function like $H_1(t;y)$ in Theorem \ref{pointwiseinttimeetaGreen}. 
Integrating such a term in space with $f(y)$ gives a contribution that
can be controlled by $e^{-\bar{\eta}t}|f|_{L^\infty}$. Integrating the term $(\partial_y G_{1,\lambda}-HY_\lambda)$ in the high frequency regime will contribute to $\partial_yG_1(t;y)$ a time and space exponentially decaying term, which is harmless. By Proposition \ref{lowG1y}, integrating the term $SY_{1,\lambda}$ in the low frequency regime will contribute to $G_1(t;y)$ a scattering term like $S_1(t;y)$ in Theorem \ref{pointwiseinttimeetaGreen}. Integrating such scattering term with $f(z)$ can be bound similarly as in \eqref{barG1t}. Integrating the term $SY_{2,\lambda}$ in the low frequency regime will result in a residue term 
that is decaying exponentially in space and independent of time. Hence, integrating this term with $f(z)$
gives a contribution that can be controlled by $|e^{-\theta|\cdot|}f(\cdot)|_{L^1}$.

By equation \eqref{highG1lambda}, in the high frequency regime, $G_{1,\lambda}=O(\frac{1}{|\lambda|})$, 
hence there will be no $\delta$-function contribution to $G_{1}(t;y)\left(\begin{array}{c}0\\f(z)\end{array}\right)$. 
The low frequency contribution can be estimated similarly as \eqref{G1y} by applying Proposition \ref{G1lambdav}.
\end{proof}

\subsection{Auxiliary estimates}\label{s:aux}
\begin{lemma}\label{Glemaux}
Assuming Evans-Lopatinsky stability, for the splitting $G=\bar G+H$, there hold:
\be\label{aux1}
\int_{0}^{t} \left| \int_{-\infty}^{+\infty} {G}(x,s;y)f(y)dy\right| \, ds \leq C |f|_{L^1\cap L^\infty},
\ee
\ba
\label{aux2}
&\int_0^t (1+s)^{-\frac{1}{2}}\left|\int_0^s    \int_{-\infty}^{\infty}\bar{G}(x, s-\tau;y)\left(\begin{array}{c}0\\f(y,\tau)\end{array}  \right)  dy  \, 
	d\tau \right|\, ds\\ 
\le&	C\int_0^t  (1+s)^{-\frac{1}{2}+\upsilon } |f(\cdot, s)|_{L^{2}} ds,
\ea
\ba
\label{aux3}
\int_0^t (1+s)^{-\frac{1}{2}}\left|\int_0^s \int_{-\infty}^{+\infty}\bar{G}_y(x,s-\tau ;y)f(y,\tau)dy  \, d\tau\right| \,ds
\leq C\int_0^t  (1+s)^{-\frac{1}{2} +\upsilon } |f(\cdot, s)|_{L^{2}} ds,
\ea
\ba
\label{aux4}
\int_0^t (1+s)^{-\frac{1}{2}}\left|\int_0^s \bar{G}(x,s-\tau ;0^\pm)f(\tau) \, d\tau\right|\, ds 
\leq C\int_0^t  (1+s)^{-\frac{1}{2} } |f( s)| ds,
\ea
\ba
\label{aux5}
\int_0^t (1+s)^{-\frac{1}{2}}\left|\int_0^s K(x,s-\tau )f(\tau) \, d\tau\right|\, ds
\leq C\int_0^t  (1+s)^{-\frac{1}{2} } |f( s)| ds,
\ea
and
\ba
\label{aux6}
\int_0^t (1+s)^{-1/2}\left|\int_0^s  \int_{-\infty}^{+\infty}H(x,s-\tau ;y)f(y,\tau)dy\, d\tau\right| \, ds
\leq C\int_0^t  (1+s)^{-\frac{1}{2} } |f(\cdot, s)|_{L^\infty } ds,
\ea
for all $t\ge 0$ and $x\gtrless 0$, any $\upsilon>0$, and some $C=C(\upsilon)>0$. 
\end{lemma}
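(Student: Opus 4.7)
The plan is to invoke the explicit pointwise decompositions $G=\bar G+H$ of Theorem~\ref{pointwiseintvGreen} and $K=H_K+R_K$ of Theorem~\ref{pointwisebouvGreen}, and handle the singular (delta-function) pieces $H$, $H_K$ separately from the Gaussian pieces $\bar G=S^1+R$ and $R_K$. The novelty, as highlighted in the introduction, is that these are \emph{vertical} time-integrals at fixed spatial location $x$, in contrast to the horizontal $L^p_x$-estimates of Lemmas~\ref{Glem}--\ref{intG1lem}; thus the relevant kernel bounds must be obtained by integrating the pointwise Green function bounds in $s$ or $s-\tau$ rather than in $y$.

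For the singular parts, the delta functions in $H$, $H_K$ are supported along characteristics $s+\int_y^x\mu_{1,2}(z)\,dz=0$. At fixed $x$, performing the $ds$-integral first collapses the delta into a pointwise evaluation with change-of-variables Jacobian $\mu_{1,2}(y)$, bounded away from zero, and the accompanying factor $e^{-\bar\eta s}$ becomes $e^{-\bar\eta'|x-y|}$. The remaining $dy$-integral against $f\in L^\infty$ yields the $H$-piece of \eqref{aux1}, \eqref{aux6} and the $H_K$-piece of \eqref{aux5} directly, using in \eqref{aux6} that the $\tau$-integration is \emph{also} collapsed by the same delta. For the regular Gaussian parts, the key ingredient is the vertical bound
\[
 \int_0^\infty \frac{1}{\sqrt{s}}\,e^{-(s-a(x-y))^2/Ms}\,ds \;\leq\; C\,e^{-\eta (x-y)_+},
\]
obtained by completing the square (or by recognizing this integral as the stationary Green function of the drift--diffusion operator whose heat kernel is the given Gaussian). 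Using this, \eqref{aux1} and \eqref{aux4}--\eqref{aux5} follow after Fubini: each reduces to a convolution of $f$ against a kernel lying in $L^1\cap L^\infty$ (in $y$ for \eqref{aux1}, in $s-\tau$ for \eqref{aux4}--\eqref{aux5}) uniformly in $x$.

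The heart of the matter is the pair of Strichartz-type estimates \eqref{aux2}--\eqref{aux3}. For these I would first apply Minkowski to swap the $ds$- and $d\tau$-integrations, then for each fixed $\tau$ split the inner $s$-integral via Cauchy--Schwarz as
\[
\int_\tau^t (1+s)^{-1/2}|h(s,\tau)|\,ds \;\leq\; \Bigl(\int_\tau^t (1+s)^{-1+2\upsilon}\,ds\Bigr)^{1/2}\Bigl(\int_\tau^t (1+s)^{-2\upsilon}|h(s,\tau)|^2\,ds\Bigr)^{1/2},
\]
where $h(s,\tau):=\int \bar G_y(x,s-\tau;y)f(y,\tau)\,dy$. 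The first factor is $O((1+t)^{\upsilon})$, while the second is controlled by the central $L^2_s$-vertical bound
\[
 \Bigl(\int_0^\infty \bigl|\!\int \bar G_y(x,s;y)f(y)\,dy\bigr|^2\,ds\Bigr)^{1/2} \;\leq\; C\,|f|_{L^2_y},
\]
which I would prove by Schur's test: the required hypotheses, uniform boundedness of $\bar G_y(x,\cdot;\cdot)$ in $L^1_s\cap L^\infty_s$ for each $y$ and in $L^1_y\cap L^\infty_y$ for each $s$, both follow from the pointwise bound $|\bar G_y|\lesssim s^{-1}e^{-(s-c(y-x))^2/Ms}$ together with the vertical Gaussian identity above. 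Estimate \eqref{aux2} then follows identically, the second-column structure $\binom{0}{\cdot}$ supplying the extra $s^{-1/2}$ gain that replaces the $\partial_y$ of \eqref{aux3}.

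The main obstacle will be making this vertical Strichartz-type bound precise with the claimed $\upsilon>0$ loss. Specifically, one must verify that the non-Gaussian mixed-scale terms of order $t^{-1}e^{-\theta|x|}$ appearing on the $y<x<0$ region of $R_y$ in \eqref{Rybound} still fit into the $L^2_s$ framework at fixed $x$ (here the exponential in $|x|$ is inert, so the bound depends only on the Gaussian factor in $t$, which is integrable); and one must check that the $\upsilon>0$ slack arises solely from the logarithmic divergence of $\int_0^t(1+s)^{-1}\,ds$ at the Cauchy--Schwarz step, so that taking $\upsilon=0$ is blocked precisely by this endpoint failure and not by any more serious issue in the kernel structure.
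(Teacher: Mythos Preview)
Your treatment of \eqref{aux1} and \eqref{aux4}--\eqref{aux6} is essentially the paper's argument: Fubini, then the vertical bound $\int_0^t|\bar G(x,s;y)|\,ds\le C$ (or Lemma~\ref{Klem}, \eqref{GlemH} for the $K$, $H$ pieces). Fine.

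The gap is in your scheme for \eqref{aux2}--\eqref{aux3}. Your Cauchy--Schwarz in $s$ gives
\[
\int_\tau^t (1+s)^{-1/2}|h(s,\tau)|\,ds
\;\le\; C(1+t)^{\upsilon}\Bigl(\int_\tau^t(1+s)^{-2\upsilon}|h(s,\tau)|^2\,ds\Bigr)^{1/2},
\]
and your Schur bound then yields $C(1+t)^{\upsilon}|f(\cdot,\tau)|_{L^2}$. After the $d\tau$-integral you end with
$C(1+t)^{\upsilon}\int_0^t|f(\cdot,\tau)|_{L^2}\,d\tau$, \emph{not} the claimed
$C\int_0^t(1+\tau)^{-1/2+\upsilon}|f(\cdot,\tau)|_{L^2}\,d\tau$. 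These differ by a full half-power of $\tau$: in the actual application (the vertical estimate \eqref{vert}), $|f(\cdot,\tau)|_{L^2}\sim(1+\tau)^{-3/4}$, so your bound grows like $(1+t)^{1/4+\upsilon}$ whereas the needed bound stays $O(1)$. The lost weight $(1+\tau)^{-1/2}$ is exactly what makes the nonlinear closure work; replacing it by a global $(1+t)^{\upsilon}$ breaks the argument.

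The paper avoids this by never separating the $(1+s)^{-1/2}$ via Cauchy--Schwarz in $s$. Instead, after Fubini it uses the elementary monotonicity
$(1+s)^{-1/2}\le(1+\tau)^{-1/2+\upsilon}(1+\tau+\sigma)^{-\upsilon}$ for $s=\tau+\sigma\ge\tau$
to pull out $(1+\tau)^{-1/2+\upsilon}$ directly, reducing matters to showing that
\[
\Bigl|\int_1^{T}\sigma^{-1-\upsilon}e^{-(a\sigma-|\cdot|)^2/b\sigma}\,d\sigma\Bigr|_{L^2_z}\le C
\]
uniformly in $T$; this is then paired with $|f(\cdot,\tau)|_{L^2}$ by H\"older \emph{in $y$}, not in $s$. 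The $\upsilon$-loss enters only through the weight $\sigma^{-\upsilon}$ needed to make this $L^2_z$ norm finite (it diverges logarithmically at $\upsilon=0$). Your Schur-test bound $\|h(\cdot,\tau)\|_{L^2_s}\le C|f(\cdot,\tau)|_{L^2}$ is correct but is the wrong dual pairing for this purpose; the weight has to be extracted \emph{before} the $L^2$ step, not manufactured afterwards.
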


\begin{proof}
	For estimate \eqref{aux1}, we estimate separately the contributions of terms $\bar G$ and $H$ in the splitting $G=\bar G+H$.
For $\bar G$, changing order of integration, and using the fact that $\int_{0}^{t}  |\bar{G}(x,s;y)| \, ds\leq C$, we obtain
$$
\begin{aligned}
	\int_{0}^{t} \left| \int_{-\infty}^{+\infty} \bar{G}(x,s;y)f(y)dy\right| \, ds &\leq
\int_{0}^{t}  \int_{-\infty}^{+\infty} |\bar{G}(x,s;y)||f(y)| dy \, ds \\
&= 
\int_{-\infty}^{+\infty}  |f(y)| \int_{0}^{t}  |\bar{G}(x,s;y)| \, ds \, dy 
 \leq C |f|_{L^1},
\end{aligned}
$$
In turn, bound $\int_{0}^{t}  |\bar{G}(x,s;y)| \, ds\leq C$ may be verified 
as in \cite{TZ}, by integrating a Gaussian moving with 
nonzero speed, hence passing transversally to the vertical line $y\equiv \const$.
Namely, denoting by $\theta(z,s)$ the moving Gaussian $\theta(z,s):=(s)^{-1/2}e^{-(z-as)^2/bs}$ with $z=x-y$,
	and $\Theta=\int_{-\infty}^z \theta$ its bounded (error function) antiderivative,
	we have 
	$$
	\Theta_s= - a\Theta_z + b/4 \Theta_{zz}= -a\theta + b/4\theta_z,
	$$
	from which we may bound 
	$$
	\begin{aligned}
	&\int_0^t |\bar G(x,s;y)|ds\leq C\int_0^t e^{-\bar\eta(|z|+s)}ds+ C\int_0^{t}\chi_{s\ge 1}\theta(z,s) ds\le C+ C\int_0^{t}\chi_{s\ge 1}\theta(z,s) ds\\
	&\int_0^{t}\chi_{s\ge 1}\theta(z,s) ds= -\frac{1}{a}\int_0^{t} \chi_{s\ge 1}\Theta_s(z,s)ds
	+\frac{b}{4a}\int_0^{t} \chi_{s\ge 1}\theta_z(z,s)ds.
	\end{aligned}
	$$
	The term $\int_0^{t} \chi_{s\ge 1}\Theta_s(z,s)ds= \Theta(z,t)-\Theta(z,1)$ is uniformly bounded.
	The term $\int_0^{t}\chi_{s\ge 1} \theta_z(z,s)ds$ may be reduced by a similar argument to a constant times
	$\Theta_z(z,t)-\Theta_z(z,1)= \theta(z,t)-\theta(z,1)$ plus a multiple of $\int_0^t \chi_{s\ge 1}\theta_{zz}(z,s)ds$; the latter
	can be bounded as
	$ |\int_0^t \chi_{s\ge 1}\theta_{zz}(z,s)ds|\leq \int_1^t (1+s)^{-3/2}ds\leq C_2 $. 
	The $H$ part can be directly obtained from taking $p=\infty$ in estimate \eqref{GlemH} and then integrating in time. This completes the proof
	of \eqref{aux1}.

	Estimates \eqref{aux2} and \eqref{aux3} are more delicate, based on the estimate
	\ba
	\left|\label{test}
	\int_1^{t}s^{-1-\upsilon} e^{-\frac{(as-|\cdot|)^2}{bs}} \, ds\right|_{L^2}\leq C, 
	\ea
	for $a\neq 0$ and some $C=C(\upsilon)$ independent of $t$. To prove \eqref{test}, by symmetricity, it suffices to show
	\ba
	\label{test1}
	\left(
	\int_0^\infty\left|\int_1^{t}s^{-1-\upsilon} e^{-\frac{(as-z)^2}{bs}} \, ds\right|^2\,dz\right)^{\frac{1}{2}}\leq C. 
	\ea
	Based on the bound on the integrand function
	$$
	s^{-1-\upsilon} e^{-\frac{(as-z)^2}{bs}}\le e^{-\frac{(as-z)^2}{bs}}\frac{2|as-z|}{bs^{1.5+\upsilon}}+s^{-1-\upsilon} \chi_{{}_{2|as-z|<\sqrt{s}b}}
	=s^{-\upsilon}|\theta_z|(z,s)+s^{-1-\upsilon}\chi_{{}_{2|as-z|<\sqrt{s}b}},
	$$
	\eqref{test1} follows from 
	\be 
	\label{test2}
	\left|\int_1^{t} s^{-\upsilon}|\theta_z|(z,s) \, ds\right|_{L^2}\le C, \quad\text{and}\quad	\left(\int_0^\infty\left|\int_1^{t}\chi_{{}_{2|z-as|<\sqrt{s}b}}s^{-1-\upsilon}  ds\right|^2\,dz\right)^{\frac{1}{2}}\leq C.
	\ee

	Estimate \eqref{test2}[i] follows readily for $z\not \in [a,at]$ by rewriting
	$$\int_1^{t} s^{-\upsilon}|\theta_z|(z,s) \, ds\le\int_1^{t} |\theta_z|(z,s) \, ds = \Big|\int_1^{t} \theta_z(z,s) \, ds\Big|,$$
	then using $\theta_s=-a\theta_z+b/4\theta_{zz}$ to obtain
	$$
	\int_1^{t} \theta_z(z,s) \, ds=\int_1^{t}\left(-\frac{\theta_s}{a}+\frac{b}{4a}\theta_{zz}\right)\,ds=\frac{1}{a}\theta(z,1)-\frac{1}{a}\theta(z,t)+\int_1^{t}\frac{b}{4a}\theta_{zz}(z,s)\,ds
	$$
	where we see $|\theta(\cdot,t)|_{L^2},|\theta(\cdot,1)|_{L^2}\le C$ and 
	$$
	\left|\int_1^{t}\theta_{zz}(\cdot,s)\,ds\right|_{L^2}\le C\left|\int_1^{t}s^{-\frac{3}{2}}e^{-\frac{{\left(\cdot-as\right)}^2}{2bs}}\,ds\right|_{L^2}\le C\int_1^{t}\left|s^{-\frac{3}{2}}e^{-\frac{{\left(\cdot-as\right)}^2}{2bs}}\right|_{L^2}\,ds\le C\int_1^t s^{-\frac{5}{4}}ds\le C.
	$$
	For $z\in [a,at]$, setting 
	$\theta_c(z,s):=(s)^{-1/2}e^{-(z-as)^2/bcs}$, $1<c$ to be a Gaussian with larger support moving at the same speed as $\theta$, we may estimate
$|s^{-\upsilon}\partial_z \theta|\leq C |\partial_z \theta_c^{1+2\upsilon}|$, for $\upsilon<(c-1)/2$.
Calculation shows
$$
\partial_s (\theta_c^{1+2\upsilon})= - a(\theta_c^{1+2 \upsilon})_z + \frac{bc}{4}(\theta_c^{1+ 2\upsilon })_{zz} 
- \frac{bc\upsilon \Big((\theta_c^{1+2\upsilon})_z\Big)^2}{2(1+2\upsilon)\theta_c^{1+2\upsilon}}=:- a(\theta_c^{1+2 \upsilon})_z+F(z,s).
$$
Observing that
$$
\int_1^{t} |(\theta_c^{1+ 2\upsilon})_z|(z,s) \, ds = \left|\Big(\int_1^{z/a} -\int_{z/a}^t \Big)
(\theta_c^{1+2\upsilon})_z(z,s)  \, ds\right|,
$$
hence, integrating the principal contribution $-1/a\partial_s(\theta_c^{1+2\upsilon})$ results in terms bounded by
$$C\Big(\theta_c(z,1)^{1+2\upsilon} +   (1+|z|)^{-1/2-\upsilon}  + \theta_c(z,t)^{1+2\upsilon}\Big)\in L^2(z).$$
As for integrating the $F$ term, calculation shows
$$
F(z,s)=\frac{e^{-\frac{{\left(z-as\right)}^2(1+2\upsilon)}{bcs}}\left(1+2\upsilon\right)\left((z-as)^2-bcs\right)}{2bcs^{5/2+\upsilon}}=O(s^{-\frac{3}{2}}e^{-\frac{{\left(z-as\right)}^2}{2bcs}}).
$$
Therefore, by the triangle inequality, also
$$
\left|\Big(\int_1^{z/a} -\int_{z/a}^t \Big)
F(\cdot,s)  \, ds\right|_{L^2}\le C.
$$
Estimate \eqref{test2}[ii] follows by direct computation
$$
\begin{aligned}
\int_0^\infty\left|\int_1^{t}\chi_{{}_{2|z-as|<\sqrt{s}b}}s^{-1-\upsilon}  ds\right|^2\,dz\le&\int_0^\infty\left|\int_1^{\infty}\chi_{{}_{2|z-as|<\sqrt{s}b}}s^{-1-\upsilon} ds\right|^2\,dz\\
\le& C\int_0^\infty\left( \frac{s_1(z)-s_2(z)}{(1+s_2(z))^{1+\upsilon}}\right)^2\,dz\le C
\end{aligned}
$$
where $s_{1,2}=(8az+b^2\pm \sqrt{16azb^2+b^4})/(8a^2)$ are the two roots of equation $2|z-as|=\sqrt{s}b$ and the last inequality follows from 
$$\left( \frac{s_1(z)-s_2(z)}{(1+s_2(z))^{1+\upsilon}}\right)^2\sim z^{-1-2\upsilon}\quad \text{for $z\gg 1$}.
$$

By the decomposition of $\bar G_y=S^1_y+R_y$ and bounds on $S^1_y$, $R_y$ from Theorem \ref{pointwiseintvGreen}, we obtain
\ba 
\label{terminL2}
&\int_\tau^{t}(1+s)^{-1/2} |\bar G_y(x,s -\tau;y)| \, ds
\leq (1+\tau)^{-1/2+\upsilon} \int_0^{t-\tau} (1+\tau+\sigma)^{-\upsilon} |\bar G_y(x,\sigma; y)| \, d\sigma \\
\leq& C(1+\tau)^{-1/2+\upsilon} \int_1^{t-\tau} (1+\tau+\sigma)^{-\upsilon}\sigma^{-1}e^{-\frac{(a\sigma-|z|)^2}{b\sigma}}  \, d\sigma \leq C(1+\tau)^{-1/2+\upsilon} \int_1^{t-\tau} \sigma^{-1-\upsilon}e^{-\frac{(a\sigma-|z|)^2}{b\sigma}}  \, d\sigma.
\ea
Switching orders of integration and applying \eqref{terminL2}, Holder's inequality, and \eqref{test} yields
$$
\begin{aligned}
	&\int_{0}^{t} (1+s)^{-1/2}\left|\int_0^s  \int_{-\infty}^{+\infty} \bar{G}_y(x,s-\tau;y)f(y,\tau)\, dy\,d\tau\right| \, ds \\
\leq  & \int_{0}^{t} (1+s)^{-1/2}\int_0^s \int_{-\infty}^{+\infty} |\bar{G}_y(x,s-\tau;y)| \, 
|f(y,\tau)| \, dy\,d\tau\,ds \\
=& \int_{0}^{t} \int_{-\infty}^{+\infty}|f(y,\tau)| \Big(\int_\tau^t   (1+s)^{-1/2}|\bar{G}_y(x,s-\tau;y)| \, 
ds \Big) \, dy\, d\tau \\
\leq& C \int_{0}^{t} (1+\tau)^{-1/2+\upsilon}\int_{-\infty}^{+\infty}|f(x-z,\tau)| 
\int_1^{t-\tau} \sigma^{-1-\upsilon}e^{-\frac{(a\sigma-|z|)^2}{b\sigma}}  \, d\sigma \, dz\, d\tau   \\
\le&C \int_{0}^{t}(1+\tau)^{-1/2+\upsilon} |f(\cdot,\tau)|_{L^2}\, d\tau\,,\\
\end{aligned}
$$
verifying \eqref{aux3}. By equation \eqref{s1second} and estimate \eqref{R2bound}, estimate \eqref{aux2} follows similarly.

Using $\int_0^t|\bar G(x,t;y)|\leq C$, we have, switching the order of integration,
$$
\begin{aligned}
	&\int_0^t (1+s)^{-1/2}\left|\int_0^s  \bar{G}(x,s-\tau ;0^\pm)f(\tau) \, d\tau\right|\, ds \leq
	\int_0^t \int_0^s  (1+s)^{-1/2}\left|\bar{G}(x,s-\tau ;0^\pm)f(\tau)\right| \, d\tau\, ds \\
	=&
	C\int_0^t(1+\tau)^{-1/2} |f(\tau)|\int_\tau^t \left|\bar{G}(x,s-\tau ;0^\pm)\right| \, ds\, d\tau\\
	\leq&
	C\int_0^t(1+\tau)^{-1/2} |f(\tau)|\int_0^{t-\tau} \left|\bar{G}(x,\sigma ;0^\pm)\right| \, d\sigma\, d\tau\leq
	C\int_0^t(1+\tau)^{-1/2} |f(\tau)|\, d\tau \\
\end{aligned}
$$
verifying \eqref{aux4}.  
The proof of \eqref{aux5} goes similarly by applying Lemma \ref{Klem}.  
Likewise, applying \eqref{GlemH} with $p=\infty$, we have
$$
\int_0^t (1+s)^{-1/2}\int_0^s \left| \int_{-\infty}^{+\infty}H(x,s-\tau ;y)f(y,\tau)dy\right|\, d\tau \, ds\leq
C\int_0^t (1+s)^{-1/2}\int_0^s e^{-\bar \eta(s-\tau)} |f(\cdot,\tau)|_{L^\infty} \, d\tau \, ds,
$$
which, switching the order of integration, yields bound
$$
\begin{aligned}
	&\int_0^t (1+s)^{-1/2}\int_0^s e^{-\bar \eta(s-\tau)} |f(\cdot,\tau)|_{L^\infty} \, d\tau \, ds=\int_0^t \int_\tau^t (1+s)^{-1/2} e^{-\bar \eta(s-\tau)} |f(\cdot,\tau)|_{L^\infty} \, ds \, d\tau\\=& \int_0^t  (1+\tau)^{-1/2} |f(\cdot,\tau)|_{L^\infty} 
\int_\tau^t e^{-\bar \eta(s-\tau)}  \, ds  \, d\tau\leq  C \int_0^t  (1+\tau)^{-\frac{1}{2} } |f(\cdot, \tau)|_{L^\infty } d\tau,
\end{aligned}
$$
verifying \eqref{aux6}. 
\end{proof}

\br\label{transversermk}
The above ``Strichartz-type'' bounds make crucial use of transverse propatation and pointwise bounds.
By contrast, a straightforward estimation by Holder's inequality 
$$
	\int_{0}^{t} \left| \int_{-\infty}^{+\infty} \bar{G}(x,s;y)f(y)dy\right| \, ds \leq 
\int_{0}^{t} |\bar{G}(x,s;y)|_{L^\infty} |f|_{L^1} \, ds \leq 
(\int_{0}^{t} (1+s)^{-1/2} \, ds) |f|_{L^1}
$$
yields bound $C(1+t)^{1/2}|f|_{L^1}$, poorer by factor
$(1+t)^{1/2}$ than \eqref{aux1}.
Similarly for the cases \eqref{aux2}--\eqref{aux3}, straightforward estimates by Holder's inequality yield bounds poorer by factor $(1+t)^{1/4-\upsilon}$. 
\er

\section{Short-time existence and nonlinear damping estimate}\label{s:damping}
We next establish nonlinear existence and damping estimates, obtained by Kreiss symmetrizer and Kawashima type energy
estimates, respectively.
As noted in \cite{JLW}, short time existence theory may be concluded by the analysis of shock
stability carried out by Kreiss symmetrizer techniques in \cite{Ma,Me}.
Denote by $\tilde R_{a}$ the punctured real line $(-\infty,a)\cup (a ,+\infty)$, and $\tilde R$
the symmetric version $\tilde R_0$.
We obtain by the results of \cite{Me} immediately the following short time existence theory.\footnote{
Note that we correct a minor typo in \cite[Thm. 4.1.5]{Me}, which requires data $v_0$ only in $H^s$ rather than $H^{s+1/2}$.
(This is not necessary for our later analysis, but only sharpens our initial regularity assumptions.)
}

\bpr\label{existence}
For $0<F<2$ and $0<H_R< H_L \frac{2F^2}{1+2F+\sqrt{1+4F}}$, let $\overline{W}=(H,Q)$ be a hydraulic shock \eqref{prof},
and $v_0$ be a perturbation supported away from the subshock discontinuity of $\overline{W}$ and lying
in $H^{s}(\tilde R))$, $s\geq 2$. 
Moreover, assume that $\overline{W}$ is \emph {spectrally stable} in the sense of the Evans-Lopatinsky condition
defined in Section \ref{s:lop}. 
Then, for initial data $\tilde W_0:=\overline{W}+v_0$, there exists a unique solution of \eqref{sv} defined for $0\leq t\leq T$, 
for some $T>0$,  with a single shock located at $ct-\eta(t)$, and $H^s$ to either side of the shock, such that
for $v(x,t):= \tilde W(x+ct -\eta(t),t)-\overline{W}(x)$,
\be\label{shortreg}
(v,\eta)\in C^0\left([0,T]; H^s(\tilde \R)\right) \times C^{s+1}([0,t]).
\ee
Moreover, the maximal time of existence $T_*$ defined as the supremum of $T>0$ for which the solution is defined
is either $+\infty$ or satisfies 
$\lim_{t\to T_*^-}|v|_{W^{1,\infty}} =+\infty.$
Finally, if $v_0^r\to v_0$, with $v_0^r\in H^{r+1/2}$, $r\geq s$, then the corresponding solutions $(v^r,\eta^r)$
converge to $(v,\eta)$ in $C^0\left([0,T]; H^s(\tilde \R)\right) \times C^{s+1}([0,t])$.
\epr

\begin{proof}
Noting that the assumption that the perturbation is supported away from the subshock implies compatibility to
all orders, we have that the first two assertions follow from Theorems 4.15 and 4.16 of \cite{Me}, 
provided that the subshock satisfies the (shock) Lopatinsky condition of Majda \cite{Ma}.  
But (see Remark \ref{subloprmk}), Lopatinsky stability of
the component subshock is implied in the high-frequency limit
by the Evans-Lopatinsky condition for the full shock profile.
The third assertion, though not explicitly stated in \cite[Thm. 4.1.5]{Me}, is established in the course of its proof.
\end{proof}

\br
In fact, the subshock can be seen directly to satisfy Majda's Lopatinsky condition, 
independent of Evans-Lopatinsky stability of the associated relaxation shock profile, 
by the fact \cite{Ma} that shock waves of isentropic gas dynamics are stable, 
since the shock Lopatinsky condition depends only on the first-order part of \eqref{sv}.
\er

Our main effort will be devoted to proving the following nonlinear damping estimate generalizing
the one proved for smooth relaxation profiles in  \cite[Prop. 1.4]{MZ2}. 

\bpr\label{damping}
Under the assumptions of Proposition \ref{existence}, 
suppose that, for $0\le t\le T$,
$|v(\cdot,s)|_{H^s(\tilde R)}$ and $|\dot \eta|$ are bounded by a sufficiently small constant $\zeta>0$.
Then, for all $0\le t\le T$ and some $\theta>0$,
\be
\label{dampest}
 |v|_{{}_{H^s}}^2(t)\le Ce^{-\theta t} |v_0|_{{}_{H^s}}^2+
 C\int_0^te^{-\theta(t-\tau)} \big( |v|_{{L^2}}^2 + |\dot \eta|^2 \big)(\tau)d\tau.
\ee
\epr

\br\label{removermk}
In the course of the proof, we show using the Rankine-Hugoniot conditions at the subshock
that $|\dot \eta|^2$ is controlled by a bounded linear sum of trace terms $|v(0^\pm)|^2$ at $\xi=0$.
By one-dimensional Sobolev embedding, these in turn are controlled by a lower-order term
$C|v|_{H^1}^2$ absorbable in the estimates \eqref{tdiffdamp} from which \eqref{dampest} is obtained, hence \eqref{dampest} could be improved to
\be \label{idampest}
 |v|_{{}_{H^s}}^2(t)\le Ce^{-\theta t} |v_0|_{{}_{H^s}}^2+
 C\int_0^te^{-\theta(t-\tau)} |v|_{{L^2}}^2 (\tau)d\tau,
\ee
slightly improving the estimate of the smooth case \cite{MZ2}. 
\er

\br\label{engen}
For clarity, we carry out the proof of Proposition \ref{damping} for shock profiles of
\eqref{sv}; however, the argument applies more generally to profiles of general relaxation systems
of the class considered in \cite{MZ2}, provided they contain a \emph{single subshock}.
(This allows the freedom to initialize the symmetrizer $A^0$ arbitrarily at the endstates of the subshock,
as we use crucially to arrange maximal dissipativity with respect to $A^0$ of the 
associated Rankine-Hugoniot conditions.)
A very interesting open problem would be to develop corresponding
damping estimates in multi-dimensions, perhaps by Kreiss symmetrizer techniques \cite{Kr};
see Remark \ref{multidrmk} for related discussion.
\er

\subsection{Preliminaries}\label{s:prelim}

Under the assumptions of Proposition \ref{damping}, the equations \eqref{sv} and profiles $\overline{W}$ satisfy
the structural assumptions made for general relaxation systems in \cite{MZ2} along the smooth
portions of $W$, i.e., everywhere except at the subshock at $x=0$.
Thus, we have the following results of \cite{MZ2}, denoting by $\Re M:=\frac12(M+M^*)$ the symmetric
part of a matrix $M$.

\begin{lemma}\label{lem:expdecay}
Under the assumptions of Proposition \ref{damping},
for some $\theta>0$, and all $k\geq 0$,
\begin{equation}
	\hbox{\rm 
	$|(d/dx)^k (\overline{W}-\overline{W}_\pm) | \le C|\overline{W}_x|\le Ce^{-\theta |x|}$ as
$x\to \pm \infty$.}
\label{expdecay}
\end{equation}
\end{lemma}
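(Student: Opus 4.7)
The plan is to exploit that $\overline{W}=(H,Q)$ with $Q = -q_0+cH$ is an affine function of $H$ on each smooth portion (see \eqref{UHrel}), so it suffices to prove the exponential decay statement for the scalar quantity $H$, which on each side $x\lessgtr 0$ satisfies the autonomous scalar ODE \eqref{profileODE}. I would rewrite this ODE in the schematic form $H' = G(H)$, where $G$ is a rational function that is smooth (and in fact real-analytic) in a neighborhood of each endstate $H_L$, $H_R$, because under the nondegeneracy assumption $H_L, H_R \neq H_s$ so the denominator $(H-H_s)(H^2+HH_s+H_s^2)$ does not vanish near the endstates.

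Next I would linearize $G$ at each endstate. Since $H_L$ and $H_R$ are simple zeros of the cubic numerator in \eqref{profileODE} (the third root $H_3$ being separated from both by the ordering $H_3<H_R<H_L$ established in Section \ref{s:profiles}), the linearization $G'(H_{L,R})$ is a nonzero real number $\mu_{L,R}$, with signs dictated by the geometry of the profile in Proposition \ref{existprop}: on the left side $x<0$ we are on the stable manifold of $H_L$ as $x\to -\infty$, and on the right we are on the stable manifold of $H_R$ as $x\to+\infty$. Standard stable manifold theory (or a direct Gronwall argument applied to $(H-H_\pm)' = \mu_\pm(H-H_\pm) + O(|H-H_\pm|^2)$) then yields $|H(x)-H_\pm|\le C e^{-\theta|x|}$ for some $\theta>0$ depending on $|\mu_{L,R}|$.

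Now for the sharpened inequality $|\overline{W}(x)-\overline{W}_\pm|\le C|\overline{W}_x(x)|$ (the $k=0$ case), I would just invert the linearization: from $H'(x) = \mu_\pm (H(x)-H_\pm)(1+o(1))$ as $x\to \pm\infty$, we obtain $|H(x)-H_\pm|\le C|H'(x)|$ on each tail, and on any bounded interval away from the subshock the bound is trivial because $H'$ is bounded below away from zero (since $H$ is strictly monotone on each smooth piece, by the factorization \eqref{profileODE}). For $k\ge 1$, I would proceed by induction: differentiating $H'=G(H)$ gives $H^{(k+1)} = P_k(H)\cdot H'$ for a polynomial expression $P_k$ in $H$ and its derivatives up to order $k$ (via Fa\`a di Bruno), and inductively $|H^{(j)}|\le C_j|H'|$ for $1\le j\le k$; combining with smoothness of $G$ on the (compact) range of $H$ on each side gives $|H^{(k+1)}|\le C_{k+1}|H'|$, which then decays at rate $e^{-\theta|x|}$ by the previous step.

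The argument is essentially routine ODE analysis once the nondegeneracy $H_{L,R}\neq H_s$ is invoked, so I would not expect a genuine obstacle; the only delicate point is to verify that the nondegeneracy assumption in the hypothesis of Theorem \ref{main} (the condition on $H_R$ separating it from the degenerate critical value $H_L\,\tfrac{2F^2}{1+2F+\sqrt{1+4F}}$) is exactly what rules out the possibility that the denominator of $G$ vanishes at an endstate. Once that observation is made, the linear rates $\mu_\pm$ are automatically nonzero with the correct signs, and all of the above follows.
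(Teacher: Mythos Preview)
Your proposal is correct and follows essentially the same approach as the paper, which simply cites ``Stable manifold theorem, plus hyperbolicity of rest points $\overline{W}_\pm$ of \eqref{profileODE}'' as its entire justification. Your write-up is a detailed unpacking of exactly this idea; one small remark is that in the discontinuous case under consideration the profile is identically $\overline{W}_R$ for $x>0$, so the $x\to+\infty$ side is trivial and only the $x\to-\infty$ analysis near $H_L$ is needed.
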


(Stable manifold theorem, plus hyperbolicity of rest points $\overline{W}_\pm$ of \eqref{profileODE}.)

\begin{lemma}[\cite{MZ2,Hu}]\label{lem:skew}
Let $D$ be diagonal, with real entries appearing with prescribed 
multiplicity in order of increasing size, and let $E$ be arbitrary.
Then, there exists a smooth skew-symmetric matrix-valued function 
$K(D,E)$ such that
 $$
  \Re \left(E- KD\right)=\Re \, \diag \,  E,
 $$
where $\diag\,E$ denotes the diagonal part of $E$.
\end{lemma}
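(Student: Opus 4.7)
The plan is to construct $K$ explicitly, block by block, exploiting the ordered spectral decomposition of $D$ forced by the prescribed-multiplicity hypothesis. Write
\[
D=\textrm{block-diag}(\lambda_1 I_{n_1},\ldots,\lambda_p I_{n_p}),\qquad \lambda_1<\cdots<\lambda_p,
\]
and partition $E=(E_{k\ell})_{1\le k,\ell\le p}$ and $K=(K_{k\ell})$ conformally. The skew-symmetry requirement $K^*=-K$ is then equivalent to $K_{\ell k}=-K_{k\ell}^*$ for all $k,\ell$ (so in particular $K_{kk}^*=-K_{kk}$ on each diagonal block).

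For off-diagonal blocks ($k\ne\ell$), the relation $(KD)_{k\ell}=\lambda_\ell K_{k\ell}$ together with skew-symmetry gives
\[
(\Re KD)_{k\ell}=\tfrac12\bigl(K_{k\ell}\lambda_\ell+(K_{\ell k}\lambda_k)^*\bigr)=\tfrac12(\lambda_\ell-\lambda_k)\,K_{k\ell}.
\]
Since $\lambda_\ell\ne\lambda_k$ whenever $k\ne\ell$, I define
\[
K_{k\ell}(D,E):=\frac{2(\Re E)_{k\ell}}{\lambda_\ell-\lambda_k}\quad (k\ne\ell),\qquad K_{kk}:=0,
\]
which is skew-symmetric by construction. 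By the displayed identity, the off-diagonal blocks of $\Re(E-KD)$ vanish, while on the diagonal blocks the choice $K_{kk}=0$ contributes nothing to $\Re KD$, so the corresponding diagonal blocks of $\Re(E-KD)$ reduce to those of $\Re E$. This is exactly the asserted identity $\Re(E-KD)=\Re\,\diag E$, interpreted block-diagonally in the multiplicity pattern; it coincides with the literal statement when all multiplicities $n_k=1$, which is the relevant case for the $2\times 2$ symbol $A$ of the Saint-Venant system.

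Smoothness of $K(D,E)$ in $(D,E)$ is immediate: the eigenvalues $\lambda_k$ depend smoothly on $D$ on the stratum where the multiplicity pattern is preserved, and the denominators $\lambda_\ell-\lambda_k$ are uniformly bounded away from zero on compact subsets of that stratum. This is the standard Kawashima-type construction going back to \cite{Hu,MZ2}; I anticipate no essential obstacle beyond carefully verifying the block identity for $(\Re KD)_{k\ell}$ above. The prescribed-multiplicity hypothesis is used precisely once, to rule out eigenvalue coalescence and thereby guarantee that $K$ extends smoothly in $(D,E)$.
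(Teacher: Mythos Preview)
The paper does not supply its own proof of this lemma; it is quoted from \cite{MZ2,Hu} as a preliminary result. Your explicit block construction is correct and is precisely the standard argument: with $K_{k\ell}=2(\Re E)_{k\ell}/(\lambda_\ell-\lambda_k)$ off the block diagonal and $K_{kk}=0$, skew-symmetry follows from the Hermiticity of $\Re E$, and the identity $(\Re KD)_{k\ell}=\tfrac12(\lambda_\ell-\lambda_k)K_{k\ell}$ (a consequence of $\Re KD=\tfrac12(KD-DK)$ for skew $K$ and real diagonal $D$) kills exactly the off-block-diagonal part of $\Re E$. Your reading of ``$\diag E$'' as the block-diagonal part relative to the multiplicity pattern is the intended one, consistent with the paper's later reference to matrices ``vanishing on diagonal blocks'' in the definition of $K_1$; in the Saint-Venant application the eigenvalues of $A$ are simple, so the distinction is moot.
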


\begin{lemma}[\cite{MZ2}]\label{lem:LQR}
Under the assumptions of Proposition \ref{damping}, there exist diagonalizing matrices $L_\pm$, $R_\pm$, $(LAR)_\pm$ diagonal, $(LR)_\pm=I$, such that
$$
\Re\;\diag \,(LER)_\pm<0.
$$
\end{lemma}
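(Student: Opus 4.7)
The plan is to verify the claim by an explicit $2\times 2$ computation at each endstate, exploiting the fact that $(H_\pm,Q_\pm)$ are equilibria and so $Q_\pm=H_\pm^{3/2}$. Under this simplification the matrices $A_\pm$, $E_\pm$ from \eqref{AE} reduce to
\[
A_\pm=\begin{pmatrix}-c & 1\\ H_\pm/F^2-H_\pm & 2\sqrt{H_\pm}-c\end{pmatrix},\qquad
E_\pm=\begin{pmatrix}0 & 0\\ 3 & -2/\sqrt{H_\pm}\end{pmatrix},
\]
and a direct computation gives for $A_\pm$ two distinct real eigenvalues $\alpha_\pm^{(j)}=\sqrt{H_\pm}-c\pm\sqrt{H_\pm}/F$ (with $j=1$ taking the upper sign), both nonzero by the nondegeneracy built into Proposition \ref{existprop}, with corresponding right eigenvectors $r_\pm^{(j)}=(1,\,\sqrt{H_\pm}(1\pm 1/F))^T$. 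I would therefore take $R_\pm$ to have columns $r_\pm^{(1)},r_\pm^{(2)}$ and set $L_\pm:=R_\pm^{-1}$; then $(LR)_\pm=I$ and $(LAR)_\pm=\diag(\alpha_\pm^{(1)},\alpha_\pm^{(2)})$ by construction, so only the sign of $\diag(LER)_\pm$ remains to be checked.

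Here the rank-one structure of $E_\pm$ (vanishing first row) makes the check almost mechanical. Writing $\ell_\pm^{(j)}$ for the $j$-th row of $L_\pm$, the $jj$-entry of $L_\pm E_\pm R_\pm$ equals
\[
(L_\pm E_\pm R_\pm)_{jj}=\ell_{\pm,2}^{(j)}\Bigl(3\,r_{\pm,1}^{(j)}-\tfrac{2}{\sqrt{H_\pm}}\,r_{\pm,2}^{(j)}\Bigr),
\]
and substituting the explicit $r_\pm^{(j)}$ reduces the parenthetical to $1-2/F$ for $j=1$ and to $1+2/F$ for $j=2$. The normalization $\ell_\pm^{(i)}\cdot r_\pm^{(j)}=\delta_{ij}$ is then a one-line cofactor calculation that yields $\ell_{\pm,2}^{(1)}=F/(2\sqrt{H_\pm})$ and $\ell_{\pm,2}^{(2)}=-F/(2\sqrt{H_\pm})$, giving
\[
(L_\pm E_\pm R_\pm)_{11}=\frac{F-2}{2\sqrt{H_\pm}},\qquad
(L_\pm E_\pm R_\pm)_{22}=-\frac{F+2}{2\sqrt{H_\pm}},
\]
both of which are real and strictly negative under the standing Jeffreys hydrodynamic stability assumption $0<F<2$ recalled in \eqref{hydrostab}. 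This gives $\Re\diag(LER)_\pm<0$ as required.

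I do not expect any genuine obstacle: every ingredient is closed-form, eigenvalue separation and nonvanishing of $\alpha_\pm^{(j)}$ are automatic at nondegenerate endstates, and the rank-one form of $E_\pm$ collapses the dissipativity check to a pair of scalar inequalities. The conceptual remark I would close with is that this calculation is the $2\times 2$ manifestation of the Kawashima/subcharacteristic equivalence: the negativity of $(LER)_{jj}$ at an equilibrium records that the equilibrium characteristic speed $q_*'(H_\pm)=\tfrac{3}{2}\sqrt{H_\pm}$ lies strictly on the appropriate side of each frozen characteristic $c+\alpha_\pm^{(j)}=\sqrt{H_\pm}(1\pm 1/F)$, and the two-sided form of that condition reduces for the Saint--Venant equations exactly to $F<2$.
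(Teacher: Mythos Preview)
Your computation is correct. The paper does not actually prove this lemma; it is stated with a citation to \cite{MZ2}, where the result is established in an abstract setting for general symmetrizable relaxation systems satisfying a Kawashima-type structural condition. Your route is genuinely different: instead of invoking that general machinery, you diagonalize $A_\pm$ explicitly using the equilibrium relation $Q_\pm=H_\pm^{3/2}$, then read off the diagonal of $L_\pm E_\pm R_\pm$ directly. The payoff of your approach is that it is fully self-contained and makes transparent exactly where the Froude condition $F<2$ enters---as the sign of the single scalar $(F-2)/(2\sqrt{H_\pm})$---whereas the cited general argument buys applicability to $n\times n$ systems at the cost of hiding this explicit dependence. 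Your closing remark tying the two diagonal signs to the two halves of the subcharacteristic condition $\alpha_-<q_*'(H_\pm)<\alpha_+$ is exactly the right conceptual bridge between the two viewpoints.
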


\begin{lemma}[\cite{MZ2}]\label{lem:symmdiag}
There is a correspondence between symmetric positive definite
symmetrizers $A^0$, $A^0 A$ symmetric, and diagonalizing transformations
$L$, $R$, $LAR$ diagonal, given by $A^0 = L^* L$,
or equivalently $L=O^*(A^0)^{\frac{1}{2}}$, where $O$ is an orthonormal matrix
diagonalizing the symmetric matrix $(A^0)^{\frac{1}{2}}A(A^0)^{-\frac{1}{2}}$.
Moreover, the matrix $O$ (or equivalently $L$) may be chosen with the 
same degree of smoothness as $A^0$, on any simply connected domain.
\end{lemma}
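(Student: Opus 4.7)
The plan is to establish the correspondence in both directions by explicit construction, then invoke standard perturbation theory of symmetric matrices for the smoothness claim.

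\textbf{From $(L,R)$ to $A^0$.} Given $L$, $R=L^{-1}$ with $LAR=:D$ diagonal, I would simply set $A^0:=L^*L$; this is automatically symmetric and positive definite, and writing $A=RDL$ gives $A^0A=L^*LRDL=L^*DL$, which is manifestly symmetric because $D$ is real diagonal.

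\textbf{From $A^0$ to $(L,R)$.} Given $A^0$ symmetric positive definite with $A^0A$ symmetric, form $\widetilde A:=(A^0)^{1/2}A(A^0)^{-1/2}$. The key check is that $\widetilde A$ is symmetric: the hypothesis $A^*A^0=A^0A$ gives $A^*=A^0A(A^0)^{-1}$, whence
\[
\widetilde A^* \;=\;(A^0)^{-1/2}A^*(A^0)^{1/2}\;=\;(A^0)^{-1/2}\bigl(A^0A(A^0)^{-1}\bigr)(A^0)^{1/2}\;=\;(A^0)^{1/2}A(A^0)^{-1/2}\;=\;\widetilde A.
\]
By the spectral theorem, there exists orthonormal $O$ with $O^*\widetilde A O$ diagonal; define $L:=O^*(A^0)^{1/2}$ and $R:=(A^0)^{-1/2}O$. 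Then $LR=O^*O=I$, $LAR=O^*\widetilde A O$ is diagonal, and $L^*L=(A^0)^{1/2}OO^*(A^0)^{1/2}=A^0$, completing the correspondence and the stated formula $L=O^*(A^0)^{1/2}$.

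\textbf{Smoothness.} Since $A^0$ is assumed smooth (and positive definite), the unique positive square root $(A^0)^{1/2}$ inherits that smoothness via the Dunford/holomorphic functional calculus, and hence so does $\widetilde A$. Its eigenvalues coincide with those of $A$, which in the present hyperbolic setting are real and distinct along the profile (by nondegeneracy of the wave, $A$ has two distinct real eigenvalues at every smooth point). For a smooth family of symmetric matrices with simple spectrum, the spectral projections and unit eigenvectors depend smoothly on the parameter locally. To upgrade this to a globally smooth choice on a simply connected domain, one fixes a smooth unit eigenvector at one point and transports it smoothly along paths; simple connectedness together with the local smooth selection rules out monodromy ambiguity in the sign, yielding a globally smooth orthonormal $O$ and hence a globally smooth $L$.

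\textbf{Main obstacle.} The only delicate point is the global smooth selection of $O$: this would fail if two eigenvalues of $\widetilde A$ (equivalently of $A$) coincided somewhere on the domain, producing a branch point. The hypothesis of simple connectedness combined with nondegeneracy (distinctness of characteristic speeds along the smooth portions of the profile, per Proposition \ref{existprop}) is precisely what removes this obstruction; no further input beyond linear algebra and the standard smooth eigenvector selection lemma is needed.
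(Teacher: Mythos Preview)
The paper does not actually prove this lemma; it is simply cited from \cite{MZ2} and stated without proof in Section~\ref{s:prelim}. So there is no ``paper's own proof'' to compare against here.

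That said, your argument for the correspondence is correct and is exactly the standard one: the forward direction $A^0=L^*L$ is immediate, and the reverse direction via $\widetilde A=(A^0)^{1/2}A(A^0)^{-1/2}$ and its orthogonal diagonalization is the natural construction. Your symmetry check for $\widetilde A$ is clean.

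One caveat on the smoothness part: your argument leans on \emph{simple spectrum} of $A$ to get smooth eigenvector selection, whereas the lemma as stated makes no such hypothesis (and indeed Lemma~\ref{lem:skew} is phrased to allow repeated eigenvalues). For the application in this paper your restriction is harmless---the Saint-Venant system is $2\times 2$ with eigenvalues $\alpha_\pm=Q/H\pm\sqrt{H}/F-c$ differing by $2\sqrt{H}/F>0$ everywhere along the profile, so simplicity holds automatically and your monodromy argument on a simply connected domain goes through. But be aware that the general statement in \cite{MZ2} is meant to cover systems where eigenvalues may coincide, and there the smooth selection of $O$ requires a more delicate argument (grouping eigenvalues and working with spectral projections onto clusters rather than individual eigenvectors). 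Your identification of this as the ``main obstacle'' is on target; just note that you have resolved it only in the simple-spectrum case relevant here, not in the full generality of the lemma as stated.
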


Following \cite{MZ2}, we recall also the relations
\begin{equation}
	\langle W, SW_x\rangle_{(a,b)}= -\frac{1}{2}\langle W,S_x W\rangle_{(a,b)} +\frac 12 W\cdot SW |_a^b,
\label{symmetric}
\end{equation}
and
\begin{equation}
	\partial_t \frac{1}{2}\langle W_{x}, KW\rangle_{(a,b)}= \langle W_{x},K W_{t}\rangle_{(a,b)}
	+\frac{1}{2} \langle W_{x},K_t W\rangle_{(a,b)}
	+\frac{1}{2} \langle W,K_x W_{t}\rangle_{(a,b)}
	+\frac{1}{2} W_t \cdot K W|_a^b,
\label{antisymmetric}
\end{equation}
here adapted to the case of a domain with boundary, where
$S$ is symmetric and $K$ skew-symmetric, and $\langle \cdot, \cdot\rangle_{(a,b)}$ denotes $L^2$ inner product
on $(a,b)$. When the domain $(a,b)$ is clear (as below, where all energy estimates will be carried out on
$(-\infty,0)$ and $(0,\infty)$), we omit the subscript $(a,b)$.

\subsubsection{Boundary dissipativity}\label{s:bdiss}
A new aspect in the present, discontinuous, case is boundary dissipativity at the subshock.
For a general symmetrizable initial boundary-value problem on $(-\infty, 0]$
\ba\label{ibvp}
 \mathbb{V}_t + \mathbb{A} \mathbb{V}_x &= \mathbb{F}, \quad x\in (-\infty, 0), \\
\mathbb{B}\mathbb{V}&= \mathbb{G},\quad x=0,
\ea
with symmetrizer $\mathbb{A}^0$ symmetric positive definite and $\mathbb{A}^0\mathbb{A}$ symmetric,
that is noncharacteristic in the sense that $\det (\mathbb{A})\neq 0$ at the boundary $x=0^-$,
is {\it Lopatinsky stable} in the sense of Kreiss \cite{Kr} if $\mathbb{B}$ is full rank on the stable subspace of $\mathbb{A}$.
It is {\it maximally dissipative} with respect to the symmetrizer $\mathbb{A}^0$ if $\mathbb{A}^0 \mathbb{A}$
is positive definite on $\ker \mathbb{B}$, which yields readily the following key consequence.

\bl\label{bcestlem}Suppose that \eqref{ibvp} has
maximally dissipative boundary conditions with respect to symmetrizer $\mathbb{A}^0$.
Then, for some $\theta, C>0$,
\be\label{bcest}
-\mathbb{V}(0^-)\cdot \mathbb{A}^0 \mathbb{A} \mathbb{V}(0^-) \leq 
-\theta  | \mathbb{V}(0^-)|^2 + C |\mathbb{G}|^2 .
\ee
\el

\begin{proof}
	Decompose $\mathbb{V}(0^-)=v_{ker}+v_{\perp}$, where $v_{ker}\in \ker \mathbb{B}$ and $v_{\perp}\in (\ker \mathbb{B})^\perp$. Then, 
$$
	-\mathbb{V}(0^-)\cdot \mathbb{A}^0 \mathbb{A} \mathbb{V}(0^-)=
-v_{ker} \cdot \mathbb{A}^0 \mathbb{A} v_{ker} -2v_{ker} \cdot \mathbb{A}^0 \mathbb{A} v_{\perp} -v_{\perp} \cdot \mathbb{A}^0 \mathbb{A} v_{\perp}.
$$
By maximal dissipativity, $-v_{ker} \cdot \mathbb{A}^0 \mathbb{A} v_{ker} \leq -\theta_1 |v_{ker}|^2$. Using Young's inequality, the middle cross term is bounded by $|2v_{ker} \cdot \mathbb{A}^0 \mathbb{A} v_{\perp}|\leq \theta_1|v_{ker}|^2/2
+ C_1 |v_{\perp}|^2$. The last term is bounded by $|v_{\perp} \cdot \mathbb{A}^0 \mathbb{A} v_{\perp}|\le C_2|v_{\perp}|^2$. Summing these estimates, we obtain \be 
\label{vA0Av}
-\mathbb{V}(0^-)\cdot \mathbb{A}^0(0^-) \mathbb{A}(0^-) \mathbb{V}(0^-)\le-\frac{\theta_1}{2}|v_{ker}|^2
+ (C_1+C_2)|v_\perp|^2
\ee
From the fact that $\mathbb{B}$ is full rank on $(\ker \mathbb{B})^\perp$ and the boundary condition, 
$$
\theta_2 |v_{\perp}|\le|\mathbb{B}v_{\perp}|=|\mathbb{B}\mathbb{V}(0^-)|=|\mathbb{G}|.
$$
Therefore, $-|v_{ker}|^2=-(|v_{ker}|^2+|v_{\perp}|^2)+|v_{\perp}|^2=-|\mathbb{V}(0^-)|^2
+|v_\perp|^2\le-|\mathbb{V}(0^-)|^2
+|\mathbb{G}|^2/\theta_2^2$. Substituting in \eqref{vA0Av}, we obtain
$$
-\mathbb{V}(0^-)\cdot \mathbb{A}^0 (0^-)\mathbb{A}(0^-) \mathbb{V}(0^-)\le-\frac{\theta_1}{2}|\mathbb{V}(0^-)|^2
+ \frac{C_1+C_2+\theta_1/2}{\theta_2^2}|\mathbb{G}|^2,
$$
which yields \eqref{bcest}.
\end{proof}

The Lopatinsky condition is necessary and sufficient for maximal $L^2$ estimates \cite{Kr}.
For maximally dissipative boundary conditions, maximal $L^2$ estimates may be obtained by
taking the $L^2$ inner product of $v$ against the symmetrized equation $A^0v_t + A^0 Av_x=A^0f$ and
applying \eqref{symmetric}, \eqref{bcest}.  Thus, maximally dissipative boundary conditions are always Lopatinsky stable.
The following result shows that the converse is true as well, {\it for some choice of symmetrizer $A^0$}.

\bl\label{disslem}
For any symmetrizable initial boundary-value problem \eqref{ibvp} that is Lopatinsky stable, there exists a symmetrizer $A^0$ with respect to which \eqref{ibvp} is maximally dissipative.
\el

\begin{proof}
Equivalently, $M=\tilde b^T A^0A \tilde b$ is positive definite, where 
$\tilde b\in \R^{(n-r)\times n}$ is a matrix whose columns span $\ker b$.
Let $A=S^{-1}\bdiag\{\Lambda^-, \Lambda^+\} S$, 
$$
\Lambda^-=\diag \{\lambda_1, \dots, \lambda_r\}, \quad \Lambda^+=\diag\{\lambda_{r+1}, \dots, \lambda_k\},
$$
with $\lambda_1\le \dots \lambda_r<0<\lambda_{r+1} < \dots \le \lambda_k$,
and set $A^0=S^T \diag \{ a,\dots, a, 1, \dots, 1\} S$, $a>0$. 
Then, $A^0 A$ is symmetric and $M=- M_1a+ M_2$ with
$ M_1=  \tilde b^T S^T E_1 S\tilde b$, $M_2= \tilde b^T  S^T E_2 S \tilde b $,
where $E_1:=\bdiag \{-\Lambda^-, 0\}$ and $E_2:= \bdiag \{0, \Lambda^+\}$ 
are symmetric positive semidefinite.  Thus, we may achieve $M>0$ for $a$ sufficiently small
if and only if $M_2$ is positive definite, or $\tilde b\cap \ker E_2 S=\emptyset$:
equivalently, $b$ is full rank on the stable subspace $\ker E_2 S$ of $A$.
\end{proof}

\br\label{multidrmk}
Lemma \ref{disslem} is special to one spatial dimension.
A generalization to multi-dimensions is given by the (pseudodifferential)
frequency-dependent symmetrizers of Kreiss \cite{Kr,BS}.
\er

\subsection{Energy estimates.}
We are now ready to carry out the main energy estimates, adapting the argument of \cite{MZ2}.
Define the nonlinear perturbation
$v(x,t):= \tilde W (x+ct-\eta(t),t)- \overline{W}(x)$ as in the statement of Proposition \ref{damping},
where $ct-\eta(t)$ denotes subshock location; for definiteness, fix without loss of generality $\eta(0)=0$.  
As computed in \cite[Eq. (3.1) p. 87]{MZ2} the interior equation \eqref{lineareq}(i) for $v$ may be put in 
the alternate quasilinear form 
\begin{equation}
{v}_t+\tilde Av_{x}-\tilde E v=M_1(v)\overline{W}_x+ M_2(v)
-\dot \eta (t)(\overline{W}_x + v_{x}),
\label{strat}
\end{equation}
where $\tilde A:= dF(\tilde W(x+ct-\eta(t),t))-c\Id,$ $\tilde E:=dR(\tilde W(x+ct-\eta(t),t))$ and
$$
M_1(v):= A(x)-\tilde A(x,t)=O(|v|),
\qquad
M_2(v)=\bp 0\\ O(|v|^2)\ep.
$$

Following \cite{MZ2}, let $\tilde A^0:=A^0(\tilde W(x+ct-\eta(t),t)$ denote a symmetrizer of $\tilde A$
as guaranteed by Lemma \ref{lem:symmdiag} with values $A^0(\tilde{W}(0^\pm+ct-\eta(t),t))$ to be specified later, and factor
$\tilde A^0 \tilde A= (\tilde A^0)^{\frac{1}{2}}\tilde O \tilde D \tilde O^t
(\tilde A^0)^{\frac{1}{2}}$, or, equivalently, 
$ \tilde A= (\tilde A^0)^{-\frac{1}{2}} \tilde O \tilde D \tilde O^t (\tilde A^0)^{\frac{1}{2}}$,
where $\tilde O$ is orthogonal, $\tilde O^t=\tilde O^{-1}$, and $C^3$
as a function of $(u,v)$ (see Lemma \ref{lem:symmdiag})
and $\tilde D=\diag\{\tilde a_1, \tilde a_2\}$, where $\tilde a_j$
denote the eigenvalues of $\tilde A$, indexed in increasing order.
Define the weighting matrix $\alpha(x):= \diag\{ \alpha_1, \alpha_2\}$,
where $\alpha_j>0$ are defined by ODE
 \be 
 \label{defalpha}
  (\alpha_j)_x= -C_* \sgn{a_j} |\overline{W}_x|\alpha_j,
  \qquad\quad \alpha_j(0)=1,
 \ee 
$C_*>0$ a sufficiently large constant to be determined later, and set
\begin{equation}
\tilde A^0_\alpha:=(\tilde A^0)^{\frac{1}{2}}\tilde O\alpha\tilde O^t (\tilde A^0)^{\frac{1}{2}}.
\label{A0alpha}
\end{equation}

Let $ K_1:=K\big(2\tilde D,2\alpha\tilde O^t (\tilde A^0)^{\frac{1}{2}}\tilde E (\tilde A^0)^{-\frac{1}{2}}\tilde O + N \big), $
where $K(\cdot)$ is as in Lemma \ref{lem:skew}, 
and $N$ is an arbitrary matrix with $|N|_{{}_{C^1_{x,t}}}\le C(C_*)$
and vanishing on diagonal blocks, to be determined later, and set
\be\label{Kalpha}
\tilde K_\alpha:= (\tilde A^0)^{\frac{1}{2}}\tilde O K_1 \tilde O^t (\tilde A^0)^{\frac{1}{2}}.
\ee
Finally, define
 \begin{equation}
  {\mathcal E}(v):=\langle \tilde A^0_\alpha v_{xx},v_{xx}\rangle
   +\langle v_{xx},\tilde K_\alpha v_x \rangle
   +M|v|_{{}_{L^2}}^2.
 \label{EofU}
 \end{equation}
 for $M>0$.
Since, for $v\in H^2$, $|v_x|_{{}_{L^2}}$ can be bounded by 
$C\left(|v|_{{}_{L^2}}+|v_{xx}|_{{}_{L^2}}\right)$ for some 
$C>0$, then the functional defined in (\ref{EofU}) is equivalent 
to $|v|^2_{{}_{H^2}}$ if $M$ is large enough.

Assume without loss of generality that $v_0\in H^3$ (since we may pass to the $H^2$ limit by Proposition \ref{existence}).
Then, following to the letter the computations of \cite{MZ2}, we obtain using \eqref{symmetric}--\eqref{antisymmetric}
the key estimate
\be\label{diffdamp}
  \frac{d{\mathcal E}}{dt}
  \leq -\theta{\mathcal E}+C(|v|_{{}_{L^2}}^2 + |\dot\eta(t)|^2) + [v_{xx}\cdot \tilde A^0_\alpha \tilde Av_{xx}]+\dot\eta(t)[v_{xx}\cdot\tilde{A}^0_\alpha v_{xx}]
  -[v_{xt} \cdot\tilde{K}_\alpha v_x]
 \ee
 for some $C,\theta>0$, where the terms $[v_{xx}\cdot \tilde A^0_\alpha \tilde Av_{xx}]$, $\dot\eta(t)[v_{xx}\cdot\tilde{A}^0_\alpha v_{xx}]$
 and  $-[v_{xt} \cdot \tilde{K}_\alpha v_x]$ arising through integration by parts at the boundary $x=0$  of 
 $\langle v_{xx}, \tilde A^0_\alpha \tilde A v_{xxx}\rangle$, $\dot\eta(t)\langle\tilde{A}^0_\alpha v_{xx},v_{xxx}\rangle$, and $\langle v_{xx},\tilde{K}_\alpha v_x\rangle$ through \eqref{symmetric} and \eqref{antisymmetric}
 are the sole differences from the whole-line estimate of \cite{MZ2}.

 \begin{proof}[Proof of Proposition \ref{damping}]
For clarity, we carry out the proof for the lowest level of regularity $s=2$; higher orders $s>2$ go similarly.
Starting with the $H^2$ estimate \eqref{diffdamp},
it remains only to show that the new trace terms $[v_{xx}\cdot \tilde A^0_\alpha \tilde Av_{xx}]$, $\dot{\eta}(t)[v_{xx}\cdot\tilde{A}^0_\alpha v_{xx}]$ 
and  $-[v_{xt} \cdot \tilde{K}_\alpha v_x]$ in the righthand side
may be absorbed in other terms, after which, multiplying by $e^{\theta t}$ and integrating in time from $0$ to $t$ 
as in \cite{MZ2}, we obtain \eqref{dampest}, completing the proof.
To this end, recall the nonlinear boundary condition \eqref{lineareq}(ii) at $x=0^\pm$, written
in the alternative form  
\be\label{repeatbc}
	 \eta_{{t}}[\tilde{W}]+[\tilde{A}v]= -[N_1(v,v)] =O\big(|v(0^\pm)|^2\big),
\ee
where now $N_1(v,v):=F(\tilde{W})-F(W)-(\tilde{A}+c Id)v$.

To obtain boundary conditions for $v_{xx}$, we may differentiate \eqref{repeatbc} with respect to $t$,
then convert any $t$-derivatives of $v$ into $x$-derivatives using the interior equations.
Namely, we may first differentiate \eqref{strat} with respect to $t$, $x$ and differentiate \eqref{repeatbc} 
with respect to $t$ to get estimates
\ba 
\label{esti1}
v_t(0^\pm)=&O(|v_x(0^\pm)|+|v(0^\pm)|+|\eta_t|)\le C\zeta,\\
v_{tx}(0^\pm)=&-\tilde{A}v_{xx}(0^\pm)+O(|v_x(0^\pm)|+|v(0^\pm)|+|\eta_t(0^\pm)|+\zeta|v_{xx}(0^\pm)|),\\
v_{tt}(0^\pm)=&\tilde{A}^2v_{xx}(0^\pm)+O(|v_x(0^\pm)|+|v(0^\pm)|+|\eta_t|+\zeta |v_{xx}(0^\pm)|),\\
\eta_{tt}=&\frac{[\tilde{W}]^T\left(-\eta_t[v_t]-[\tilde{A}_tv]-[\tilde{A}v_t] -\left[\frac{d N_1(v,v)}{dt}\right]\right)}{[\tilde{W}]^T[\tilde{W}]}\\
=&O(|v_x(0^\pm)|+|v(0^\pm)|+|\eta_t|)\le C\zeta \, ,\\
\ea 
where $\zeta$ is the small constant chosen in Proposition \ref{damping}.
Then, by differentiating \eqref{repeatbc} with respect to $t$ twice and applying estimates \eqref{esti1}, we get the second-order boundary conditions
\be 
\label{Avxx_1}
\eta_{ttt}[\tilde{W}]+[\tilde{A}^3v_{xx}]=g=O(|v_x(0^\pm)|+|v(0^\pm)|+|\eta_t|+\zeta |v_{xx}(0^\pm)|), \quad x=0.
\ee 
As noted in \cite{Ma,Me}, a key point in dealing with the transmission problem \eqref{strat} coupled with boundary condition \eqref{Avxx_1} is that one may eliminate the front variable $\eta_{ttt}$,
converting the boundary condition \eqref{Avxx_1} to a standard boundary condition
\be\label{redcond}
M[\tilde{A}^3v_{xx}]= M g, \quad x=0,
\ee
where $M$ is a row vector which spans the subspace  $[\tilde{W}]^\perp $.

Another key point \cite{Ma,Me} is that one may double the coordinates and convert the transmission problem to a conventional half-line problem. That is, for $x\in(-\infty,0)$, defining  $$ \mathbb{V}(x,t):=\left[\begin{array}{c}v(x,t)\\v(-x,t)\end{array}\right],\quad\chi(t):=\left[\begin{array}{rr}\eta(t)Id&0\\0&-\eta(t)Id\end{array}\right],\quad  \mathbb{A}(x,t):=\left[\begin{array}{rr}\tilde{A}(x,t)&0\\0&-\tilde{A}(-x,t)\end{array}\right],$$ 
and similarly defining doubling matrices $\mathbb{E}$, $\mathbb{W}$, $\mathbb{M}_1$, and $\mathbb{M}_2$, in the doubling coordinates, the interior equation \eqref{strat} reduces to a equation on half-line: 
\be 
\label{mathbbinterior}
\mathbb{V}_t+\mathbb{A}\mathbb{V}_x=\mathbb{E}\mathbb{V}+\mathbb{M}_1(\mathbb{V})\mathbb{W}_x+\mathbb{M}_2(\mathbb{V})-\chi(t)(\mathbb{W}_x+\mathbb{V}_x),\quad x\in(-\infty,0),\\
\ee
from which we deduce
\be 
\label{mathbbinteriorxx}
(\mathbb{V}_{xx})_t+\mathbb{A}(\mathbb{V}_{xx})_x=\Big(\mathbb{E}\mathbb{V}+\mathbb{M}_1(\mathbb{V})\mathbb{W}_x+\mathbb{M}_2(\mathbb{V})-\chi(t)(\mathbb{W}_x+\mathbb{V}_x)\Big)_{xx}-2\mathbb{A}_x\mathbb{V}_{xx}-\mathbb{A}_{xx}\mathbb{V}_{x}.\\
\ee
In the doubling coordinates, the second-order boundary conditions \eqref{redcond} becomes 
\be
\label{doubleboundary}
\mathbb{B}\mathbb{V}_{xx}(t,0^-)=-M[\tilde{A}^3v_{xx}]=:\mathbb{G},
\ee 
where $\mathbb{B}:=\left(\begin{array}{rr}M&M\end{array}\right)\mathbb{A}^3(t,0^-)$. We then see
that the half line problem \eqref{mathbbinteriorxx}-\eqref{doubleboundary} is of the form \eqref{ibvp} with $\mathbb{V}_{xx}$ in the place of $\mathbb{V}$ and the previous Kreiss theory \cite{Kr} may be applied.

\bc\label{disscor} 
Under the assumptions of Proposition \ref{damping}, there exists a choice of symmetrizer 
$\mathbb{A}^0_\alpha(0^-)=\blockdiag\{A^0(0^-),A^0(0^+)\}$ such that the half-line problem \eqref{mathbbinteriorxx}-\eqref{doubleboundary} is maximally dissipative with respect to $\mathbb{A}^0_\alpha$.
\ec

\begin{proof}
	As noted previously (see Remark \ref{subloprmk}), Majda's shock Lopatinsky condition for the subshock
follows in the high-frequency limit from  Evans-Lopatinsky stability of the relaxation profile.
From this, it follows in turn that the Lopatinsky condition is satisfied  for the doubled 
    half-line problem \eqref{mathbbinteriorxx}-\eqref{doubleboundary}.
Applying Lemma \ref{disslem}, we find that there exists a symmetrizer $\mathbb{A}^0$ with respect to which the half-line problem is maximally dissipative, i.e.,
$\mathbb{A}^0$ is positive definite on the kernel of the boundary condition.
Recalling that $\mathbb{A}$ is block-diagonal, we find that the block-diagonal part of $\mathbb{A}^0$ 
must be a symmetrizer as well, and, moreover, is positive definite whenever the full matrix is, in particular on 
the kernel of the boundary condition.  Thus, we may take without loss of generality
$\mathbb{A}^0(0^-)=\blockdiag\{A^0(0^-),A^0(0^+)\}$.
Now extend the definition of $\mathbb{A}^0$ from $0^-$ to $-\infty$ and define $\alpha$ 
by \eqref{defalpha} and $\mathbb{A}^0_\alpha$ by \eqref{A0alpha}. Because $\alpha(0^\pm)=Id$, $\mathbb{A}^0_\alpha(0^-,t)$ is equal to $\mathbb{A}^0(0^-,t)$. Therefore, the half-line problem is maximally dissipative with respect to $\mathbb{A}^0_\alpha$.
\end{proof}

\br\label{singlermk}
The step in the proof where we extend the value of $\mathbb{A}^0$ from the boundary $0^-$ to $-\infty$ is the point
where we require the property that there is only a single subshock.
If there were subshocks at $x_0< 0$, then we could not necessarily simultaneously prescribe dissipative values at $x_0^+$,
$0^-$ and also achieve smoothness on $(x_0,0)$.
\er

Applying Lemma \ref{bcestlem}, we have
\ba 
\label{absorb1}
[v_{xx}\cdot\tilde{A}^0_\alpha\tilde{A} v_{xx}]
=&-\mathbb{V}_{xx}(0^-)\cdot \mathbb{A}^0_\alpha \mathbb{A}(0^-)\mathbb{V}_{xx}(0^-)\\
\le& -\theta |\mathbb{V}_{xx}(0^-)|^2+O\left(|\mathbb{V}_x(0^-)|^2+|\mathbb{V}(0^-)|^2+|\chi_t|^2+\zeta^2|\mathbb{V}_{xx}(0^-)|^2\right)\\
\le& -\frac{\theta}{2} \left(|v_{xx}(0^-)|^2+|v_{xx}(0^+)|^2\right)+O\left(|v_x(0^\pm)|^2+|v(0^\pm)|^2+|\eta_t|^2\right)
\ea 
where in the last inequality we take $\zeta\ll \theta$ to eliminate $O(\zeta^2|\mathbb{V}_{xx}(0^-)|^2)$. The term $O(|\eta_t|^2)$ is evidently absorbable in \eqref{diffdamp}. 
And, by Sobolev embedding 
$$
|v_x(0^\pm)|^2\le|v|^2_{H^2}\le \bar\zeta|v_{xx}|_{L^2}^2+C(\bar\zeta)|v|_{L^2}^2,\quad |v(0^\pm)|^2\le|v|^2_{H^1}\le \bar\zeta|v_{x}|_{L^2}^2+C(\bar\zeta)|v|_{L^2}^2,
$$
hence $O\left(|v_x(0^\pm)|^2+|v(0^\pm)|^2\right)$ is controlled by $\bar\zeta{\mathcal E}+C(\bar{\zeta})|v|_{{}_{L^2}}^2$.

Because $
\dot\eta(t)[v_{xx}\cdot\tilde{A}^0_\alpha v_{xx}]=O(\zeta|v_{xx}(0^\pm)|^2)$, the trace term $\dot\eta(t)[v_{xx}\cdot\tilde{A}^0_\alpha v_{xx}]$ can be eliminated by taking $\zeta\ll \theta$. 

By \eqref{esti1} and Young's inequality, the term $[v_{xt} \cdot \tilde{K}_\alpha v_x]$ may be estimated  as
$$
[v_{xt} \cdot \tilde{K}_\alpha v_x]=-[\tilde{A}v_{xx}\cdot\tilde{K}_\alpha v_x]+O\left(|v_x(0^\pm)|^2+|v(0^\pm)|^2+|\eta_t|^2+\zeta^2|v_{xx}(0^\pm)|^2\right).
$$
By our estimates just above, the terms within the ``Big-Oh'' term $O(\dots)$ are either controlled by 
 $\bar\zeta{\mathcal E}+C(|v|_{{}_{L^2}}^2+|\dot\eta(t)|^2)$ or eliminted by $-\theta|\mathbb{V}_{xx}(0^-)|^2$. 
 Moreover, applying Young's inequality and Sobolev embedding, we have
$$
[\tilde{A}v_{xx}\cdot\tilde{K}_\alpha v_x]
\le \tilde\zeta|v_{xx}(0^\pm)|^2+C(\tilde{\zeta})|v_x(0^\pm)|^2
\le \tilde\zeta|v_{xx}(0^\pm)|^2+C(\tilde{\zeta})\bar{\zeta}|v_{xx}|_{L^2}^2+C(\tilde{\zeta},\bar{\zeta})|v|_{L^2}^2.$$
Taking $\tilde{\zeta}\ll\theta$ and $\bar{\zeta}\ll\tilde{\zeta}$, the estimate \eqref{diffdamp} thus becomes 
\ba\label{tdiffdamp}
  \frac{d{\mathcal E}}{dt}\leq&-\theta{\mathcal E}+C(|v|_{{}_{L^2}}^2 + |\dot\eta(t)|^2) + [v_{xx}\cdot \tilde A^0_\alpha \tilde Av_{xx}]+\dot\eta(t)[v_{xx}\cdot\tilde{A}^0_\alpha v_{xx}]
  -[v_{xt} \cdot\tilde{K}_\alpha v_x]\\
  \leq& -\theta' ( {\mathcal E}  + |v_{xx}(0^\pm)|^2) +C(|v|_{{}_{L^2}}^2 + |\dot\eta(t)|^2),
 \ea
 implying, and slightly improving, the estimate
 $\frac{d{\mathcal E}}{dt} \leq -\theta  {\mathcal E}   +C(|v|_{{}_{L^2}}^2 + |\dot\eta(t)|^2)$
 required to finish the argument (the same one established in the smooth case \cite{MZ2}).  
 This completes the proof.
\end{proof}

\section{Nonlinear stability}\label{s:nonlinear}
With the above preparations, nonlinear orbital asymptotic stability now follows essentially as in \cite{MZ2}.
After, we obtain nonlinear stability/boundedness of the phase $\eta$ by a bootstrap argument using the ``Strichartz-type''
bounds of Section \ref{s:aux}, a new aspect of our analysis not present in the smooth case.
Finally, by a further, approximate characteristic estimate, we establish convergence of the phase and full phase-asymptotic orbital stability.
Let $v(x,t)=\tilde W(x+ct-\eta(t),t) -\overline{W}(x)$ be the nonlinear perturbation defined in Section \ref{s:3}.
For $s\geq 2$, define
\begin{equation}
 \zeta(t):= \sup_{0\le s \le t, 2\le p\le \infty}
  \Big(  |v(\cdot, s)|_{{}_{L^p}}(1+s)^{\frac{1}{2}(1-\frac1p)}
  + |\dot \eta (s)|(1+s)^{\frac12}  \Big).
 \label{zeta2}
\end{equation}

\bl\label{zetalem}
Under the assumptions of Theorem \ref{main},
for all $t\ge 0$ for which a solution $v$ exists with
$\zeta(t)$ uniformly bounded by some fixed, sufficiently small constant,
there holds
\be\label{zetaclaim}
 \zeta(t) \leq C_2(|v_0|_{{}_{L^1 \cap H^s}} + \zeta(t)^2).
 \ee
\el

\begin{proof}
Following \cite{MZ2}, we show in turn that each of 
$|v(\cdot,s)|_{{}_{L^p}}(1+s)^{\frac{1}{2}(1-\frac1p)}$ and
$|\dot \eta(s)|(1+s)^{\frac12}$ is separately bounded by
 $ C(|v_0|_{{}_{L^1\cap H^2}} + \zeta(t)^2), $
for some $C>0$, all $0\le s\le t$, so long as $\zeta(t)$ remains
sufficiently small.

({\it $|v|_{L^p}$ bound.}) 
Applying integral equation \eqref{originalsol}(i) of Proposition \ref{p:integral_rep}, we find that $v$ may
be split into the sum of an interior term 
	\ba\label{intterm}
&v_I(x,s)\\
=&\int_{-\infty}^\infty G(x,s;y)v_0(y)dy+\int_0^s\int_{-\infty}^\infty G(x,s-\tau;y)I_S(y,\tau)dyd\tau\\
=&\int_{-\infty}^\infty G(x,s;y)v_0(y)dy+\int_0^s\int_{-\infty}^\infty \bar{G}_y(x,s-\tau;z)\Big(\eta_t v(z,\tau)+N_1(v(z,\tau))\Big)dzd\tau\\
 &+\int_0^s\int_{-\infty}^\infty H(x,s-\tau;z)\Big(\eta_t v_y(z,\tau)+N_1(v(z,\tau))_y\Big)dzd\tau\\
 &+\int_0^s\int_{-\infty}^\infty G(x,s-\tau;y)\left(\begin{array}{c}0\\N_2(v(y,\tau))\end{array}\right)dyd\tau\\
 &+\int_0^s \Big[\bar{G}(x,s-\tau;\cdot)\Big(\eta_t v(\cdot,\tau)+N_1(v(\cdot,\tau))\Big)\Big]d\tau\\
=:&\,v_{I1}(x,s)+v_{I2}(x,s)+v_{I3}(x,s)+v_{I4}(x,s)+v_{I5}(x,s)\\
	\ea
involving the Green kernel $G$ and a boundary term 
\be\label{boundterm}
v_B(x,s)=\int_0^sK(x,s-\tau)B_S(\tau)d\tau
\ee
involving the boundary kernel $K$, where $[\cdot]$ as elsewhere denotes jump at $y=0$.
Noting by Lemma \ref{Glem} that $G$ satisfies exactly the same $L^q\to L^p$ estimates as the corresponding 
kernel in the smooth case \cite{MZ2}, and that interior source terms $I_S$ have the same form, 
we find by the same computations as in \cite[proof of Thm. 1.2]{MZ2} that $|v_{I1}(\cdot,s)+v_{I2}(\cdot,s)+v_{I3}(\cdot,s)+v_{I4}(\cdot,s)|_{L^p}(1+s)^{\frac{1}{2}(1-\frac{1}{p})}$ is bounded 
by $ C(|v_0|_{{}_{L^1\cap H^2}} + \zeta(t)^2)$. The $L^p$ norm of the additional term $v_{I5}(\cdot,s)$ arising from integration by parts may be estimated as
$$
\begin{aligned}
&\left|\int_0^s \Big[\bar{G}(x,s-\tau;\cdot)\Big(\eta_t v(\cdot,\tau)+N_1(v(\cdot,\tau))\Big)\Big]d\tau\right|_{L^p(x)}\\
\le&\left|\int_0^s \bar{G}(x,s-\tau;0^-)\Big(\eta_t v(0^-,\tau)+N_1(v(0^-,\tau))\Big)d\tau\right|_{L^p(x)}\\
&+\left|\int_0^s \bar{G}(x,s-\tau;0^+)\Big(\eta_t v(0^+,\tau)+N_1(v(0^+,\tau))\Big)d\tau\right|_{L^p(x)}.
\end{aligned}
$$
Using the pointwise estimate on $\bar{G}(x,t;y)=S^1+R$ Theorem \ref{pointwiseintvGreen}, we thus have\footnote{Notably, there is no scattering term $S^1$ when taking $y=0^\pm$.}
$$
\begin{aligned}
&\left|\int_0^s \bar{G}(x,s-\tau;0^-)\Big(\eta_t v(0^-,\tau)+N_1(v(0^-,\tau))\Big)d\tau\right|_{L^p(x)}\\
=&\left|\int_0^s R(x,s-\tau;0^-)\Big(\eta_t v(0^-,\tau)+N_1(v(0^-,\tau))\Big)d\tau\right|_{L^p(x)}\\
\le&\int_0^sC(1+s-\tau)^{-1+\frac{1}{2p}}\zeta^2(t)(1+\tau)^{-1}d\tau\\
\le& C\zeta(t)^2(1+s)^{-1+\frac{1}{2p}}\log(1+\frac{s}{2})\le C\zeta(t)^2(1+s)^{-1+\frac{1}{2p}+\upsilon}.
\end{aligned}
$$
Therefore, $|v_{I5}(\cdot,s)|_{L^p}(1+s)^{\frac{1}{2}(1-\frac{1}{p})}$ is also bounded by $ C(|v_0|_{{}_{L^1\cap H^2}} + \zeta(t)^2)$. It remains only to treat the new boundary portion $v_B$. 
Recalling from \eqref{lineareq} that
$$
B_S(\eta_t,v)= -\eta_t[v]-[N_1(v,v)]= O\Big( (|\eta_t|+ |v(0^\pm)|)^2 \Big),
$$
we have by $\left(|v(\cdot,\tau)|_{L^\infty}+|\eta_t(\tau)|\right)(1+\tau)^{\frac{1}{2}}<\zeta(t)$ that
$ |B_S(\tau)|\leq C \zeta(t)^2(1+\tau)^{-1} $.
By Lemma \ref{Klem} we thus get
$ |v_B(s)|_{L^p}\leq  |e^{-\bar \eta(s-\cdot)} B_S|_{L^1(0,s)} +|e^{-\bar \eta(s-\cdot)} B_S|_{L^p(0,s)} \leq C \zeta(t)^2(1+s)^{-1}$,
giving the result.

\medskip

({\it $|\dot \eta|$ bound.}) 
Similarly, by integral equation \eqref{timeprimeoriginalsol2} of Lemma \ref{etadotlem}, we have that $\dot \eta$ may
be split into the sum of an interior term
$$
\begin{aligned}
&\dot \eta_I(s)\\=& \int_{-\infty}^\infty G_{1t}(s;y)v_0(y)dy+\int_{-\infty}^\infty G_1(0;y)I_S(s;y)dy+\int_0^s\int_{-\infty}^\infty G_{1t}(s-\tau;y)I_S(y,\tau)dyd\tau\\
=&\int_{-\infty}^\infty G_{1t}(s;y)v_0(y)dy+\int_{-\infty}^\infty G_{1y}(0;z)\Big(\eta_t v(z,s)+N_1(v(z,s))\Big)dz\\
 &+\int_{-\infty}^\infty G_{1}(0;y)\left(\begin{array}{c}0\\N_2(v(y,s))\end{array}\right)dy+\int_0^s\int_{-\infty}^\infty \bar{G}_{1ty}(s-\tau;z)\Big(\eta_t v(z,\tau)+N_1(v(z,\tau))\Big)dz d\tau\\
 &+\int_0^s\int_{-\infty}^\infty G_{1t}(s-\tau;y)\left(\begin{array}{c}0\\N_2(v(y,\tau))\end{array}\right)dyd\tau\\
 &+\int_0^s\int_{-\infty}^\infty H_{1}(s-\tau;z)\Big(\eta_t v_y(z,\tau)+N_1(v(z,\tau))_y\Big)dz d\tau+\Big[G_1(0;\cdot)\Big(\eta_t(s)v(\cdot,s)+N_1(v(\cdot,s))\Big)\Big]\\
&+\int_0^s\Big[ \bar{G}_{1t}(s-\tau;\cdot)\Big(\eta_t v(\cdot,\tau)+N_1(v(\cdot,\tau))\Big)\Big] d\tau\\
=:&\,\dot \eta_{I1}(s)+\dot \eta_{I2}(s)+\dot \eta_{I3}(s)+\dot \eta_{I4}(s)+\dot \eta_{I5}(s)+\dot \eta_{I6}(s)+\dot \eta_{I7}(s)+\dot \eta_{I8}(s)
\end{aligned}
$$
and a boundary term
$$
\dot \eta_B(s)= K_1(0)B_S(s)+\int_0^s K_{1t}(s-\tau)B_S(\tau)d\tau.
$$
For $\dot \eta_{I1}(s)$, $\dot \eta_{I4}(s)$, $\dot \eta_{I5}(s)$, and  $\dot \eta_{I6}(s)$ in the interior term $\dot \eta_I(t)$, both the estimates given in Lemma \ref{G1lem} and the form of the interior source term $I_S$ 
are identical to those given for the smooth case in \cite{MZ2}.
Thus, we have by the same computations as in \cite[proof of Thm. 1.2]{MZ2} that
$$(1+s)^{\frac{1}{2}}\left(\left|\dot \eta_{I1}(s)\right|+ \left|\dot \eta_{I4}(s)\right|+\left|\dot \eta_{I5}(s)\right|+\left|\dot \eta_{I6}(s)\right|\right)$$ is bounded by $ C(|v_0|_{{}_{L^1\cap H^2}} + \zeta(t)^2)$. 
Applying Lemma \ref{intG1lem} with $t=0$, $q=\infty$, we find
$$
\left|\dot \eta_{I2}(s)\right|+\left|\dot \eta_{I3}(s)\right|\le|\eta_t(s)||v(s,\cdot)|_{L^{\infty}}+|v(s,\cdot)|^2_{L^{\infty}}\le \zeta(t)^2(1+s)^{-1}.
$$
The trace term $\dot \eta_{I7}(s)$ and $\dot \eta_{I8}(s)$ arising from integration by part may be treated by using 
the fact that $G_1$ is uniformly bounded in space and time \eqref{G1bounded} together with
the pointwise estimate on $\bar{G}_{1t}=S_1+R_1$ in Theorem \ref{pointwiseinttimeetaGreen} to obtain $|\dot \eta_{I6}(s)|\le C\zeta(t)^2(1+s)^{-1}$ and $|\dot \eta_{I7}(s)|\le C\zeta(t)^2(1+s)^{-1}$. Combining, we get that
$(1+s)^{\frac{1}{2}}\dot \eta_I(s)$ is bounded by $C(|v_0|_{{}_{L^1\cap H^2}} + \zeta(t)^2)$.

Likewise, using the fact that $K_1(0)$ is a constant row vector, the bound $|B_S(\tau)|\leq C \zeta(t)^2(1+\tau)^{-1} $,
and Lemma \ref{K1lem}, we find that $ |\dot\eta_B(s)| \leq C \zeta(t)^2(1+s)^{-1}$, giving the result.
\end{proof}

\begin{proof}[Proof of Theorem \ref{main}]
({\it $v$ and  $\dot \eta$ bounds}) (following \cite[proof of Thm. 1.2]{MZ2}).
From Lemma \ref{zetalem}, it follows by continuous induction that, provided 
$|v_0|_{{}_{L^1\cap H^2}} < 1/4C_2^2$, there holds 
\begin{equation}
 \zeta(t) \leq 2C_2 |v_0|_{{}_{L^1\cap H^s}},
 \label{bd}
\end{equation}
for all $t\geq 0$ such that $\zeta$ remains small.
For, by Proposition \ref{existprop}, there exists a solution $v(\cdot,t)\in H^s$
on the open time-interval for which $|v|_{H^s}$ remains bounded and sufficiently small,
and thus $\zeta$ is well-defined and continuous. 
Now, let $[0,T)$ be the maximal interval on which $|v|_{{}_{H^s}}$
remains strictly bounded by some fixed, sufficiently small constant $\delta>0$.
By Proposition \ref{damping}, we have
\begin{equation}
 \begin{aligned}
 |v(t)|_{{}_{H^s}}^2&\le C |v(0)|^2_{{}_{H^s}}e^{-\theta t}
  +C\int_0^t e^{-\theta_2 (t-\tau )}(|v|_{{}_{L^2}}^2
  +|\dot\eta|^2)(\tau) d\tau \\
 &\le C_2\big(|v(0)|^2_{{}_{H^s}}+ \zeta(t)^2\big) (1+t)^{-\frac12},
 \end{aligned}
 \label{2calc}
\end{equation}
and so the solution continues so long as $\zeta(t)$ remains small,
with bound (\ref{bd}), at once yielding existence and the claimed 
bounds on  $|v|_{L^p\cap H^s}  $, $2\le p\le \infty$, and $|\dot \eta|$.

\medskip
({\it Auxiliary (vertical) $v$ bound.}) At this point, we have established asymptotic orbital stability,
with sharp decay rates for $|v|$ and the derivative $|\dot \eta|$ of the phase.
To obtain estimates on the phase $|\eta|$ and get full nonlinear stability, 
we first establish the {\it vertical estimate}:
\be\label{vert}
\int_0^t (1+s)^{-1/2} |v(x,s)|\, ds\leq C,\quad \forall\quad t>0,\;x\gtrless 0.
\ee
This follows by substituting for $v(x,s)$ the reprentation $v(x,s)=v_I(x,s)+v_B(x,s)$ given in
\eqref{intterm}--\eqref{boundterm} and applying the bounds of Lemma \ref{Glemaux}, together with
the bounds
\ba\label{sourcebounds}
&\Big|\eta_t v(\cdot,s)+N_1(v(\cdot,s))\Big|_{L^2}, 
\; |N_2(v(\cdot,s))|_{L^{2}}  \leq    C (|v|_{L^\infty}+|\dot \eta|)|v|_{L^2} \leq C (1+s)^{-3/4} ,\\
&\Big|\eta_t v(\cdot,s)+N_1(v(\cdot,s))\Big|_{L^{\infty}} \leq    C (|v|_{L^\infty}+|\dot \eta|)|v|_{L^\infty} \leq C (1+s)^{-1},\\
&\Big|\partial_y\big(\eta_t v(\cdot,s )+N_1(v(\cdot,s))\Big|_{L^{\infty}}\leq C (|v|_{L^\infty}+|\dot \eta|)|v_y|_{L^\infty}\leq C(1+s)^{-3/4},
\ea
following from our previously established estimates on $v$, $\dot \eta$.
Here, we have used Sobolev embedding to bound $|v_y|_{L^\infty}\leq |v|_{{H^2}}\leq C(1+s)^{-1/4}$.

\medskip
({\it $\eta$ bound.}) 
Continuing, by integral equation \eqref{originalsol}(ii) of Proposition \ref{p:integral_rep}, we have that $\eta$ may
be split into the sum of an interior term
	\ba\label{intetabd}
	\eta_I(t)=&\eta_0+ \int_{-\infty}^\infty G_1(t;y)v_0(y)dy+\int_0^t\int_{-\infty}^\infty G_1(t-s;y)I_S(s,y)dyds\\
=&\eta_0+ \int_{-\infty}^\infty G_1(t;y)v_0(y)dy+\int_0^t\int_{-\infty}^\infty G_{1y}(t-s;z)\Big(\eta_t v(z,s)+N_1(v(z,s))\Big)dzds\\
&+\int_0^t\int_{-\infty}^\infty G_1(t-s;y)\left(\begin{array}{c}0\\N_2(v(y,s))\end{array}\right)dyds\\
&+\int_0^t \Big[G_1(t-s;\cdot)\Big(\eta_t v(\cdot,s)+N_1(v(\cdot,s))\Big)\Big]ds\\
=:&\,\eta_0+\eta_{I1}(t)+\eta_{I2}(t)+\eta_{I3}(t)+\eta_{I4}(t)
	\ea
and a boundary term
\be\label{Betabd}
\eta_B(t)= \int_0^t K_1(t-s)B_S(s)ds.
\ee
By boundedness of $|G_1|$ \eqref{G1bounded}, we find that $|\eta_{I1}(t)|$ is bounded by $C|v_0|_{L^1}$.  By estimates \eqref{G1y} and \eqref{G1v},
together with vertical estimate \eqref{vert}, boundedness of $|\eta_{I2}(t)|+|\eta_{I3}(t)|$ follows from
\ba 
\label{etaI23bounded0}
&C\int_0^t (1+t-s)^{-\frac{1}{2q}}\left(|v^2|_{L^q}+|\dot\eta v|_{L^q}\right) ds+ C\int_0^t e^{-\bar\eta(t-s)}\left|\dot\eta v+N_1(v)\right|_{L^\infty} ds\\
\le&C\int_0^t (1+t-s)^{-\frac{1}{2q}}(|v|_{L^\infty}+|\dot\eta|) |v|_{L^q} ds+C\int_0^t e^{-\bar\eta(t-s)}|v|_{L^\infty}\left(|v|_{L^\infty}+|\dot\eta|\right) ds\\
\le& C\int_0^t (1+t-s)^{-\frac{1}{2q}}\zeta(t)^2(1+s)^{-1+\frac{1}{2q}} ds+C\int_0^t e^{-\bar\eta(t-s)}\zeta(t)^2 (1+s)^{-1}ds\\
\le& C\zeta(t)^2
\ea 
and 
\ba 
\label{etaI23bounded}
&\int_0^t\int_{-\infty}^\infty|\psi(z)|\left|\dot\eta(s) v(z,s)+N_1(v(z,s))\right|dzds,\int_0^t\int_{-\infty}^\infty|\tilde \psi(z)|\left|N_2(v(z,s))\right|dzds\\
\leq &C\int_0^t\int_{-\infty}^\infty e^{-\theta|z|}\zeta(t)(1+s)^{-1/2}|v(z,s)|dzds\\
=&C\int_{-\infty}^\infty e^{-\theta|z|}\zeta(t)\int_0^t(1+s)^{-1/2}|v(z,s)|dsdz\\
\leq& C\zeta(t)\int_{-\infty}^\infty e^{-\theta|z|}dz\leq C\zeta(t).
\ea 
Using the fact that $G_1$ is uniformly bounded in space and time \eqref{G1bounded}, 
together with vertical estimate \eqref{vert}, we find that
$$
\begin{aligned}
|\eta_{I4}(t)|\le&\int_0^t \left|G_1(t-s;0^-)\Big(\eta_t v(0^-,s)+N_1(v(0^-,s))\Big)\right|ds\\
&+\int_0^t \left|G_1(t-s;0^+)\Big(\eta_t v(0^+,s)+N_1(v(0^+,s))\Big)\right|ds\\
\le&C \int_0^t |v(0^\pm,s)|(|v(0^\pm,s)|+|\dot \eta|(s)\, ds \leq C\zeta(t)\int_0^t (1+s)^{-1/2}|v(0^\pm,s)|ds\leq C\zeta(t).
\end{aligned}
$$
Applying Lemma \ref{intK1lem} and using vertical estimate \eqref{vert}, we find that $|\eta_B(t)|$ is bounded by
$$
C \int_0^t |B_S|(s)ds \leq 
C \int_0^t |v(0^\pm,s)|(|v(0^\pm,s)|+|\dot \eta|(s))\, ds \leq 
C\zeta(t)\int_0^t (1+s)^{-1/2}|v(0^\pm,s)|ds
\leq C\zeta(t).
$$
Summing, we obtain the claimed bound \eqref{mainests}(iv) on $|\eta(t)|$, completing the proof.

\medskip
({\it $\eta$ convergence.}) Finally, we establish \eqref{mainests}(v) and convergence of the phase $\eta$,
by showing convergence as $t\rightarrow \infty$ of each of the terms 
in the decomposition of $\eta$ given in \eqref{intetabd}--\eqref{Betabd}.

\noindent{\bf $\eta_{I2}$, $\eta_{I3}$}:

Term $\eta_{I2}$ may be decomposed as 
\ba 
\eta_{I2}(t)=&\int_0^t\int_{-\infty}^\infty \psi(z)\Big(\eta_t v(z,s)+N_1(v(z,s))\Big)dz\,ds\\
&+\int_0^t\int_{-\infty}^\infty \big(G_{1y}(t-s;z)-\psi(z)\big)\Big(\eta_t v(z,s)+N_1(v(z,s))\Big)dz\,ds.
\ea 
By estimate \eqref{etaI23bounded}, the first integral is absolutely convergent and thus converges to a limit as $t\to +\infty$. We show now that the remaining part of $\eta_{I2}(t)$ converges to zero, completing the proof. The part corresponding to integrating high frequency term $H$ against $S_I$ converges to zero by estimate \eqref{etaI23bounded0}. It remains to show convergence to $0$ of parts corresponding to integrating Gaussian scattering term $S$ and faster-decaying $R$ terms with $\eta_t v(z,s)+N_1(v(z,s)$. These parts are bounded by
\ba 
\label{eta23con}
&\int_0^t\left(\int_{-\infty}^{0}e^{-\frac{(t-s+c_{2,-}^1y)^2}{M(t-s)}}+\int_{0}^{\infty}e^{-\frac{(t-s-c_{1,+}^1y)^2}{M(t-s)}}\right)\chi_{{}_{t-s>1}}\frac{\zeta(t)|v(y,s)|}{\sqrt{(t-s)(1+s)}}\,dy\,ds\\
\le&\int_0^{t-1}\int_{-\frac{t-s}{2c_{2,-}^1}}^{\frac{t-s}{2c_{1,+}^1}}e^{-\frac{t-s}{4M}}\frac{\zeta(t)|v(y,s)|}{\sqrt{(t-s)(1+s)}}\,dy\,ds\\
&+\int_0^{t-1}\left(\int_{-\infty}^{-\frac{t-s}{2c_{2,-}^1}}e^{-\frac{(t-s+c_{2,-}^1y)^2}{M(t-s)}}+\int_{\frac{t-s}{2c_{1,+}^1}}^{\infty}e^{-\frac{(t-s-c_{1,+}^1y)^2}{M(t-s)}}\right)\frac{\zeta(t)|v(y,s)|}{\sqrt{(t-s)(1+s)}}\,dy\,ds,
\ea 
where
$$
\int_0^{t-1}\int_{-\frac{t-s}{2c_{2,-}^1}}^{\frac{t-s}{2c_{1,+}^1}}e^{-\frac{t-s}{4M}}\frac{\zeta(t)|v(y,s)|}{\sqrt{(t-s)(1+s)}}\,dy\,ds\le\int_0^{t-1}\left(\frac{1}{2c_{1,+}^1}+\frac{1}{2c_{2,-}^1}\right)e^{-\frac{t-s}{4M}}\frac{\zeta(t)^2\sqrt{t-s}}{1+s}\,ds
$$
converges to $0$ at rate $t^{-1}$.

To show convergence to $0$ of the remaining part in \eqref{eta23con}, we establish an improved ``approximate characteristic'' estimate \eqref{charest} on the variable $v$,
giving different decay rates
based on approximate domains of influence of tail and center contributions of the initial perturbation $v_0$.
To this end, motivated by \eqref{eta23con}, it is convenient to define  
\ba\label{Gammat}
\Gamma_t:=&\{ \;(y,s): \,  -a_-(t-s)/2 < y< -a_+ (t-s)/2; \quad 0\leq s\leq t\},\\
\gamma_{\tau}:=&\{ \;y: \,  -a_-\tau/2 < y< -a_+ \tau/2\},
\ea
where $a_+ =-1/c_{1,+}^1,\;a_- =1/c_{2,-}^1$, with $a_->0>a_+$.
With this definition, we have that backward characteristics originating outside $\Gamma_t$ stay outside the set, lying strictly in its complement
$\Gamma_t^c$.
From this fact, we obtain using our previously established bounds on $v$, $\dot\eta$, and separating out principal, approximate 
equilibrium characteristic parts of the Green kernels from remaining, faster-decaying terms, the more detailed {\it approximate characteristic estimate}:
\be\label{charest}
|v(\cdot, s)|_{L^2(\gamma_{t-s}^c)}\leq
C(1+s)^{-1/4} |v_0|_{L^1(\gamma_{t}^c)} + C(1+s)^{-1/2 +\upsilon}|v_0|_{L^1\cap H^s}
\ee
valid for any $\upsilon>0$, as we shall show further below. 

With the aid of the bound \eqref{charest} and Holder's inequality, the remaining part in \eqref{eta23con} may be estimated as
$$
\begin{aligned}
&\int_0^{t-1}\left(\int_{-\infty}^{-a_-(t-s)/2}e^{-\frac{(t-s+c_{2,-}^1y)^2}{M(t-s)}}+\int_{-a_+(t-s)/2}^{\infty}e^{-\frac{(t-s-c_{1,+}^1y)^2}{M(t-s)}}\right)\frac{\zeta(t)|v(y,s)|}{\sqrt{(t-s)(1+s)}}\,dy\,ds\\
\le&C\zeta(t)\int_0^{t}(1+t-s)^{-1/4}(1+s)^{-1/2}|v(\cdot, s)|_{L^2(\gamma_{t-s}^c)}ds\\
\le&C\zeta(t)|v_0|_{L^1(\gamma_{t}^c)}\int_0^{t}(1+t-s)^{-1/4}(1+s)^{-3/4}ds+C\zeta(t)|v_0|_{L^1\cap H^s}\int_0^{t}(1+t-s)^{-1/4}(1+s)^{-1+\upsilon}ds\\
\le&C\zeta(t)|v_0|_{L^1(\gamma_{t}^c)}+C\zeta(t)|v_0|_{L^1\cap H^s}(1+t)^{-1/4+\upsilon}\rightarrow 0,\quad \text{as $t\rightarrow \infty$.}
\end{aligned}
$$
This completes the proof that $\eta_{I2}$ converges to a limit at a rate given by
the slower of $|v_0|_{L^1(\gamma_{t}^c)}$ and $\eps(1+t)^{-1/4+\upsilon}$, where $\eps$ is the
$L^1\cap H^s$ norm of the initial perturbation $v_0$.
Convergence of $\eta_{I3}$ can be shown similarly, with the same rate.

To prove \eqref{charest}, note in decompositions \eqref{intterm}--\eqref{boundterm} on $v(x,t)$, by previous estimates, the whole-space $L^2$ norms of terms $v_{I3}$, $v_{I5}$, and $v_{B}$ decay at faster rate. That is $|v_{I3}(\cdot,s)|_{L^2}\le C(1+s)^{-3/4}|v_0|_{L^1\cap H^s}$, $|v_{I5}(\cdot,s)|_{L^2}\le C(1+s)^{-3/4+\upsilon}|v_0|_{L^1\cap H^s}$, and $|v_{B}(\cdot,s)|_{L^2}\le C(1+s)^{-1}|v_0|_{L^1\cap H^s}$.  And, by Theorem \ref{pointwiseintvGreen}, the whole-space $L^2$ norms of parts contribute to $v_{I1}$, $v_{I2}$, and $v_{I4}$ from integrating with $H^{1,2}$, $R$, and $R_y$ terms also decay at faster rate $(1+t)^{-3/4+\upsilon}|v_0|_{L^1\cap H^s}$ than the rate
$(1+t)^{-1/4}|v_0|_{L^1\cap H^s}$ of $L^2$ norms of parts from integrating with terms $S^1$, $S^1_y$, and $R\left(\begin{array}{c}0\\1\end{array}\right)$.
Hence, these contributions may be absorbed in the second term on the righthand side of \eqref{charest},
and so we may focus on the parts that are contributed from $S$, $S^1_y$, and $R\left(\begin{array}{c}0\\1\end{array}\right)$. This leaves us with the task of estimating the scattering part of $v_{I1}$, giving an integral of form
\be\label{init}
\int_{-\infty}^{\infty} S^1(y,s;z)v_0(z) \, dz,
\ee
and the scattering parts of $v_{I2}$ and $v_{I3}$, giving integrals of form
\ba \label{vI2scat}
&\int_0^s \int_{-\infty}^{\infty} S^1_y(y,s-\tau;z)\Big(\eta_t(\tau)v(z,\tau)+N_1(v(z,\tau))\Big)  dz\, d\tau,\\
&\int_0^s \int_{-\infty}^{\infty} R(y,s-\tau;z)\left(\begin{array}{c}0\\N_2(v(z,\tau))\end{array}\right)  dz\, d\tau.
\ea

To estimate the $L^2(\gamma_{t-s}^c)$ norm of \eqref{init}, notice that for $(y,s)\not \in \Gamma_t$ and $z \in \gamma_{t}$,
$S^1$ gives a time-exponentially decaying contribution $|S^1(y,s; z)|\leq Ce^{-\bar \eta|y-z-a_\pm s|}/\sqrt{s}$,
hence we may bound
$$
\Big|\int_{\gamma_{t}} |S^1(\cdot, s;z)| |v_0|(z) \, dz \Big|_{L^2(\gamma_{t-s}^c)} \leq Ce^{- \eta' s}|v_0|_{L^2}.
$$
Meanwhile, the remaining contribution may be estimated as
$$
	\Big|\int_{\gamma_{t}^c} |S^1(\cdot, s;z)| |v_0|(z) \, dz \Big|_{L^2(\gamma_{t-s}^c)}\leq
\int_{\gamma_{t}^c} |S(\cdot, s;z)|_{L^2} |v_0|(y) \, dy 
\leq C(1+s)^{-1/4} |v_0|_{L^1(\gamma_{t}^c)}.
$$
Summing, we find that the ${L^2(\gamma_{t-s}^c)}$ norm of \eqref{init} is controlled by the righthand side of \eqref{charest}.

The estimate for \eqref{vI2scat}[i] goes similarly, noting for $(y,s)\not \in \Gamma_t$
and contributions of source $\eta_t(\tau)v(z,\tau)+N_1(v(z,\tau))$ originating from $(z,\tau)\in \Gamma_t$, 
the propagator $S^1_y(y,s-\tau;z)$ is exponentially decaying in $s-\tau$ and $|y-z -a_\pm (s-\tau)|$,
hence, using our prior bounds on $(|v|+|\dot \eta|)|v|(z,\tau)$, the total of such contributions is bounded by 
$C(1+s)^{-1/2}|v_0|_{L^1\cap H^s}$.
On the other hand, defining
\be\label{tildezeta}
\tilde \zeta(t)= \sup_{0\leq s \leq t} 
\frac{ |v(\cdot, s)|_{L^2(\gamma_{t-s}^c)} }
{
 (1+s)^{-1/4}|v_0|_{L^1(\gamma_{t}^c)}+ (1+s)^{-1/2 +\upsilon}|v_0|_{L^1\cap H^s}} ,
\ee
we obtain that $\big|\eta_t(\tau)v(z,\tau)+N_1(v(z,\tau))\big|_{L^2(\gamma_{t-\tau}^c)}$  may be bounded by
$$
C\zeta(t)(1+\tau)^{-1/2}|v(\cdot,\tau)|_{L^2(\gamma_{t-\tau}^c)}
\leq
C\zeta(t) \tilde \zeta(t) \Big( (1+\tau)^{-3/4}|v_0|_{L^1(\gamma_{t}^c)}+ (1+\tau)^{-1 +\upsilon}|v_0|_{L^1\cap H^s}\Big).
$$
Applying Young's convolution inequality yields
$$
\begin{aligned}
	&\left|\int_0^s \int_{\gamma_{t-\tau}^c}S^1_y(\cdot, s-\tau;z) \big(\eta_t(\tau)v(z,\tau)+N_1(v(z,\tau))\big) dz\, d\tau\right|_{L^2(\gamma_{t-s}^c)}\\
	\leq& \int_0^s \left|\int_{\gamma_{t-\tau}^c}S^1_y(\cdot, s-\tau;z) \big(\eta_t(\tau)v(z,\tau)+N_1(v(z,\tau))\big) dz\right|_{L^2(\gamma_{t-s}^c)}\, d\tau\\
	\leq&\int_0^s |S^1_y(\cdot, s-\tau;z)|_{L^1} \big|\eta_t(\tau)v(z,\tau)+N_1(v(z,\tau))\big|_{L^2(\gamma_{t-\tau}^c)}\, d\tau\\
	\leq&\int_0^s (1+s-\tau)^{-1/2} C\zeta(t) \tilde \zeta(t) \Big( (1+\tau)^{-3/4}|v_0|_{L^1(\gamma_{t}^c)}+ (1+\tau)^{-1 +\upsilon}|v_0|_{L^1\cap H^s}\Big)\, d\tau\\
	\leq &C\zeta(t)\tilde{\zeta}(t)\Big((1+s)^{-1/4}|v_0|_{L^1(\gamma_{t}^c)}+(1+s)^{-1/2 +\upsilon}|v_0|_{L^1\cap H^s}\Big)
\end{aligned}
$$
from which we obtain, combining with our previous estimates,
$$
|v(\cdot, s)|_{L^2(\gamma_{t-s}^c)}\leq C\big(1+\zeta(t)\tilde{\zeta}(t)\big)\Big((1+s)^{-1/4}|v_0|_{L^1(\gamma_{t}^c)}+(1+s)^{-1/2 +\upsilon}|v_0|_{L^1\cap H^s}\Big).
$$
Dividing $\big((1+s)^{-1/4}|v_0|_{L^1(\gamma_{t}^c)}+(1+s)^{-1/2 +\upsilon}|v_0|_{L^1\cap H^s}\big)$ and taking the supremum over $0\le s\le t$, we obtain
$$
\tilde \zeta(t)\leq C\big(1+ \tilde \zeta(t) \zeta(t)\big),
$$
yielding $\tilde \zeta(t)\leq 2C$ for $\zeta(t)$, or equivalently, $v_0|_{L^1\cap H^s}$, sufficiently small.
By definition \eqref{tildezeta}, this yields the desired bound \eqref{charest}.

\noindent{\bf $\eta_{I1}:$}

Integrating \eqref{G1tdecompose} gives $G_1(t;y)=G_1(0;y)+\int_0^t(H_1+S_1+R_1)(s;y)ds$ and thus $$\eta_{I1}(t)=\int_{-\infty}^\infty\left(G_1(0;y)+\int_0^t(H_1+S_1+R_1)(s;y)ds\right)v_0(y)dy.$$ Applying Theorem \ref{pointwiseinttimeetaGreen}, for $1<t_1<t_2$, we have the estimate
$$
\begin{aligned}
\left|\eta_{I1}(t_2)-\eta_{I1}(t_1)\right|=&\left|\int_{-\infty}^\infty\int_{t_1}^{t_2}(H_1+S_1+R_1)(s;y)v_0(y)\,ds\,dy\right|\\
\le&\int_{-\infty}^\infty\left|\int_{t_1}^{t_2}H_1(s;y)ds\right| |v_0(y)|dy+\int_{-\infty}^\infty\int_{t_1}^{t_2}\left|(S_1+R_1)(s;y)\right|ds \,|v_0(y)|dy
\end{aligned}
$$
where
$$
\int_{-\infty}^\infty\left|\int_{t_1}^{t_2}H_1(s;y)ds\right| |v_0(y)|dy\le C\int_{|y|>ct_1}e^{-\bar{\eta}|y|}|v_0(y)|dy\le Ce^{-c\bar\eta t_1 }|v_0|_{L^1}\rightarrow 0,\quad  \text{as $t_1,\;t_2\rightarrow+\infty$,}
$$
and
$$
\begin{aligned}
&\int_{-\infty}^\infty\int_{t_1}^{t_2}\left|(S_1+R_1)(s;y)\right|ds\,|v_0(y)|dy\le C\left(\int_{\gamma_{t_1}^{}}+\int_{\gamma_{t_1}^c}\right)\int_{t_1}^{\infty}\left|S_1(s;y)\right|ds\,|v_0(y)|dy\\
\le& C\int_{\gamma_{t_1}^{}}\int_{t_1}^\infty\frac{1}{\sqrt{s}}e^{-\frac{s}{4M}} ds \,|v_0(y)|dy+C\int_{\gamma_{t_1}^c}\int_{1}^{\infty}\left|S_1(s;y)\right| ds \,|v_0(y)|dy\\
\le&C\int_{\gamma_{t_1}^{}} erfc\left(\sqrt{\frac{t_1}{2M}}\right) \,|v_0(y)|dy +C|v_0|_{L^1(\gamma_{t_1}^c)}\le Ce^{- t_1/(2M) }|v_0|_{L^1}+C|v_0|_{L^1(\gamma_{t_1}^c)}
\end{aligned}
$$
where we have used the estimates $erfc(x)\le {e^{-x^2}}$ for the error function and $\int_1^\infty \theta(y,s)ds<C,\;\forall y$ for a moving Gaussian kernel. Therefore $\eta_{I1}(t)$ approaches a limit at rate $ |v_0|_{L^1(\gamma_{t}^c)}$.

\noindent{\bf $\eta_{I4}:$}

Again using $G_1(t;y)=G_1(0;y)+\int_0^t(H_1+S_1+R_1)(s;y)ds$, we have for $1<t_1<t_2$,
$$
\begin{aligned}
&\eta_{I4}(t_2)-\eta_{I4}(t_1)\\
=&\int_{t_1}^{t_2}\Big[G_1(0;\cdot)\Big(\eta_t(s)v(\cdot,s)+N_1(v(\cdot,s))\Big)\Big]ds\\
&+\left[\left(\int_0^{t_1}\int_{t_1-s}^{t_2-s}+\int_{t_1}^{t_2}\int_0^{t_2-s}\right)(H_1+S_1+R_1)(\tau;\cdot)d\tau\Big(\eta_t(s)v(\cdot,s)+N_1(v(\cdot,s))\Big)\,ds\right]
\end{aligned}
$$
where the first part is controlled by $\int_{t_1}^{t_2} (1+s)^{-1/2}|v(0^\pm,s)|ds$ and by vertical estimate \eqref{vert} it converges to $0$ as $t_1,t_2\rightarrow+\infty$. As for convergence rates, replacing integrals in Lemma \ref{Glemaux} by tail integrals $\int_t^\infty$, we find that the convergence
rate of integral \eqref{vert} is $(1+t)^{-1/4+\upsilon}$, namely
\be\label{vertcon}
\int_t^\infty (1+s)^{-1/2} |v(x,s)|\, ds\leq C(1+t)^{-1/4+\upsilon},\quad \forall\quad t>0,\;x\gtrless 0.
\ee
It remains to show that the remaining part converges to $0$. Straigntforward computation shows that
$ \int_{t_1-s}^{t_2-s}H_1(\tau;0^\pm)d\tau=0$, with $\int_0^{t_2-s}H_1(\tau;0^\pm)d\tau=H_1(0;0^\pm)$ identically equal to some constant vectors.
Thus, the integral of the term involving $H_1$ can also be controlled by $\int_{t_1}^{t_2}(1+s)^{-1/2}|v(0^\pm,s)|ds$, hence converges. 
The integral of the term involving $S_1+R_1$ can be controlled by
$$
\begin{aligned}
&\left(\int_0^{t_1}\int_{t_1-s}^{t_2-s}+\int_{t_1}^{t_2}\int_0^{t_2-s}\right)|S_1(\tau;0^\pm)|d\tau\Big|\eta_t(s)v(0^\pm,s)+N_1(v(0^\pm,s))\Big|\,ds\\
\le&C \int_0^{t_1}\int_{t_1-s}^\infty\chi_{{}_\tau>1}\frac{1}{\sqrt{\tau}}e^{-\frac{\tau}{M}}d\tau\Big|\eta_t(s)v(0^\pm,s)+N_1(v(0^\pm,s))\Big|\,ds\\
&+C\int_{t_1}^{t_2}\int_1^{\infty}\frac{1}{\sqrt{\tau}}e^{-\frac{\tau}{M}}d\tau\Big|\eta_t(s)v(0^\pm,s)+N_1(v(0^\pm,s))\Big|\,ds\\
\le&C \int_0^{t_1-1}erfc(\sqrt{(t_1-s)/M})\Big|\eta_t(s)v(0^\pm,s)+N_1(v(0^\pm,s))\Big|\,ds+C\int_{t_1-1}^{t_2}(1+s)^{-1/2}|v(0^\pm,s)|\,ds\\
\le&C \int_0^{t_1-1}e^{-\frac{t_1-s}{M}}\zeta(t)^2(1+s)^{-1}\,ds+C\int_{t_1-1}^{t_2}(1+s)^{-1/2}|v(0^\pm,s)|\,ds\rightarrow 0,\quad \text{as $t_1,t_2\rightarrow+\infty$.}
\end{aligned}
$$
Combining, we find that $\eta_{I4}$ converges to a limit at rate $(1+t)^{-1/4+\upsilon}$.

\noindent{\bf $\eta_{IB}:$}

Convergence of $\eta_{IB}(t)$ can be proven similarly by applying Theorem \ref{pointwiseboutimeetaGreen}.

This completes the proof of convergence $\eta(t)$ to a limit $\eta_\infty$.
Collecting estimates, we obtain a total rate of convergence given by the 
slower of $C\eps(1+t)^{-1/4+\upsilon}$ and $C|v_0|_{L^1(\gamma_{t}^c)}$, verifying \eqref{mainests}(v).

\end{proof}

\br\label{linphasermk}
For algebraically-decaying initial perturbation $|v_0(x)|\leq C(1+|x|)^{-r}$, with $v_0\in H^s$, $s>2$, our estimates
give convergence of the phase $\eta$ at rate
$$
|\eta(t)-\eta_\infty|\leq \begin{cases} C(1+t)^{1-r}  & \hbox{\rm for $ 1<r<5/4$, } \\
C(1+t)^{-1/4+\upsilon}& \hbox{\rm for $ r\ge 5/4$,}
\end{cases}
$$
for any $\upsilon>0$.  For subalgebraically-decaying perturbations, essentially the same argument gives rate
$ |\eta(t)-\eta_\infty|\leq C\eps(1+t)^{-1/4+\upsilon} + C|v_0|_{L^1([-(1-\upsilon)a_- t, -(1-\upsilon)a_+ t]^c)}$
for $\upsilon>0$, $C=C(\upsilon)>0$, arbitrarily close to the expected rate $C|v_0|_{L^1([-a_- t, -a_+ t]^c)}$
described in the introduction.
\er

\br\label{sysphasermk}
Our argument for phase-convergence, based on approximate characteristic estimate \eqref{charest},
though it may appear to be limited to the case of a scalar equilibrium system for which all equilibrium characteristics approach the
shock, is in principle generalizable to arbitrary relaxation systems of the type studied in \cite{MZ2}, and to the class of systems of viscous
conservation laws studied in \cite{MZ4}. 
For, as noted in \cite{MZ2,MZ4}, non-decaying contributions to the phase shift $\eta$ consist of products of Gaussian scattering-type terms
multiplying constant projections, which projections annihilate vectors in outgoing characteristic modes, ``seeing'' only incoming modes.
Thus, to obtain asymptotic phase-convergence, it is sufficient to prove an approximate characteristic estimate of form \eqref{charest}
on incoming characteristic modes only, a task to which the present argument structure is in principle still suited.
To carry out such an estimate and obtain phase-convergence in the general system case, assuming only $L^1$ boundedness of the initial
perturbation with no algebraic rate of decay, would be a significant advance in the theory.
\er

\br\label{G1rmk}
One may deduce from \eqref{originalsol}(ii)
that $G_1(0;y)=0$ for $y\neq 0$, by finite propagation speed for the linearized problem $I_S=0$, considering
perturbations vanishing in a vicinity of $y=0$.
Similarly, by conservation of mass principles, one may deduce that $\lim_{t\to +\infty}G_1(t;y)\equiv (\alpha,0)^T$
for some constant $\alpha$; see Remark \ref{noratermk}. However, we neither require nor derive these here.
\er

\section{Numerical verifications}\label{s:numerics}
In this section we verify numerically the spectral stability assumptions made in the analysis.

\subsection{Numerical calculation of the Evans-Lopatinsky determinant}\label{hybridsub}
For robustness of numerical implementation, let 
\be 
\label{wcoordinate}
w_{1,-}(\lambda,x)=e^{-\gamma_{1,-}(\lambda)x}T_-(\lambda,x)e^{\gamma_{1,-}(\lambda)x}z_{1,-}(\lambda)=T_-(\lambda,x)z_{1,-}(\lambda),\quad x<0,
\ee 
We find the $w_{1,-}$ solves
\be
\label{shift1}
w'=\left(A^{-1}(E-\lambda I-A_x)-\gamma_{1,-}\right)w.
\ee
In $w_{1,-}$, the Evans-Lopatinsky determinant \eqref{lopatinsky} becomes
\be 
\label{reslop}
\Delta(\lambda)=\det\left(\begin{array}{rr}[\lambda W-R(W)]&A(0^-) w_{1,-}(\lambda,0^-)\end{array}\right)
\ee

\subsubsection{Change of independent variable}\label{s:independent}
Profile $H(x)$ solves (\ref{profileODE}).
The fact that $H'< 0$ for $x<0$ allows us to make the change of independent variable
$\tilde{w}(\lambda,H)=w(\lambda,x)$
for system \eqref{shift1}, yielding
\be
\label{shiftwH}
H'\tilde{w}'=\left(A^{-1}(E-\lambda I-A_x)-\gamma_{1,-}\right)\tilde{w}
\ee
The Evans-Lopatinsky determinant \eqref{reslop} becomes
\be 
\label{hcorlop}
\Delta(\lambda)=\det\left(\begin{array}{rr}[\lambda W-R(W)]& A(H_*) \tilde{w}(\lambda,H_*)\end{array}\right)
\ee
with $[\cdot]=\cdot|_{H_R}-\cdot|_{H_*}$. 
By this change of independent variable, we convert to a problem on the finite interval $[H_*,1]$ and introduce $H=1$ as a singular point in ODE \eqref{shiftwH}. We then may use the hybrid method introduced in \cite{JNRYZ} to calculate mode $\tilde{w}(H)$, combining power series expansion with numerical ODE solution.

To be specific, we expand $\tilde{w}(H)$ as a power series of in the vicinity of $H=1$ to write
\ba
\tilde{w}(\lambda,H)=\sum_{n=0}^\infty c_n(F,H_R,\lambda)(H-1)^n.
\ea
Truncating and evaluating the series at some $H_-\in(H_*,1)$ gives approximations
\ba
\tilde{w}(\lambda,H_-)\approx\sum_{n=0}^N c_n(F,H_R,\lambda)(H-1)^n:=\tilde{w}_-.
\ea
We then evolve ODE \eqref{shiftwH} from $H_-$ to $H_*$ with initial condition $\tilde{w}_-$ to get an approximation for $\tilde{w}(\lambda,H_*)$ which is then substituted in \eqref{hcorlop} to obtain an approximate
value of the Evans-Lopatinsky determinant.

\subsection{Numerical calculation of the Evans function (smooth case)}\label{hybridsmooth}
In this section, we study the spectral stability of small amplitude traveling waves, 
as depicted in Figure \ref{profile}(c), using the Evans function.
In the small amplitude region $H_R<H_L<H_R\frac{1+2F+\sqrt{1+4F}}{2F^2}$, we first see conditions \eqref{signgamma} become
\ba 
\label{signgamma2}
&\Re\gamma_{1,-}(\lambda)>0,\;\Re\gamma_{2,-}(\lambda)<0,\quad\text{for all $\Re\lambda>0,\;F<2,\;\nu>1$}\\
&\Re\gamma_{1,+}(\lambda)>0,\;\Re\gamma_{2,+}(\lambda)<0,\quad\text{for all $\Re\lambda>0,\;\nu<\frac{1+\sqrt{1+4F}}{2F}$}.
\ea
We then define the corresponding Evans function, following \cite{MZ,GZ,AGJ}.
\begin{definition}\label{defEvans}
Let $v_{1,-}(\lambda,x)$ ($v_{2,+}(\lambda,x)$) be decaying mode as $x\rightarrow-\infty$ ($x\rightarrow+\infty$) of eigenvalue equation \eqref{smootheigen}
\be
\label{smootheigen}
v'=\left(A^{-1}(E-\lambda I-A_x)\right)v
\ee
The Evans function $D(\lambda,x_0)$ is defined as 
\be
\label{Evans}
D(\lambda,x_0):=\det\left(\begin{array}{rr}v_{1,-}(\lambda,x_0)&v_{2,+}(\lambda,x_0)\end{array}\right).
\ee
\end{definition}
Again for numerical robustness and efficiency, we rescale the modes by
\be 
w_{1,-}(\lambda,x)=e^{-\gamma_{1,-}(\lambda)x}v_{1,-}(\lambda,x),\quad w_{2,+}(\lambda,x)=e^{-\gamma_{2,+}(\lambda)x}v_{2,+}(\lambda,x)
\ee
to find that $w_{1,-}$, $w_{2,+}$ solve
\be 
\label{smoothode}
w'=\left(A^{-1}(E-\lambda I-A_x)-\gamma_{1,-}\right)w,\quad w'=\left(A^{-1}(E-\lambda I-A_x)-\gamma_{2,+}\right)w,
\ee
respectively. Performing the change of independent variable $\tilde{w}_{1,-}(\lambda,H)=w_{1,-}(\lambda,x)$ and $\tilde{w}_{2,+}(\lambda,H)=w_{2,+}(\lambda,x)$, we find that $\tilde{w}_{1,-}$, $\tilde{w}_{2,+}$ satisfy
\be
H'\tilde{w}'=\left(A^{-1}(E-\lambda I-A_x)-\gamma_{1,-}\right)\tilde{w},\quad H'\tilde{w}'=\left(A^{-1}(E-\lambda I-A_x)-\gamma_{2,+}\right)\tilde{w}.
\ee

We then expand $\tilde{w}_{1,-}(H)$, $\tilde{w}_{2,+}(H)$ as power series
\ba
\label{powersmooth}
\tilde{w}_{1,-}(\lambda,H)=\sum_{n=0}^\infty c^-_n(F,H_R,\lambda)(H-1)^n, \quad\tilde{w}_{2,+}(\lambda,H)=\sum_{n=0}^\infty c^+_n(F,H_R,\lambda)(H-H_R)^n.
\ea
Accordingly, in $H$ coordinates a rescaled Evans function is defined as
\be 
\label{hcorEvans}
D(\lambda,H_m):=\det\left(\begin{array}{rr}\tilde{w}_{1,-}(\lambda,H_m)&\tilde{w}_{2,+}(\lambda,H_m)\end{array}\right)
\ee 
for some $H_m$ $H_R<H_m<1$. 

Note that $|\gamma_{2,+}|\gg |\gamma_{1,-}|$.
Thus, it is numerically more robust if we evaluate $D(\lambda,\cdot)$ at some $H_m$ closer to $H_R$. (In fact, we find this in practice essential in order to do computations for even reasonably sized $|\lambda|$ of order one.)
In the extreme case, we only evolve \eqref{smoothode}(i) toward $H_R$ and never evolve \eqref{smoothode}(ii) towards $1$. 
That is, after evaluating the truncated series \eqref{powersmooth} at some $H_{l,r}$ $H_R<H_r<H_l<1$, we evolve \eqref{smoothode}(i) from $H_l$ to $H_r$.

(Note, the numerically calculated Evans function differs from the defined one by a nonzero analytic function, but
this is harmless as we are searching for roots.)

\subsection{High-frequency stability}
Using the result of Lemma \ref{orihflemma}, we now prove high-frequency stability of both smooth and
discontinuous hydraulic shock waves. 

\textbf{Nonvanishing of Evans-Lopatinsky determinant \eqref{lopatinsky} at high frequency.}
Evaluating \eqref{stablesolution} at $x=0^-$ and substituting in the second column of \eqref{lopatinsky}, yields
\be
\label{secondcolumn}
\begin{aligned}
A(0^-)w_{1,-}(\lambda,0^-)
=&A(0^-)R_1(0^-)+O(1/|\lambda|)=-\frac{1}{\mu_1(0^-)}R_1(0^-)+O(1/|\lambda|)
\end{aligned}
\ee
where $R_1$ is the first column of $R$.
\begin{proposition}\label{hfprop}
For any $F,H_R$, there exists $C(F,H_R)$, such that $\Delta(\lambda)$ does not vanish for all $\Re\lambda>
-\bar\eta,|\lambda|>C(F,H_R)$.
\begin{proof}
Substituting \eqref{secondcolumn} in the Evans-Lopatinsky determinant \eqref{lopatinsky}, in the high frequency regime, we have 
\ba
\label{hfexpansion}
\Delta(\lambda)=&-\frac{\lambda}{\mu_1(H_*)}\det\left(\begin{array}{rr}H_R- H_* &-FH_*(\sqrt{H_R}+1)\\Q_R-Q_*&H_*^{3/2}(\sqrt{H_R}+1)-F(H_*-H_R+H_*H_R+H_*\sqrt{H_R}))\end{array}\right)+O(1)\\
=&-\frac{\lambda\left(H_R-H_*\right){\left({H_*}^{3/2}+\sqrt{H_R}{H_*}^{3/2}+FH_R\right)}^2}{FH_*\left(\sqrt{H_R}+1\right)}+O(1)
\ea
which is nonvanishing. The constant $C$ should be sufficiently large such that $\mathcal{T}_\lambda$ becomes contraction mapping.
\end{proof}
\end{proposition}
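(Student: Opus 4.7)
The plan is to combine the high-frequency expansion of the stable mode $w_{1,-}(\lambda,0^-)$ coming from Lemma \ref{orihflemma} with the affine-in-$\lambda$ structure of the first column $[\lambda\overline{W}-R(\overline{W})]$ of $M(\lambda)$, and then show that the leading term in $\Delta(\lambda)$ is linear in $\lambda$ with nonzero coefficient. Concretely, first I would apply Lemma \ref{orihflemma} to obtain the uniform expansion $w_{1,-}(\lambda,0^-)=R_1(0^-)+O(1/|\lambda|)$, which upon multiplication by $A(0^-)$ gives \eqref{secondcolumn}, namely $A(0^-)w_{1,-}(\lambda,0^-)=-\mu_1(0^-)^{-1}R_1(0^-)+O(1/|\lambda|)$.

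Next, I would substitute this into the definition \eqref{lopatinsky}, using that the first column of $M(\lambda)$ is $[\lambda\overline{W}-R(\overline{W})]=\lambda[\overline{W}]-[R(\overline{W})]$, and expand the $2\times 2$ determinant. The $O(\lambda)$ part factors explicit entries of $R_1(H_*)$, $[H]$, $[Q]$, and $\mu_1(H_*)$, all of which are known closed-form functions of $F$ and $H_R$. After simplification (using the profile relations from Section \ref{s:profiles}), the leading order reads
\be
\Delta(\lambda)=-\frac{\lambda(H_R-H_*)\bigl(H_*^{3/2}+\sqrt{H_R}\,H_*^{3/2}+FH_R\bigr)^{2}}{FH_*(\sqrt{H_R}+1)}+O(1),
\ee
as recorded in \eqref{hfexpansion}.

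The coefficient of $\lambda$ here is manifestly nonzero: within the domain of existence of discontinuous profiles we have $H_R\neq H_*$ (since a genuine subshock is present), while the squared factor is strictly positive and $F,H_R,H_*>0$. Therefore the principal part grows like $|\lambda|$, and picking $C=C(F,H_R)$ large enough that the $O(1)$ remainder is dominated by, say, half of the leading term, we get $|\Delta(\lambda)|\geq c|\lambda|/2>0$ for all $|\lambda|>C$ with $\Re\lambda\geq -\bar\eta$.

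The only delicate point, and what I expect to be the main obstacle, is ensuring that the expansion $w_{1,-}=R_1+O(1/|\lambda|)$ is genuinely uniform on the half-strip $\{\Re\lambda\geq -\bar\eta,\,|\lambda|\geq C\}$ rather than just for $\Re\lambda$ large. This is exactly what the contraction-mapping threshold in Lemma \ref{cone} provides: the operator $\mathcal{T}_\lambda$ of \eqref{operatorPhi1} is a contraction on the ball of radius $4C/(c|\lambda|)$ precisely when $|\lambda|>8C/c$, with the decay rate $c$ on $\Re(\Lambda_{22}-\Lambda_{11})$ remaining strictly positive throughout the half-strip (provided $\bar\eta$ is small enough relative to the spectral gap). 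So $C(F,H_R)$ is chosen to be the larger of the contraction threshold from Lemma \ref{cone} and the threshold needed to absorb the $O(1)$ remainder, completing the proof.
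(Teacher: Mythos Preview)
Your proposal is correct and follows essentially the same approach as the paper's proof: substitute the high-frequency expansion \eqref{secondcolumn} of the second column into the definition \eqref{lopatinsky}, extract the $O(\lambda)$ leading term, and verify its coefficient is nonzero. If anything, your version is more explicit than the paper's, since you spell out why the leading coefficient is nonvanishing (via $H_R\neq H_*$ and positivity of the remaining factors) and articulate the role of the contraction threshold from Lemma \ref{cone} in ensuring uniformity of the $O(1/|\lambda|)$ remainder on the half-strip.
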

\begin{remark}\label{subloprmk}
The principal, $\lambda$-order, term in the righthand side of \eqref{hfexpansion} can be recognized as
the Lopatinsky condition of Majda \cite{Ma} for short-time stability/well-posedness of the component subshock,
considered as a solution of the first-order part of \eqref{sv} with forcing terms
set to zero; see \cite{Er1,JLW,Z2} for similar observations in the context of detonations.
As the first-order system in this case coincides with the equations of isentropic gas dynamics with $\gamma$-law
pressure (see Introduction), nonvanishing of this principal part is a special case of the theorem of \cite{Ma,Se2} that
shock waves of isentropic gas dynamics are Lopatinsky stable for any monotone pressure function.
\end{remark}

\noindent\textbf{Nonvanishing of Evans function \eqref{Evans} at high frequency.}\\
The high frequency analysis of Section \ref{HighFrequency} also applies to the smooth case,
yielding the following result.

\begin{proposition}\label{hfpropEvans}
For any $F,H_R$, there exists $C(F,H_R)$, such that $D(\lambda,0)$ does not vanish for all $\Re\lambda>
-\bar\eta,|\lambda|>C(F,H_R)$.
\end{proposition}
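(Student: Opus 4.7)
The plan is to mimic the argument of Proposition \ref{hfprop}, substituting the high-frequency asymptotic formulas of Lemma \ref{orihflemma} into the definition \eqref{Evans} of the Evans function and showing that the leading-order contribution is nonvanishing.

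First, I would apply Lemma \ref{orihflemma} to the left-decaying mode $v_{1,-}$. With the scale of $w_{1,-}$ fixed so that $w_{1,-}(\lambda,x) \to R_1(H_L)$ as $x \to -\infty$ (so that $w_{1,-}$ picks out the stable subspace of $\mathcal{A}(\lambda)$ at $-\infty$ in the rescaled problem), formula \eqref{stableunstablefloww}(i) evaluated at $x=0$ gives
\begin{equation*}
v_{1,-}(\lambda,0)=w_{1,-}(\lambda,0) = \kappa_-(\lambda) R_1(H(0))+O(1/|\lambda|),
\end{equation*}
where $\kappa_-(\lambda)= \exp\bigl(\int_{-\infty}^0 [M_{11}(z) + \tfrac{1}{\lambda} N_{11}(\lambda,z) + \tfrac{1}{\lambda} N_{12}(\lambda,z) \Phi_1(\lambda,z)]\,dz \bigr)$ is a bounded, nonvanishing smooth prefactor (whose leading term is independent of $\lambda$). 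An analogous argument for the right-decaying mode, using the $x>0$ counterpart of Lemma \ref{orihflemma} (which follows by the same contraction-mapping argument as Lemma \ref{cone}, with the roles of stable and unstable subspaces exchanged), yields
\begin{equation*}
v_{2,+}(\lambda,0)=w_{2,+}(\lambda,0)=\kappa_+(\lambda) R_2(H(0))+O(1/|\lambda|),
\end{equation*}
with $\kappa_+(\lambda)$ a second bounded, nonvanishing smooth prefactor.

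Next, substituting these expansions into definition \eqref{Evans} evaluated at $x_0=0$,
\begin{equation*}
D(\lambda,0)=\kappa_-(\lambda)\kappa_+(\lambda)\det\bigl(R_1(H(0)),R_2(H(0))\bigr)+O(1/|\lambda|).
\end{equation*}
A direct computation from the explicit form of $R$ in \eqref{originRdef} gives $\det R(H)=-2FH^{5/2}$, which is nonzero at $H=H(0)\in(H_R,H_L)$. Since $\kappa_-\kappa_+$ is bounded away from $0$ uniformly in $\lambda$ on $\{\Re\lambda>-\bar\eta,\,|\lambda|>C\}$ for suitable $C$ (the integrals defining $\kappa_\pm$ converge absolutely because $M_{jj}$, $N_{jk}$, $\Phi_j$ decay fast enough at $\pm\infty$, and because $\Phi_j=O(1/|\lambda|)$ by Lemma \ref{cone}), the leading term is uniformly bounded away from $0$; choosing $C(F,H_R)$ large enough that the $O(1/|\lambda|)$ correction is dominated gives the claim.

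The main obstacle I anticipate is purely bookkeeping: correctly identifying which of the eigenvalues $\mu_j$ corresponds to the left-decaying vs.\ right-decaying mode at the various sides, ensuring in particular that the ``stable-manifold'' asymptotic formula \eqref{stableunstablefloww}(i) of Lemma \ref{orihflemma} indeed matches (up to normalization) the mode $v_{1,-}$ of \eqref{smootheigen}, and deriving the parallel formula on $x>0$. Because the hyperbolic signs \eqref{signgamma2} separate the stable/unstable flows at both $\pm\infty$ on all of $\{\Re\lambda>0\}$ in the smooth regime, and because Lemma \ref{orihflemma} is stated uniformly for $\Re\lambda>-\bar\eta$, this matching is automatic; no additional estimates beyond those already in Section \ref{HighFrequency} are required.
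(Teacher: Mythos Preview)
Your approach is essentially the paper's: feed the high-frequency expansions of Lemma \ref{orihflemma} into the Evans function at $x_0=0$, reducing to $\det\bigl(R_1(0),R_2(0)\bigr)+O(1/|\lambda|)$, which is nonvanishing since $\det R(H)=-2FH^{5/2}\neq 0$.

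One bookkeeping slip: your formula for $\kappa_-$ is not well-defined as written. The integrand $M_{11}(z)$ does \emph{not} decay at $-\infty$ (it tends to the nonzero constant $M_{11,-}$), so $\int_{-\infty}^0 M_{11}(z)\,dz$ diverges; moreover you have dropped the term $\lambda\bigl(\mu_1(z)-\mu_{1,-}\bigr)$ coming from the difference $\Lambda_{11}-\gamma_{1,-}$ in the rescaled flow. The correct exponent is $\int_{-\infty}^0\bigl[\lambda(\mu_1(z)-\mu_{1,-})+(M_{11}(z)-M_{11,-})+O(1/|\lambda|)\bigr]\,dz$, which does converge because the profile decays exponentially. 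The paper avoids this entirely by normalizing the modes at $x=0$ (so the exponential prefactor is $1$ there) and simply saying ``up to a scalar multiple''; since only the zeros of $D$ matter, any nonvanishing analytic prefactor is irrelevant, and your conclusion stands once the formula is repaired.
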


\begin{proof}
In the high frequency regime, following Lemma \ref{orihflemma}, we find that the decaying modes $v_{1,-}$, $v_{2,+}$ in Definition \ref{defEvans} are up to a scalar multiple equal to
\ba 
v_{1,-}(\lambda,x)=&e^{\int_0^x\left(\Lambda_{11}(\lambda,y)+\frac{1}{\lambda}N_{11}(\lambda,y)+\frac{1}{\lambda}N_{12}(\lambda,y)\Phi_1(\lambda,y)\right)dy}\left(R_1(x)+O(1/|\lambda|)\right),\\
v_{2,+}(\lambda,x)=&e^{\int_0^x\left(\Lambda_{22}(\lambda,y)+\frac{1}{\lambda}N_{22}(\lambda,y)+\frac{1}{\lambda}N_{21}(\lambda,y)\Phi_2(\lambda,y)\right)dy}\left(R_2(x)+O(1/|\lambda|)\right).
\ea 
Evaluating the Evans function \ref{Evans} at $x_0=0$ yields
\be
\label{hfexpansionEvans}
D(\lambda)=\det\left(\begin{array}{rr}R_1(0) &R_2(0)\end{array}\right)+O(1/|\lambda|)
\ee
which is nonvanishing. The constant $C$ should be sufficiently large such that $\mathcal{T}_\lambda$ becomes contraction mapping.
\end{proof}

\begin{remark}
High frequency stability restricts the study of spectral stability to 
{\it investigation of the bounded domain $\{\lambda:\Re\lambda>-\bar\eta,|\lambda|\le C(F,H_R)\}$,}
a numerically feasible problem.
\end{remark}

\subsection{Verification of mid- and low-frequency stability}
\label{EvansLopMidLow}
The hybrid schemes described in Sections \ref{hybridsub}, \ref{hybridsmooth} are implemented in Matlab and show great efficiency (see Table \ref {table1}, Table \ref {table2} in Appendix \ref{computertime} for computation time). To determine stability, we fix $0<r<R$ and $a\ll 1$ and examine the presence of spectrum within the set
$$\Omega(r,R,a):=\{\lambda -a:\Re\lambda>0,r<|\lambda|<R\}.$$ At the end, we compute numerically the
winding numbers of contours $\Delta(\partial\Omega(r,R,a))$ and $D(\partial\Omega(r,R,a))$, i.e. we discretize  $\partial\Omega(r,R,a)$ as $\lambda_0,\;\lambda_1,\cdots,\lambda_n,\lambda_{n+1}=\lambda_0$ and calculate the winding number by
$$
n(\Omega):=\frac{1}{2\pi}\sum_{i=0}^n \angle \Big(\Delta(\lambda_i),\Delta(\lambda_{i+1})\Big), \quad \Bigg(n(\Omega):=\frac{1}{2\pi}\sum_{i=0}^n \angle \Big(D(\lambda_i),D(\lambda_{i+1})\Big)\Bigg)
$$
where $\angle(z_1,z_2)$ denotes the angle change from $z_1$ to $z_2$.
Since $\Delta(\lambda)$ ($D(\lambda)$) is analytic in $\lambda$, it is clear that $n$ counts its number of zeros of $\Delta(\lambda)$ ($D(\lambda )$) within the set $\Omega$.

We have verified that all 
discontinuous hydraulic profiles are mid- and low-frequency stable. Here ``all" is limited to discretized existence domain $F\in[0.05:0.05:1.95]$, $H_R\in[0.01:0.01:H_C(F)-0.01]$ ($1559$ points in total) and mid- and low-stability is checked for $\Omega:=\Omega(0.1,C(F,H_R),0.000001)$ where $C(F,H_R)$ defined in Proposition \ref{hfprop} can be estimated by Lemma \ref{cone}. 
Note that, exceptionally, there are $191$ points in the low $F$ regime requiring $C(F,H_R)>2000$ and one parameter $(F=0.85,H_R=0.25)$ even requiring a $C(F,H_R)$ as large as $1.1664\times10^5$. It turns out for these values that
for that large $\lambda$, in the power series evaluation step, the hybrid scheme cannot move enough distance away from the singular point $H=1$, causing problems in the later ODE-evolution step. 
Numerics are then not robust for these pair of $F,H_R$. We have restricted $C(F,H_R)=2000$ for roughly half of these low-$F$ points, and 
$C(F,H_R)=100-1,000$ for the rest.

See Figure \ref{windingsub}(a)-(b) for typical images of contours $\partial{\Omega}(r,R,a)$ under function $\Delta(\lambda)$.

We have also verified that all small amplitude smooth hydraulic shock waves are mid- and low-frequency stable. Here ``all" is limited to discretized existence domain $F\in[0.05:0.05:1.95]$, $H_R\in[0.99:-0.01:H_C(F)+0.01]$ ($2227$ points in total) and mid- and low-stability is checked for $\Omega:=\Omega(0.1,C(F,H_R),0)$ where $C(F,H_R)$ defined in Proposition \ref{hfpropEvans} can be estimated by Lemma \ref{cone}. Note that, exceptionally, there are $18$ points in the $F\approx 1$ regime requiring $C(F,H_R)>2000$. For the same reasoning, numerics is then not robust for these pairs of $F,H_R$. We have restricted $C(F,H_R)=2000$ for these points. 

See Figure \ref{windingsub}(c)-(d) for typical images of contours $\partial{\Omega}$ under function $D(\lambda)$.

\begin{figure}
\begin{center}
\includegraphics[scale=0.32]{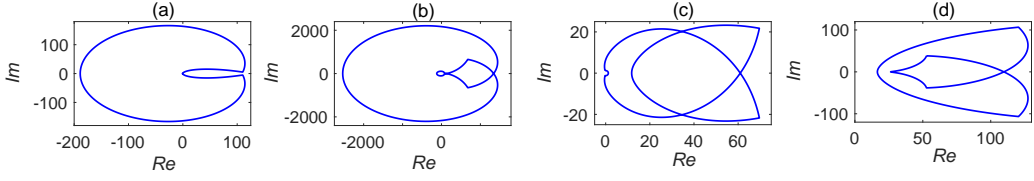}
\end{center}
\caption{Images of contours under Evans Lopatinsky determinant $\Delta(\lambda)$ for (a) $F=1.5$, $H_R=0.2$, $\Omega_1:=\Omega(0.1,5,0.000001)$ (b) $F=1.5$, $H_R=0.2$, $\Omega_2:=\Omega(4,15,0.000001)$; and images of contours under Evans function $D(\lambda)$ for (c) $F=1.5$, $H_R=0.8$, $\Omega_1:=\Omega(0.1,5,0)$ (b) $F=1.5$, $H_R=0.8$, $\Omega_2:=\Omega(4,10,0)$. Winding numbers are all zeros.}
\label{windingsub}
\end{figure}

\subsection{Time evolution of perturbed hydraulic shock profiles}
We have carried out also a time-evolution study using CLAWPACK \cite{C1,C2}, illustrating stability under perturbations of large amplitude discontinuous hydraulic shocks and small amplitude smooth hydraulic shocks. In both cases, all evolutions 
clearly indicate stability. In Figure \ref{timeev}, we display the results under two different perturbations of a discontinuous profile.  In Figure \ref{smoothev}, we display the results for a perturbed smooth profile.
Note that for the exceptional points for which we were not able to carry out a winding-number study out to the full theoretical radius
provided by high-frequency asymptotics, 
these time-evolution studies bridge the gap between computed ($100-2,000$) and theoretical ($>2,000$) radius.
For, nonstable eigenmodes $\Re \lambda\geq 0$ with $|\lambda|\geq 100$ should be clearly visible on the timescale $0\leq t\leq 20$ considered,
dominating the solution by time $t=20$.

\begin{figure}
\begin{center}
\includegraphics[scale=0.32]{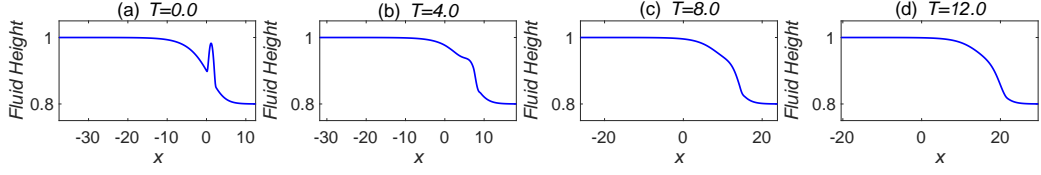}
\end{center}
\caption{Time-evolution study using CLAWPACK \cite{C1,C2}, illustrating stability under perturbation of a smooth hydraulic shock.  In (a) we show a perturbed profile with $C^\infty$ ``bump-type'' perturbation.  In (b) and (c) we show the solution at intermediate times $T=4.0$ and $8.0$ of the waveform in (a) after evolution under \eqref{sv}; stability and smoothness away from the subshock are clearly visible. In (d) we show the solution at time $T=12.0$, exhibiting convergence to a shift of the original waveform (slightly compressed in the horizontal direction due to scaling of the figure).}
\label{smoothev}
\end{figure}

\appendix
\section{Decomposition map}
\label{Decompositionmap}
The decomposition of Green kernel function $G$ can be summarized as
\ba 
\label{decomposition1}
&G=\chi_{|x-y|/t<S}\left(I+II+III\right),\quad I=\chi_{t\le 1}I+\chi_{t> 1}I,\quad \chi_{t> 1}I=\chi_{t> 1}(I^1+I^2),\\
&I^1=I^1_S+I^1_R=I_{S1}^1+I_{S2}^1+I_{S3}^1+I_{R1}^1+I_{R2}^1+I_{R3}^1,\quad I^2=I^2_{R1}+I_{R2}^2+I_{R3}^2,\\
&I_{S2}^1=\chi_{\frac{\bar{\alpha}}{p}>\eps}I_{S2}^1+\chi_{\frac{\bar{\alpha}}{p}\le\eps}\left(S^1+I_{S2Ri}^1+I_{S2Rii}^1\right),\quad I_{R2}^{1,2}=\chi_{\frac{\bar{\alpha}}{p}>\eps}I_{R2}^{1,2}+\chi_{\frac{\bar{\alpha}}{p}\le\eps}I_{R2}^{1,2},\\
&III=III^1+III^2=III^1_a+III^1_b+III^1_c+III^2_a+III^2_b+III^2_c,\quad III^{1,2}_a:=H^{1,2},
\ea
in which we see
\small
\ba
\label{decomposition2}
&H^{1,2}=\chi_{|x-y|/t<S}III^{1,2}_a,\quad S^1=\chi_{|x-y|/t<S,t>1,\bar{\alpha}/p\le \eps}S^1,\\
&R=\chi_{|x-y|/t<S}\left(II+III^{1,2}_{b,c}+\chi_{t\le 1}I+\chi_{t> 1}\left(I^1_{S1,S3,R}+\chi_{\frac{\bar{\alpha}}{p}>\eps}I_{S2}^1+\chi_{\frac{\bar{\alpha}}{p}\le\eps}\left(I_{S2Ri}^{1}+I_{S2Rii}^1\right)+I^2\right)\right).
\ea
\normalsize

\section{Integral estimates}\label{ApEstimate}
\textbf{$I^1_{S2Ri}:$}
Setting $f(u)=\frac{1}{\sqrt{4c^2_{2,-}\pi u}}e^{-\frac{(t-c^1_{2,-}(x-y))^2}{4c^2_{2,-} u}}$, yields $I^1_{S2Ri}=f(\frac{t}{c^1_{2,-}})-f(x-y)$, in which by \eqref{comparable} $\frac{t}{c^1_{2,-}}$ and $x-y$ are comparable. 
Writing the difference as an integral yields
\be
|I^1_{S2Ri}|=\frac{1}{\sqrt{4c^2_{2,-}\pi}}\left|\int_{x-y}^{\frac{t}{c^1_{2,-}}}e^{-\frac{(t-c^1_{2,-}(x-y))^2}{4c^2_{2,-}u}}\frac{\frac{(t-c^1_{2,-}(x-y))^2}{2c^2_{2,-}}-u}{2u^{\frac{5}{2}}}du\right|.
\ee
Using that $\frac{t}{c^1_{2,-}}$ and $x-y$ are comparable, we have $e^{-\frac{(t-c^1_{2,-}(x-y))^2}{4c^2_{2,-}u}}\le e^{-\frac{c^1_{2,-}(t-c^1_{2,-}(x-y))^2}{8
c^2_{2,-}t}}$, which, together with $x^ne^{-x^2}\lesssim e^{-\frac{x^2}{2}}$ for any $n$ positive, yields
{\small
\ba
|I^1_{S2Ri}|\lesssim& \int_{x-y}^{\frac{t}{c^1_{2,-}}}\left|e^{-\frac{(t-c^1_{2,-}(x-y))^2}{4c^2_{2,-}u}}\frac{\frac{(t-c^1_{2,-}(x-y))^2}{2c^2_{2,-}}-u}{2u^{\frac{5}{2}}}\right||du|\\
\lesssim&e^{-\frac{(t-c^1_{2,-}(x-y))^2}{4c^2_{2,-}\frac{2t}{c^1_{2,-}}}}(t-c^1_{2,-}(x-y))^2\left|\int_{x-y}^{\frac{t}{c^1_{2,-}}}u^{-\frac{5}{2}}du\right|+e^{-\frac{(t-c^1_{2,-}(x-y))^2}{4c^2_{2,-}\frac{2t}{c^1_{2,-}}}}\left|\int_{x-y}^{\frac{t}{c^1_{2,-}}}u^{-\frac{3}{2}}du\right|\\
\lesssim&e^{-\frac{c^1_{2,-}(t-c^1_{2,-}(x-y))^2}{8c^2_{2,-}t}}(t-c^1_{2,-}(x-y))^2\left|\frac{1}{\left(\frac{t}{c^1_{2,-}}\right)^{1.5}}-\frac{1}{\left(x-y\right)^{1.5}}\right|\\&+e^{-\frac{c^1_{2,-}(t-c^1_{2,-}(x-y))^2}{8c^2_{2,-}t}}\left|\frac{1}{\left(\frac{t}{c^1_{2,-}}\right)^{0.5}}-\frac{1}{\left(x-y\right)^{0.5}}\right|\\
\lesssim&e^{-\frac{c^1_{2,-}(t-c^1_{2,-}(x-y))^2}{8c^2_{2,-}t}}\frac{(t-c^1_{2,-}(x-y))^3}{t^{2.5}}+e^{-\frac{(t-c^1_{2,-}(x-y))^2}{8c^2_{2,-}t}}\frac{(t-c^1_{2,-}(x-y))}{t^{1.5}}\\
\lesssim&e^{-\frac{c^1_{2,-}(t-c^1_{2,-}(x-y))^2}{16c^2_{2,-}t}}\frac{1}{t}.
\ea}
\normalsize
$\frac{\partial I^1_{S2Ri}}{\partial y}:$
{\small
\be
|\frac{\partial I^1_{S2Ri}}{\partial y}|=\frac{1}{\sqrt{4c^2_{2,-}\pi}}\left|\frac{\partial}{\partial y}\int_{x-y}^{\frac{t}{c^1_{2,-}}}e^{-\frac{(t-c^1_{2,-}(x-y))^2}{4c^2_{2,-}u}}\frac{\frac{(t-c^1_{2,-}(x-y))^2}{2c^2_{2,-}}-u}{2u^{\frac{5}{2}}}du\right|
\ee
$$
\begin{aligned}
\lesssim&\left|e^{-\frac{(t-c^1_{2,-}(x-y))^2}{4c^2_{2,-}(x-y)}}\frac{\frac{(t-c^1_{2,-}(x-y))^2}{2c^2_{2,-}}-(x-y)}{2(x-y)^{\frac{5}{2}}}\right|+e^{-\frac{(t-c^1_{2,-}(x-y))^2}{4c^2_{2,-}\frac{2t}{c^1_{2,-}}}}(t-c^1_{2,-}(x-y))^3\left|\int_{x-y}^{\frac{t}{c^1_{2,-}}}u^{-\frac{7}{2}}du\right|\\&+e^{-\frac{(t-c^1_{2,-}(x-y))^2}{4c^2_{2,-}\frac{2t}{c^1_{2,-}}}}(t-c^1_{2,-}(x-y))\left|\int_{x-y}^{\frac{t}{c^1_{2,-}}}u^{-\frac{5}{2}}du\right|\\
\lesssim&e^{-\frac{c^1_{2,-}(t-c^1_{2,-}(x-y))^2}{16c^2_{2,-}t}}\frac{1}{t^{1.5}}.
\end{aligned}
$$}
\normalsize
\textbf{$I^1_{S2Rii}:$}
Using that $e^{-\frac{(t-c^1_{2,-}(x-y))^2}{4c^2_{2,-}(x-y)}}<1$ and that for the complementary error function $erfc(x):=\frac{2}{\sqrt{\pi}}\int_x^{\infty}e^{-z^2}dz$, there is the estimate
$erfc(x)\le {e^{-x^2}}$, $I^1_{S2Rii}$ can be bounded by
{\small
\be 
|I^1_{S2Rii}|\lesssim \int_r^\infty e^{-c^2_{2,-}(x-y)\xi^2}d\xi=\frac{1}{\sqrt{(x-y)c^2_{2,-}}}erfc(\sqrt{c^2_{2,-}(x-y)}r)\lesssim e^{-r^2c^2_{2,-}(x-y)}\le e^{-r^2c^2_{2,-}\frac{t}{2}},
\ee}
in which we have used that $x-y$ is comparable to $t$ hence is bounded away from $0$ and is greater than $\frac{t}{2}$.
Term $I_{S2Rii}$ is then time-exponentially small.

\textbf{$\frac{\partial I^1_{S2Rii}}{\partial y}:$}
When the partial derivative hits the exponential outside the integral we get time- exponentially small terms by following
the proof for \textbf{$I^1_{S2Rii}:$}.
When the partial derivative hits inside the integral we use $x^2e^{-x^2}\lesssim e^{-x^2/2}$ and
again get time-exponentially small terms by following the proof for \textbf{$I^1_{S2Rii}.$}

\textbf{$I^1_{R2i}:$} Using that $xe^{-x^2}\lesssim e^{\frac{-x^2}{2}}$ and that $\frac{t}{c^1_{2,-}}$ 
and $x-y$ are comparable ($\frac{t}{2c^1_{2,-}}<x-y<\frac{2t}{c^1_{2,-}}$), we have
{\small
\ba
&e^{-\frac{\left(t-c^1_{2,-}(x-y)\right)^2}{4c^2_{2,-}\left(x-y\right)}}\int_{-r}^r e^{-\xi^2c^2_{2,-}(x-y)}O|\eta_*|d\xi\lesssim e^{-\frac{\left(t-c^1_{2,-}(x-y)\right)^2}{4c^2_{2,-}\left(x-y\right)}}\int_{-r}^r e^{-\xi^2c^2_{2,-}(x-y)}\frac{|t-c^1_{2,-}(x-y)|}{x-y}d\xi\\
\lesssim& \frac{|t-c^1_{2,-}(x-y)|}{(x-y)^{\frac{3}{2}}}e^{-\frac{\left(t-c^1_{2,-}(x-y)\right)^2}{4c^2_{2,-}\left(x-y\right)}}\lesssim\frac{1}{(x-y)}e^{-\frac{\left(t-c^1_{2,-}(x-y)\right)^2}{8c^2_{2,-}\left(x-y\right)}}\le\frac{1}{\frac{t}{2c^1_{2,-}}}e^{-\frac{\left(t-c^1_{2,-}(x-y)\right)^2}{8c^2_{2,-}\frac{2t}{c^1_{2,-}}}},
\ea 
\ba
&e^{-\frac{\left(t-c^1_{2,-}(x-y)\right)^2}{4c^2_{2,-}\left(x-y\right)}}\int_{-r}^r e^{-\xi^2c^2_{2,-}(x-y)}O|\xi|d\xi\lesssim e^{-\frac{\left(t-c^1_{2,-}(x-y)\right)^2}{4c^2_{2,-}\left(x-y\right)}}\int_{0}^r e^{-\xi^2c^2_{2,-}(x-y)}\xi d\xi\\\lesssim&\frac{1}{x-y}e^{-\frac{\left(t-c^1_{2,-}(x-y)\right)^2}{4c^2_{2,-}\left(x-y\right)}}(1-e^{-r^2c^2_{2,-}(x-y)})\le\frac{1}{\frac{t}{2c^1_{2,-}}}e^{-\frac{\left(t-c^1_{2,-}(x-y)\right)^2}{8c^2_{2,-}\frac{2t}{c^1_{2,-}}}},
\ea
\ba 
&e^{-\frac{\left(t-c^1_{2,-}(x-y)\right)^2}{4c^2_{2,-}\left(x-y\right)}}\int_{-r}^r e^{-\xi^2c^2_{2,-}(x-y)}O|\eta_*^3(x-y)|d\xi\\
\lesssim &e^{-\frac{\left(t-c^1_{2,-}(x-y)\right)^2}{4c^2_{2,-}\left(x-y\right)}}\int_{-r}^r e^{-\xi^2c^2_{2,-}(x-y)}\frac{|t-c^1_{2,-}(x-y)|^3}{(x-y)^2}d\xi\lesssim \frac{|t-c^1_{2,-}(x-y)|^3}{(x-y)^{\frac{5}{2}}}e^{-\frac{\left(t-c^1_{2,-}(x-y)\right)^2}{4c^2_{2,-}\left(x-y\right)}}\\
\lesssim&\frac{1}{(x-y)}e^{-\frac{\left(t-c^1_{2,-}(x-y)\right)^2}{8c^2_{2,-}\left(x-y\right)}}\lesssim\frac{1}{t}e^{-\frac{\left(t-c^1_{2,-}(x-y)\right)^2}{8c^2_{2,-}\frac{2t}{c^1_{2,-}}}},
\ea
\ba
&e^{-\frac{\left(t-c^1_{2,-}(x-y)\right)^2}{4c^2_{2,-}\left(x-y\right)}}\int_{-r}^r e^{-\xi^2c^2_{2,-}(x-y)}O|\eta_*^2\xi(x-y)|d\xi\\
\lesssim& e^{-\frac{\left(t-c^1_{2,-}(x-y)\right)^2}{4c^2_{2,-}\left(x-y\right)}}\frac{|t-c^1_{2,-}(x-y)|^2}{x-y}\int_{0}^r e^{-\xi^2c^2_{2,-}(x-y)}\xi d\xi\\
\lesssim& \frac{|t-c^1_{2,-}(x-y)|^2}{(x-y)^2}e^{-\frac{\left(t-c^1_{2,-}(x-y)\right)^2}{4c^2_{2,-}\left(x-y\right)}}(1-e^{-r^2c^2_{2,-}(x-y)})
\lesssim\frac{1}{t}e^{-\frac{\left(t-c^1_{2,-}(x-y)\right)^2}{8c^2_{2,-}\frac{2t}{c^1_{2,-}}}},
\ea
\ba
&e^{-\frac{\left(t-c^1_{2,-}(x-y)\right)^2}{4c^2_{2,-}\left(x-y\right)}}\int_{-r}^r e^{-\xi^2c^2_{2,-}(x-y)}O|\eta_*\xi^2(x-y)|d\xi\\
\lesssim& |t-c^1_{2,-}(x-y)|e^{-\frac{\left(t-c^1_{2,-}(x-y)\right)^2}{4c^2_{2,-}\left(x-y\right)}}\int_0^r e^{-\xi^2c^2_{2,-}(x-y)}\xi^2d\xi
\lesssim \frac{|t-c^1_{2,-}(x-y)|}{(x-y)^{\frac{3}{2}}}e^{-\frac{\left(t-c^1_{2,-}(x-y)\right)^2}{4c^2_{2,-}\left(x-y\right)}}\\
\lesssim&\frac{1}{(x-y)}e^{-\frac{\left(t-c^1_{2,-}(x-y)\right)^2}{8c^2_{2,-}\left(x-y\right)}}
\lesssim\frac{1}{t}e^{-\frac{\left(t-c^1_{2,-}(x-y)\right)^2}{8c^2_{2,-}\frac{2t}{c^1_{2,-}}}},
\ea
\ba
&e^{-\frac{\left(t-c^1_{2,-}(x-y)\right)^2}{4c^2_{2,-}\left(x-y\right)}}\int_{-r}^r e^{-\xi^2c^2_{2,-}(x-y)}O|\xi^3(x-y)|d\xi
\lesssim (x-y)e^{-\frac{\left(t-c^1_{2,-}(x-y)\right)^2}{4c^2_{2,-}\left(x-y\right)}}\int_{0}^r e^{-\xi^2c^2_{2,-}(x-y)}\xi^3 d\xi\\
\lesssim &\frac{1}{x-y}e^{-\frac{\left(t-c^1_{2,-}(x-y)\right)^2}{4c^2_{2,-}\left(x-y\right)}}
\lesssim\frac{1}{t}e^{-\frac{\left(t-c^1_{2,-}(x-y)\right)^2}{8c^2_{2,-}\frac{2t}{c^1_{2,-}}}}.
\ea}
\normalsize
We then see that all terms are absorbable in $R$ \eqref{Rbound}.

\section{Computational framework}\label{s:computations}

\subsection{Computational environment} In carrying out our numerical investigations, we have used MacBook Pro 2017 with 16GB memory and Intel Core i7 processor with 2.8GHz processing speed for coding and debugging. The main parallelized computation is done in the compute nodes of IU Karst, a high-throughput computing cluster. It has $228$ compute nodes. Each node is an IBM NeXtScale nx360 M4 server equipped with two Intel Xeon E5-2650 v2 8-core processors and with 32 GB of RAM and 250 GB of local disk storage. 

\subsection{Computational time}\label{computertime}
The following computational times are times elapsed in a single processor of IU Karst.
\begin{table}[htbp]
\centering
\caption{Times to compute a single Evans-Lopatinsky determinant $\Delta_{F,H_R}(\lambda)$ .}
\label{table1}
\begin{tabular}{|l|l|l|l|l|l|l|}
\hline
   \diaghead{DDDDDD}
{$\lambda$}{$F,H_R$}      & $0.1,H_C(0.1)-10^{-5}$ & $0.1,0.002$ & $1,H_C(1)-10^{-5}$ & $1,0.2$ & $1.9,H_{C}(1.9)-10^{-5}$ & $1.9,0.5$ \\ \hline
$0.01$ & 0.06s                              & 0.06s   & 0.02s                              & 0.03s   & 0.02	s                                & 0.02s    \\ \hline
$1$     & 0.19s                              & 0.21s   & 0.04s                              & 0.04s   & 0.04s                                & 0.03s    \\ \hline
$100$  & 4.59s                              & 5.45s   & 0.78s                               & 0.87s   & 0.02s                                & 0.03s    \\ \hline
\end{tabular}
\end{table}
\begin{table}[htbp]
\centering
\caption{Times to compute a single Evans determinant $D_{F,H_R}(\lambda)$ .}
\label{table2}
\begin{tabular}{|l|l|l|l|l|l|l|}
\hline
   \diaghead{DDDDDD}
{$\lambda$}{$F,H_R$}      & $0.1,H_C(0.1)+10^{-2}$ & $0.1,0.9$ & $1,H_C(1)+10^{-2}$ & $1,0.9$ & $1.9,H_{C}(1.9)+10^{-2}$ & $1.9,0.99$ \\ \hline
$0.01$ & 0.11s                              & 0.09s   & 0.06s                              & 0.06s   & 0.06	s                                & 0.05s    \\ \hline
$1$     & 0.25s                              & 0.43s   & 0.07s                              & 0.05s   & 0.12s                                & 0.15s    \\ \hline
$100$  & 3.05s                              & 4.84s   & 0.32s                               & 0.58s   & 0.73s                                & 2.36s    \\ \hline
\end{tabular}
\end{table}


\begin{thebibliography}{JNRYZ}

\bibitem [AGJ] {AGJ} J. Alexander, R. Gardner, and C.K.R.T. Jones,
{\it A topological invariant arising in the analysis of
traveling waves}, J. Reine Angew. Math. 410 (1990) 167--212.

\bibitem[BHRZ]{BHRZ} B. Barker, J. Humpherys, K. Rudd, and K. Zumbrun,
{\it Stability of viscous shocks in isentropic gas dynamics},
Comm. Math. Phys.  281  (2008),  no. 1, 231--249. 

\bibitem[BHZ]{BHZ} B. Barker, J. Humpherys, and K. Zumbrun,
{\it One-dimensional stability of parallel shock
layers in isentropic magnetohydrodynamics,}
J. Diff. Eq. 249 (2010), no. 9, 2175--2213.

\bibitem[BJNRZ]{BJNRZ} B.~Barker, M.A. Johnson, P. Noble, L.-M. Rodrigues, and K.~Zumbrun,
{\it Stability of St Venant roll-waves: from onset to the large Froude number limit,}
J. Nonlinear Sci. 27 (2017) 285--342.

\bibitem
[BL] {BL} A. Boudlal and V.Y. Liapidevskii,
{\it Stabilit\'e de trains d'ondes dans un canal d\'ecouvert,} C.R. M\'ecanique 330 (2002) 291--295.


\bibitem[BLZ]{BLZ} B. Barker, O. Lafitte, and K. Zumbrun,
{\it Existence and stability of viscous shock profiles for 2-D isentropic MHD with infinite electrical resistivity,}
Acta Math. Sci. Ser. B Engl. Ed. 30 (2010), no. 2, 447--498.

\bibitem[BLeZ]{BLeZ} B. Barker, M. Lewicka, and K. Zumbrun,
{\it Existence and stability of viscoelastic shock profiles,}
Arch. Ration. Mech. Anal. 200 (2011), no. 2, 491--532.

\bibitem[BM]{BM} N.J. Balmforth and S. Mandre,
{\it Dynamics of roll waves,}
J. Fluid Mech. 514 (2004) 1--33.

\bibitem[Bre]{Bre} A. Bressan,
{\it Hyperbolic systems of conservation laws. The one-dimensional Cauchy problem,}
Oxford Lecture Series in Mathematics and its Applications, 20. Oxford University Press, Oxford, 2000. xii+250 pp. ISBN: 0-19-850700-3.

\bibitem[Br1]{Br1} R.R. Brock,
{\it Development of roll-wave trains in open channels.}
J. Hydraul. Div., Am. Soc. Civ. Eng. 95:4 (1969) 1401--1428.

\bibitem[Br2]{Br2} R.R. Brock,
{\it Periodic permanent roll waves.} J. Hydraul. Div.,
Am. Soc. Civ. Eng. 96:12 (1970) 2565--2580.

\bibitem[BS]{BS} S. Benzoni-Gavage and D. Serre,
{\it Multidimensional hyperbolic partial differential equations. First-order systems and applications,} Oxford Mathematical Monographs. The Clarendon Press, Oxford University Press, Oxford, 2007. xxvi+508 pp. ISBN: 978-0-19-921123-4; 0-19-921123-X.



\bibitem[C1]{C1} Mandli, K.T., Ahmadia, A.J., Berger, M.J., Calhoun, D.A., George, D.L.,
Hadjimichael, Y., Ketcheson, D.I., Lemoine, G.I., LeVeque, R.J., 2016.
Clawpack: building an open source ecosystem for solving hyperbolic PDEs.
PeerJ Computer Science. 


\bibitem[C2]{C2} Clawpack Development Team (2017), Clawpack Version 5.4.0,
http://www.clawpack.org, doi:10.5281/zenodo.262111.

\bibitem[D]{D} G. Doetsch,
{\it Introduction to the Theory and Application of the Laplace Transformation,}
Springer-Verlag, New York-Heidelberg (1974), vii + 326 pp.

\bibitem[Da]{Da} C. Dafermos,
{\it Hyperbolic conservation laws in continuum physics,}
Springer Verlag 325, Berlin, 2000.

\bibitem[Dr]{Dr} R.F. Dressler,
{\it Mathematical solution of the problem of roll-waves in inclined open channels}, Comm. Pure Appl. Math 2 (1949), p 149-194.

\bibitem[He]{He} D. Henry, Geometric theory of semilinear parabolic equations, Lecture Notes in Mathematics,
Springer-Verlag, Berlin (1981), iv + 348 pp.

\bibitem[DR1]{DR1} V. Duch{\^e}ne and L.-M. Rodrigues, {\it Large-time asymptotic stability of {R}iemann shocks of scalar balance laws},
preprint; arxiv: 1810.08481

\bibitem[DR2]{DR2} V. Duch{\^e}ne and L.-M. Rodrigues, {\it Stability in scalar balance laws: fronts and periodic waves,} in preparation.

\bibitem [Er1]{Er1} J. J. Erpenbeck, {\it Stability of steady-state equilibrium detonations,} Phys. Fluids 5 (1962),
604--614.

\bibitem [Er2]{Er2} J. J. Erpenbeck, {\it Stability of step shocks.} Phys. Fluids 5 (1962) no. 10, 1181--1187.
 
\bibitem[F1]{F1} 
K.O. Friedrichs,
{\it Symmetric hyperbolic linear differential equations,}
Comm. Pure and Appl. Math. 7 (1954) 345--392.
 
\bibitem[F2]{F2}
K.O. Friedrichs,
{\it On the laws of relativistic electro-magneto-fluid dynamics,}
Comm. Pure and Appl. Math. 27 (1974) 749--808.

\bibitem[GZ]{GZ} R. A. Gardner and K. Zumbrun, 
{\it The gap lemma and geometric criteria for instability of viscous shock profiles,}
Comm. Pure Appl. Math. 51 (1998), no. 7, 797--855.

\bibitem[H]{Hu} J. Humpherys, 
{\it Admissibility of viscous-dispersive systems,} J. Hyperbolic Differ. Equ. 2 (2005), no. 4, 963--974.

\bibitem[HLyZ1]{HLyZ1}
J. Humpherys, G. Lyng, and K. Zumbrun,
{\it Spectral stability of ideal gas shock layers,}
 Arch. Ration. Mech. Anal.  194  (2009),  no. 3, 1029--1079.

\bibitem[Ho]{Ho} P. Howard,
{\it Pointwise Green's function approach to stability for scalar conservation laws,} Comm. Pure Appl. Math. 52 (1999), no. 10, 1295--1313. 
 
\bibitem[HRZ]{HRZ} P. Howard, M. Raoofi, and K. Zumbrun,
{\it Sharp pointwise bounds for perturbed viscous shock waves},
J. Hyperbolic Differ. Equ.  3  (2006),  no. 2, 297--373.  

\bibitem[HoZ]{HoZ} P. Howard and K. Zumbrun,  
{\it Stability of undercompressive shock profiles,} J. Differential Equations  225  (2006),  no. 1, 308--360.

\bibitem[HuZ]{HuZ} J. Humpherys and K. Zumbrun, 
{\it Efficient numerical stability analysis of detonation waves in ZND,} Quart. Appl. Math. 70 (2012), no. 4, 685-703.

\bibitem[Je]{Je} H. Jeffreys, {\it The flow of water in an inclined channel of rectangular section,} Phil. Mag. 49 (1925) 793--807.

\bibitem[JK]{JK} S. Jin and M. Katsoulokis,
{\it Hyperbolic Systems with Supercharacteristic Relaxations and Roll Waves,}
SIAM J. Appl. Math. 61 (2000) 273-292.

\bibitem[JLW]{JLW} H.K. Jenssen, G. Lyng, and M. Williams,
{\it Equivalence of low-frequency stability conditions for multidimensional detonations in three models of combustion,}
Indiana Univ. Math. J. 54 (2005) 1--64.

\bibitem[JNRZ]{JNRZ} M.~A. Johnson, P.~Noble, L.~M. Rodrigues, and K.~Zumbrun.
{\it Behavior of periodic solutions of viscous conservation laws under localized and nonlocalized perturbations,}
Invent. Math., 197 (2014), no. 1, 115--213.

\bibitem[JNRYZ]{JNRYZ}
M. Johnson, P. Noble, L.M. Rodrigues, Z. Yang, K. Zumbrun, {\it Spectral stability of inviscid roll waves,} Preprint (2018).

\bibitem[JZN]{JZN} M. Johnson, K. Zumbrun, and P. Noble,
{\it Nonlinear stability of viscous roll waves,}  SIAM J. Math. Anal. 43 (2011), no. 2, 577--611.

\bibitem[K]{K} S. Kawashima,
{\it Systems of a hyperbolic--parabolic composite type,
 with applications to the equations of magnetohydrodynamics},
thesis, Kyoto University (1983).

\bibitem[Kr]{Kr} H.O. Kreiss,
{\it Initial boundary value problems for hyperbolic
systems,} Comm. Pure Appl. Math. 23 (1970) 277-298.

\bibitem[L1]{L1} T.-P. Liu, 
{\it Hyperbolic conservation laws with relaxation,}
Comm. Math. Phys. 108 (1987), no. 1, 153--175. 

\bibitem[L2]{L2} T.-P. Liu, 
{\it Nonlinear stability of shock waves for viscous conservation laws,}
Mem. Amer. Math. Soc. 56 (1985), no. 328, v+108 pp.

\bibitem[La]{La} P.D. Lax,
\textit{Hyperbolic systems of conservation laws. II.}
Comm. Pure Appl. Math. 10 1957 537--566.

\bibitem[Ma]{Ma} Majda, A., \emph{The stability of multidimensional shock fronts}, Mem. Amer. Math. Soc. No. 275, AMS, Providence, 1983.

\bibitem[Me]{Me} G. M\'etivier,
{\it Stability of multidimensional shocks},
Advances in the theory of shock waves, 25--103,
Progr. Nonlinear Differential Equations, Appl., 47, Birkh\"auser Boston, Boston, MA, 2001.

\bibitem[MeZ]{MeZ} G. M\'etivier and K. Zumbrun, 
{\it Large viscous boundary layers for noncharacteristic nonlinear hyperbolic problems,}
Mem. Amer. Math. Soc.  175  (2005),  no. 826, vi+107 pp.

\bibitem[MM]{MM} A. Mailybaev and D. Marchesin, Private communication;
{\it Conservation Laws and Applications} conference, IMPA, Rio de Janeiro, August (2017).

\bibitem[MZ1]{MZ} C. Mascia and K. Zumbrun, 
{\it Pointwise Green's function bounds and stability of relaxation shocks.} Indiana Univ. Math. J. 51 (2002), no. 4, 773-904.

\bibitem[MZ2]{MZ2} C. Mascia and K. Zumbrun, 
{\it Stability of large-amplitude shock profiles of general relaxation systems,} SIAM J. Math. Anal. 37 (2005), no. 3, 889-913. 

\bibitem[MZ3]{MZ3} C. Mascia and K. Zumbrun, 
{\it Spectral stability of weak relaxation shock profiles,}
Comm. Partial Differential Equations  34  (2009),  no. 1-3, 119--136.

\bibitem[MZ4]{MZ4} C. Mascia and K. Zumbrun,
{\it Stability of large-amplitude viscous shock profiles
of hyperbolic--parabolic systems,} 
Arch. Ration. Mech. Anal. 172 (2004), no.1, 93--131.

\bibitem[NZ]{NZ} T. Nguyen and K. Zumbrun, 
{\it Long-time stability of large-amplitude noncharacteristic
boundary layers for hyperbolic parabolic systems},
 J. Math. Pures Appl. (9)  92  (2009),  no. 6, 547--598.
 
\bibitem[Pa]{Pa} A. Pazy, {\it Semigroups of linear operators and applications 
to partial differential equations.} Applied Mathematical Sciences, 44, 
Springer-Verlag, New York-Berlin, (1983) viii+279 pp. ISBN: 0-387-90845-5.

\bibitem[PW]{PW} R. L. Pego-M.I. Weinstein,
\textit{Eigenvalues, and instabilities of solitary waves}.
Philos. Trans. Roy. Soc. London Ser. A 340 (1992), 47--94.

\bibitem[RG1]{RG1} G. L. Richard and S. L. Gavrilyuk,
{\it A new model of roll waves: comparison with Brock's experiments,}
J. Fluid Mech. 698 (2012) 374--405.

\bibitem[RG2]{RG2} G. L. Richard and S. L. Gavrilyuk,
{\it The classical hydraulic jump in a model of shear shallow-water flows,}
J. Fluid Mech. 725 (2013) 492--521.



  

\bibitem[RaZ]{RaZ} M. Raaofi and K. Zumbrun,
{\it Stability of undercompressive viscous shock profiles of hyperbolic-parabolic systems,}
J. Differential Equations  246  (2009),  no. 4, 1539--1567.

\bibitem[R]{R} L.M. Rodrigues, private communication: Sobolev-based analysis of long-time stability
of discontinuous periodic waves.

\bibitem[RZ]{RZ} L.M. Rodrigues and K. Zumbrun,
{\it Periodic-coefficient damping estimates, and stability of large-amplitude roll waves in inclined thin film flow,}
SIAM J. Math. Anal. 48 (2016), no. 1, 268-280.

\bibitem[Sa]{Sa} D. Sattinger,
{\it On the stability of waves of nonlinear parabolic systems}.
Adv. Math. 22 (1976) 312--355.

\bibitem[Se1]{Se1} D. Serre,
{\it Systems of conservation laws. 1. Hyperbolicity, entropies, shock waves,} 
Translated from the 1996 French original by I. N. Sneddon. 
Cambridge University Press, Cambridge, 1999. xxii+263 pp. ISBN: 0-521-58233-4.

\bibitem[Se2]{Se2} D. Serre,
{\it Systems of conservation laws. 2. Geometric structures, oscillations, 
and initial-boundary value problems,} Translated from the 1996 French original 
by I. N. Sneddon. Cambridge University Press, Cambridge, 2000.  xii+269 pp. ISBN: 0-521-63330-3.
 
\bibitem[Sm]{Sm} J.~Smoller.
{\it Shock waves and reaction-diffusion equations}.
Springer-Verlag, New York, second edition, 1994.

\bibitem[SYZ] {SYZ} A. Sukhtayev, Z. Yang, and K. Zumbrun {\it Spectral stability of hydraulic shock profiles,}  
to appear, Physica D: Nonlinear Phenomena, {arXiv:1810.01490}.

\bibitem[TZ]{TZ} B. Texier and K. Zumbrun, 
{\it Galloping instability of viscous shock waves,} Phys. D. 237 (2008) 1553-1601.


\bibitem[W]{W} G. B. Whitham, \emph{Linear and Nonlinear Waves}, Pure and Applied Mathematics (New York), John Wiley \& Sons Inc., New York, 1999.
Reprint of the 1974 original, A Wiley-Interscience Publication.
\bibitem[Yo]{Yo} W.-A. Yong,
{\it Basic aspects of hyperbolic relaxation systems},
in "Advances in the theory of shock waves", 259--305,
Progr. Nonlinear Differential Equations Appl., 47,
Birkh\"auser Boston, Boston, MA, 2001. 

\bibitem[YZ]{YZ} S. Yarahmadian and K. Zumbrun,
{\it Pointwise green function bounds and long-time stability of large-amplitude noncharacteristic boundary layers},
SIAM J. Math. Anal.  40  (2009),  no. 6, 2328--2350. 

\bibitem[YoZ]{YoZ} W.-A. Yong and K. Zumbrun,
{\it Existence of relaxation shock profiles for hyperbolic
conservation laws},
SIAM J. Appl. Math. 60 (2000) no.5, 1565--1575.

\bibitem[ZH]{ZH} K. Zumbrun and P. Howard, {\it Pointwise semigroup methods and stability
of viscous shock waves.} Indiana Mathematics Journal V47 (1998), 741-871; Errata, Indiana Univ. Math. J. 51 (2002), no. 4, 1017-1021.

\bibitem[Z1]{Z1} K.Zumbrun, 
{\it High-frequency asymptotics and 1-D stability of ZND detonations
in the small-heat release and high-overdrive limits}, Arch. Ration. Mech. Anal. 203 (2012), no. 3, 701--717.

\bibitem[Z2]{Z2} K. Zumbrun,
{\it Stability of detonation waves in the ZND limit,}
Arch. Ration. Mech. Anal. 200 (2011), no. 1, 141--182.

\end{thebibliography}
\end{document}